\newcommand{\ku}{\Bbbk}
\newcommand\fD{\mathsf{D}}
\newcommand\fP{\mathsf{P}}
\newcommand\fR{\mathsf{R}}
\newcommand{\Z}{{\mathbb Z}}
\newcommand{\D}{{\mathbb D}}
\newcommand{\End}{\operatorname{End}}
\newcommand{\Ext}{\operatorname{Ext}}
\newcommand\Hom{\operatorname{Hom}}
\newcommand\id{\operatorname{id}}
\numberwithin{equation}{section}
\newtheorem{lema}{Lemma}[section]
\newtheorem{theorem}[lema]{Theorem}
\newtheorem{cor}[lema]{Corollary}
\newtheorem{prop}[lema]{Proposition}
\newtheorem{question-app}{Question}
\theoremstyle{definition}
\newtheorem{definition}[lema]{Definition}
\theoremstyle{remark}
\newtheorem{example}[lema]{Example}
\newtheorem{rmk}[lema]{Remark}
\newcommand{\uv}{{\underline{v}}}
\newcommand{\uw}{{\underline{w}}}
\newcommand{\ux}{{\underline{x}}}
\newcommand{\uy}{{\underline{y}}}
\newcommand{\uz}{{\underline{z}}}
\newcommand{\uH}{\underline{H}}
\newcommand{\ustar}{\,\underline{\star}\,}
\newcommand{\oDelta}{\overline{\Delta}}
\newcommand{\onabla}{\overline{\nabla}}
\newcommand{\h}{\mathfrak{h}}
\newcommand{\rD}{\mathscr{D}}
\newcommand{\rI}{\mathscr{I}}
\newcommand{\rF}{\mathscr{F}}
\newcommand{\rG}{\mathscr{G}}
\newcommand{\rT}{\mathscr{T}}
\newcommand{\rP}{\mathscr{P}}
\newcommand{\rL}{\mathscr{L}}
\newcommand{\qq}{\mathbin{\!/\mkern-5mu/\!}}
\newcommand{\LL}{\mathbb{LL}}
\newcommand\BE{\mathsf{BE}}
\newcommand\RE{\mathsf{RE}}
\newcommand\Tilt{\mathsf{Tilt}}
\newcommand{\be}{\mathbf{e}}
\newcommand{\f}{\mathbf{f}}
\newcommand{\kF}{\mathfrak{F}}
\newcommand{\For}{\mathsf{For}}
\newcommand{\bk}{\Bbbk}
\newcommand{\fgen}{\mathrm{fg}}
\newcommand{\Mod}{\mathrm{Mod}}
\newcommand{\Free}{\mathrm{Free}}
\newcommand{\Kb}{K^{\mathrm{b}}}
\newcommand{\Db}{D^{\mathrm{b}}}
\newcommand{\DiagBS}{\mathscr{D}_{\mathrm{BS}}}
\newcommand{\oDiagBS}{\overline{\mathscr{D}}_{\mathrm{BS}}}
\newcommand{\Diag}{\mathscr{D}}
\newcommand{\oDiag}{\overline{\mathscr{D}}}
\newcommand{\BS}{\mathrm{BS}}
\newcommand{\oB}{\overline{B}}
\newcommand{\oD}{\overline{\Delta}}
\newcommand{\oN}{\overline{\nabla}}
\newcommand{\coH}{\mathsf{H}}
\def\Tenint{\@ifnextchar_{\@Tenintsub}{\@Tenintnosub}}
\def\@Tenintsub_#1{\mathchoice{\mathbin{\underline{\mathop{\otimes}}}_{#1}}%
  {\underline{\otimes}_{#1}}{\underline{\otimes}_{#1}}{\underline{\otimes}^L_{#1}}}
\def\@Tenintnosub{\mathbin{\underline{\mathop{\otimes}}}}
\def\lotimes{\@ifnextchar_{\@lotimessub}{\@lotimesnosub}}
\def\@lotimessub_#1{\mathchoice{\mathbin{\mathop{\otimes}^L}_{#1}}%
  {\otimes^L_{#1}}{\otimes^L_{#1}}{\otimes^L_{#1}}}
\def\@lotimesnosub{\mathbin{\mathop{\otimes}^L}}
\title{Mixed perverse sheaves on flag varieties of Coxeter groups}
\author{Pramod N. Achar}
\address{Department of Mathematics\\
  Louisiana State University\\
  Baton Rouge, LA 70803\\
  U.S.A.}
\email{pramod@math.lsu.edu}
\author{Simon Riche}
\address{Universit\'e Clermont Auvergne, CNRS, LMBP, F-63000 Clermont-Ferrand, Fran\-ce.
}
\email{simon.riche@uca.fr}
\author{Cristian Vay}
\address{Universidad Nacional de C\'ordoba, Facultad de Matem\'atica, Astronom\'ia, F\'isica y Computaci\'on, CIEM -- CONICET, C\'ordoba, Argentina.}
\email{vay@famaf.unc.edu.ar}
\thanks{P.A. was partially supported by NSF Grant No. DMS-1500890. S.R. was partially supported by ANR Grant No.~ANR-13-BS01-0001-01. This project has received funding from the European Research Council (ERC) under the European Union's Horizon 2020 research and innovation programme (grant agreement No. 677147). The work of C.V. was done during a research stay at the Universit\'e Clermont Auvergne supported by CONICET. He also was partially supported by Secyt (UNC), FONCyT PICT 2016-3957 and MathAmSud project GR2HOPF}
\subjclass[2010]{20F55, 20C08, 14F05}
\begin{document}

\begin{abstract}
In this paper we construct an abelian category of ``mixed perverse sheaves'' attached to any realization of a Coxeter group, in terms of the associated Elias--Williamson diagrammatic category. This construction extends previous work of the first two authors, where we worked with parity complexes instead of diagrams, and we extend most of the properties known in this case to the general setting. As an application we prove that the split Grothendieck group of the Elias--Williamson diagrammatic category is isomorphic to the corresponding Hecke algebra, for any choice of realization.
\end{abstract}

\maketitle

\section{Introduction}

\subsection{Categorifications of Hecke algebras}
\label{ss:intro-categorifications}

Let $(W,S)$ be a Coxeter system, and let $\mathcal{H}_W$ be the associated Hecke algebra. When $W$ is crystallographic, i.e.~is the Weyl group of a Kac--Moody group $G$, a fact of fundamental importance, going back to 1980, is the existence of a remarkable geometric categorification of $\mathcal{H}_W$: it can be realized as the split Grothendieck group of the additive monoidal category of $B$-equivariant semisimple complexes (with complex coefficients) on the flag variety $G/B$ of $G$, where $B \subset G$ is the Borel subgroup; see~\cite{kl2,springer}.\footnote{The papers~\cite{kl2} and~\cite{springer} only mention finite or affine Kac--Moody groups. However, thanks to the subsequent development of the general theory of Kac--Moody groups~\cite{kumar,tits}, their methods now apply in this generality.} The main point of this categorification is that it realizes the Kazhdan--Lusztig basis of $\mathcal{H}_W$ as the classes of simple perverse sheaves.

In the 2000s, this categorification was generalized in two different directions:
\begin{enumerate}
 \item
 In~\cite{soergel-bim}, Soergel showed that semisimple complexes of flag varieties can be replaced by \emph{Soergel bimodules}, thereby providing a categorification of $\mathcal{H}_W$ for \emph{any} Coxeter group.
 \item
 In~\cite{jmw}, Juteau--Mautner--Williamson introduced the \emph{parity complexes}, which provide the appropriate replacement for semisimple complexes when we take coefficients in an arbitrary field (this leads naturally to the notion of \emph{$p$-canonical bases}).
\end{enumerate}

These two generalizations were recently united by the introduction of the \emph{Elias--Williamson diagrammatic category}~\cite{ew}, a certain monoidal category attached to any Coxeter group equipped with a ``realization.''  For certain realizations (coming from ``reflection faithful representations''), this category is equivalent to the category of Soergel bimodules.  On the other hand, for realizations constructed from Kac--Moody root data, one recovers the corresponding category of parity complexes on the associated flag variety. (This result was suggested in~\cite{ew} and~\cite{jmw}, and formally proved in~\cite[Part~3]{rw}.) Let us note that the ``reflection faithful'' requirement is rather restrictive, justifying the interest of a construction avoiding this condition.

\subsection{Triangulated categories}

The categorifications considered above take us away from the very comfortable world of perverse sheaves. The main goal of the present paper is to explain how perverse sheaves can be reintroduced into the picture. 
This paper draws inspiration from~\cite{modrap2}, which (in the setting of parity complexes on flag varieties) introduced the notions of ``mixed derived category'' and ``mixed perverse sheaves.''  These notions have since found important applications in modular representation theory; see in particular~\cite{modrap2, arider, MaR, prinblock, mkdkm}.

The first step is to embed the diagrammatic category in a suitable triangulated category.  This was done by Makisumi, Williamson, and the first two authors in~\cite{amrw1}.  That paper defines the ``biequivariant'' derived category $\BE(\h,W)$ attached to a Coxeter group $W$ and a realization $\h$ as the bounded homotopy category of the Elias--Williamson category. The same paper also defines the ``right-equivariant'' derived category $\RE(\h,W)$, which plays the role of the \emph{$B$-constructible} derived category of $G/B$ in the usual picture.

\subsection{Perverse sheaves}
\label{ss:intro-perverse}

In the present paper, we build on this approach and construct the ``perverse t-structure'' on $\BE(\h,W)$ and $\RE(\h,W)$.  One would like to follow the model of~\cite{modrap2}, but that paper exploits the fact that parity complexes are already defined in terms of sheaves on some topological space, where it makes sense to restrict to or push forward from a locally closed subspace.  Thus, one key step in the present paper is to understand the correct analogue of ``locally closed subspace'' in the diagrammatic setting. The solution, explained in Section~\ref{sec:categories-subsets} and inspired by~\cite{arider1}, is to work with certain ``naive'' subquotients of the diagrammatic category.

The proof that these subquotients have the appropriate behaviour relies on some properties of the \emph{double leaves basis} for morphism spaces, introduced by Libedinsky~\cite{libedinsky} for Soergel bimodules, and studied in the context of the diagrammatic category in~\cite[Sections~6--7]{ew}. From our point of view, this study provides another illustration of the power of these methods.

Once we have made the correct definitions, we will construct a ``recollement'' formalism for these categories (following essentially the same ideas as in~\cite{modrap2}), and use it to define the desired t-structure.

\subsection{Standard and costandard objects}
\label{ss:intro-standard-costandard}

An important property of $B$-equiva\-riant perverse sheaves on $G/B$ is that the \emph{standard} and \emph{costandard} objects (the $*$- and $!$-extensions of the constant perverse sheaves on Bruhat strata) are perverse sheaves. In the usual topological context, this property follows from the fact that the embeddings of these strata are affine morphisms. For the mixed derived categories of~\cite{modrap2}, a different argument was needed.  The proof given there is based on the study of push-forward to and pullback from partial flag varieties.

In the context of the present paper, we have no analogue of sheaves on partial flag varieties,\footnote{A definition of such a category would require a diagrammatic version of the ``singular Soergel bimodules'' of~\cite{williamson-sing}; no definition of such objects is available at the moment.} so some new ideas are needed (which, once again, rely to some extent on the properties of the light leaves basis).  The proof that standard and costandard objects are perverse in the diagrammatic setting appears in Section~\ref{sec:BE t structure}.

\subsection{Some other properties}

Let $\fP^{\BE}(\h,W)$ and $\fP^{\RE}(\h,W)$ denote the hearts of the perverse t-structures on $\BE(\h,W)$ and $\RE(\h,W)$, respectively.  These categories share many properties with their traditional counterparts. In particular, we prove that:
\begin{enumerate}
\item
In the case of field coefficients, the simple objects in the abelian categories $\fP^{\BE}(\h,W)$ and $\fP^{\RE}(\h,W)$ can be described in terms of $!*$-extensions; see~\S\ref{ss:simple-perverse} and~\S\ref{ss:RE-field}.
 \item The forgetful functor $\fP^{\BE}(\h,W) \to \fP^{\RE}(\h,W)$ is fully faithful; see Proposition~\ref{prop:perv-equiv-const}.
 \item If $\bk$ is a field, the category $\fP^{\RE}(\h,W)$ has a natural structure of highest weight category; see Theorem~\ref{thm:hw}.
 \item
 If $\bk$ is a field and $W$ is finite, one can construct a ``Ringel duality'' exchanging projective and tilting objects in $\fP^{\RE}(\h,W)$; moreover, the indecomposable tilting object associated with the longest element in $W$ is both projective and injective; see Section~\ref{sec:Ringel}.
\end{enumerate}

\subsection{Applications}
\label{ss:application}

One classical motivation for studying mixed perverse sheaves on flag varieties (with complex coefficients) is that they provide a ``mixed version'' of (a regular block of) the Bernstein--Gelfand--Gelfand category $\mathcal{O}$ associated with a semisimple complex Lie algebra; see~\cite{bgs, soergel-ICM}. In this spirit, $\fP^\RE(\h,W)$ may be thought of as a ``generalized mixed category $\mathcal{O}$'' attached to $W$ and $\h$. 

As a more concrete application of our results, we prove that for any realization of $W$, the split Grothendieck group of the Elias--Williamson diagrammatic category is isomorphic to the Hecke algebra $\mathcal{H}_W$. (Note that in~\cite{ew} this result was proved only in the case the base ring $\bk$ is a field or a complete local ring.) This application illustrates the interest of our formalism in the study of the diagrammatic category, beyond the simple game of defining perverse objects.

One of the main results of~\cite{mkdkm} is that when the realization $\h$ comes from a Kac--Moody root datum, there is an equivalence of triangulated categories between $\RE(\h,W)$ and $\RE(\h^*,W)$, known as \emph{Koszul duality}.  The main reason for the restriction to the Kac--Moody setting is that some of the arguments make use of the perverse t-structure from~\cite{modrap2}.  We expect that the methods developed in the present paper will allow one to drop this restriction.

\subsection{Relation with previous work}

As mentioned already, the idea of using the recollement formalism in this kind of setting comes from~\cite{modrap2}.
In~\cite{makisumi}, Makisumi has shown how to adapt the constructions of~\cite{modrap2} to the setting of \emph{sheaves on moment graphs}. In general, this notion, which arose as a kind of combinatorial model for torus-equivariant geometry, takes us away from the world of Coxeter groups, but it overlaps with the results of the present paper in the following special situation: for a Coxeter group equipped with a reflection faithful representation, the category of Soergel bimodules (and hence the Elias--Williamson diagrammatic category) is equivalent to the category of sheaves on the Bruhat moment graph. In this setting, Makisumi's constructions and ours are equivalent. Under the further assumption that Soergel's conjecture holds for the representation under consideration, this t-structure can also be defined purely in terms of Soergel bimodules; see~\cite[Remark~5.7]{makisumi}.
Because moment graphs are closer to geometry (both in spirit and because of the existence of moment graphs modeling partial flag varieties), the arguments in~\cite{makisumi} avoid some of the difficulties mentioned in~\S\S\ref{ss:intro-perverse}--\ref{ss:intro-standard-costandard}.

Separately, a different approach to defining a ``category $\mathcal{O}$'' for a general Coxeter group was proposed by Fiebig in~\cite{fiebig} (in terms of sheaves on the Bruhat moment graph), and studied further by Abe in~\cite{abe}. Compared to their point of view, ours is ``Koszul dual'': in their picture the indecomposable Soergel bimodules correspond to projective objects, whereas for us they correspond to ``parity objects'' (i.e.~semisimple complexes when $\bk$ is a field of characteristic $0$).

\subsection{Contents}

Section~\ref{sec:prelim} contains notation and conventions related to graded modules, and Section~\ref{sec:EW categories} contains background on the Elias--Williamson diagrammatic category, and on the categories $\BE(\h,W)$ and $\RE(\h,W)$.  In Section~\ref{sec:categories-subsets}, we study the diagrammatic analogues of parity complexes on locally closed subsets of the flag variety.  This is needed in order to formulate the recollement theorem, which is proved in Section~\ref{sec:BE}.  Next, Section~\ref{sec:standard-costandard} is devoted to the study of standard and costandard objects.  This section also contains the proof of the categorification result mentioned in~\S\ref{ss:application}.

The definition and some basic properties of the perverse t-structure on $\BE(\h,W)$ appear in Section~\ref{sec:BE t structure}.  In Section~\ref{sec:field}, we specialize to the case of field coefficients.  Much of the work in this section is aimed at understanding the composition factors of standard and costandard perverse objects.

In Section~\ref{sec:RE-category}, we turn our attention to $\RE(\h,W)$.  Many statements carry over from $\BE(\h,W)$, but there are two new results here: one about the full faithfulness of the forgetful functor, and another about the highest weight structure on $\fP^\RE(\h,W)$ (for field coefficients).  One may then ask what the Ringel dual of this highest weight category is.  We conclude the paper in Section~\ref{sec:Ringel} with a proof that $\fP^\RE(\h,W)$ is self-Ringel-dual.

\subsection{Acknowledgements}

We thank Geordie Williamson for useful discussions, and in particular for suggesting the application mentioned in~\S\ref{ss:application}. We also thank Shotaro Makisumi for helpful comments on a draft of this paper.

\section{Preliminaries}\label{sec:prelim}

\subsection{Graded categories}
\label{ss:graded-cat}

Let $\bk$ be a commutative ring, and let $\mathscr{A}$ be a (small) $\bk$-linear category which is enriched over $\Z$-graded $\bk$-modules. Recall that this means that for any $X,Y$ in $\mathscr{A}$ the set of morphisms from $X$ to $Y$ in $\mathscr{A}$ is a graded $\bk$-module
\[
\Hom^\bullet_{\mathscr{A}}(X,Y) = \bigoplus_{n \in \Z} \Hom^n_{\mathscr{A}}(X,Y),
\]
and that composition is defined by morphisms of graded $\bk$-modules (which implies that identity morphisms have degree $0$). To such a category one can attach a category $\mathscr{A}^\circ$ whose objects are symbols $X(n)$ where $X$ is an object of $\mathscr{A}$ and $n \in \Z$, and whose morphisms are defined by
\[
\Hom_{\mathscr{A}^{\circ}} \bigl( X(n), Y(m) \bigr) := \Hom_{\mathscr{A}}^{m-n}(X,Y)
\]
(with the composition defined in the obvious way). This category admits a natural autoequivalence $(1)$ sending the object $X(n)$ to $X(n+1)$; we will denote its $j$-th power by $(j)$ for $j \in \Z$. Moreover, each orbit of the group $\{(j) : j \in \Z\}$ on the set of objects of $\mathscr{A}^\circ$ admits a ``distinguished'' representative $X(0)$.

On the other hand, let $\mathscr{B}$ be a (small) $\bk$-linear category endowed with an auto\-equivalence $(1)$ (whose $j$-th power will be denoted $(j)$) and a set of representatives of the orbits of $\{(j) : j \in \Z\}$ on the set of objects of $\mathscr{B}$. Then
one can define a category $\mathscr{B}^{\mathbb{Z}}$ enriched over graded $\bk$-modules as follows. The objects of $\mathscr{B}^{\Z}$ are the representatives considered above, and the morphisms are defined by
\[
\Hom^\bullet_{\mathscr{B}^\Z}(X,Y) := \bigoplus_{n \in \Z} \Hom_{\mathscr{B}}(X,Y(n)).
\]

It is not difficult to check that the assignments $\mathscr{A} \mapsto \mathscr{A}^\circ$ and $\mathscr{B} \mapsto \mathscr{B}^{\Z}$ are inverse to each other, in the sense that there exist canonical equivalences
\[
(\mathscr{A}^\circ)^\Z \cong \mathscr{A} \quad \text{and} \quad (\mathscr{B}^{\Z})^\circ \cong \mathscr{B}
\]
of categories enriched over graded $\bk$-modules and of $\bk$-linear categories, respectively. For this reason, in the body of the paper we will sometimes not be very careful about the distinction between these points of view, and write e.g.~$\Hom^\bullet_{\mathscr{B}}(M,N)$ for $\bigoplus_{n \in \Z} \Hom_{\mathscr{B}}(M,N(n))$.

\subsection{Tensor products with \texorpdfstring{$R$}{R}-modules}
\label{ss:tensor-product}

Let $\bk$ and $\mathscr{B}$ be as in~\S\ref{ss:graded-cat}.
We let also $R$ be a commutative $\Z$-graded $\bk$-algebra, and assume that $R$ ``acts'' on the objects of the category $\mathscr{B}^\Z$ in the sense that for any $M$ in $\mathscr{B}$ and $r \in R^n$ we have morphisms
\[
\varrho^M_r : M \to M(n), \quad \lambda^M_r : M \to M(n)
\]
such that
\[
\varrho^{M(n)}_{r'} \circ \varrho^M_r = \varrho^M_{rr'} \quad \text{and} \quad \lambda^{M(n)}_{r'} \circ \lambda^M_r = \lambda^M_{r'r}
\]
for $r \in R^n$ and $r' \in R^m$, and
which satisfy
\[
\varrho^{M(1)}_r=\varrho^M_r(1), \quad \lambda^{M(1)}_r=\lambda^M_r(1)
\]
for any $M$ in $\mathscr{B}$ and $r \in R^n$,
\[
 \varrho^{N}_r \circ f = \bigl( f(n) \bigr) \circ \varrho^M_r, \quad \lambda^{N}_r \circ f = \bigl( f(n) \bigr) \circ \lambda^M_r
\]
for any $M,N$ in $\mathscr{B}$, $r \in R^n$ and $f \in \Hom_{\mathscr{B}}(M,N)$, and finally that if $r$ is in the image of $\bk$ in $R^0$ and $M$ is in $\mathscr{B}$, then
$\varrho^M_r = \lambda^M_r$ is the action given by the $\bk$-linear structure on $\mathscr{B}$.

If $X$ is in $\mathscr{B}$ and if $M$ is a $\Z$-graded left $R$-module which is free of finite rank, then we define $X \Tenint_R M$ as the object representing the functor
\[
Y \mapsto \bigl( \Hom^\bullet_{\mathscr{B}}(Y,X) \otimes_R M \bigr)^0,
\]
where the superscript ``$0$'' means the degree-$0$ part, and where the right action of $R$ on $\Hom^\bullet_{\mathscr{B}}(Y,X)$ is defined by 
$f \cdot r = \bigl(f(n)\bigr) \circ \lambda^Y_r = \lambda^{X(m)}_r \circ f$
for $f \in \Hom^m_{\mathscr{B}}(Y,X)$ and $r \in R^n$.
Then we have a natural isomorphism
\begin{equation}
\label{eqn:Hom-tensor}
\Hom^\bullet_{\mathscr{B}}(Y, X \Tenint_R M) \cong \Hom^\bullet_{\mathscr{B}}(Y,X) \otimes_R M.
\end{equation}

In practice, any choice of a graded basis $(e_i)_{i \in I}$ of $M$ as a left $R$-module defines an identification $X \Tenint_R M \cong \bigoplus_{i \in I} X(-\deg(e_i))$. Moreover, if 
$(f_j)_{j \in J}$ is another graded basis of $M$ then there exist unique homogeneous coefficients $a_{i,j} \in R$ such that $e_i = \sum_j a_{i,j} \cdot f_j$ for any $i,j$, and the matrix $\left( \lambda_{a_{ij}}^{X(-\deg f_j)} \right)_{\substack{j \in J \\ i \in I}}$ gives an isomorphism
\[
\bigoplus_{i \in I} X(-\deg(e_i))\xrightarrow{\sim}\bigoplus_{j \in J} X(-\deg(f_j)).
\]
The morphisms $\lambda_r^{X \Tenint_R M}$ and $\varrho_r^{X \Tenint_R M}$ are induced in the natural way by $\lambda_r^X$ and $\varrho_r^X$ respectively.

Now, let $X,Y$ be in $\mathscr{B}$. We consider the left $R$-action on $\Hom^\bullet_{\mathscr{B}}(X,Y)$ given by $r \cdot f = \lambda^{Y(m)}_r \circ f$ for $f \in \Hom^m_{\mathscr{B}}(X,Y)$ and $r \in R^n$. (In other words we consider the same action as before, but now considered as a \emph{left} action.) 
We assume that this action makes $\Hom^\bullet_{\mathscr{B}}(X,Y)$ a graded free left $R$-module. We claim that in this situation there exists a canonical morphism
\begin{equation}
\label{eqn:canonical-morph}
X \Tenint_R \Hom^\bullet_{\mathscr{B}}(X,Y) \to Y.
\end{equation}
In fact, if $(\varphi_i)_{i \in I}$ is a graded basis of the left $R$-module $\Hom^\bullet_{\mathscr{B}}(X,Y)$, then this choice identifies the left-hand side with $\bigoplus_{i \in I} X 
(-\deg(\varphi_i))$, and~\eqref{eqn:canonical-morph} can be defined as $\bigoplus_{i \in I} \varphi_i(-\deg(\varphi_i))$. It can be easily checked that this morphism does not depend on the choice of basis, and hence is indeed canonical. For any $Z$ in $\mathscr{B}$, the induced morphism
\[
 \Hom^{\bullet}_{\mathscr{B}} \bigl( Z,X \Tenint_R \Hom^\bullet_{\mathscr{B}}(X,Y) \bigr) \to \Hom^{\bullet}_{\mathscr{B}}(Z,Y)
\]
identifies, via~\eqref{eqn:Hom-tensor}, with the morphism induced by composition in $\mathscr{B}$.

\subsection{Derived category and free modules}
\label{ss:rings}

For some results in this paper we will impose
the following assumptions on our (commutative) ``base ring'' $\bk$:
\begin{enumerate}
\item
\label{it:ass-1}
$\bk$ is an integral domain;
\item
\label{it:ass-2}
$\bk$ is Noetherian and of finite global dimension;
\item
\label{it:ass-3}
every projective finitely generated $\bk$-module is free.
\end{enumerate}
Here Assumption~\eqref{it:ass-1} is needed in order to apply the results of~\cite{ew}.\footnote{This assumption is not explicit in~\cite{ew} but, as noted in particular in~\cite[Footnote on p.~10]{amrw1}, it is in fact needed.} Assumption~\eqref{it:ass-2} is standard, and ensures that the bounded derived category of finitely generated $\bk$-module has favorable behavior (and similarly for graded modules, and for rings of polynomials with coefficients in $\bk$). Finally, assumption~\eqref{it:ass-3} allows us to describe an appropriate derived category in terms of free modules; see Lemma~\ref{lem:DbMod-KbFree} below. Of courses, these assumptions are satisfied if $\bk$ is a field, or the ring of integers in a finite extension of $\mathbb{Q}_p$, or a finite localization of $\Z$. (These are the typical examples the reader can keep in mind.) Assumption~\eqref{it:ass-3} is also known to hold when $\bk$ is local, see~\cite[Theorem~2.5]{matsumura}. (Note that here we only need the ``trivial'' special case of Kaplansky's theorem when the module is of finite type.)

So, in this subsection we assume that $\ku$ satisfies properties~\eqref{it:ass-2}--\eqref{it:ass-3} above.
We let $V$ be a left graded $\bk$-module which is free of finite rank, and concentrated in positive degrees. Then we denote by $R$ the symmetric algebra of $V$, which we consider as a graded $\bk$-algebra. We will denote by $\Mod^{\fgen,\Z}(R)$ the abelian category of finitely generated graded left $R$-modules, and by $\Free^{\fgen,\Z}(R)$ the full subcategory whose objects are the graded free finitely generated graded left $R$-modules.

\begin{lema}
\label{lem:DbMod-KbFree}
The natural functor
\[
\Kb \Free^{\fgen,\Z}(R) \to \Db \Mod^{\fgen,\Z}(R)
\]
is an equivalence of triangulated categories.
\end{lema}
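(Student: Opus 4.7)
The plan is to reduce the statement to two facts about the graded ring $R$: (i) every finitely generated graded projective $R$-module is free, and (ii) $R$ has finite global dimension in the graded sense, so that every finitely generated graded $R$-module admits a finite resolution by finitely generated graded projectives. Granting these, the equivalence follows by standard homological algebra.

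The main work lies in (i), which is where assumption~\eqref{it:ass-3} enters essentially. Let $P$ be a finitely generated graded projective $R$-module and consider its reduction $P/R_{>0}P \cong P \otimes_R \bk$. Since $P$ is a direct summand of some finitely generated graded free $R$-module, this reduction is a direct summand of a finitely generated graded free $\bk$-module, and in particular a finitely generated graded projective $\bk$-module. Applying assumption~\eqref{it:ass-3} degreewise (using Noetherianity of $\bk$) shows that $P/R_{>0}P$ is a finitely generated graded free $\bk$-module. I would pick a homogeneous $\bk$-basis $(\bar e_i)_{i \in I}$, lift it to homogeneous elements $e_i \in P$ of the same degrees, and form the morphism
\[
\varphi \colon F := \bigoplus_{i \in I} R(-\deg \bar e_i) \to P.
\]
A graded Nakayama argument is available in our setting, because $R$ is concentrated in non-negative degrees with $R_0 = \bk$ and $V$ in strictly positive degrees: any finitely generated graded $R$-module $N$ that is bounded below and satisfies $R_{>0}N = N$ must vanish (inspect the lowest nonzero degree). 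Applying this to $\coker \varphi$, which is finitely generated and satisfies $\coker \varphi \otimes_R \bk = 0$ by construction, yields surjectivity of $\varphi$. Since $P$ is projective, $\varphi$ splits, so $\ker \varphi$ is a finitely generated graded direct summand of $F$ with $\ker \varphi \otimes_R \bk = 0$, and Nakayama again forces $\ker \varphi = 0$. Hence $\varphi$ is an isomorphism.

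For (ii), I would invoke the graded analogue of Hilbert's syzygy theorem relative to $\bk$: since $\bk$ has finite global dimension by assumption~\eqref{it:ass-2} and $V$ is free of finite rank, the graded polynomial ring $R = S(V)$ has finite global dimension as well. Together with the Hilbert basis theorem, this ensures that the successive kernels in a projective resolution of a finitely generated graded $R$-module stay finitely generated, and that the resolution terminates at a fixed finite stage $N$.

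Combining (i) and (ii), every object of $\Mod^{\fgen,\Z}(R)$ admits a bounded resolution by finitely generated graded free modules, and a standard termwise construction plus truncation at the $N$-th syzygy extends this to every object of $\Db \Mod^{\fgen,\Z}(R)$; this gives essential surjectivity of the functor in the statement. Full faithfulness is automatic: finitely generated graded free modules are in particular projective in $\Mod^{\fgen,\Z}(R)$, so for any two bounded complexes $F^\bullet, G^\bullet$ of such modules one has $\Hom_{\Kb}(F^\bullet, G^\bullet) \cong \Hom_{\Db}(F^\bullet, G^\bullet)$ by the usual projective-resolution calculation. I expect the Nakayama argument in (i) to be the main obstacle, since $\bk$ is not assumed to be a field and one must carefully combine graded Nakayama over $R$ with the ``projective-implies-free'' hypothesis~\eqref{it:ass-3} for $\bk$ itself.
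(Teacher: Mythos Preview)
Your proposal is correct and follows essentially the same approach as the paper: both reduce to showing (a) $R$ has finite global dimension (inherited from $\bk$) so bounded complexes resolve by bounded projective complexes, and (b) finitely generated graded projective $R$-modules are graded free via reduction modulo $R_{>0}$, assumption~\eqref{it:ass-3} on $\bk$, and graded Nakayama. Your version spells out the Nakayama argument and the full-faithfulness step in more detail than the paper does, but the structure is the same.
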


\begin{proof}
Since $\bk$ has finite global dimension, the same property holds for $R$. Hence any bounded complex of graded $R$-modules is quasi-isomorphic to a bounded complex of projective graded $R$-modules, and to conclude it suffices to prove that any finitely generated projective graded $R$-module is in fact graded free. However, if $M$ is a finitely generated projective graded $R$-module, then $\bk \otimes_R M$ is a finitely generated projective graded $\bk$-module (where $\bk$ is concentrated in degree $0$, and $R$ acts on $\bk$ via the quotient $R/V \cdot R =\bk$), and hence is graded free by Assumption~\eqref{it:ass-3}. Then we deduce that $M$ is graded free by the graded Nakayama lemma.
\end{proof}

\subsection{Terminology}

In the body of the paper we will use the following terminology. If $(X,\preceq)$ is a poset, we will say that a subset $Y \subset X$ is \emph{closed} if for any $x,x' \in X$ with $x' \in Y$ and $x \preceq x'$ we have $x \in Y$. A subset $Z \subset X$ will be called \emph{open} if $X \smallsetminus Z$ is closed. Finally we will say that $Y \subset X$ is \emph{locally closed} if it is the intersection of an open and a closed subset, or equivalently if $Y$ is open in $\overline{Y}=\{x \in X \mid \exists y \in Y \text{ such that } x \preceq y\}$, or equivalently if $Y$ is closed in $X \smallsetminus (\overline{Y} \smallsetminus Y)$. A basic observation that we will use repeatedly is that if $x \in X$, then $x$ is minimal for $\preceq$ if and only if $\{x\}$ is a closed subset of $X$, and $x$ is maximal for $\preceq$ if and only if $\{x\}$ is an open subset of $X$.

In this context, if $x \in X$ we will write $\{\preceq x\}$, resp.~$\{\prec x\}$ for $\{z \in X \mid z \preceq x\}$, resp.~$\{z \in X \mid z \prec x\}$; these subsets are closed in $X$.

\section{The Elias--Williamson diagrammatic category}
\label{sec:EW categories}

From now on we let $\bk$ be an integral domain.

\subsection{Notation and terminology regarding Coxeter systems}

For the rest of this paper we fix a Coxeter system $(W, S)$ with $S$ finite. Then $W$ is equipped with the Bruhat order $\leq$ and the length function $\ell$.

A word $\uw$ in $S$ will be called an {\it expression}. The length $\ell(\uw)$ of an expression $\uw$ is the number of letters in this word. We will by denote $\pi(\uw)$ the corresponding element in $W$ (obtained as the product in $W$ of the letters of $\uw$); then we will say that $\uw$ is \emph{an expression for} (or that $\uw$ \emph{expresses}) $\pi(\uw)\in W$. Recall also that an expression $\uw$ is said to be \emph{reduced} if $\ell(\uw)=\ell(\pi(\uw))$. 

If $\uw=(s_{1},\ldots,s_n)$ is an expression, a \emph{subexpression} of $\uw$ is defined to be a sequence $\mathbf{e}=(e_1,\ldots, e_n)$ of $0$'s and $1$'s. Such a datum determines an expression $\uv=(s_{i_1},\ldots, s_{i_m})$ where $1\leq i_1<\cdots <i_m\leq n$ are the indices such that $\{i_1, \ldots, i_m\} = \{i \in \{1, \ldots, n \} \mid e_i=1\}$.
In a minor abuse of language, we will also say that $\mathbf{e}$ expresses $\pi(\uv)$.
If $x \in W$, we will denote by $M(\uw,x)$ the set of subexpressions of $\uw$ expressing $x$.

With this terminology, the Bruhat order on $W$ can be described as follows: if $\uw$ is a reduced expression for $w \in W$, then $v\le w$ if and only if 
$v$ is expressed by some subexpression of $\uw$.

\subsection{The Elias--Williamson category}
\label{ss:EW-category}

Let $\h=(V, \{\alpha_s^\vee:s\in S\}, \{\alpha_s:s\in S\})$ be a balanced realization of $(W,S)$ over $\ku$ which satisfies Demazure surjectivity in the sense of~\cite{ew} (see also~\cite[\S 2.1]{amrw1}). In particular, $V$ is a free $\bk$-module of finite rank, $\{\alpha_s^\vee:s\in S\}$ is a subset of $V$, and $\{\alpha_s:s\in S\}$ is a subset of $V^*:=\Hom_\bk(V,\bk)$.

We let $R$ be the
symmetric algebra of $V^*$, considered as a graded ring with $V^*$ in degree 2. Following Elias--Williamson~\cite{ew} (see also~\cite[\S 2.2]{amrw1}), to $(W,S)$ and $\h$ we associate a $\bk$-linear monoidal category as follows.
First, one defines a $\bk$-linear monoidal category $\widetilde{\mathscr{D}}_{\mathrm{BS}}(\h,W)$ enriched over graded $\bk$-modules with:
\begin{itemize}
\item objects the symbols $B_{\uw}$ for $\uw$ an expression, with the monoidal product defined by $B_{\uv} \star B_{\uw} = B_{\uv\uw}$;
\item morphisms generated (under composition, monoidal product and $\bk$-linear combinations) by the following ``elementary'' morphisms:
\begin{enumerate}
\item
for any homogeneous $f \in R$, a morphism
\[
    \begin{tikzpicture}[thick,scale=0.07,baseline]
      \node at (0,0) {$f$};
      \draw[dotted] (-5,-5) rectangle (5,5);
    \end{tikzpicture}
\]
from $B_\varnothing$ to itself, of degree $\deg(f)$;
\item
for any $s \in S$, ``dot'' morphisms
\[
       \begin{tikzpicture}[thick,scale=0.07,baseline]
      \draw (0,-5) to (0,0);
      \node at (0,0) {$\bullet$};
      \node at (0,-6.7) {\tiny $s$};
    \end{tikzpicture}
    \qquad \text{and} \qquad
      \begin{tikzpicture}[thick,baseline,xscale=0.07,yscale=-0.07]
      \draw (0,-5) to (0,0);
      \node at (0,0) {$\bullet$};
      \node at (0,-6.7) {\tiny $s$};
    \end{tikzpicture}
\]
from $B_s$ to $B_\varnothing$ and from $B_\varnothing$ to $B_s$, respectively, of degree $1$;
\item
for any $s \in S$, trivalent morphisms
\[
        \begin{tikzpicture}[thick,baseline,scale=0.07]
      \draw (-4,5) to (0,0) to (4,5);
      \draw (0,-5) to (0,0);
      \node at (0,-6.7) {\tiny $s$};
      \node at (-4,6.7) {\tiny $s$};
      \node at (4,6.7) {\tiny $s$};      
    \end{tikzpicture}
    \qquad \text{and} \qquad
        \begin{tikzpicture}[thick,baseline,scale=-0.07]
      \draw (-4,5) to (0,0) to (4,5);
      \draw (0,-5) to (0,0);
      \node at (0,-6.7) {\tiny $s$};
      \node at (-4,6.7) {\tiny $s$};
      \node at (4,6.7) {\tiny $s$};    
    \end{tikzpicture}
\]
from $B_s$ to $B_{(s,s)}$ and from $B_{(s,s)}$ to $B_s$, respectively, of degree $-1$;
\item
for any pair $(s,t)$ of distinct simple reflections such that $st$ has finite order $m_{st}$ in $W$, a morphism
\[
    \begin{tikzpicture}[yscale=0.5,xscale=0.3,baseline,thick]
\draw (-2.5,-1) to (0,0) to (-1.5,1);
\draw (-0.5,-1) to (0,0);
\draw[red] (-1.5,-1) to (0,0) to (-2.5,1);
\draw[red] (0,0) to (-0.5,1);
\node at (-2.5,-1.3) {\tiny $s$\vphantom{$t$}};
\node at (-1.5,1.3) {\tiny $s$\vphantom{$t$}};
\node at (-0.5,-1.3) {\tiny $s$\vphantom{$t$}};
\node at (-1.5,-1.3) {\tiny $t$};
\node at (-2.5,1.3) {\tiny $t$};
\node at (-0.5,1.3) {\tiny $t$};
\node at (0.6,-0.7) {\small $\cdots$};
\node at (0.6,0.7) {\small $\cdots$};
\draw (2.5,-1) -- (0,0);
\draw[red] (2.5,1) -- (0,0);
\node at (2.5,-1.3) {\tiny $s$\vphantom{$t$}};
\node at (2.5,1.3) {\tiny $t$};
\end{tikzpicture} \text{ if $m_{st}$ is odd or}
    \begin{tikzpicture}[yscale=0.5,xscale=0.3,baseline,thick]
\draw (-2.5,-1) to (0,0) to (-1.5,1);
\draw (-0.5,-1) to (0,0);
\draw[red] (-1.5,-1) to (0,0) to (-2.5,1);
\draw[red] (0,0) to (-0.5,1);
\node at (-2.5,-1.3) {\tiny $s$\vphantom{$t$}};
\node at (-1.5,1.3) {\tiny $s$\vphantom{$t$}};
\node at (-0.5,-1.3) {\tiny $s$\vphantom{$t$}};
\node at (-1.5,-1.3) {\tiny $t$};
\node at (-2.5,1.3) {\tiny $t$};
\node at (-0.5,1.3) {\tiny $t$};
\node at (0.6,-0.7) {\small $\cdots$};
\node at (0.6,0.7) {\small $\cdots$};
\draw[red] (2.5,-1) -- (0,0);
\draw (2.5,1) -- (0,0);
\node at (2.5,-1.3) {\tiny $t$};
\node at (2.5,1.3) {\tiny $s$\vphantom{$t$}};
\end{tikzpicture} \text{ if $m_{st}$ is even}
\]
from $B_{(s,t,\cdots)}$ to $B_{(t,s,\cdots)}$ (where each expression has length $m_{st}$, and colors alternate), of degree $0$,
\end{enumerate}
subject to a number of relations for which we refer to~\cite{ew} or~\cite[\S 2.2]{amrw1}.
\end{itemize}
Then we set $\DiagBS(\h,W):=\bigl( \widetilde{\mathscr{D}}_{\mathrm{BS}}(\h,W) \bigr)^\circ$ (where we use the notation from~\S\ref{ss:graded-cat}). We will also denote by $\DiagBS^\oplus(\h,W)$ the additive hull of $\DiagBS(\h,W)$.

Typically, a morphism in $\DiagBS(\h,W)$ or in $\widetilde{\mathscr{D}}_{\mathrm{BS}}(\h,W)$ will be written as a linear combination of (equivalence classes of) diagrams where horizontal concatenation corresponds to the monoidal product, and vertical concatenation corresponds to composition. Such diagrams are to be read from bottom to top. We will sometimes omit the labels ``$s$'' or ``$t$'' in the diagrams for morphisms when they do not play any role.

Note that for $X,Y$ in $\DiagBS^\oplus(\h,W)$ the graded $\bk$-module
\begin{equation}
\label{eqn:def-Hombullet}
\Hom^\bullet_{\DiagBS^\oplus(\h,W)}(X,Y) := \bigoplus_{n \in \Z} \Hom_{\DiagBS^\oplus(\h,W)}(X,Y(n))
\end{equation}
has a natural structure of graded $R$-bimodule, where the left, resp.~right, action of $f \in R^n$ is induced by adding a box labelled by $f$ to the left, resp.~right, of a diagram.

We set:
\[
\begin{array}{c}
\begin{tikzpicture}[scale=0.3,thick]
\draw (-1,-1) to[out=90,in=180] (0,1) to[out=0,in=90] (1,-1);
\end{tikzpicture}
\end{array}
:=
\begin{array}{c}
\begin{tikzpicture}[scale=0.3,thick]
\draw (-1,-1) to (0,0) to (1,-1);
\draw (0,0) to (0,1);
\node at (0,1) {$\bullet$};
\end{tikzpicture}
\end{array},
\qquad
\begin{array}{c}
\begin{tikzpicture}[scale=-0.3,thick]
\draw (-1,-1) to[out=90,in=180] (0,1) to[out=0,in=90] (1,-1);
\end{tikzpicture}
\end{array}
:=
\begin{array}{c}
\begin{tikzpicture}[scale=-0.3,thick]
\draw (-1,-1) to (0,0) to (1,-1);
\draw (0,0) to (0,1);
\node at (0,1) {$\bullet$};
\end{tikzpicture}
\end{array}.
\]
These morphisms induce morphisms of functors
\[
\bigl( B_s \star (-) \bigr) \circ \bigl( B_s \star (-) \bigr) \to \id \quad \text{and} \quad \id \to \bigl( B_s \star (-) \bigr) \circ \bigl( B_s \star (-) \bigr)
\]
which make $\bigl( B_s \star (-), B_s \star (-) \bigr)$ an adjoint pair. Similarly, $\bigl( (-) \star B_s, (-) \star B_s \bigr)$ is an adjoint pair in a natural way. Let us recall also the isomorphism
 \begin{equation}
 \label{eqn:BsBs}
 B_s \star B_s \cong B_s(1) \oplus B_s(-1),
 \end{equation}
 see~\cite[(5.14)]{ew}.

This category has another symmetry which turns out to be very useful.
We denote by
\[
\D:\DiagBS^\oplus(\h,W)\to\DiagBS^\oplus(\h,W)^{\mathrm{op}}
\]
the antiinvolution which fixes each $B_\uw$ and flips diagrams upside-down; see~\cite[Definition 6.22]{ew}. 
Notice that 
\begin{align*}
\D\circ(n)\simeq(-n)\circ\D. 
\end{align*}

\subsection{The double leaves basis}

One of the main results of~\cite{ew} states that for any two expressions $\uv,\uw$, the graded $R$-bimodule $\Hom^\bullet_{\DiagBS(\h,W)}(B_{\uv}, B_{\uw})$ is graded free of finite rank as a left $R$-module and as a right $R$-module (see~\cite[Corollary~6.14]{ew}). In fact, following an idea of Libedinsky, Elias and Williamson provide a way to produce a set $\LL_{\uv,\uw}$ of homogeneous morphisms, called ``double leaves morphisms,'' which constitutes of graded basis of $\Hom^\bullet_{\DiagBS(\h,W)}(B_{\uv}, B_{\uw})$, both as a left $R$-module and as a right $R$-module. This construction is algorithmic in nature, and depends on many choices. We will not repeat the construction here, but we will recall certain properties that we will need below.

The set $\LL_{\uv,\uw}$ is in natural bijection with the set 
\begin{align*}
\bigcup_{x\in W}M(\uw,x)\times M(\uv,x).
\end{align*}
In fact, if $\mathbf{e}$ and $\mathbf{f}$ are subexpressions of $\uv$ and $\uw$ respectively expressing the same element $x \in W$, then the procedure of~\cite[\S 6.1]{ew} produces homogeneous elements $\mathrm{LL}_{\uv,\mathbf{e}} \in \Hom^\bullet_{\DiagBS(\h,W)}(B_{\uv}, B_{\ux})$ and $\mathrm{LL}_{\uw,\mathbf{f}} \in \Hom^\bullet_{\DiagBS(\h,W)}(B_{\uw}, B_{\ux})$ for a certain reduced expression $\ux$ for $x$ (which can be chosen arbitrarily), and then one defines
\[
\LL^{\uv,\uw}_{x,\mathbf{f},\mathbf{e}} := \D(\mathrm{LL}_{\uw,\mathbf{f}}) \circ \mathrm{LL}_{\uv,\mathbf{e}}
\]
and sets
\[
\LL_{\uv,\uw} = \left\{ \LL^{\uv,\uw}_{x,\mathbf{f},\mathbf{e}} : (\mathbf{f},\mathbf{e}) \in \bigcup_{x\in W}M(\uw,x)\times M(\uv,x) \right\}.
\]

Note in particular that if $\uv$ and $\uw$ are reduced expressions, then the element $x$ above must satisfy $x \leq \pi(\uv)$ and $x \leq \pi(\uw)$.

\begin{example}\label{example:hom s empty s}
Let $s\in S$. The (left or right) $R$-modules $\Hom_{\DiagBS(\h,W)}^\bullet(B_{s},B_{\varnothing})$ and $\Hom_{\DiagBS(\h,W)}^\bullet(B_{\varnothing},B_{s})$ are of rank $1$, with generators
\[
        \begin{tikzpicture}[thick,scale=0.07,baseline]
      \draw (0,-5) to (0,0);
      \node at (0,0) {$\bullet$};
      \node at (0,-6.7) {\tiny $s$};
    \end{tikzpicture}
    \qquad \text{and} \qquad
      \begin{tikzpicture}[thick,baseline,xscale=0.07,yscale=-0.07]
      \draw (0,-5) to (0,0);
      \node at (0,0) {$\bullet$};
      \node at (0,-6.7) {\tiny $s$};
    \end{tikzpicture}
\]

respectively.
\end{example}

\subsection{The biequivariant and the right equivariant categories}
\label{ss:BE-RE}

In~\cite{amrw1}, Makisumi, Williamson and the first two authors of the present paper study various triangulated categories constructed out of $\DiagBS(\h,W)$. The two cases that we will mainly consider in this paper are:
\begin{itemize}
\item
the biequivariant\footnote{The motivation for our terminology comes from geometry; see~\cite{amrw1} for details.} category $\BE(\h,W)$, which can be defined as
\[
\BE(\h,W) := \Kb \DiagBS^\oplus(\h,W);
\]
\item
the right-equivariant category $\RE(\h,W)$, which can be defined as
\[
\RE(\h,W) := \Kb \oDiagBS^\oplus(\h,W).
\]
\end{itemize}
Here, $\oDiagBS^\oplus(\h,W)$ is the additive hull of the category $\oDiagBS(\h,W)$ obtained by the procedure $(-)^\circ$ of~\S\ref{ss:graded-cat} out of the category obtained from $\widetilde{\mathscr{D}}_{\mathrm{BS}}(\h,W)$ by applying $\bk \otimes_R (-)$ to morphism spaces (where again $\bk$ is in degree $0$, and $R$ acts via the quotient $R/V \cdot R=\bk$). For $\uw$ an expression, we will denote by $\oB_{\uw}$ the image of $B_\uw$ in $\oDiagBS^\oplus(\h,W)$.

The category $\BE(\h,W)$ has a natural monoidal structure, which ``extends'' the product $\star$ on $\Diag_\BS^\oplus(\h,W)$, and whose product will be denoted $\ustar$; see~\cite[\S 4.2]{amrw1} for details. (This construction involves some rather delicate sign conventions, which will not be recalled in detail here.) As in~\S\ref{ss:EW-category}, the pairs of functors $\bigl( B_s \ustar (-), B_s \ustar (-) \bigr)$ and $\bigl( (-) \ustar B_s, (-) \ustar B_s \bigr)$ form adjoint pairs in a natural way. The unit for this product is $B_\varnothing$. The category $\RE(\h,W)$ is in a natural way a right module category over $\BE(\h,W)$; this operation is also denoted $\ustar$. There  also exists a natural ``forgetful'' functor
\[
\For^\BE_\RE : \BE(\h,W) \to \RE(\h,W)
\]
induced by tensoring morphism spaces with $\bk$ (over $R$); this functor satisfies
\begin{equation}
\label{eqn:For-convolution}
\For^\BE_\RE (\mathscr{F} \ustar \mathscr{G}) = \For^\BE_\RE(\mathscr{F}) \ustar \mathscr{G}
\end{equation}
for $\mathscr{F},\mathscr{G}$ in $\BE(\h,W)$. 

The ``cohomological shift'' functors on the triangulated categories $\BE(\h,W)$ and $\RE(\h,W)$ will be denoted $[1]$. These categories possess two other ``shift'' autoequivalences denoted $\langle 1 \rangle$ and $(1)$. Here $(1)$ extends the operation on $\DiagBS^\oplus(\h,W)$ denoted similarly in the following way: it sends a complex $(\mathscr{F}^n, d^n)_{n \in \Z}$ to the complex $(\mathscr{F}^n(1), -d^n)_{n \in \Z}$, and we have $(1)=\langle -1 \rangle [1]$. The $m$-th power of $[1]$, resp.~$\langle 1 \rangle$, resp.~$(1)$, is denoted $[m]$, resp.~$\langle m \rangle$, resp.~$(m)$.

\section{Diagrammatic categories associated with locally closed subsets of \texorpdfstring{$W$}{W}}
\label{sec:categories-subsets}

We continue with the setting of Section~\ref{sec:EW categories}. In particular, $\bk$ is only required to be an integral domain.

\subsection{The diagrammatic category attached to a closed subset}
\label{ss:Diag-closed}
Let $I \subset W$ be a closed subset. We define the category
\[
\Diag_{\BS,I}(\h,W)
\]
as the full subcategory of $\DiagBS(\h,W)$ 
whose objects are of the form $B_{\uw}(n)$ for $n \in \Z$ and $\uw$ a reduced expression for an element in $I$. We will also denote by $\Diag_{\BS,I}^\oplus(\h,W)$ the additive hull of $\Diag_{\BS,I}(\h,W)$; this category identifies in a natural way with the full subcategory of $\DiagBS^\oplus(\h,W)$ whose objects are the direct sums of objects of the form $B_\uw(n)$ with $\uw$ a reduced expression for an element in $I$.

\begin{rmk}
We warn the reader that, in the case $I=W$, it is not clear (and most probably false) that the category $\Diag^\oplus_{\BS,W}(\h,W)$ is equivalent to $\DiagBS^\oplus(\h,W)$, since the latter contains objects $B_{\uw}$ where $\uw$ is not a reduced expression.  Nevertheless, we will see later on that their homotopy categories are equivalent; see Remark~\ref{rmk:generate}\eqref{it:generate}.
\end{rmk}

Note that the antiinvolution $\D$ stabilizes the subcategory $\Diag^\oplus_{\BS,I}(\h,W)$; its restriction will be denoted $\D_I$. As in $\DiagBS^\oplus(\h,W)$, for $B,B'$ in $\Diag^\oplus_{\BS,I}(\h,W)$ we set
\[
\Hom^\bullet_{\Diag^\oplus_{\BS,I}(\h,W)}(B,B') = \bigoplus_{n \in \Z} \Hom_{\Diag^\oplus_{\BS,I}(\h,W)}(B,B'(n)).
\]

If $B,B'$ are objects of $\DiagBS^{\oplus}(\h,W)$, we will denote by
\[
\kF_I(B,B') \subset \Hom^\bullet_{\DiagBS^{\oplus}(\h,W)}(B,B')
\]
the submodule of morphisms which factor
through $\rD_{\BS,I}^{\oplus}(\h,W)$.

\begin{lema}\label{le:kF I closed}
If $\uv$ and $\uw$ are expressions, then $\kF_{I}(B_{\uv},B_{\uw})$ is the $R$-span (under either the left or right action) of the double leaves morphisms $\LL^{\uv,\uw}_{x,\f,\be}$ with $x\in I$.
\end{lema}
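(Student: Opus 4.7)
The inclusion ``$\supseteq$'' is immediate. For $x\in I$ and $(\f,\be)\in M(\uw,x)\times M(\uv,x)$, the morphism
\[
\LL^{\uv,\uw}_{x,\f,\be}=\D(\mathrm{LL}_{\uw,\f})\circ\mathrm{LL}_{\uv,\be}
\]
factors through $B_\ux$ (where $\ux$ is the chosen reduced expression for $x$), and $B_\ux\in\Diag_{\BS,I}^\oplus(\h,W)$ since $x\in I$. The subset $\kF_I(B_\uv,B_\uw)$ is moreover stable under both $R$-actions, since adding a polynomial box on either side of a diagram factoring through $B_\ux$ yields another such diagram. Hence $\kF_I$ contains the $R$-span (on either side) of these double-leaves morphisms.

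For the converse inclusion, any $f\in\kF_I(B_\uv,B_\uw)$ factors through an object of $\Diag_{\BS,I}^\oplus(\h,W)$, and by decomposing this object as a direct sum and using additivity, we can write $f$ as a finite sum of morphisms each of the form $\beta\circ\alpha$ with $\alpha\colon B_\uv\to B_\uy$ and $\beta\colon B_\uy\to B_\uw$, where $\uy$ is a reduced expression for some $y\in I$ (grading shifts are absorbed into $\alpha$ and $\beta$). Since $I$ is closed, $\{z\le y\}\subseteq I$, so it suffices to prove, for each $y\in W$ and each reduced expression $\uy$ of $y$, that any such composition $\beta\circ\alpha$ lies in the $R$-span of the $\LL^{\uv,\uw}_{x,\f,\be}$ with $x\le y$. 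We argue by induction on $y$ in the Bruhat order.

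For the inductive step, expand $\alpha$ and $\beta$ in their double-leaves bases:
\[
\alpha=\sum_{z\le y}\LL^{\uv,\uy}_{z,\f_1,\be_1}\cdot c_{z,\f_1,\be_1},\qquad
\beta=\sum_{z'\le y}\LL^{\uy,\uw}_{z',\f_2,\be_2}\cdot d_{z',\f_2,\be_2},
\]
and split $\alpha=\alpha_y+\alpha'$, $\beta=\beta_y+\beta'$ according to whether the middle element is $y$ or strictly less. When $z=y$, a length count forces the corresponding subexpression of $\uy$ to be $(1,\ldots,1)$, and the recursive construction of light leaves (each step being an ascent, since $\uy$ is reduced) yields $\mathrm{LL}_{\uy,(1,\ldots,1)}=\id_{B_\uy}$. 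Consequently $\LL^{\uv,\uy}_{y,(1,\ldots,1),\be_1}=\mathrm{LL}_{\uv,\be_1}$ and $\LL^{\uy,\uw}_{y,\f_2,(1,\ldots,1)}=\D(\mathrm{LL}_{\uw,\f_2})$, and a direct calculation gives
\[
\beta_y\circ\alpha_y=\sum_{\be_1\in M(\uv,y),\;\f_2\in M(\uw,y)}\LL^{\uv,\uw}_{y,\f_2,\be_1}\cdot r_{\f_2,\be_1}
\]
for suitable $r_{\f_2,\be_1}\in R$, which is manifestly in the $R$-span of the $\LL^{\uv,\uw}_{y,\f,\be}$. Meanwhile, every basis element occurring in $\alpha'$ (resp.\ $\beta'$) factors through some $B_\uz$ with $z<y$, so $\alpha'$ and $\beta'$ factor through objects of $\Diag_{\BS,\{<y\}}^\oplus(\h,W)$; the three mixed compositions $\beta'\circ\alpha_y$, $\beta_y\circ\alpha'$, $\beta'\circ\alpha'$ therefore all lie in $\kF_{\{<y\}}(B_\uv,B_\uw)$. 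By the inductive hypothesis applied to each $y'<y$ (combined with the additivity reduction as above), $\kF_{\{<y\}}(B_\uv,B_\uw)$ coincides with the $R$-span of the $\LL^{\uv,\uw}_{x,\f,\be}$ with $x<y$. Summing, $\beta\circ\alpha$ lies in the $R$-span of the $\LL^{\uv,\uw}_{x,\f,\be}$ with $x\le y$. The base case $y=e$ is immediate, as then $B_\uy=B_\varnothing$ and only the terms with $z=z'=e$ occur in the relevant double-leaves expansions.

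The main technical input is the identification $\mathrm{LL}_{\uy,(1,\ldots,1)}=\id_{B_\uy}$ for reduced $\uy$, which cleanly isolates the $x=y$ diagonal piece of the double-leaves expansion of $\beta\circ\alpha$; one must also track the $R$-coefficients carefully under composition (using the commutativity of $R$ and the freedom to slide polynomial boxes within a diagram region).
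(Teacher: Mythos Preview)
Your proof is correct and takes a somewhat different route from the paper's. The paper expands only the morphism $\alpha : B_\uv \to B_\uy$ in its light-leaves basis (reducing by linearity to $f = g \circ \LL^{\uv,\uy}_{x,\f,\be}$ with $x \le \pi(\uy)$, hence $x \in I$) and then invokes \cite[Claim~6.21]{ew} as a black box: that claim provides precisely the upper-triangularity statement needed to rewrite $g \circ \LL^{\uv,\uy}_{x,\f,\be}$ as an $R$-linear combination of double leaves $\LL^{\uv,\uw}_{x',\f',\be'}$ with $x' \le x$. You instead expand \emph{both} $\alpha$ and $\beta$ in double-leaves bases and isolate the diagonal contribution via the elementary identity $\mathrm{LL}_{\uy,(1,\ldots,1)} = \id_{B_\uy}$ for reduced $\uy$, pushing the cross terms into $\kF_{\{<y\}}$ and handling them by Bruhat induction. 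Your approach is more self-contained---it does not cite \cite[Claim~6.21]{ew}, though it effectively reproves a weak form of it---at the cost of a slightly longer inductive argument; the paper's version is more concise but relies on the external reference. One small caveat worth making explicit: your identification $\beta_y \circ \alpha_y = \sum \LL^{\uv,\uw}_{y,\f_2,\be_1} \cdot r_{\f_2,\be_1}$ requires that the light-leaves choices for $\mathrm{LL}_{\uv,\be_1}$ and $\mathrm{LL}_{\uw,\f_2}$ (in particular the choice of $\uy$ as the reduced expression for $y$) be made consistently across the three double-leaves bases in play; this is harmless since those choices are free, but it should be said.
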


\begin{proof}
To fix notation we consider the left action of $R$.

It is clear from the definition that if $x \in I$ then $\LL^{\uv,\uw}_{x,\f,\be}\in\kF_{I}(B_{\uv},B_{\uy})$. In particular, the $R$-span under consideration is contained in $\kF_{I}(B_{\uv},B_{\uw})$.

For the opposite containment, we will prove 
that for any reduced expression $\uy$ for an element of $I$, any morphism which factors through a shift of $B_\uy$ belongs to the $R$-span of the light leaves morphisms $\LL^{\uv,\uw}_{x,\f,\be}$ with $x\in I$. 
Let $\uy$ be as above, and let $f \in \Hom^\bullet_{\DiagBS^{\oplus}(\h,W)}(B_\uv,B_\uw)$ be a morphism which factors through a shift of $B_\uy$. Since the light leaves morphisms form an $R$-basis of $\Hom^\bullet_{\DiagBS^{\oplus}(\h,W)}(B_\uv,B_{\uy})$, we can assume that $f=g\LL_{x,\mathbf{f},\mathbf{e}}^{\uv,\uy}$ for some subexpressions $\mathbf{e},\mathbf{f}$ of $\uv$ and $\uy$ respectively expressing some element $x$. Here $x \leq \pi(\uy)$, and hence $x \in I$. By~\cite[Claim~6.21]{ew}, $f$ is then an $R$-linear combination of light leaves morphisms $\LL_{x,\mathbf{f}',\mathbf{e}}^{\uv,\uw}$ for some subexpressions $\mathbf{f}'$ of $\uw$ expressing $x$, and some light leaves morphisms corresponding to subexpressions expressing certain elements $x' < x$. Here again $x' \in I$, so the result follows.
\end{proof}

\subsection{The diagrammatic category attached to a locally closed subset}
\label{ss:Diag-loc-closed}

Let $I_0 \subset W$ be a closed subset, and let $I_1 \subset I_0$ be closed. Then $I_1$ is also closed in $W$, so that we can consider the categories $\Diag^\oplus_{\BS,I_0}(\h,W)$ and $\Diag^\oplus_{\BS,I_1}(\h,W)$. We set
\[
\Diag_{\BS,I_0,I_1}^\oplus(\h,W) := \Diag_{\BS,I_0}^{\oplus}(\h,W)\qq\Diag_{\BS,I_1}^{\oplus}(\h,W),
\]
where the ``naive'' quotient on the right-hand side is defined as follows: its objects are the same as those of $\Diag_{\BS,I_0}^{\oplus}(\h,W)$, and its morphisms are defined by
\[
\Hom_{\Diag_{\BS,I_0,I_1}^\oplus(\h,W)}(B,B') = \left( \Hom^\bullet_{\Diag_{\BS,I_0}^\oplus(\h,W)}(B,B') / \kF_{I_1}(B,B') \right)^0
\]
for $B,B'$ in $\Diag_{\BS,I_0}^\oplus(\h,W)$, where the superscript ``$0$'' means the degree-$0$ part. Note that the objects $B_\uw$ with $\uw$ a reduced expression for an element in $I_1$ have trivial image in $\Diag_{\BS,I_0,I_1}^\oplus(\h,W)$. In particular, every object of $\Diag_{\BS,I_0,I_1}^\oplus(\h,W)$ is a direct sum of (images of) objects of the form $B_{\uw}(m)$ where $m \in \Z$ and $\uw$ is a reduced expression for an element in $I_0 \smallsetminus I_1$.

Of course the ``shift'' equivalence $(1)$ induces an autoequivalence of the category $\Diag_{\BS,I_0,I_1}^\oplus(\h,W)$, which will be denoted similarly. If 
$B,B'$ are in $\Diag_{\BS,I_0}^\oplus(\h,W)$,
the left and right actions of $R$ on $\Hom_{\Diag_{\BS,I_0}^\oplus(\h,W)}^\bullet(B,B')$ descend to actions on
\[
\Hom_{\Diag_{\BS,I_0,I_1}^\oplus(\h,W)}^\bullet(B,B') := \bigoplus_{n \in \Z} \Hom_{\Diag_{\BS,I_0,I_1}^\oplus(\h,W)}(B,B'(n)).
\]
Moreover, if $B=B_\uv$ and $B'=B_\uw$ where $\uv,\uw$ are reduced expressions for elements of $I_0$, then it follows from Lemma~\ref{le:kF I closed} that $\Hom_{\Diag_{\BS,I_0,I_1}^\oplus(\h,W)}^\bullet(B_\uv,B_\uw)$ is free as a left and as a right graded $R$-module, and that the images of the light leaves morphisms $\LL_{x,\mathbf{f},\mathbf{e}}^{\uv,\uw}$ with $x \in I_0 \smallsetminus I_1$ form a graded basis of this space (both as a left and as a right $R$-module).  More generally, this implies that for arbitrary $B,B'$ in $\Diag_{\BS,I_0,I_1}^\oplus(\h,W)$, the space $\Hom_{\Diag_{\BS,I_0,I_1}^\oplus(\h,W)}^\bullet(B,B')$ is graded free both as a left and as a right $R$-module.

\begin{lema}
\label{lem:DiagI-indep}
Up to canonical equivalence, the category $\Diag_{\BS,I_0,I_1}^\oplus(\h,W)$ only depends on the locally closed subset $I_0 \smallsetminus I_1$.
\end{lema}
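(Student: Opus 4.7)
The plan is to show that there is a canonical ``minimal'' model for this quotient category depending only on $J := I_0 \smallsetminus I_1$, and that every other choice of pair $(I_0, I_1)$ realizing $J$ is canonically equivalent to it. Let $\bar J = \{x \in W \mid \exists y \in J \text{ such that } x \preceq y\}$ be the closure of $J$, so that $\bar J$ is closed, $\bar J \smallsetminus J$ is closed (since $J$ is open in $\bar J$), and $(\bar J, \bar J \smallsetminus J)$ is a valid pair with difference $J$. For any other valid pair $(I_0, I_1)$ with $I_0 \smallsetminus I_1 = J$, one has $\bar J \subseteq I_0$ (since $J \subseteq I_0$ and $I_0$ is closed) and $\bar J \smallsetminus J = \bar J \cap I_1 \subseteq I_1$.

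Next, I would construct a natural functor
\[
\Phi_{I_0,I_1} : \Diag_{\BS,\bar J, \bar J \smallsetminus J}^\oplus(\h,W) \to \Diag_{\BS,I_0,I_1}^\oplus(\h,W).
\]
The full subcategory inclusion $\Diag_{\BS,\bar J}^\oplus(\h,W) \hookrightarrow \Diag_{\BS,I_0}^\oplus(\h,W)$ carries $\kF_{\bar J \smallsetminus J}$ into $\kF_{I_1}$ because $\bar J \smallsetminus J \subseteq I_1$ (any morphism factoring through an object indexed by an element of $\bar J \smallsetminus J$ a fortiori factors through an object indexed by an element of $I_1$), so it descends to the quotients. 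Since every object of $\Diag^\oplus_{\BS,I_0,I_1}(\h,W)$ is isomorphic to a direct sum of (images of) objects $B_\uw(m)$ with $\uw$ a reduced expression for $w \in J \subseteq \bar J$, and any such object comes from $\Diag^\oplus_{\BS,\bar J, \bar J \smallsetminus J}(\h,W)$, the functor $\Phi_{I_0,I_1}$ is essentially surjective.

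For full faithfulness, I would invoke the description of morphism spaces via double leaves. Let $\uv, \uw$ be reduced expressions for elements of $J$. By Lemma~\ref{le:kF I closed} and the discussion in \S\ref{ss:Diag-loc-closed}, the graded Hom space
\[
\Hom_{\Diag_{\BS,I_0,I_1}^\oplus(\h,W)}^\bullet(B_\uv, B_\uw)
\]
admits as a graded $R$-basis the images of the double leaves $\LL_{x,\f,\e}^{\uv,\uw}$ with $x \in I_0 \smallsetminus I_1$, and similarly the source Hom space has a graded $R$-basis indexed by $x \in \bar J \smallsetminus (\bar J \smallsetminus J) = J$. In any such basis element one has $x \leq \pi(\uv) \in J$ (and $x \leq \pi(\uw)$), forcing $x \in \bar J$; hence the admissible indices $x$ in the source are exactly $x \in J$, while in the target they are $x \in \bar J \cap (I_0 \smallsetminus I_1) = \bar J \cap J = J$ (using $\bar J \smallsetminus J \subseteq I_1$ to restrict to $\bar J$). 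Since $\Phi_{I_0,I_1}$ visibly sends each double leaves morphism to the corresponding double leaves morphism, it induces a bijection on these bases, and hence is fully faithful.

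The main (and essentially only) subtle point is the last bookkeeping: verifying that the set of indices $x$ appearing in the basis of the Hom spaces is the \emph{same} subset $J$ of $W$ regardless of the pair $(I_0,I_1)$, which hinges on the fact that the constraint $x \leq \pi(\uv)$ automatically places $x$ in the closure $\bar J$. Everything else is formal once the double leaves basis of \cite{ew} and Lemma~\ref{le:kF I closed} are in hand. Composing $\Phi_{I_0',I_1'}$ with a quasi-inverse of $\Phi_{I_0,I_1}$ then produces the claimed canonical equivalence between $\Diag_{\BS,I_0,I_1}^\oplus(\h,W)$ and $\Diag_{\BS,I_0',I_1'}^\oplus(\h,W)$ whenever $I_0 \smallsetminus I_1 = I_0' \smallsetminus I_1'$.
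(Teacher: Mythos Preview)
Your proof is correct and follows essentially the same approach as the paper: both compare an arbitrary pair $(I_0,I_1)$ to the minimal pair $(\overline{J}, \overline{J}\smallsetminus J)$ via the natural functor induced by the inclusion $\Diag_{\BS,\overline{J}}^\oplus(\h,W) \subset \Diag_{\BS,I_0}^\oplus(\h,W)$, and both use the double leaves basis description together with Lemma~\ref{le:kF I closed} to verify full faithfulness. Your write-up is slightly more explicit about the bookkeeping step (that any $x$ indexing a double leaf between objects attached to $J$ automatically lies in $\overline{J}$), which the paper leaves implicit.
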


\begin{proof}
Let $I:=I_0 \smallsetminus I_1$. Then $I_0$ contains $\overline{I}:=\{z \in W \mid \exists x \in I, \, z \leq x\}$, so that we have a natural inclusion of categories
\[
\Diag_{\BS,\overline{I}}^\oplus(\h,W) \subset \Diag_{\BS,I_0}^\oplus(\h,W),
\]
which induces a functor
\[
\Diag_{\BS,\overline{I},\overline{I} \smallsetminus I}^\oplus(\h,W) \to \Diag_{\BS,I_0,I_1}^\oplus(\h,W).
\]
The description of morphism spaces in $\Diag_{\BS,I_0,I_1}^\oplus(\h,W)$ in terms of light leaves morphisms considered above implies that this functor is fully faithful. By the remarks above it is also essentially surjective, and hence an equivalence.
\end{proof}

From Lemma~\ref{lem:DiagI-indep} it follows that it makes sense to define, for any locally closed subset $I \subset W$, the category $\Diag_{\BS,I}^\oplus(\h,W)$ as
\[
\Diag_{\BS,I}^\oplus(\h,W) = \Diag_{\BS,I_0,I_1}^\oplus(\h,W) = \Diag_{\BS,I_0}^{\oplus}(\h,W)\qq\Diag_{\BS,I_1}^{\oplus}(\h,W)
\]
where $I_1 \subset I_0$ are any closed subsets of $W$ such that $I=I_0 \smallsetminus I_1$. Of course, in case $I$ is closed, the category we obtain coincides with the category defined in~\S\ref{ss:Diag-closed}. It is clear that the autoequivalences $\D_{I_0}$ and $(1)$ of $\Diag_{\BS,I_0}^{\oplus}(\h,W)$ induce autoequivalences of $\Diag_{\BS,I}^\oplus(\h,W)$, which will be denoted $\D_I$ and $(1)$ respectively.

\subsection{The case of a singleton} 
\label{ss:Diag-singleton}

In this subsection we consider the special case $I=\{w\}$ for $w \in W$. (This subset is obviously locally closed in $W$.) For any choice of a reduced expression $\uw$ for $w$ we can consider the image of the corresponding object $B_\uw$ in $\Diag^\oplus_{\BS,\{w\}}(\h,W)$. If $\uw'$ is another reduced expression for $w$, then $\uw$ and $\uw'$ can be related by a ``rex move,'' i.e.~a sequence of braid relations (meaning the replacement of a subword $(s,t, \ldots)$ by the word $(t,s,\ldots)$, where the words have length the order $m_{s,t}$ of $st$ and their entries alternate between $s$ and $t$). See~\cite[\S 4.2]{ew} for details. To each such braid relation is associated (by definition) a morphism in $\DiagBS^\oplus(\h,W)$; composing these morphisms we obtain a ``rex move morphism'' $B_\uw \to B_{\uw'}$. By~\cite[Lemma~7.4 and Lemma~7.5]{ew}, the image of this morphism in $\Diag_{\BS,\{w\}}^\oplus(\h,W)$ does not depend on the choice of rex move, and is an isomorphism. 
In particular, the images of $B_\uw$ and $B_{\uw'}$ in $\Diag^\oplus_{\BS,\{w\}}(\h,W)$ are canonically isomorphic. Hence they define a canonical object in $\Diag^\oplus_{\BS,\{w\}}(\h,W)$, which will be denoted $b_w$.

\begin{lema}
\label{le:Dw is equivalent to free R}
There exists a canonical equivalence of categories 
\[
\gamma:\Diag_{\BS,\{w\}}^{\oplus}(\h,W)\xrightarrow{\sim}\Free^{\fgen,\Z}(R)
\]
such that $\gamma(b_w) = R$.
Under this equivalence, the autoequivalence $(1)$ identifies with the ``shift of grading'' autoequivalence of $\Free^{\fgen,\Z}(R)$ defined by $\bigl( M(1) \bigr)^n = M^{n+1}$.
\end{lema}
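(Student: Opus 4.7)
The strategy is to realize $\gamma$ as a $\Hom$-functor and apply a standard Morita-style argument. Since every object of $\Diag_{\BS,\{w\}}^{\oplus}(\h,W)$ is a finite direct sum of shifts of $b_w$ (as observed in~\S\ref{ss:Diag-loc-closed} using $\{w\} = \{\leq w\} \smallsetminus \{< w\}$), the object $b_w$ is a ``generator,'' and the functor
\[
\gamma := \Hom^\bullet_{\Diag_{\BS,\{w\}}^{\oplus}(\h,W)}(b_w, -)
\]
should give the desired equivalence onto $\Free^{\fgen,\Z}(\End^\bullet(b_w))$. The main task is therefore to identify $\End^\bullet(b_w)$ canonically with $R$ as graded $\bk$-algebras, after which everything else reduces to formal bookkeeping.

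To compute $\End^\bullet(b_w)$, I would fix any reduced expression $\uw$ for $w$. By the description of morphism spaces in~\S\ref{ss:Diag-loc-closed} (a consequence of Lemma~\ref{le:kF I closed} applied with $I_0 = \{\leq w\}$ and $I_1 = \{<w\}$), the module $\Hom^\bullet_{\Diag_{\BS,\{w\}}^{\oplus}(\h,W)}(b_w, b_w)$ is graded free as a left $R$-module with basis the images of the double leaves morphisms $\LL^{\uw,\uw}_{x,\mathbf{f},\mathbf{e}}$ with $x = w$. Since $\uw$ is reduced of length $\ell(w)$ and any expression for $w$ requires at least $\ell(w)$ letters, the set $M(\uw,w)$ consists of the single ``full'' subexpression $\mathbf{1} = (1,\ldots,1)$. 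Hence there is exactly one basis element, and inspecting the algorithm of~\cite[\S 6.1]{ew} for subexpressions built entirely from $U1$ steps, one sees that $\mathrm{LL}_{\uw,\mathbf{1}}$ is (after choosing $\ux = \uw$) the identity morphism on $B_\uw$; the composition $\LL^{\uw,\uw}_{w,\mathbf{1},\mathbf{1}} = \D(\mathrm{LL}_{\uw,\mathbf{1}}) \circ \mathrm{LL}_{\uw,\mathbf{1}}$ is therefore $\id_{b_w}$ in degree $0$ (and any other choice of $\ux$ yields a rex move morphism composed with its reverse, which agrees with $\id_{b_w}$ in the quotient by the canonicity established in~\S\ref{ss:Diag-singleton}). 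Consequently $f \mapsto \lambda^{b_w}_f(\id_{b_w})$ defines an isomorphism $R \xrightarrow{\sim} \End^\bullet(b_w)$ of graded $\bk$-algebras, which is intrinsic and does not depend on the choice of $\uw$.

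With this identification in place, I would define $\gamma(X) := \Hom^\bullet(b_w, X)$, viewed as a graded left $R$-module via postcomposition with $\lambda^X$ (the two natural actions coincide by the naturality axiom of~\S\ref{ss:tensor-product}). By construction $\gamma(b_w) = R$ and, more generally, $\gamma(b_w(n)) = R(n)$, so that $\gamma$ sends any finite direct sum of shifts of $b_w$ to the corresponding finite direct sum of shifts of $R$; this proves essential surjectivity. Full faithfulness reduces by additivity to the identity $\Hom(b_w(n), b_w(m)) = R^{m-n} = \Hom_{\Free^{\fgen,\Z}(R)}(R(n),R(m))$, which is immediate from the identification above and the grading conventions of~\S\ref{ss:graded-cat}. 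The compatibility $\gamma \circ (1) \simeq (1) \circ \gamma$ is then built into these same conventions, since $\Hom^\bullet(b_w, X(1))^k = \Hom^\bullet(b_w,X)^{k+1}$. The main obstacle is the identification $\End^\bullet(b_w) \cong R$, which rests on two facts about the double leaves basis: the combinatorial observation that $M(\uw,w)$ is a singleton for reduced $\uw$, and the more delicate fact that the resulting unique double leaves morphism is the identity in the quotient category. All remaining steps are routine consequences of the Morita-style setup.
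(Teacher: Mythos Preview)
Your proof is correct and follows essentially the same approach as the paper: both define $\gamma := \Hom^\bullet_{\Diag_{\BS,\{w\}}^{\oplus}(\h,W)}(b_w,-)$, observe that every object is a direct sum of shifts of $b_w$, and identify $\End^\bullet(b_w)\cong R$ via the light leaves basis (the paper attributes this directly to Lemma~\ref{le:kF I closed}, while you spell out that $M(\uw,w)=\{\mathbf{1}\}$ and that the corresponding double leaves morphism is the identity). The additional detail you provide on the Morita-style verification of full faithfulness, essential surjectivity, and compatibility with $(1)$ is routine and matches what the paper leaves implicit.
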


\begin{proof}
It follows from the definition and the comments above that any object of $\Diag_{\BS,\{w\}}^{\oplus}(\h,W)$ is isomorphic to a direct sum of shifts of $b_w$. Since moreover we have $\End^\bullet_{\Diag_{\BS,\{w\}}^{\oplus}(\h,W)}(b_w)=R$ by Lemma~\ref{le:kF I closed}, we deduce that the functor $\gamma := \Hom^\bullet_{\Diag_{\BS,\{w\}}^{\oplus}(\h,W)}(b_w, -)$ provides the desired equivalence.
\end{proof}

\subsection{Closed and open inclusions}
\label{ss:inclusions}

Let $I \subset W$ be a locally closed subset, and write $I=I_0 \smallsetminus I_1$ for some closed subsets $I_1 \subset I_0 \subset W$. Any subset $J \subset I$ that is closed as a subset of $I$ can be written as $J_0 \smallsetminus (J_0 \cap I_1)$ for some closed subset $J_0 \subset I_0$. There exists a natural embedding
\[
\Diag_{\BS,J_0}^\oplus(\h,W) \subset \Diag_{\BS,I_0}^\oplus(\h,W),
\]
which induces a 
functor
\[
\Diag_{\BS,J_0}^\oplus(\h,W) \qq \Diag_{\BS,J_0 \cap I_1}^\oplus(\h,W) \to \Diag_{\BS,I_0}^\oplus(\h,W) \qq \Diag_{\BS,I_1}^\oplus(\h,W).
\]
The description of morphism spaces in terms of light leaves morphisms in~\S\ref{ss:Diag-loc-closed} shows that this functor is fully faithful.
As explained in~\S\ref{ss:Diag-loc-closed}, the categories involved here
do not depend on the choices of $I_0$ and $J_0$. It is clear that, under these identifications, the functor does not depend on these choices either; it will be denoted
\[
(i_J^I)_* : \Diag_{\BS,J}^\oplus(\h,W) \to \Diag_{\BS,I}^\oplus(\h,W).
\]
It is clear that this functor satisfies
\[
(i_J^I)_* \circ \D_J \cong \D_I \circ (i_J^I)_*,
\]
and that this construction is compatible with composition of closed inclusions in the obvious way.

Now, let $K \subset I$ be a subset that is open in the order topology on $I$. Let $J=I \smallsetminus K$ be the complementary closed subset, and write $J=J_0 \smallsetminus (J_0 \cap I_1)$ as above, so that $K=I_0 \smallsetminus K_1$, where $K_1 = J_0 \cup I_1$. Then by definition there exists a natural full functor
\[
\Diag_{\BS,I_0}^\oplus(\h,W) \qq \Diag_{\BS,I_1}^\oplus(\h,W) \to \Diag_{\BS,I_0}^\oplus(\h,W) \qq \Diag_{\BS,K_1}^\oplus(\h,W).
\]
Once again, this functor does not depend on the choices of $I_0$ and $J_0$; it will be denoted
\[
(i_K^I)^* : \Diag_{\BS,I}^\oplus(\h,W) \to \Diag_{\BS,K}^\oplus(\h,W).
\]
This functor satisfies
\[
(i_K^I)^* \circ \D_I = \D_K \circ (i_K^I)^*,
\]
and this construction is compatible with composition of open inclusions in the obvious way.

It is clear from this construction that if $J \subset I$ is closed we have
\begin{equation}
\label{eqn:comp-open-closed-0}
(i_{I \smallsetminus J}^I)^* \circ (i_J^I)_*=0.
\end{equation}

\begin{example}
\label{ex:bases of x w and y w}
Let $I \subset W$ be a locally closed subset, and let $w \in I$ be a minimal element. Then the subset $\{w\} \subset I$ is closed. Let us fix a reduced expression $\uw$ for $w$. If $\ux$ and $\uy$ are reduced expressions for elements of $I$, then by Lemma~\ref{le:kF I closed} (see also~\S\ref{ss:Diag-loc-closed}) the subsets
\[
\{ \LL_{w,\mathbf{1},\mathbf{e}}^{\ux,\uw} : \mathbf{e} \in M(\ux,w) \} \quad \text{and} \quad \{ \LL_{w,\mathbf{f},\mathbf{1}}^{\uw,\uy} : \mathbf{f} \in M(\uy,w) \}
\]
(where $\mathbf{1}$ means the subexpression consisting only of $1$'s) form $R$-bases of the modules $\Hom^\bullet_{\Diag_{\BS,I}^\oplus(\h,W)}(B_\ux,B_\uw)$ and $\Hom^\bullet_{\Diag_{\BS,I}^\oplus(\h,W)}(B_\uw,B_\uy)$ respectively (both for the left and for the right actions). Moreover, composition induces a morphism
\[
\Hom^\bullet_{\Diag_{\BS,I}^\oplus(\h,W)}(B_\uw,B_\uy) \otimes_R \Hom^\bullet_{\Diag_{\BS,I}^\oplus(\h,W)}(B_\ux,B_\uw) \to \Hom^\bullet_{\Diag_{\BS,I}^\oplus(\h,W)}(B_\ux,B_\uy)
\]
(where the right $R$-module structure on $\Hom^\bullet_{\Diag_{\BS,I}^\oplus(\h,W)}(B_\uw,B_\uy)$ and the left $R$-module structure on $\Hom^\bullet_{\Diag_{\BS,I}^\oplus(\h,W)}(B_\ux,B_\uw)$ are both given either by adding a ``box'' to the right of diagrams, or by adding a ``box'' to the left of diagrams). The ``light leaves basis'' considerations of~\S\ref{ss:Diag-loc-closed} also show that this morphism is injective (for both choices of conventions for $R$-actions).
\end{example}

\begin{rmk}
\label{rmk:open-closed}
 Let $I \subset W$ be a locally closed subset, and let $J \subset I$ be a subset which is both open and closed. Then from the definitions we see that
 \[
  (i_J^I)^* \circ (i_J^I)_* = \id.
 \]
 Moreover, using the light leaves basis for morphisms in $\Diag_{\BS,I}^\oplus(\h,W)$ (see~\S\ref{ss:Diag-loc-closed}), it is not difficult to check that for any $B$ in $\Diag_{\BS,J}(\h,W)$ and $B'$ in $\Diag_{\BS,I \smallsetminus J}^\oplus(\h,W)$ we have
 \[
  \Hom_{\Diag_{\BS,I}^\oplus(\h,W)} \bigl( (i_J^I)_* B, (i_{I \smallsetminus J}^I)_* B' \bigr) = 0.
 \]
 It follows that any object $B$ of $\Diag_{\BS,I}^\oplus(\h,W)$ has a canonical decomposition
 \[
  B \cong (i_J^I)_* B' \oplus (i_{I \smallsetminus J}^I)_* B''
 \]
 with $B'$ in $\Diag_{\BS,J}^\oplus(\h,W)$ and $B''$ in $\Diag_{\BS,I \smallsetminus J}^\oplus(\h,W)$, and that we have $B'=(i_J^I)^* B$ and $B''=(i_{I \smallsetminus J}^I)^* B$. From this we deduce that the pairs
 \[
 \bigl( (i_J^I)^*, (i_J^I)_* \bigr) \quad \text{and} \quad \bigl( (i_J^I)_*, (i_J^I)^* \bigr)
 \]
 are adjoint pairs of functors.
\end{rmk}

\section{Recollement}
\label{sec:BE}

We continue with the setting of Sections~\ref{sec:EW categories}--\ref{sec:categories-subsets}. Our goal in this (rather technical) section is to construct a recollement formalism (in the sense of~\cite[\S 1.4.3]{bbd}) for the category $\BE(\h,W)$, which will allow us to describe this category in terms of ``local versions'' associated with locally closed subsets of $W$.

\subsection{The biequivariant category associated with a locally closed subset}
\label{ss:BE-category}

If $I \subset W$ is a locally closed subset, we define the triangulated category $\BE_I(\h,W)$ by setting
\[
\BE_I(\h,W):=\Kb\Diag_{\BS,I}^{\oplus}(\h,W).
\]
As for $\BE(\h,W)$, this category admits ``shift'' autoequivalences $[n]$, $\langle n \rangle$, $(n)$ defined as above (for $n \in \Z$). The contravariant autoequivalence $\D_I$ of $\Diag_{\BS,I}^\oplus(\h,W)$ also induces a (contravariant) autoequivalence of $\BE_I(\h,W)$, which will be denoted similarly. By definition we have
\[
\D_I \circ [n] = [-n] \circ \D_I, \quad \D_I \circ \langle n \rangle = \langle -n \rangle \circ \D_I, \quad \D_I \circ (n) = (-n) \circ \D_I.
\]

If $J \subset I$ is a closed subset, then the functor $(i_J^I)_*$ defined in~\S\ref{ss:inclusions} induces a fully faithful functor from $\BE_J(\h,W)$ to $\BE_I(\h,W)$, which will also be denoted $(i_J^I)_*$. Whenever convenient, we will identify $\BE_J(\h,W)$ with its image in $\BE_I(\h,W)$, and omit the functor $(i_J^I)_*$.

Similarly, if $K \subset I$ is an open subset, then the functor $(i_K^I)^*$ defined in~\S\ref{ss:inclusions} induces a functor from $\BE_I(\h,W)$ to $\BE_K(\h,W)$, which will also be denoted $(i_K^I)^*$. As in~\S\ref{ss:inclusions}, we have
\begin{equation}
\label{eqn:D-incl}
(i_J^I)_* \circ \D_J = \D_I \circ (i_J^I)_*, \quad (i_K^I)^* \circ \D_I = \D_K \circ (i_K^I)^*.
\end{equation}

Note that the functors $(i_J^I)_*$ identify the category $\BE_I(\h,W)$ with the inductive limit of the categories $\BE_J(\h,W)$ for $J \subset I$ a finite closed subset.
This observation will allow us to generalize some of our constructions below from finite subsets of $W$ to arbitrary subsets.

\subsection{Closed embedding of a singleton}

In this subsection we fix a locally closed subset $I \subset W$ and a minimal element $w \in I$ (so that $\{w\}$ is a closed subset of $I$). Our goal is to prove Lemma~\ref{le:recollement step 1a} below.

The statement of this lemma involves the ``$*$'' operation from~\cite[\S1.3.9]{bbd}.  We recall the definition of this notation: if $\mathcal{D}$ is a triangulated category, and if $\mathcal{A}, \mathcal{B} \subset \mathcal{D}$ are two full subcategories, then $\mathcal{A} * \mathcal{B}$ denotes the strictly full subcategory of $\mathcal{D}$ whose objects $X$ are those that fit into a distinguished triangle $A \to X \to B \xrightarrow{[1]}$ with $A \in \mathcal{A}$ and $B \in \mathcal{B}$.

\begin{lema}
\label{le:recollement step 1a}
The functor $(i_{I \smallsetminus \{w\}}^I)^*$ admits a left 
adjoint $(i_{I \smallsetminus \{w\}}^I)_!$ and a right adjoint $(i_{I \smallsetminus \{w\}}^I)_*$. Moreover, the adjunction morphisms 
\[
(i_{I \smallsetminus \{w\}}^I)^*(i_{I \smallsetminus \{w\}}^I)_*\to\id \quad \text{and} \quad \id\rightarrow (i_{I \smallsetminus \{w\}}^I)^*(i_{I \smallsetminus \{w\}}^I)_!
\]
are isomorphisms, and we have
\begin{align*}
\BE_I(\h,W)&= (i_{I \smallsetminus \{w\}}^I)_!(\BE_{I\smallsetminus\{w\}}(\h,W)) * (i_{\{w\}}^I)_*(\BE_{\{w\}}(\h,W)),\\
\BE_I(\h,W)&= (i_{\{w\}}^I)_*(\BE_{\{w\}}(\h,W)) * (i_{I \smallsetminus \{w\}}^I)_*(\BE_{I\smallsetminus\{w\}}(\h,W)).
\end{align*}
\end{lema}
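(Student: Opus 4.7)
The plan is to construct $(i^I_{I \smallsetminus \{w\}})_!$ explicitly on generators of $\Diag^\oplus_{\BS, I \smallsetminus \{w\}}(\h,W)$, extend termwise to $\Kb$, and obtain $(i^I_{I \smallsetminus \{w\}})_*$ dually as $\D_I \circ (i^I_{I \smallsetminus \{w\}})_! \circ \D_{I \smallsetminus \{w\}}$, using the compatibility~\eqref{eqn:D-incl} of $\D$ with inclusions. For a reduced expression $\ux$ of $x \in I \smallsetminus \{w\}$, fix the light leaves basis $\{\LL^{\ux, \uw}_{w, \mathbf{1}, \mathbf{g}}\}_{\mathbf{g} \in M(\ux, w)}$ of $\Hom^\bullet_{\Diag^\oplus_{\BS,I}(\h,W)}(B_\ux, b_w)$ (say under the left $R$-action) provided by Example~\ref{ex:bases of x w and y w}, and set $d_\mathbf{g} := \deg \LL^{\ux, \uw}_{w, \mathbf{1}, \mathbf{g}}$. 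Define
\[
(i^I_{I \smallsetminus \{w\}})_! B_\ux := \bigl[\, B_\ux \xrightarrow{\partial_\ux} \textstyle\bigoplus_{\mathbf{g} \in M(\ux, w)} b_w(d_\mathbf{g}) \,\bigr],
\]
a two-term complex in cohomological degrees $0$ and $1$ whose differential $\partial_\ux$ has $\mathbf{g}$-component the morphism $\LL^{\ux, \uw}_{w, \mathbf{1}, \mathbf{g}}$ (viewed as a degree $0$ morphism to $b_w(d_\mathbf{g})$). A change of light leaves basis induces a canonical isomorphism between the resulting complexes, so the construction is canonical up to isomorphism.

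Two verifications are required. First, $(i^I_{I \smallsetminus \{w\}})^* (i^I_{I \smallsetminus \{w\}})_! \cong \id$: since $w$ is minimal in $I$, every morphism out of $b_w$ automatically factors through $b_w$ and hence lies in $\kF_{\{w\}}$; by Lemma~\ref{le:kF I closed} this forces $b_w$ to become a zero object in $\Diag^\oplus_{\BS, I \smallsetminus \{w\}}(\h,W)$, so applying $(i^I_{I \smallsetminus \{w\}})^*$ to the above complex yields $[B_\ux \to 0] \cong B_\ux$. The isomorphism $(i^I_{I \smallsetminus \{w\}})^* (i^I_{I \smallsetminus \{w\}})_* \cong \id$ then follows by duality. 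Second, for the adjunction
\[
\Hom_{\BE_I(\h,W)}((i^I_{I \smallsetminus \{w\}})_! B_\ux, F) \cong \Hom_{\BE_{I \smallsetminus \{w\}}(\h,W)}(B_\ux, (i^I_{I \smallsetminus \{w\}})^* F),
\]
reduce by dévissage on $F$ (using that both sides are cohomological in $F$) to the case where $F$ is an object of $\Diag^\oplus_{\BS,I}(\h,W)$ placed in cohomological degree $0$. A direct chain-level computation in $\Kb$ then gives the left-hand side as $\Hom(B_\ux, F)^0 / \im(\partial_\ux^*)$, where $\partial_\ux^*(h) := h \circ \partial_\ux$. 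The crucial identification is $\im(\partial_\ux^*) = \kF_{\{w\}}(B_\ux, F)^0$: the inclusion $\subseteq$ holds because $h \circ \partial_\ux$ factors through $\bigoplus_\mathbf{g} b_w(d_\mathbf{g})$, and the reverse uses that the components of $\partial_\ux$ form an $R$-basis of $\Hom^\bullet(B_\ux, b_w)$, so any factorization $g = k_2 \circ k_1 \in \kF_{\{w\}}$ can be rewritten as $(k_2 \circ \phi) \circ \partial_\ux$ after decomposing $k_1$ componentwise in the basis. For shifts $F[n]$ with $n \neq 0$ both Hom groups vanish: this uses the injectivity of composition from the last assertion of Example~\ref{ex:bases of x w and y w} on the $\Kb$ side.

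Finally, the distinguished triangle
\[
(i^I_{I \smallsetminus \{w\}})_! B_\ux \to B_\ux \xrightarrow{\partial_\ux} \textstyle\bigoplus_{\mathbf{g}} b_w(d_\mathbf{g}) \xrightarrow{+1}
\]
associated to the two-term complex is the gluing triangle $j_! j^* F \to F \to i_* i^* F \xrightarrow{+1}$ for $F = B_\ux$, with $\bigoplus_\mathbf{g} b_w(d_\mathbf{g})$ naturally an object of $(i^I_{\{w\}})_* \BE_{\{w\}}(\h,W)$. Extending termwise to arbitrary $F \in \BE_I(\h,W)$ (each generator $B_\uy$ with $y = w$ satisfies the triangle trivially, each $B_\uy$ with $y \in I \smallsetminus \{w\}$ by the construction above) and invoking functoriality yields the first decomposition; the second follows by applying $\D_I$. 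The main technical obstacle of the entire proof is the identification $\im(\partial_\ux^*) = \kF_{\{w\}}(B_\ux, F)^0$ in the second paragraph, together with the injectivity statement for composition: both rest decisively on the algorithmic structure of the light leaves basis from~\cite[Sections~6--7]{ew}, and once they are secured the remainder of the argument is formal.
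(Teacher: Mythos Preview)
Your two-term complex is exactly the $\D$-dual of the paper's object $B_\ux^+$, and your adjunction computation --- the identification $\im(\partial_\ux^*) = \kF_{\{w\}}(B_\ux, F)^0$ together with the vanishing of $\ker\bigl((-)\circ\partial_\ux\bigr)$ via Example~\ref{ex:bases of x w and y w} --- is the dual of the paper's Lemma~\ref{le:iso para the recollement} and is correct. The gap is in ``extend termwise to $\Kb$'': applying a construction termwise to complexes requires a functor on the additive category $\Diag^\oplus_{\BS, I \smallsetminus \{w\}}(\h,W)$, but you have only specified it on objects. Given a morphism $f\colon B_\ux \to B_\uy(m)$ in that \emph{quotient} category you would have to choose a lift to $\Diag^\oplus_{\BS,I}(\h,W)$, build the degree-$1$ component of a chain map, and check that different lifts yield homotopic chain maps --- none of which you address. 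The subsequent phrase ``invoking functoriality'' is therefore circular.

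The paper's packaging avoids this, and your ingredients plug directly into it. Let $\fD^- \subset \BE_I(\h,W)$ be the full triangulated subcategory generated by your complexes $[B_\ux \to \bigoplus_\mathbf{g} b_w(d_\mathbf{g})]$. Your Hom computation (extended by the five-lemma in each variable) shows that $(i^I_{I\smallsetminus\{w\}})^*$ induces isomorphisms
\[
\Hom_{\BE_I(\h,W)}(Y, F) \xrightarrow{\sim} \Hom_{\BE_{I\smallsetminus\{w\}}(\h,W)}\bigl((i^I_{I \smallsetminus \{w\}})^* Y,\, (i^I_{I \smallsetminus \{w\}})^* F\bigr)
\]
for all $Y \in \fD^-$ and $F \in \BE_I(\h,W)$; in particular $(i^I_{I\smallsetminus\{w\}})^*|_{\fD^-}$ is fully faithful, and since its essential image contains the generators $B_\ux$ of $\BE_{I\smallsetminus\{w\}}(\h,W)$ it is an equivalence. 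One now \emph{defines} $(i^I_{I\smallsetminus\{w\}})_!$ as the inclusion $\fD^- \hookrightarrow \BE_I(\h,W)$ composed with the inverse of this equivalence, which is automatically a triangulated functor. The adjunction is then precisely the displayed isomorphism, the unit $\id \to (i^I_{I\smallsetminus\{w\}})^*(i^I_{I\smallsetminus\{w\}})_!$ is invertible by construction, and the $*$-decomposition follows from your gluing triangle (showing $\fD^-$ and $(i^I_{\{w\}})_* \BE_{\{w\}}(\h,W)$ generate $\BE_I(\h,W)$) together with the vanishing of $\Hom\bigl((i^I_{I\smallsetminus\{w\}})_!(-),\, (i^I_{\{w\}})_*(-)\bigr)$ given by adjunction and~\eqref{eqn:comp-open-closed-0}. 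This is exactly the paper's argument, run on the $!$-side rather than the $*$-side.
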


The proof of this lemma will use the following construction. We fix once and for all a reduced expression $\uw$ for $w$. Then for any reduced expression $\ux$ for an element in $I \smallsetminus \{w\}$ we consider the complex $B_\ux^+$ given by
\[
\cdots \to 0 \to B_\uw \Tenint_R \Hom^\bullet_{\Diag_{\BS,I}^\oplus(\h,W)}(B_\uw,B_\ux) \to B_\ux \to 0 \to \cdots,
\]
where $B_\uw \Tenint_R \Hom^\bullet_{\Diag_{\BS,I}^\oplus(\h,W)}(B_\uw,B_\ux)$ is in cohomological degree $-1$, $B_\ux$ is in cohomological degree $0$, all the other terms are $0$, and the only nontrivial differential is given by the morphism defined in~\eqref{eqn:canonical-morph}. (In particular, the $R$-module structure on $\Hom^\bullet_{\Diag_{\BS,I}^\oplus(\h,W)}(B_\uw,B_\ux)$ that we consider here is as defined before~\eqref{eqn:canonical-morph}.) Note that we have a canonical distinguished triangle
\begin{equation}
\label{eqn:triangle-B+}
B_\ux \to B_\ux^+ \to B_\uw \Tenint_R \Hom^\bullet_{\Diag_{\BS,I}^\oplus(\h,W)}(B_\uw,B_\ux)[1] \xrightarrow{[1]}
\end{equation}
in $\BE_I(\h,W)$.

\begin{lema}\label{le:iso para the recollement}
If $\ux$ is a reduced expression for an element in $I \smallsetminus \{w\}$ and $\uy$ is a reduced expression for an element in $I$, then for any $n,m \in \Z$ the functor $(i_{I \smallsetminus \{w\}}^I)^*$ induces an isomorphism
\begin{multline*}
\Hom_{\BE_I(\h,W)}(B_{\uy},B_{\ux}^+(m)[n])\xrightarrow{\sim}\\
\Hom_{\BE_{I\smallsetminus\{w\}}(\h,W)}((i_{I \smallsetminus \{w\}}^I)^*B_{\uy},(i_{I \smallsetminus \{w\}}^I)^*B_{\ux}^+(m)[n]).
\end{multline*}
Moreover, these $\bk$-modules are zero unless $\pi(\uy)\neq w$ and $n=0$, in which case they are isomorphic to $\Hom_{\Diag^\oplus_{\BS,I\smallsetminus\{w\}}(\h,W)}(B_{\uy},B_{\ux}(m))$.
\end{lema}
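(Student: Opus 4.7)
The plan is to apply the cohomological functor $\Hom_{\BE_I(\h,W)}(B_\uy,-(m)[n])$ to the distinguished triangle~\eqref{eqn:triangle-B+}, analyze the resulting long exact sequence using the light leaves basis, and separately compute the right-hand side of the alleged isomorphism.

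Set $H:=\Hom^\bullet_{\Diag_{\BS,I}^\oplus(\h,W)}(B_\uw,B_\ux)$. Because $B_\uy$, $B_\ux$, and $B_\uw\Tenint_R H$ all sit in cohomological degree $0$ of the homotopy category $\BE_I(\h,W)=\Kb\Diag_{\BS,I}^\oplus(\h,W)$, the graded Hom between them in $\BE_I(\h,W)$ vanishes for every cohomological shift $[k]$ with $k\ne 0$. The long exact sequence attached to~\eqref{eqn:triangle-B+} therefore forces $\Hom_{\BE_I(\h,W)}(B_\uy,B_\ux^+(m)[n])=0$ for $n\notin\{-1,0\}$, and for the remaining two values it identifies the Hom with the kernel (resp.\ cokernel) of a single map
\[
\phi_m : \bigl(\Hom^\bullet_{\Diag_{\BS,I}^\oplus(\h,W)}(B_\uy,B_\uw)\otimes_R H\bigr)^m \to \Hom_{\Diag_{\BS,I}^\oplus(\h,W)}(B_\uy,B_\ux(m)),
\]
obtained via~\eqref{eqn:Hom-tensor} from the canonical morphism~\eqref{eqn:canonical-morph}; concretely $\phi_m$ is the composition map.

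The key step is the analysis of $\phi_m$. Because $w$ is minimal in $I$, the double leaves morphisms $\LL^{\uy,\ux}_{x,\mathbf{f},\mathbf{e}}$ in $\Diag_{\BS,I}^\oplus(\h,W)$ that factor through the chosen reduced expression $\uw$ of $w$ are precisely those with $x=w$; Example~\ref{ex:bases of x w and y w} (together with the factorization $\LL^{\uy,\ux}_{w,\mathbf{f},\mathbf{e}} = \D(\mathrm{LL}_{\ux,\mathbf{f}})\circ \mathrm{LL}_{\uy,\mathbf{e}}$) then shows that $\phi_m$ is injective and that its image is exactly $\kF_{\{w\}}(B_\uy,B_\ux(m))$. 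Consequently $\Hom(B_\uy,B_\ux^+(m)[-1])=0$, and
\[
\Hom(B_\uy,B_\ux^+(m)) \cong \Hom_{\Diag_{\BS,I}^\oplus(\h,W)}(B_\uy,B_\ux(m))/\kF_{\{w\}},
\]
which by definition of the naive quotient coincides with $\Hom_{\Diag_{\BS,I\smallsetminus\{w\}}^\oplus(\h,W)}(B_\uy,B_\ux(m))$. When $\pi(\uy)=w$ the identity of $B_\uy$ itself factors through $\Diag_{\BS,\{w\}}^\oplus(\h,W)$, so every morphism out of $B_\uy$ lies in $\kF_{\{w\}}$ and the quotient vanishes, yielding the required case distinction.

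Finally, I would verify directly that $(i_{I\smallsetminus\{w\}}^I)^*B_\uw=0$ in $\Diag_{\BS,I\smallsetminus\{w\}}^\oplus(\h,W)$ (its identity factors through itself, an object of $\Diag_{\BS,\{w\}}^\oplus(\h,W)$), so that the complex $(i_{I\smallsetminus\{w\}}^I)^*B_\ux^+$ reduces to $(i_{I\smallsetminus\{w\}}^I)^*B_\ux$ in degree $0$. The same additive-category argument then computes the right-hand side, matching the left-hand side. Compatibility of the natural map with the identifications is then automatic: under the cokernel description, both sides are presented as the image of $\Hom^\bullet_{\Diag_{\BS,I}^\oplus(\h,W)}(B_\uy,B_\ux(m))$ in its quotient by $\kF_{\{w\}}$, and $(i_{I\smallsetminus\{w\}}^I)^*$ is by definition the corresponding quotient map. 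The main obstacle is the identification $\mathrm{image}(\phi_m)=\kF_{\{w\}}$, which is where the minimality of $w$ in $I$ and the careful bookkeeping of double leaves morphisms is essential.
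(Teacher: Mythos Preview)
Your proof is correct and follows essentially the same approach as the paper: both apply $\Hom(B_\uy,-)$ to the triangle~\eqref{eqn:triangle-B+}, use Example~\ref{ex:bases of x w and y w} to see that the composition map is injective, and invoke the light leaves description of morphism spaces in $\Diag_{\BS,I\smallsetminus\{w\}}^\oplus(\h,W)$ to identify the cokernel. Your write-up is slightly more explicit about the image of $\phi_m$ being the $R$-span of the double leaves $\LL^{\uy,\ux}_{w,\mathbf{f},\mathbf{e}}$ and about the compatibility of the induced map with $(i_{I\smallsetminus\{w\}}^I)^*$, whereas the paper leaves these to the reader; the only notational quibble is that $\kF_{\{w\}}$ was defined in $\DiagBS^\oplus(\h,W)$ rather than in $\Diag_{\BS,I}^\oplus(\h,W)$, but your intended meaning (the kernel of the quotient map) is clear.
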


\begin{proof}
It is clear that in the morphism under consideration, the left-hand side vanishes unless $n \in \{-1,0\}$, and the right-hand side vanishes unless $n=0$ (because $(i_{I \smallsetminus \{w\}}^I)^*B_{\ux}^+ = (i_{I \smallsetminus \{w\}}^I)^*B_{\ux}$). In particular, the claim is obvious unless $n \in \{-1,0\}$.

From~\eqref{eqn:triangle-B+} we deduce an exact sequence
\begin{multline*}
0 \to \Hom_{\BE_I(\h,W)}(B_{\uy},B_{\ux}^+(m)[-1]) \\
\to \Hom_{\Diag^\oplus_{\BS,I}(\h,W)} \bigl( B_{\uy},B_\uw \Tenint_R \Hom^\bullet_{\Diag_{\BS,I}^\oplus(\h,W)}(B_\uw,B_\ux)(m) \bigr) \\
\to \Hom_{\Diag_{\BS,I}^\oplus(\h,W)}(B_{\uy},B_{\ux}(m)) \to \Hom_{\BE_I(\h,W)}(B_{\uy},B_{\ux}^+(m)) \to 0.
\end{multline*}
Now by definition (see~\S\ref{ss:tensor-product}) the term on the middle line identifies with
\[
\bigl( \Hom_{\Diag_{\BS,I}^\oplus(\h,W)}^\bullet(B_{\uy},B_\uw(m)) \otimes_R \Hom^\bullet_{\Diag_{\BS,I}^\oplus(\h,W)}(B_\uw,B_\ux) \bigr)^0,
\]
and the differential to $\Hom_{\Diag_{\BS,I}^\oplus(\h,W)}(B_{\uy},B_{\ux}(m))$ identifies with the natural composition morphism. As explained in Example~\ref{ex:bases of x w and y w} this map is injective. It follows that
\[
\Hom_{\BE_I(\h,W)}(B_{\uy},B_{\ux}^+(m)[-1])=0,
\]
proving the desired isomorphism in this case. The fact that our morphism is an isomorphism when $n=0$ also follows from this exact sequence, together with the ``light leaves basis'' considerations in~\S\ref{ss:Diag-loc-closed}.
\end{proof}

\begin{proof}[Proof of Lemma~{\rm \ref{le:recollement step 1a}}]
We will explain the construction of the functor $(i_{I \smallsetminus \{w\}}^I)_*$ and prove that it satisfies the desired properties; then in view of~\eqref{eqn:D-incl} the functor
\begin{equation}
\label{eqn:def-!-pf}
(i_{I \smallsetminus \{w\}}^I)_! := \D_I \circ (i_{I \smallsetminus \{w\}}^I)_* \circ \D_{I \smallsetminus \{w\}}
\end{equation}
will also satisfy the corresponding properties.

Let $\fD^+ \subset \BE_I(\h,W)$ be the full graded (i.e.~stable by $(1)$) triangulated subcategory generated by the objects $B_{\ux}^+$ for all reduced expressions $\ux$ for elements in $I\smallsetminus\{w\}$, and let 
$\iota:\fD^+\rightarrow\BE_I(\h,W)$ be the inclusion. By Lemma \ref{le:iso para the recollement} and using the five-lemma, it follows that the functor $(i_{I \smallsetminus \{w\}}^I)^*$ induces an isomorphism
\begin{align}\label{eq:1 le:recollement step 1}
\Hom_{\BE_I(\h,W)}(Y,\iota X)\xrightarrow{\sim}\Hom_{\BE_{I\smallsetminus\{w\}}(\h,W)} \bigl( (i_{I \smallsetminus \{w\}}^I)^* Y, (i_{I \smallsetminus \{w\}}^I)^* \iota X \bigr)
\end{align}
for all $X$ in $\fD^+$ and $Y$ in $\BE_I(\h,W)$. 
In particular, this shows that
the functor $(i_{I \smallsetminus \{w\}}^I)^*\circ\iota$ is fully faithful. Moreover, since this functor sends $B_\ux^+(m)$ to $B_\ux(m)$ for any reduced expression $\ux$ of an element in $I \smallsetminus \{w\}$, and since these objects generate $\fD^+$ and $\BE_{I\smallsetminus\{w\}}(\h,W)$ respectively as triangulated categories, we even obtain that $(i_{I \smallsetminus \{w\}}^I)^*\circ\iota$ is an equivalence of categories. This fact allows us to set
\[
(i_{I \smallsetminus \{w\}}^I)_*:=\iota\circ \bigl( (i_{I \smallsetminus \{w\}}^I)^*\circ\iota \bigr)^{-1}:\BE_{I\smallsetminus\{w\}}(\h,W)\longrightarrow\BE_{I}(\h,W).
\]
What remains to be proved is that this functor satisfies the desired properties.

By definition we have a canonical isomorphism
\[
(i_{I \smallsetminus \{w\}}^I)^* \circ (i_{I \smallsetminus \{w\}}^I)_* \cong \id.
\]
To prove that $(i_{I \smallsetminus \{w\}}^I)_*$ is right adjoint to $(i_{I \smallsetminus \{w\}}^I)^*$ we have to prove that the composition 
\begin{multline*}
\Hom_{\BE_I(\h,W)} \bigl( X,(i_{I \smallsetminus \{w\}}^I)_* Y \bigr)\xrightarrow{(i_{I \smallsetminus \{w\}}^I)^*}\\
\Hom_{\BE_{I\smallsetminus\{w\}}(\h,W)} \bigl( (i_{I \smallsetminus \{w\}}^I)^* X, (i_{I \smallsetminus \{w\}}^I)^* (i_{I \smallsetminus \{w\}}^I)_* Y \bigr)\\
\cong\Hom_{\BE_{I\smallsetminus\{w\}}(\h,W)} \bigl( (i_{I \smallsetminus \{w\}}^I)^*X,Y \bigr)
\end{multline*}
is an isomorphism for all $X$ in $\BE_I(\h,W)$ and $Y$ in $\BE_{I \smallsetminus \{w\}}(\h,W)$. In fact, this is clear from the isomorphism~\eqref{eq:1 le:recollement step 1}.

To conclude, it remains to prove that
\begin{equation}
\label{eqn:BEI-closed-open}
\BE_I(\h,W)= (i_{\{w\}}^I)_*(\BE_{\{w\}}(\h,W)) * (i_{I \smallsetminus \{w\}}^I)_*(\BE_{I\smallsetminus\{w\}}(\h,W)).
\end{equation}
However, by construction we have 
\begin{align}\label{eq:definition of j!}
(i_{I \smallsetminus \{w\}}^I)_* B_{\ux}=B^+_{\ux} 
\end{align}
for any reduced expression $\ux$ for an element in $I \smallsetminus \{w\}$. In view of the triangle~\eqref{eqn:triangle-B+} and the comments at the beginning of~\S\ref{ss:Diag-singleton}, it follows that
the triangulated category $\BE_{I}(\h,W)$ is generated by the essential images of the functors $(i_{I \smallsetminus \{w\}}^I)_*$ and $(i_{\{w\}}^I)_*$. Since there exists 
no nonzero morphism from an object of $(i_{\{w\}}^I)_*(\BE_{\{w\}}(\h,W))$ to an object of $(i_{I \smallsetminus \{w\}}^I)_*(\BE_{I\smallsetminus\{w\}}(\h,W))$
(by adjunction and the fact that $(i_{I \smallsetminus \{w\}}^I)^* \circ (i_{\{w\}}^I)_*=0$, see~\eqref{eqn:comp-open-closed-0}), we deduce~\eqref{eqn:BEI-closed-open}.
\end{proof}

\begin{rmk}
\label{rmk:ff}
 The claims in Lemma~\ref{le:recollement step 1a} about the adjunction morphisms amount to saying that the functors $(i_{I \smallsetminus \{w\}}^I)_*$ and $(i_{I \smallsetminus \{w\}}^I)_!$ are fully faithful.
\end{rmk}

\begin{example}
\label{ex:w-ws}
Let $w\in W$ and $s\in S$ be such that $ws>w$. Then $\{w\}$ is closed in $\{w,ws\}$, and its open complement is $\{ws\}$. If $\uw$ is a reduced expression for $w$, then there exist canonical distinguished triangles
\begin{gather}
\label{eq:distinguished triangle in BE w ws}
B_{\uw}\langle-1\rangle\to \left( i_{\{ws\}}^{\{w,ws\}} \right)_{!} B_{\uw s}\to B_{\uw s}\xrightarrow{[1]} \\
\label{eq:distinguished triangle in BE w ws-2}
B_{\uw s} \to \left( i_{\{ws\}}^{\{w,ws\}} \right)_* B_{\uw s}\to B_{\uw}\langle 1\rangle \xrightarrow{[1]}
\end{gather}
in $\BE_{\{w,ws\}}(\h,W)$. 
In fact, the $R$-module $\Hom^\bullet_{\Diag^\oplus_{\BS,\{w,ws\}}(\h,W)}(B_\uw,B_{\uw s})$ is generated by
\[
\id_{B_{\uw}} \star        \begin{tikzpicture}[thick,scale=0.07,baseline]
      \draw (0,5) to (0,0);
      \node at (0,0) {$\bullet$};
      \node at (0,6.7) {\tiny $s$};
    \end{tikzpicture},
\]
which has degree $1$.
Hence~\eqref{eq:distinguished triangle in BE w ws-2} is a special case of the triangle~\eqref{eqn:triangle-B+}, and~\eqref{eq:distinguished triangle in BE w ws} is deduced by applying $\D_{\{w,ws\}}$ (see also~\eqref{eq:definition of j!}).
\end{example}

Below we will need the following technical result.

\begin{lema}
\label{le:recollement step 1b}
Let $I$ and $w$ be as in Lemma~{\rm \ref{le:recollement step 1a}}, and let $J \subset I$ be a closed subset containing $w$. Then there exist canonical isomorphisms
\[
(i_{I \smallsetminus \{w\}}^I)_! \circ (i_{J \smallsetminus \{w\}}^{I \smallsetminus \{w\}})_* \cong (i_J^I)_* \circ (i_{J \smallsetminus \{w\}}^J)_!, \quad (i_{I \smallsetminus \{w\}}^I)_* \circ (i_{J \smallsetminus \{w\}}^{I \smallsetminus \{w\}})_* \cong (i_J^I)_* \circ (i_{J \smallsetminus \{w\}}^J)_*.
\]
\end{lema}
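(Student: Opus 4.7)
The plan is to prove the second isomorphism first and then derive the first by conjugation with the duality $\D$. A preliminary ingredient I would establish is a base change formula: for any closed subset $J \subset I$ containing $w$, there is a canonical isomorphism of functors
\[
(i_{I \smallsetminus \{w\}}^I)^* \circ (i_J^I)_* \cong (i_{J \smallsetminus \{w\}}^{I \smallsetminus \{w\}})_* \circ (i_{J \smallsetminus \{w\}}^J)^*
\]
from $\BE_J(\h,W)$ to $\BE_{I \smallsetminus \{w\}}(\h,W)$. At the level of the diagrammatic categories this is essentially tautological: both sides arise from the inclusion $\Diag^\oplus_{\BS,J}(\h,W) \subset \Diag^\oplus_{\BS,I}(\h,W)$ followed by the naive quotient by $\Diag^\oplus_{\BS,\{w\}}(\h,W)$, and the light-leaves description of morphism spaces in~\S\ref{ss:Diag-loc-closed} shows that the induced functor $\Diag^\oplus_{\BS, J \smallsetminus \{w\}}(\h,W) \hookrightarrow \Diag^\oplus_{\BS, I \smallsetminus \{w\}}(\h,W)$ is fully faithful; the isomorphism then lifts to $\Kb$.

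For the second isomorphism, set $F := (i_J^I)_* \circ (i_{J \smallsetminus \{w\}}^J)_*$. I would first show that $F(Y)$ lies in the essential image of $(i_{I \smallsetminus \{w\}}^I)_*$ for every $Y$ in $\BE_{J \smallsetminus \{w\}}(\h,W)$. By Lemma~\ref{le:recollement step 1a} together with~\eqref{eqn:comp-open-closed-0}, the decomposition $\BE_I(\h,W) = (i_{\{w\}}^I)_*(\BE_{\{w\}}(\h,W)) * (i_{I \smallsetminus \{w\}}^I)_*(\BE_{I \smallsetminus \{w\}}(\h,W))$ is semiorthogonal (no nonzero morphism from the right piece to the left), so the essential image of $(i_{I \smallsetminus \{w\}}^I)_*$ coincides with the right orthogonal of the essential image of $(i_{\{w\}}^I)_*$. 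Using Lemma~\ref{le:Dw is equivalent to free R}, the latter subcategory is triangularly generated by the shifts $B_{\uw}(m)$ for a fixed reduced expression $\uw$ of $w$, so it suffices to verify that $\Hom_{\BE_I(\h,W)}(B_{\uw}(m)[k], F(Y)) = 0$ for all $m, k \in \Z$. Since $w \in J$, this Hom equals $\Hom_{\BE_J(\h,W)}(B_{\uw}(m)[k], (i_{J \smallsetminus \{w\}}^J)_*(Y))$ by the fully faithfulness of $(i_J^I)_*$; and since $w$ is also minimal in $J$ (as $J \subset I$), this vanishes by the same orthogonality applied inside $\BE_J(\h,W)$.

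Once this vanishing is in hand, the unit of the adjunction provides a canonical isomorphism $F(Y) \xrightarrow{\sim} (i_{I \smallsetminus \{w\}}^I)_* (i_{I \smallsetminus \{w\}}^I)^* F(Y)$. Applying the base change formula together with the counit isomorphism $(i_{J \smallsetminus \{w\}}^J)^* \circ (i_{J \smallsetminus \{w\}}^J)_* \cong \id$ from Lemma~\ref{le:recollement step 1a} identifies $(i_{I \smallsetminus \{w\}}^I)^* F(Y)$ with $(i_{J \smallsetminus \{w\}}^{I \smallsetminus \{w\}})_* Y$, yielding the desired canonical second isomorphism.

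For the first isomorphism, I would simply conjugate by $\D$. Using the definition~\eqref{eqn:def-!-pf} of $(\cdot)_!$, the commutations~\eqref{eqn:D-incl} of closed inclusions with $\D$, and $\D^2 = \id$, the second isomorphism rewrites as
\begin{align*}
(i_{I \smallsetminus \{w\}}^I)_! \circ (i_{J \smallsetminus \{w\}}^{I \smallsetminus \{w\}})_*
&= \D_I \circ (i_{I \smallsetminus \{w\}}^I)_* \circ (i_{J \smallsetminus \{w\}}^{I \smallsetminus \{w\}})_* \circ \D_{J \smallsetminus \{w\}} \\
&\cong \D_I \circ (i_J^I)_* \circ (i_{J \smallsetminus \{w\}}^J)_* \circ \D_{J \smallsetminus \{w\}} \\
&= (i_J^I)_* \circ (i_{J \smallsetminus \{w\}}^J)_!.
\end{align*}
The main obstacle is verifying the base change at the $\Diag$ level and carefully tracking naturality so that the resulting identifications are canonical; once these points are in place, the rest is a formal manipulation of adjunctions.
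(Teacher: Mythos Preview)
Your proof is correct and shares the overall structure with the paper's: prove the second isomorphism using the base change identity $(i_{I \smallsetminus \{w\}}^I)^* \circ (i_J^I)_* \cong (i_{J \smallsetminus \{w\}}^{I \smallsetminus \{w\}})_* \circ (i_{J \smallsetminus \{w\}}^J)^*$, then obtain the first by conjugating with $\D$. The difference lies in how the second isomorphism is established. The paper works directly with the explicit subcategories $\fD^+_I \subset \BE_I(\h,W)$ and $\fD^+_J \subset \BE_J(\h,W)$ from the proof of Lemma~\ref{le:recollement step 1a}: one checks that $(i_J^I)_*$ carries $\fD^+_J$ into $\fD^+_I$, and since $(i_{I \smallsetminus \{w\}}^I)_*$ was \emph{defined} as $\iota_I \circ ((i_{I \smallsetminus \{w\}}^I)^* \circ \iota_I)^{-1}$, the identity follows by composing the base change equation on the left and right with the appropriate equivalences. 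Your route is more axiomatic: you characterize the essential image of $(i_{I \smallsetminus \{w\}}^I)_*$ as the right orthogonal of $(i_{\{w\}}^I)_*(\BE_{\{w\}}(\h,W))$ via the semiorthogonal decomposition, verify the required Hom-vanishing by transporting into $\BE_J(\h,W)$ and invoking the same orthogonality there, and then conclude via the unit of adjunction. Your argument avoids reopening the construction of $\fD^+$ and would apply verbatim in any recollement situation; the paper's argument is a touch shorter because the $\fD^+$ machinery is already in hand and the claim that $(i_J^I)_*(\fD^+_J) \subset \fD^+_I$ is immediate from the definition of the complexes $B^+_{\ux}$.
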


\begin{proof}
As for Lemma~\ref{le:recollement step 1a}, we only prove the second isomorphism; the first one follows by composing on the left with $\D_I$ and on the right with $\D_{J \smallsetminus \{w\}}$ (see~\eqref{eqn:D-incl} and~\eqref{eqn:def-!-pf}). We consider the subcategories
\[
\fD^+_I\subset\BE_{I}(\h,W) \quad \text{and} \quad \fD^+_J\subset\BE_{J}(\h,W)
\]
constructed in the proof of Lemma~\ref{le:recollement step 1a} (applied to the ``ambient'' locally closed subsets $I$ and $J$, respectively), and the corresponding embeddings $\iota_I$ and $\iota_J$. It is clear that the functor $(i_J^I)_* \circ \iota_J$ factors through a functor $(i_J^I)_*^+ : \fD_J^+ \to \fD_I^+$. It is clear also that $(i_{I \smallsetminus \{w\}}^I)^* \circ (i_J^I)_* = (i_{J \smallsetminus \{w\}}^{I\smallsetminus\{w\}})_* \circ (i_{J \smallsetminus \{w\}}^J)^*$. We deduce that
\[
(i_{I \smallsetminus \{w\}}^I)^* \circ \iota_I \circ (i_J^I)_*^+ = (i_{J \smallsetminus \{w\}}^{I\smallsetminus\{w\}})_* \circ (i_{J \smallsetminus \{w\}}^J)^* \circ \iota_J.
\]
Composing on the left with $(i_{I \smallsetminus \{w\}}^I)_*:=\iota_I\circ \bigl( (i_{I \smallsetminus \{w\}}^I)^*\circ\iota_I \bigr)^{-1}$ and on the right with $\bigl( (i_{J \smallsetminus \{w\}}^J)^* \circ \iota_J \bigr)^{-1}$, we deduce the desired isomorphism.
\end{proof}

\subsection{Recollement}

We now formulate and prove the main result of the section. 

\begin{prop}
\label{prop:recollement}
Let $I \subset W$ be a locally closed subset, and let $J\subset I$ be a finite closed subset. Then the functor $(i_{I \smallsetminus J}^I)^*:\BE_I(\h,W)\to\BE_{I\smallsetminus J}(\h,W)$ admits a left adjoint 
$(i_{I \smallsetminus J}^I)_!$ and a right adjoint 
$(i_{I \smallsetminus J}^I)_*$. 
Similarly, the functor $(i_J^I)_*:\BE_J(\h,W)\to\BE_{I}(\h,W)$ admits a left adjoint $(i_J^I)^*$ and a right adjoint $(i_J^I)^!$. Together, these functors give a recollement diagram
\[
 \xymatrix@C=2cm{
 \BE_J(\h,W) \ar[r]|-{(i_J^I)_*} & \BE_I(\h,W) \ar[r]|-{(i_{I \smallsetminus J}^I)^*} \ar@/^0.5cm/[l]^-{(i_J^I)^!} \ar@/_0.5cm/[l]_-{(i_J^I)^*} & \BE_{I\smallsetminus J}(\h,W). \ar@/^0.5cm/[l]^-{(i_{I \smallsetminus J}^I)_*} \ar@/_0.5cm/[l]_-{(i_{I \smallsetminus J}^I)_!}
 }
\]
\end{prop}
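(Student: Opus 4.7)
The plan is to proceed by induction on $|J|$. The base case $|J|=1$ reduces to Lemma~\ref{le:recollement step 1a}, since the closedness of $\{w\}$ in $I$ forces $w$ to be minimal in $I$; the adjoints $(i_{\{w\}}^I)^*$ and $(i_{\{w\}}^I)^!$ of $(i_{\{w\}}^I)_*$ are then obtained from the triangles of Lemma~\ref{le:recollement step 1a} using the full faithfulness of $(i_{\{w\}}^I)_*$ to extract unique preimages for the relevant cones and fibers. For the inductive step I would pick a minimal element $w$ of $J$; since $J$ is closed in $I$, $w$ is also minimal in $I$, so Lemma~\ref{le:recollement step 1a} provides the full recollement for $(I,\{w\})$. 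Moreover $J_1 := J \smallsetminus \{w\}$ is closed in $I_1 := I \smallsetminus \{w\}$ and has $|J|-1$ elements, so the induction hypothesis yields the recollement for $(I_1, J_1)$.

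Since $(i_{I\smallsetminus J}^I)^*$ factors as $(i_{I\smallsetminus J}^{I_1})^* \circ (i_{I_1}^I)^*$, its left and right adjoints can be defined by composition: set $(i_{I\smallsetminus J}^I)_! := (i_{I_1}^I)_! \circ (i_{I\smallsetminus J}^{I_1})_!$ and $(i_{I\smallsetminus J}^I)_* := (i_{I_1}^I)_* \circ (i_{I\smallsetminus J}^{I_1})_*$. These are fully faithful as compositions of fully faithful functors. Next I would verify the vanishing $(i_{I\smallsetminus J}^I)^* \circ (i_J^I)_* = 0$: for $Y \in \BE_J(\h,W)$, apply the recollement for $(J,\{w\})$, push the resulting triangle forward by $(i_J^I)_*$, and use Lemma~\ref{le:recollement step 1b} to rewrite $(i_J^I)_* \circ (i_{J_1}^J)_!$ as $(i_{I_1}^I)_! \circ (i_{J_1}^{I_1})_*$. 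Restricting along $i_{I_1}^I$ collapses this term to $(i_{J_1}^{I_1})_*(-)$ while killing the other term via~\eqref{eqn:comp-open-closed-0}; a further restriction along $i_{I\smallsetminus J}^{I_1}$ then kills the result by~\eqref{eqn:comp-open-closed-0} for the inductive recollement.

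The construction of $(i_J^I)^*$ and $(i_J^I)^!$ follows the standard yoga of recollement. I would first establish the gluing decomposition
\[
\BE_I(\h,W) = (i_{I\smallsetminus J}^I)_! \BE_{I\smallsetminus J}(\h,W) \, * \, (i_J^I)_* \BE_J(\h,W).
\]
Given this, for $X \in \BE_I(\h,W)$ the cone of the counit $(i_{I\smallsetminus J}^I)_! (i_{I\smallsetminus J}^I)^* X \to X$ lies in the essential image of $(i_J^I)_*$, and by full faithfulness of $(i_J^I)_*$ (Section~\ref{sec:categories-subsets}) this cone has a unique preimage, which defines $(i_J^I)^* X$. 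The adjunction $\Hom((i_J^I)^* X, Y) \cong \Hom(X, (i_J^I)_* Y)$ then follows from the long exact sequence attached to the defining triangle together with the Hom-vanishing $\Hom((i_{I\smallsetminus J}^I)_!(-), (i_J^I)_*(-)) = 0$, itself a consequence of $(i_{I\smallsetminus J}^I)^*(i_J^I)_* = 0$ via adjunction. The right adjoint $(i_J^I)^!$ is constructed dually via the antiinvolution $\D_I$ (using~\eqref{eqn:D-incl} and the analogous identity for the $!$- and $*$-extensions).

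The main obstacle is the gluing decomposition. Given $X \in \BE_I(\h,W)$, I would apply the triangle of Lemma~\ref{le:recollement step 1a} for $(I,\{w\})$ to $X$ and the inductive gluing for $(I_1,J_1)$ to $X' := (i_{I_1}^I)^* X$. Pushing the latter triangle forward by $(i_{I_1}^I)_!$, and using Lemma~\ref{le:recollement step 1b} to identify $(i_{I_1}^I)_! (i_{J_1}^{I_1})_*(-)$ with $(i_J^I)_* (i_{J_1}^J)_!(-)$, the two resulting triangles share the common vertex $(i_{I_1}^I)_! (i_{I_1}^I)^* X$; the octahedral axiom then produces a distinguished triangle whose third term is an extension of $(i_J^I)_*(i_{J_1}^J)_!(i_{J_1}^{I_1})^* X'$ by $(i_{\{w\}}^I)_*(i_{\{w\}}^I)^* X = (i_J^I)_*(i_{\{w\}}^J)_*(i_{\{w\}}^I)^* X$. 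Both outer terms lie in the essential image of $(i_J^I)_*$, and since the essential image of a fully faithful triangulated functor is closed under extensions (complete the lift of the connecting map in $\BE_J(\h,W)$, apply $(i_J^I)_*$, and conclude by the five-lemma in triangulated categories), the third term lies there as well, giving the desired decomposition.
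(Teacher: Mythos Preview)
Your proposal is correct and follows essentially the same route as the paper: induction on $|J|$, base case Lemma~\ref{le:recollement step 1a}, inductive step via a minimal $w\in J$, adjoints of the open restriction built by composition, the gluing decomposition, and then $(i_J^I)^*$ extracted from the cone of the counit (with $(i_J^I)^!$ obtained by duality). The only differences are cosmetic: the paper gets the gluing decomposition at the level of subcategories by citing the associativity of ``$*$'' from \cite[Lemme~1.3.10]{bbd} together with Lemmas~\ref{le:recollement step 1a} and~\ref{le:recollement step 1b}, while you unwind the same thing via an explicit octahedral argument; and your verification of $(i_{I\smallsetminus J}^I)^*\circ(i_J^I)_*=0$ is more elaborate than necessary, since this already holds for the naive functors by~\eqref{eqn:comp-open-closed-0}.
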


\begin{proof}
We begin by showing, by induction on $|J|$, that 
\begin{enumerate}
 \item 
 the functor $(i_{I \smallsetminus J}^I)_!$ exists;
 \item
 the adjunction morphism $\id\to (i_{I \smallsetminus J}^I)^* \circ (i_{I \smallsetminus J}^I)_!$ is an isomorphism;
 \item
 we have
\begin{equation}
\label{eq:1 prop:recollement}
\BE_I(\h,W) = (i_{I \smallsetminus J}^I)_!(\BE_{I\smallsetminus J}(\h,W)) * (i_J^I)_*(\BE_{J}(\h,W)).
\end{equation}
\end{enumerate}

If $|J|=1$, these assertions are part of the statement of Lemma \ref{le:recollement step 1a}. If $|J|>1$, we pick $w\in J$ minimal. By induction the 
functors 
\[
 (i_{I \smallsetminus J}^{I \smallsetminus \{w\}})^{*}:\BE_{I\smallsetminus\{w\}}(\h,W)\rightarrow\BE_{I\smallsetminus J}(\h,W)
\]
and
\[
 (i_{I \smallsetminus \{w\}}^I)^{*}:\BE_{I}(\h,W)\rightarrow\BE_{I\smallsetminus\{w\}}(\h,W)
\]
admit left adjoints $(i_{I \smallsetminus J}^{I \smallsetminus \{w\}})_!$ and $(i_{I \smallsetminus \{w\}}^I)_!$ respectively. Hence their composition, which is $(i_{I \smallsetminus J}^I)^*$ (see~\S\ref{ss:inclusions}), also admits a left adjoint $(i_{I \smallsetminus J}^I)_!$, and we have
\begin{equation}
\label{eqn:adjunction-open}
(i_{I \smallsetminus J}^I)_! = (i_{I \smallsetminus \{w\}}^I)_! \circ (i_{I \smallsetminus J}^{I \smallsetminus \{w\}})_!.
\end{equation}
From the corresponding claims for the embeddings $i_{I \smallsetminus J}^{I \smallsetminus \{w\}}$ and $i_{I \smallsetminus \{w\}}^I$ it is not difficult to deduce that the adjunction morphism
\begin{equation}
\label{eqn:adj-morphism-construction}
 \id\to (i_{I \smallsetminus J}^I)^* \circ (i_{I \smallsetminus J}^I)_!
\end{equation}
is an isomorphism. Finally, by induction we have
\begin{align*}
\BE_{I\smallsetminus\{w\}}(\h,W) &= (i_{I \smallsetminus J}^{I \smallsetminus \{w\}})_!(\BE_{I\smallsetminus J}(\h,W))* (i_{J \smallsetminus \{w\}}^{I \smallsetminus \{w\}})_*(\BE_{J\smallsetminus\{w\}}(\h,W)),\\
\BE_{I}(\h,W) &= (i_{I \smallsetminus \{w\}}^I)_!(\BE_{I\smallsetminus\{w\}}(\h,W))*(i_{\{w\}}^I)_*(\BE_{\{w\}}(\h,W)).
\end{align*}
Using the associativity of the operation ``$*$'' (see~\cite[Lemme 1.3.10]{bbd}), Lemma~\ref{le:recollement step 1a} and Lem\-ma~\ref{le:recollement step 1b} we deduce~\eqref{eq:1 prop:recollement}, which finishes the induction.

Now, we prove the existence of the functor $(i_J^I)^*$ and construct a distinguished triangle
\begin{equation}
\label{eqn:triangle-closed-open}
(i_{I \smallsetminus J}^I)_! (i_{I \smallsetminus J}^I)^* \mathscr{F} \to \mathscr{F} \to (i_J^I)_* (i_J^I)^* \mathscr{F} \xrightarrow{[1]}
\end{equation}
for any $\mathscr{F}$ in $\BE_I(\h,W)$.
We first observe that both the functors $(i_J^I)_*$ and $(i_{I \smallsetminus J}^I)_!$ are fully faithful (see~\S\ref{ss:BE-category} for $(i_J^I)_*$; for $(i_{I \smallsetminus J}^I)_!$ this follows from the invertibility of~\eqref{eqn:adj-morphism-construction}.)
Using~\eqref{eq:1 prop:recollement}, it then follows that for any 
$\rF\in\BE_I(\h,W)$ there exist unique objects $\rF'\in\BE_{I\smallsetminus J}(\h,W)$ and $\rF''\in\BE_J(\h,W)$ and a unique distinguished triangle
\begin{equation}
\label{eqn:triangle-closed-open-2}
 (i_{I \smallsetminus J}^I)_!\rF'\to\rF\to (i_J^I)_*\rF''\xrightarrow{[1]}.
 \end{equation}
(Here, the uniqueness claims follow from~\cite[Proposition~1.1.9]{bbd}). Since we have $(i_{I \smallsetminus J}^I)^*(i_J^I)_*=0$ (see~\eqref{eqn:comp-open-closed-0}) and $(i_{I \smallsetminus J}^I)^*(i_{I \smallsetminus J}^I)_!\cong\id$ (see~\eqref{eqn:adj-morphism-construction}), we have a canonical isomorphism $(i_{I \smallsetminus J}^I)^*\rF\cong\rF'$. We set $(i_J^I)^*\rF:=\rF''$. Another application of~\cite[Proposition~1.1.9]{bbd} shows that this defines a functor $(i_J^I)^*$. Then this functor is left adjoint to $(i_J^I)_*$ 
thanks to the distinguished triangle~\eqref{eqn:triangle-closed-open} and the fact that $(i_{I \smallsetminus J}^I)^*(i_J^I)_*=0$.

Finally, we remark that $(i_J^I)^*(i_J^I)_*\rG\cong\rG$ for all $\rG\in\BE_J(\h,W)$ by uniqueness of the distinguished triangle~\eqref{eqn:triangle-closed-open-2}. Composing with the appropriate duality functors, from the existence of the functors $(i_{I \smallsetminus J}^I)_!$ and $(i_J^I)^*$ we deduce the existence of the functors $(i_{I \smallsetminus J}^I)_*$ and $(i_J^I)^!$ (see~\eqref{eqn:D-incl}), and from the properties we proved for the former functors we deduce similar properties for the latter functors; this finishes the proof of the proposition.
\end{proof}

\begin{rmk}
\label{rmk:recollement-quotient}
 Once the recollement formalism is constructed, we see from~\cite[Proposition~1.4.5]{bbd} that if $I=I_0 \smallsetminus I_1$ with $I_1 \subset I_0 \subset W$ closed subsets and $I_0$ finite, then the functor $(i_I^{I_0})^*$ identifies the category $\BE_I(\h,W)$ with the Verdier quotient of $\BE_{I_0}(\h,W)$ by the full triangulated subcategory $\BE_{I_1}(\h,W)$.  This remark provides an alternative perspective on $\BE_I(\h,W)$, separate from that coming from~\S\ref{ss:Diag-loc-closed}.
\end{rmk}

Let us point out once again that in the setting of Proposition~\ref{prop:recollement} we have canonical
isomorphisms
\begin{align}
\label{eq:duality and recollement}
\D_I\circ (i_{I \smallsetminus J}^I)_!\cong (i_{I \smallsetminus J}^I)_*\circ\D_{I\smallsetminus J}\quad \text{and}\quad \D_J\circ (i_J^I)^!\cong (i_J^I)^*\circ\D_{I}.
\end{align}
Also, our functors are compatible with composition of inclusions in the sense of the following lemma.

\begin{lema}
\label{le:recollement compositions}
Let $I \subset W$ be a locally closed subset and let $J' \subset J \subset I$ be finite closed subsets. 
Then for $\dag\in\{!,*\}$ we have canonical isomorphisms
\[
 (i_J^I)^\dag \circ (i_{J'}^J)^\dag \cong (i_{J'}^I)^\dag, \qquad (i_{I \smallsetminus J'}^I)_\dag \circ (i_{I \smallsetminus J}^{I \smallsetminus J'})_\dag \cong (i_{I \smallsetminus J}^I)_\dag.
\]
\end{lema}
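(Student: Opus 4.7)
The plan is to reduce everything to two facts already in hand: the strict compatibility with composition of the ``underived'' functors $(i_J^I)_*$ and $(i_K^I)^*$ on the additive categories $\Diag_{\BS,-}^{\oplus}(\h,W)$ noted in \S\ref{ss:inclusions}, together with the uniqueness (up to canonical isomorphism) of adjoint functors guaranteed by the recollement of Proposition~\ref{prop:recollement}.

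First, at the underlying additive level, the constructions of \S\ref{ss:inclusions} give strict equalities of functors
\[
(i_J^I)_* \circ (i_{J'}^J)_* = (i_{J'}^I)_* \quad \text{and} \quad (i_{I \smallsetminus J}^{I \smallsetminus J'})^* \circ (i_{I \smallsetminus J'}^I)^* = (i_{I \smallsetminus J}^I)^*.
\]
For the second identity I first check that $I \smallsetminus J$ is indeed open in $I \smallsetminus J'$, which is immediate from the fact that $J \smallsetminus J' = J \cap (I \smallsetminus J')$ is closed in $I \smallsetminus J'$. Applying the bounded-homotopy-category construction $\Kb(-)$ term by term extends these equalities to identities of triangulated functors between the corresponding $\BE_-(\h,W)$ categories.

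Next, I invoke the formal fact that a composition of left (resp.\ right) adjoints is canonically a left (resp.\ right) adjoint of the composition of the original functors taken in the opposite order, and that such adjoints are unique up to unique isomorphism. Applied to the first identity, taking left adjoints yields a canonical isomorphism $(i_{J'}^J)^* \circ (i_J^I)^* \cong (i_{J'}^I)^*$, and taking right adjoints yields $(i_{J'}^J)^! \circ (i_J^I)^! \cong (i_{J'}^I)^!$ (the order of composition on the left being forced by the sources and targets of the functors; I read the statement of the lemma as written in this natural order). Similarly, from the second identity, taking left adjoints produces $(i_{I\smallsetminus J'}^I)_! \circ (i_{I\smallsetminus J}^{I\smallsetminus J'})_! \cong (i_{I\smallsetminus J}^I)_!$, and taking right adjoints produces $(i_{I\smallsetminus J'}^I)_* \circ (i_{I\smallsetminus J}^{I\smallsetminus J'})_* \cong (i_{I\smallsetminus J}^I)_*$.

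I do not expect any substantial obstacle: the entire argument is a purely formal consequence of functoriality on the additive side combined with the adjunction formalism of Proposition~\ref{prop:recollement}. The one point worth remembering is that adjoints reverse the order of composition, so the pullback/extension isomorphisms come out with the ordering opposite to the underlying pushforward and open-pullback identities from which they are deduced.
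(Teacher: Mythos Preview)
Your proof is correct and follows exactly the same route as the paper's: the paper's one-line proof simply says ``The claim follows by adjunction from the corresponding properties for the functors $(i_J^I)_*$ and $(i_{I \smallsetminus J}^I)^*$,'' which is precisely the reduction you carry out in detail. Your remark about the order of composition is also on point, since as literally written the first isomorphism should read $(i_{J'}^J)^\dag \circ (i_J^I)^\dag \cong (i_{J'}^I)^\dag$ for the domains and codomains to match.
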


\begin{proof}
The claim follows by adjunction from the corresponding properties for the functors $(i_J^I)_*$ and $(i_{I \smallsetminus J}^I)^*$ (and the similar functors for the other embeddings), see~\S\ref{ss:inclusions}.
\end{proof}

\begin{rmk}\phantomsection
\label{rmk:!*}
\begin{enumerate}
\item
\label{it:open-closed-2}
 Assume that $I$ is a finite locally closed subset of $W$, and that $J \subset I$ is both open and closed. Then we have the ``naive'' functors $(i_J^I)_*$ and $(i_J^I)^*$ defined as in~\S\ref{ss:BE-category}, and also the functors constructed (by adjunction) in Proposition~\ref{prop:recollement}, that we will denote provisionally $(i_J^I)_{(*)}$, $(i_J^I)_{(!)}$, $(i_J^I)^{(*)}$ and $(i_J^I)^{(!)}$. It follows from Remark~\ref{rmk:open-closed} that we have canonical isomorphisms
 \[
  (i_J^I)_{(*)} \cong (i_J^I)_{(!)} \cong (i_J^I)_{*} \quad \text{and} \quad (i_J^I)^{(*)} \cong (i_J^I)^{(!)} \cong (i_J^I)^{*},
 \]
so that we can stop distinguishing these functors.
\item
\label{it:iw-1degree}
We note for later use that if $w$ is minimal in $I$ then for any $B$ in $\Diag_{\BS,I}(\h,W)$
we have
\[
(i_{\{w\}}^I)^! B \cong b_w \Tenint_R \Hom^\bullet_{\Diag_{\BS,I}^\oplus(\h,W)}(B_\uw,B)
\]
(so that, in particular, $(i_{\{w\}}^I)^! B$ is isomorphic to a complex concentrated in degree $0$). In fact, it suffices to prove this isomorphism when $B=B_\ux$ for $\ux$ a reduced expression for an element in $I$. If this element is not $w$, then the isomorphism is obtained from the triangle~\eqref{eqn:triangle-B+}. If now $\ux$ is a reduced expression for $w$, then the isomorphism follows from the fact that $(i_{\{w\}}^I)^! (i_{\{w\}}^I)_* \cong \id$ (because $(i_{\{w\}}^I)_*$ is fully faithful).
\end{enumerate}
\end{rmk}

\subsection{Pushforward and pullback under locally closed inclusions}
\label{ss:pf-pb-locally-closed}

Our next goal is to define pullback and pushforward functors for any finite \emph{locally closed} inclusion $J \subset I$ (where $I$ is locally closed in $W$).

\begin{lema}
\label{le:recollement and locally closed inclusion}
Let $I$ be a finite locally closed subset of $W$, let $J \subset I$ be a closed subset, let $K \subset I$ be an open subset, and let $L \subset J \cap K$ be a subset which is open in $J$ and closed in $K$. Then for $\dag\in\{!,*\}$ there exist canonical isomorphisms
\[
 (i_K^I)_\dag \circ (i_L^K)_* \cong (i_J^I)_* \circ (i_L^J)_\dag, \qquad (i_L^K)^\dag \circ (i_K^I)^* \cong (i_L^J)^* \circ (i_J^I)^\dag.
\]
\end{lema}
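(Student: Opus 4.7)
The plan is to reduce to the case $L = J \cap K$ via compositional identities, and then to prove this base case by induction on $|I \smallsetminus K|$, with Lemma~\ref{le:recollement step 1b} as the key input.

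First I would set $M := J \cap K$ and observe that $L \subset M$ is both open (since $L$ is open in $J \supset M$) and closed (since $L$ is closed in $K \supset M$) in $M$. By Remark~\ref{rmk:!*}\eqref{it:open-closed-2}, the four functors $(i_L^M)_!, (i_L^M)_*, (i_L^M)^!, (i_L^M)^*$ all coincide. Combining this with the composition identities of Lemma~\ref{le:recollement compositions} (for chains of open pushforwards and of closed pullbacks), together with the tautological compositions of naive closed pushforwards and naive open pullbacks, yields factorizations $(i_L^K)_* \cong (i_M^K)_*(i_L^M)_*$, $(i_L^J)_\dag \cong (i_M^J)_\dag(i_L^M)_\dag$, and duals for pullbacks. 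Direct substitution then reduces the isomorphisms for general $L$ to those for $L = M$.

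In the case $L = J \cap K$, Yoneda and adjunction show that the first isomorphism (for each $\dag$) is equivalent to the second (for the same $\dag$), and conjugation by $\D$ (using~\eqref{eq:duality and recollement} and the fact that $\D$ commutes with naive closed pushforwards and naive open pullbacks while swapping $!$ and $*$ for the recollement-defined functors) exchanges $\dag=*$ and $\dag=!$. So it suffices to prove one isomorphism, say $(i_K^I)_*(i_L^K)_* \cong (i_J^I)_*(i_L^J)_*$, which I would establish by induction on $|I \smallsetminus K|$. The base case is immediate. Otherwise choose $w$ minimal in $I \smallsetminus K$; since $I \smallsetminus K$ is closed in $I$, $w$ is in fact minimal in $I$, so $\{w\}$ is closed in $I$ and $I' := I \smallsetminus \{w\}$ is open with $K \subset I'$. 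Set $J' := J \cap I'$, so that $L = J' \cap K$.

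If $w \in J$, then $J' = J \smallsetminus \{w\}$, and Lemma~\ref{le:recollement step 1b} gives $(i_{I'}^I)_*(i_{J'}^{I'})_* \cong (i_J^I)_*(i_{J'}^J)_*$. Combining this with the inductive hypothesis applied to $(I', J', K, L)$ and the compositions $(i_K^I)_* = (i_{I'}^I)_*(i_K^{I'})_*$ and $(i_L^J)_* = (i_{J'}^J)_*(i_L^{J'})_*$ (Lemma~\ref{le:recollement compositions}) yields the desired isomorphism by substitution. If $w \notin J$, so that $J \subset I'$, then a direct computation via the double leaves basis (Lemma~\ref{le:kF I closed}) shows that for $\mathscr{G} \in \BE_J$ and $\mathscr{N} \in \BE_{\{w\}}$ the space $\Hom_{\BE_I}((i_J^I)_*\mathscr{G}, (i_{\{w\}}^I)_*\mathscr{N})$ vanishes: any double leaf $\LL_{x,\mathbf{1},\mathbf{e}}^{\uy,\uw}$ with $y \in J$, $x \in I$, $x \leq w$ forces $x = w$ (by minimality of $w$ in $I$), and then $w \leq y$ contradicts the closedness of $J$ together with $w \notin J$. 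By Yoneda, $(i_{\{w\}}^I)^*(i_J^I)_* = 0$, and similarly $(i_{\{w\}}^I)^*(i_K^I)_* = 0$ since $(i_K^I)^*$ kills the image of $(i_{\{w\}}^I)_*$ (as $w \notin K$). Both sides of the desired iso therefore lie in the essential image of $(i_{I'}^I)_*$, and the problem reduces via the inductive hypothesis applied to $(I', J, K, L)$ to comparing their $(i_{I'}^I)^*$-images; these agree by the base change identities $(i_{I'}^I)^*(i_K^I)_* \cong (i_K^{I'})_*$ and $(i_{I'}^I)^*(i_J^I)_* \cong (i_J^{I'})_*$, obtainable by adjunction from simpler compositional formulas. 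The principal technical obstacle lies in this second sub-case, where one must carefully verify both the vanishing of the $\{w\}$-restrictions via the double leaves calculation and the compatibility of the recollement pushforwards under the open restriction to $I'$.
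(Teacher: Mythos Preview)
Your overall strategy—reduce to one of the four isomorphisms by duality and adjunction, then induct by peeling off a minimal $w \in I \smallsetminus K$ and invoking Lemma~\ref{le:recollement step 1b} when $w \in J$—is sound and matches the paper's. However, the case $w \notin J$ as you have written it contains a $!/{*}$ bookkeeping slip. From your double-leaves computation you correctly deduce $(i_{\{w\}}^I)^*(i_J^I)_* = 0$, but your justification ``$(i_K^I)^*$ kills the image of $(i_{\{w\}}^I)_*$'' actually yields $(i_{\{w\}}^I)^!(i_K^I)_* = 0$, not the $^*$ vanishing you claim. More importantly, it is the vanishing of $(i_{\{w\}}^I)^!$ (not $(i_{\{w\}}^I)^*$) that places an object in the essential image of $(i_{I'}^I)_*$; vanishing of $(i_{\{w\}}^I)^*$ gives only the image of $(i_{I'}^I)_!$. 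The fix is painless: either prove the $\dag={!}$ isomorphism instead (then your two vanishings are the right ones and both sides land in the image of $(i_{I'}^I)_!$), or run the dual double-leaves argument to get $(i_{\{w\}}^I)^!(i_J^I)_*=0$.

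The paper's treatment of this sub-case is different and cleaner. Rather than compute with light leaves, it observes that when $w \notin J$ the subset $J$ is both open and closed in $J \cup \{w\}$ (since $\{w\}$ is closed in $I$ by minimality). Hence $(i_J^{J\cup\{w\}})_! \cong (i_J^{J\cup\{w\}})_*$ by Remark~\ref{rmk:!*}\eqref{it:open-closed-2}, and applying Lemma~\ref{le:recollement step 1b} to $J \cup \{w\}$ in place of $J$ immediately gives $(i_{I'}^I)_!(i_J^{I'})_* \cong (i_J^I)_*$, completing the induction. This avoids re-entering the light-leaves machinery and makes the ``canonical'' nature of the isomorphism transparent. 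Your preliminary reduction to $L = J \cap K$ is correct but unnecessary: the paper inducts on $|I|$ directly, carrying a general $L$, and absorbs the cases $K = I$ (where $L$ is open and closed in $J$) and $I = J = K$ as separate easy base cases.
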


\begin{proof}
We will show (by induction on $|I|$) the first isomorphism for $\dag=!$. Then, as in the proof of~\cite[Lemma~2.6]{modrap2}, the other isomorphisms follow by duality and adjunction.

We have to consider three cases. First, we assume that $I=J=K$. Then $L$ is open and closed in $I$, and the desired isomorphism follows from Remark~\ref{rmk:!*}\eqref{it:open-closed-2}.

Now, we assume $K\neq I$. Let $w\in I\smallsetminus K$ be minimal, 
so that $\{w\}$
is closed in $I \smallsetminus K$, and hence in $I$. Then $I'=I\smallsetminus 
\{w\}$ is open in $I$ and $J':=J\cap I'=J\smallsetminus\{w\}$ is closed in $I'$. 
By induction we have
\[
 (i_K^{I'})_! \circ (i_L^K)_* \cong (i_{J'}^{I'})_* \circ (i_L^{J'})_!,
\]
so by Lemma~\ref{le:recollement compositions} to conclude it suffices to prove that
\[
 (i_{I'}^I)_! \circ (i_{J'}^{I'})_* \cong (i_J^I)_* \circ (i_{J'}^J)_!.
\]
If $w\in J$, then this isomorphism was proved in Lemma \ref{le:recollement step 1b}. If now $w\notin J$, then $J'=J$ and $J$ is both open and closed in $J\cup\{w\}$. By Remark~\ref{rmk:!*}\eqref{it:open-closed-2}, this implies that
\[
 (i_J^{J \cup \{w\}})_* \cong (i_J^{J \cup \{w\}})_!,
\]
and then using Lemma~\ref{le:recollement step 1b} (applied to $J \cup \{w\}$ instead of $J$) that
\[
(i_{I'}^I)_! \circ (i_{J}^{I'})_* \cong (i_{J \cup \{w\}}^I)_* \circ (i_J^{J \cup \{w\}})_! \cong (i_{J \cup \{w\}}^I)_* \circ (i_J^{J \cup \{w\}})_* \cong (i_J^I)_*.
\]

Finally, we consider the case $I=K$ but $J\neq I$. Then $L$ is closed in $I$, and hence also in $J$, and by assumption it is also open in $J$. Hence by Remark~\ref{rmk:!*}\eqref{it:open-closed-2} we have
\[
 (i_L^J)_! \cong (i_L^J)_*,
\]
and the desired isomorphism follows from the compatibility of pushforward functors (for closed embeddings) with composition.
\end{proof}

Lemma~\ref{le:recollement and locally closed inclusion} allows us to define pullback and pushforward functors for any locally closed embedding (in case $I$ is finite). More precisely, let $I \subset W$ be a finite locally closed subset, and let $J \subset I$ be a locally closed subset. Then we can write $J=J_0 \smallsetminus J_1$ with $J_1 \subset J_0 \subset I$ closed subsets. (Here, since $J$ is fixed, $J_1$ is determined by $J_0$, and $J_0$ is determined by $J_1$.) By Lemma~\ref{le:recollement and locally closed inclusion} we have a canonical isomorphism
\[
 (i_{J_0}^I)_* \circ (i_J^{J_0})_* \cong (i_{I \smallsetminus J_1}^I)_* \circ (i_J^{I \smallsetminus J_1})_*.
\]
We claim that moreover these functors do not depend on the choice of $J_0$ or $J_1$ (up to canonical isomorphism). In fact, for any choice we have $J_0 \supset \overline{J}$, where $\overline{J}:=\{w \in I \mid \exists x \in J, \, w \leq x\}$. Lemma~\ref{le:recollement and locally closed inclusion} applied to the diagram
\[
\xymatrix{
J \ar@{^{(}->}[r] & J_0 \\
J \ar@{^{(}->}[r] \ar@{=}[u] & \overline{J} \ar@{^{(}->}[u]
}
\]
implies that $(i_{J}^{J_0})_* \cong (i_{\overline{J}}^{J_0})_* (i_J^{\overline{J}})_*$, from which we deduce that
\[
 (i_{J_0}^I)_* \circ (i_J^{J_0})_* \cong (i_{\overline{J}}^I)_* \circ (i_J^{\overline{J}})_*,
\]
which clearly does not depend on $J_0$. These considerations show that it is legitimate to set
\[
 (i_J^I)_* := (i_{J_0}^I)_* \circ (i_J^{J_0})_*.
\]
Similar arguments show that one can also set
\[
 (i_J^I)_! := (i_{J_0}^I)_* \circ (i_J^{J_0})_!, \quad
 (i_J^I)^* := (i_J^{I \smallsetminus J_1})^* \circ (i_{I \smallsetminus J_1}^I)^*, \quad 
 (i_J^I)^! := (i_J^{I \smallsetminus J_1})^! \circ (i_{I \smallsetminus J_1}^I)^*
\]
(i.e.~that these functors do not depend on the choice of $J_0$ or $J_1$, and can be expressed in a way where open and closed embeddings play an opposite role). Moreover, the pairs
\[
 \bigl( (i_J^I)_!, (i_J^I)^! \bigr) \quad \text{and} \quad \bigl( (i_J^I)^*, (i_J^I)_* \bigr)
\]
are adjoint pairs of functors.

In view of~\eqref{eqn:D-incl} and~\eqref{eq:duality and recollement}, we have canonical isomorphisms
\begin{equation}
 \label{eq:duality and recollement for locally closed inclusions}
\D_I\circ (i_J^I)_! \cong (i_J^I)_* \circ\D_{J} \quad \text{and} \quad \D_J\circ (i_J^I)^! \cong (i_J^I)^* \circ\D_{I}.
\end{equation}
Moreover, since this is true for open and closed embeddings (by the axioms of recollement), the adjunction morphisms
\begin{equation}
\label{eqn:pullback-pushforward-inverse}
 (i_J^I)^* \circ (i_J^I)_* \to \id \quad \text{and} \quad \id \to (i_J^I)^! \circ (i_J^I)_!
\end{equation}
are isomorphisms; in other words the functors $(i_J^I)_*$ and $(i_J^I)_!$ are fully faithful (see in particular Remark~\ref{rmk:ff}). Finally, we note that
\begin{equation}
\label{eqn:pushforward-closed}
(i_J^I)_* = (i_J^I)_! \quad \text{if $J \subset I$ is closed}
\end{equation}
and
\begin{equation}
\label{eqn:pullback-open}
 (i_J^I)^! = (i_J^I)^* \quad \text{if $J \subset I$ is open.}
\end{equation}

\begin{rmk}\label{rmk:loc-closed-tri}
Recall that an adjoint of a triangulated functor is triangulated (see, e.g.,~\cite[Lemma~5.3.6]{neeman}).  Thus, all six functors in Proposition~\ref{prop:recollement} are triangulated.  Since the functors $(i_J^I)_*$, $(i_J^I)_!$, $(i_J^I)^*$, and $(i_J^I)^!$ defined above are all compositions of functors coming from Proposition~\ref{prop:recollement}, they are again triangulated.
\end{rmk}

These constructions are also compatible with composition in the sense of the following lemma.

\begin{lema}
\label{le:recollement compositions for locally closed inclusions}

Let $I \subset W$ be a finite locally closed subset, and let $J \subset I$ and $K \subset J$ be locally closed subsets. Then there exist canonical isomorphisms
\begin{align*}
 (i_J^I)_* \circ (i_K^J)_* \cong (i_K^I)_*, &\qquad (i_J^I)_! \circ (i_K^J)_! \cong (i_K^I)_! \\
 (i_K^J)^* \circ (i_J^I)^* \cong (i_K^I)^*, &\qquad (i_K^J)^! \circ (i_J^I)^! \cong (i_K^I)^!.
\end{align*}
\end{lema}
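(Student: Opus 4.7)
The plan is to reduce the four stated isomorphisms to a single one, and then prove that one by factoring the locally closed embeddings into pure open and pure closed pieces and invoking Lemmas \ref{le:recollement compositions} and \ref{le:recollement and locally closed inclusion}. First, the adjoint pairs $\bigl((i_A^B)^*, (i_A^B)_*\bigr)$ and $\bigl((i_A^B)_!, (i_A^B)^!\bigr)$ for any finite locally closed inclusion $A \subset B$ imply, by uniqueness of adjoints, that the $*$-pushforward isomorphism is equivalent to the $*$-pullback isomorphism
\[
(i_K^J)^* \circ (i_J^I)^* \cong (i_K^I)^*,
\]
and similarly the $!$-pushforward isomorphism is equivalent to the $!$-pullback one. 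Moreover, \eqref{eq:duality and recollement for locally closed inclusions} allows one to translate the $*$-version into the $!$-version by conjugation with the duality functors $\D$. So it suffices to establish the displayed $*$-pullback isomorphism.

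For the main computation, given a locally closed inclusion $A \subset B$ let us write $\overline{A}^{B}$ for the closure of $A$ in $B$, so that $A \subset \overline{A}^{B} \subset B$ with the first embedding open and the second closed. The first step is to check that the pullback defined in \S\ref{ss:pf-pb-locally-closed} admits the alternative ``open-then-closed'' expression
\[
(i_A^B)^* \cong (i_A^{\overline{A}^{B}})^* \circ (i_{\overline{A}^{B}}^B)^*;
\]
this follows by applying Lemma \ref{le:recollement and locally closed inclusion} inside $B$ to the closed subset $\overline{A}^{B}$ and the open subset $B \smallsetminus (\overline{A}^{B} \smallsetminus A)$, whose intersection is exactly $A$. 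Using this alternative factorization for both $J \subset I$ and $K \subset J$, we obtain
\[
(i_K^J)^* \circ (i_J^I)^* \cong (i_K^{\overline{K}^{J}})^* \circ (i_{\overline{K}^{J}}^J)^* \circ (i_J^{\overline{J}^{I}})^* \circ (i_{\overline{J}^{I}}^I)^*.
\]
The set-theoretic identity $\overline{K}^{J} = J \cap \overline{K}^{I}$ together with the inclusion $\overline{K}^{I} \subset \overline{J}^{I}$ shows that $\overline{K}^{J}$ is open in the closed subset $\overline{K}^{I} \subset \overline{J}^{I}$ and closed in the open subset $J \subset \overline{J}^{I}$; hence Lemma \ref{le:recollement and locally closed inclusion} applied inside $\overline{J}^{I}$ gives
\[
(i_{\overline{K}^{J}}^J)^* \circ (i_J^{\overline{J}^{I}})^* \cong (i_{\overline{K}^{J}}^{\overline{K}^{I}})^* \circ (i_{\overline{K}^{I}}^{\overline{J}^{I}})^*.
\]
Finally, Lemma \ref{le:recollement compositions} collapses the two resulting closed pullbacks along $\overline{K}^{I} \subset \overline{J}^{I} \subset I$ and the two open pullbacks along $K \subset \overline{K}^{J} \subset \overline{K}^{I}$, producing $(i_K^{\overline{K}^{I}})^* \circ (i_{\overline{K}^{I}}^I)^*$, which is the open-then-closed expression for $(i_K^I)^*$.

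The only nontrivial obstacle is the initial consistency check that $(i_A^B)^*$ agrees with its ``open-then-closed'' expression; once this is in place, every subsequent step is either a pure composition of closed (respectively, open) embeddings handled by Lemma \ref{le:recollement compositions}, or the single swap handled by Lemma \ref{le:recollement and locally closed inclusion}. The verification of the hypotheses for the swap reduces to the elementary identity $\overline{A}^{B} \cap B' = \overline{A}^{B'}$ for any open subset $B'$ of $B$ containing $A$, and to the fact that intersecting with an open subset preserves openness.
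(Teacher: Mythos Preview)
Your proof is correct and follows essentially the same route as the paper's: factor each locally closed inclusion into an open piece and a closed piece via closures, use Lemma~\ref{le:recollement and locally closed inclusion} to swap the middle open/closed pair, and then collapse the resulting chains of purely open and purely closed pullbacks. The paper sets up the same diagram (its $K_0 \cap J$ is exactly your $\overline{K}^{J}$) and invokes the same three ingredients; your version is more explicit in first reducing to a single isomorphism via adjunction and duality, and one small point is that the collapse of the two \emph{open} pullbacks $K \subset \overline{K}^{J} \subset \overline{K}^{I}$ comes from the elementary compatibility in~\S\ref{ss:inclusions} rather than from Lemma~\ref{le:recollement compositions}.
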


\begin{proof}
One can choose closed subsets $J_1 \subset J_0 \subset I$ and $K_1 \subset K_0 \subset I$ such that
\[
 J=J_0 \smallsetminus J_1, \quad K=K_0 \smallsetminus K_1, \quad J_1 \subset K_1 \subset K_0 \subset J_0.
\]
(For instance, with $J_0=\overline{J}$ and $K_0=\overline{K} \cup (\overline{J} \smallsetminus J)$ these conditions are satisfied.) Then we have a diagram of embeddings
\[
 \xymatrix{
 K \ar@{^{(}->}[r]^-{o} & K_0 \cap J \ar@{^{(}->}[r]^-{c} \ar@{^{(}->}[d]^-{o} & J \ar@{^{(}->}[d]^-{o} & \\
 & K_0 \ar@{^{(}->}[r]^-{c} & J_0 \ar@{^{(}->}[r]^-{c} & I \\
 }
\]
where the arrows decorated with ``$o$'' are open embeddings, and those decorated with ``$c$'' are closed embeddings. (To justify the claim about the embedding $K \subset K_0 \cap J$, we observe that the complement of this embedding is $K_1 \cap J$, which is closed in $K_0 \cap J$. For the embedding $K_0 \cap J \subset K_0$, one simply observes that $K_0 \cap J = K_0 \smallsetminus J_1$.) Then the desired isomorphisms follow from Lemma~\ref{le:recollement and locally closed inclusion}, the compatibility of pushforward under closed embeddings and pullback under open embeddings with composition (see~\S\ref{ss:inclusions}), and Lemma~\ref{le:recollement compositions}.
\end{proof}

\section{Study of standard and costandard objects}
\label{sec:standard-costandard}

\subsection{Generation of the categories by reduced expressions}

We begin with the following lemma. Recall the notion of ``rex moves'' (see~\cite[\S 4.2]{ew} or~\cite[\S 4.3]{rw}), and the associated morphisms in $\Diag_{\BS}^\oplus(\h,W)$.

\begin{lema}
\label{lem:cone-rex-smaller}
Let $\ux$ and $\uy$ be reduced expressions for an element $w \in W$. Consider a rex move $\ux \leadsto \uy$, and denote by $f : B_{\ux} \to B_{\uy}$ the associated morphism. Then the cone of $f$ belongs to $\BE_{\{<w\}}(\h,W)$.
\end{lema}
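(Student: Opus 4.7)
The plan is to use the recollement machinery of Proposition~\ref{prop:recollement} to reduce the problem to the singleton stratum $\{w\}$, where the statement becomes a direct consequence of the invariance properties of rex move morphisms recalled in \S\ref{ss:Diag-singleton}.

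First, I would observe that since $\ux$ and $\uy$ are reduced expressions for $w$, the objects $B_\ux$ and $B_\uy$ lie in $\Diag^\oplus_{\BS,\{\leq w\}}(\h,W)$, and hence $f$ and its cone live in $\BE_{\{\leq w\}}(\h,W)$. Because Bruhat intervals in Coxeter groups are finite, $J := \{<w\}$ is a finite closed subset of the locally closed subset $I := \{\leq w\}$, with open complement $\{w\}$, so Proposition~\ref{prop:recollement} provides a recollement of $\BE_{\{\leq w\}}(\h,W)$ with respect to these two strata. In particular, we have the distinguished triangle \eqref{eqn:triangle-closed-open}
\[
(i_{\{<w\}}^{\{\leq w\}})_! \, (i_{\{<w\}}^{\{\leq w\}})^* \mathscr{C} \to \mathscr{C} \to (i_{\{w\}}^{\{\leq w\}})_* \, (i_{\{w\}}^{\{\leq w\}})^* \mathscr{C} \xrightarrow{[1]}
\]
where $\mathscr{C}$ denotes the cone of $f$. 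To conclude that $\mathscr{C}$ lies in the essential image of $(i_{\{<w\}}^{\{\leq w\}})_!$ (equivalently $(i_{\{<w\}}^{\{\leq w\}})_*$, see \eqref{eqn:pushforward-closed}), it suffices to prove that $(i_{\{w\}}^{\{\leq w\}})^* \mathscr{C} = 0$.

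The main content is the following: by the description of pullback along an open inclusion in \S\ref{ss:inclusions}, the functor $(i_{\{w\}}^{\{\leq w\}})^* : \BE_{\{\leq w\}}(\h,W) \to \BE_{\{w\}}(\h,W)$ is induced termwise from the naive quotient functor $\Diag^\oplus_{\BS,\{\leq w\}}(\h,W) \to \Diag^\oplus_{\BS,\{w\}}(\h,W)$. Thus $(i_{\{w\}}^{\{\leq w\}})^*(f)$ is the image of $f$ in $\Diag^\oplus_{\BS,\{w\}}(\h,W)$. By \cite[Lemma~7.4 and Lemma~7.5]{ew}, as recalled in \S\ref{ss:Diag-singleton}, this image is an isomorphism $b_w \xrightarrow{\sim} b_w$ (and in fact does not depend on the chosen rex move). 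Hence $(i_{\{w\}}^{\{\leq w\}})^*(\mathscr{C})$ is the cone of an isomorphism in $\BE_{\{w\}}(\h,W)$, which vanishes.

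Applying the recollement triangle above then yields $\mathscr{C} \cong (i_{\{<w\}}^{\{\leq w\}})_! (i_{\{<w\}}^{\{\leq w\}})^* \mathscr{C}$, which exhibits $\mathscr{C}$ as an object of $\BE_{\{<w\}}(\h,W)$ under the fully faithful embedding $(i_{\{<w\}}^{\{\leq w\}})_! = (i_{\{<w\}}^{\{\leq w\}})_*$; composing with the pushforward to $\BE(\h,W)$ gives the claim. I do not expect any real obstacle: the only nontrivial ingredient is the rex-move invariance statement of Elias--Williamson, and this is already precisely what gets packaged into the existence of the canonical object $b_w$ in $\Diag^\oplus_{\BS,\{w\}}(\h,W)$. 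If one preferred an alternative argument avoiding the recollement, one could proceed by induction on the length of the rex move, using the fact that a triangulated subcategory is closed under cones of compositions, reducing to the case of a single braid move; but the restriction-to-$\{w\}$ argument is cleaner and already gives the independence-of-rex-move aspect for free.
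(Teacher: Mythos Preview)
Your proof is correct and in fact more direct than the paper's own argument. Both proofs use the same two ingredients---the Elias--Williamson result on rex moves (packaged here in \S\ref{ss:Diag-singleton}) and the recollement formalism of Proposition~\ref{prop:recollement}---and both conclude by showing that the open restriction $(i_{\{w\}}^{\{\leq w\}})^*\mathscr{C}$ vanishes. The difference is in how this vanishing is obtained. You observe that $(i_{\{w\}}^{\{\leq w\}})^*$ is induced termwise from the naive quotient functor of \S\ref{ss:inclusions}, so the image of $f$ is already known to be an isomorphism $b_w\xrightarrow{\sim}b_w$ by \S\ref{ss:Diag-singleton}, and the cone of an isomorphism is zero. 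The paper instead works at the level of $\BE(\h,W)$: it uses the explicit relation $gf=\id_{B_{\ux}}+h_2 h_1$ (with $h_1,h_2$ factoring through $\Diag^\oplus_{\BS,\{<w\}}$) to exhibit concrete homotopy equivalences showing that the cones of $gf$ and $fg$ lie in $\BE_{\{<w\}}(\h,W)$, deduces that $\mathscr{C}$ dies in the Verdier quotient $\BE(\h,W)/\BE_{\{<w\}}(\h,W)$, then invokes \cite[Proposition~4.6.2]{krause} to factor $\id_{\mathscr{C}}$ through an object of $\BE_{\{<w\}}(\h,W)$, and only then applies recollement. Your route avoids the explicit homotopy computation, the passage through Verdier quotients, and the appeal to \cite{krause}; the paper's route has the minor advantage of making the lower-term structure of $gf$ visible, but for the purpose of this lemma your argument is cleaner.
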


\begin{proof}
Consider also the ``reversed'' rex move $\uy \leadsto \ux$, and denote by $g : B_{\uy} \to B_{\ux}$ the associated morphism. Then by~\cite[Lemma~7.4 and Lemma~7.5]{ew}, there exists an object $B$ in $\Diag_{\BS,\{<w\}}^\oplus(\h,W)$ and morphisms $h_1 : B_{\ux} \to B$ and $h_2 : B \to B_\ux$ such that
\[
gf = \id_{B_{\ux}} + h_2 \circ h_1.
\]
Then we can consider the morphisms of complexes
\[
\xymatrix@C=1.4cm{
\cdots \ar[r] & 0 \ar[r] & B_{\ux} \ar[r]^-{gf} \ar@<2pt>[d]^-{h_1} & B_{\ux} \ar[r] \ar@<2pt>[d]^-{h_1} & 0 \ar[r] & \cdots \\
\cdots \ar[r] & 0 \ar[r] & B \ar[r]^-{\id_B + h_1h_2} \ar@<2pt>[u]^-{-h_2} & B \ar[r] \ar@<2pt>[u]^-{-h_2} & 0 \ar[r] & \cdots
}
\]
It is not difficult to check that the images of these morphisms are inverse isomorphisms in $\BE(\h,W)$. In particular, the cone of $gf$ belongs to $\BE_{\{<w\}}(\h,W)$. Similar arguments show that the cone of $fg$ belongs to $\BE_{\{<w\}}(\h,W)$, and this implies that the image of $f$ in the Verdier quotient $\BE(\h,W) / \BE_{\{<w\}}(\h,W)$ is an isomorphism, i.e.~that the image of the cone $\mathscr{C}$ of $f$ in $\BE(\h,W) / \BE_{\{<w\}}(\h,W)$ is trivial.
In view of~\cite[Proposition~4.6.2]{krause}, this means that there exists an object $\mathscr{F}$ in $\BE_{\{<w\}}(\h,W)$ such that the identity of $\mathscr{C}$ factors as a composition $\mathscr{C} \to \mathscr{F} \to \mathscr{C}$. We deduce that $(i_{\{<w\}}^{\{\leq w\}})^* \mathscr{C}=0$. By the recollement formalism (see Proposition~\ref{prop:recollement}) it follows that $\mathscr{C}$ belongs to $\BE_{\{<w\}}(\h,W)$, as desired.
\end{proof}

Let us denote by ``$*$'' the \emph{Hecke product} on $W$ studied e.g.~in~\cite[\S 3]{bm}. (Recall in particular that this product is associative.)
For $\uw=(s_1, \ldots, s_r)$ an expression, we set
\[
*\uw := s_1 * \cdots * s_r \quad \in W.
\]

\begin{lema}
\label{lem:Buw-leq*uw}
For any expression $\uw$, the object $B_\uw$ belongs to $\BE_{\{ \leq *\uw \}}(\h,W)$.
\end{lema}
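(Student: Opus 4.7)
The plan is to argue by induction on $\ell(\uw)$, the base case $\uw = \emptyset$ being trivial since $B_\emptyset$ lies tautologically in $\BE_{\{e\}}(\h,W)$ and $*\emptyset = e$. For the inductive step, I will write $\uw = \uw' s$ with $s \in S$ and let $y := *\uw'$, so that $*\uw = y*s$. By the inductive hypothesis $B_{\uw'} \in \BE_{\{\leq y\}}(\h,W)$, and since $B_\uw = B_{\uw'} \ustar B_s$, the lemma will follow once I check that the triangulated additive functor $(-)\ustar B_s$ carries $\BE_{\{\leq y\}}(\h,W)$ into $\BE_{\{\leq y*s\}}(\h,W)$.

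Now $\BE_{\{\leq y\}}(\h,W) = \Kb \Diag^\oplus_{\BS,\{\leq y\}}(\h,W)$ is generated as a triangulated category by the objects $B_\uv$ with $\uv$ a reduced expression for some $v \leq y$, and one has the monotonicity $v \leq y \Rightarrow v*s \leq y*s$ (a short case check using the lifting property of the Bruhat order). Thus the inductive step reduces to the following sub-claim: for every $v \in W$, every reduced expression $\uv$ for $v$, and every $s \in S$, one has $B_\uv \star B_s \in \BE_{\{\leq v*s\}}(\h,W)$. I will prove this sub-claim by a second induction, on $\ell(v)$. If $vs > v$ then $\uv s$ is a reduced expression for $v*s = vs$, so $B_\uv \star B_s = B_{\uv s}$ lies in $\BE_{\{\leq vs\}}(\h,W)$ tautologically. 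Otherwise $vs < v$ and $v*s = v$; I will pick a reduced expression $\uv' = (\uv'', s)$ for $v$ ending in $s$, so that Lemma~\ref{lem:cone-rex-smaller} supplies a distinguished triangle $B_\uv \to B_{\uv'} \to C \xrightarrow{[1]}$ in $\BE(\h,W)$ with $C \in \BE_{\{<v\}}(\h,W)$. Applying $(-)\ustar B_s$, the middle term becomes $B_{\uv''} \star B_s \star B_s \cong B_{\uv'}(1) \oplus B_{\uv'}(-1)$ via~\eqref{eqn:BsBs}, which lies in $\BE_{\{\leq v\}}(\h,W)$; and $C \ustar B_s$ lies in the same subcategory by the inner inductive hypothesis applied to the constituent objects $B_\uu$ with $u < v$ (noting that $u*s \leq v*s = v$). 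Hence $B_\uv \star B_s \in \BE_{\{\leq v\}}(\h,W) = \BE_{\{\leq v*s\}}(\h,W)$, closing the inner induction.

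The main point to manage is the interplay between the two nested inductions and the cone-in-lower-strata estimate of Lemma~\ref{lem:cone-rex-smaller}; no new technology beyond that lemma, the isomorphism~\eqref{eqn:BsBs}, and the triangulated monoidal structure on $\BE(\h,W)$ will be required. The elementary monotonicity of the Hecke product $u \leq v \Rightarrow u*s \leq v*s$ is the only external combinatorial input, and it is a straightforward four-case verification.
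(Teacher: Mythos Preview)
Your proposal is correct and takes essentially the same approach as the paper's proof: both argue by induction on the length of $\uw$, reduce to showing $B_{\uv} \star B_s \in \BE_{\{\leq v*s\}}(\h,W)$ for $\uv$ reduced, split into the cases $vs>v$ and $vs<v$, and in the latter case pass to a reduced expression ending in $s$ via Lemma~\ref{lem:cone-rex-smaller}, use~\eqref{eqn:BsBs}, and invoke the monotonicity $u \leq v \Rightarrow u*s \leq v*s$ (the paper cites \cite[Proposition~3.1]{bm} for this). The only difference is organizational: you isolate the sub-claim and prove it by a self-contained inner induction on $\ell(v)$, whereas the paper interleaves this with the outer induction on $\ell(\uw)$ by appealing to the main induction hypothesis for the shorter expression $\uz s$.
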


\begin{proof}
We argue by induction on $\ell(\uw)$. Of course the claim is obvious if $\uw$ is a reduced expression, and in particular when $\ell(\uw)=0$. Now, let $\uw$ be a nonempty expression, and assume the claim is known for expressions of strictly smaller length. Write $\uw = \uy s$ for some $s \in S$; then by induction we know that $B_{\uy} \in \BE_{\{\leq *\uy\}}(\h,W)$. In view of the definition of $\BE_{\{\leq *\uy\}}(\h,W)$, we therefore have to show that if $\uz$ is a reduced expression for an element $z \leq *\uy$, then $B_{\uz s} \in \BE_{\{\leq *\uw\}}(\h,W)$. 

If $\ell(\uz) < \ell(\uy)$, then $\ell(\uz s) < \ell(\uw)$, so by induction we know that $B_{\uz s}$ belongs to $\BE_{\leq *(\uz s)}(\h,W)$. On the other hand, by \cite[Proposition~3.1]{bm} we have $*(\uz s) = z * s \leq (* \uy) * s = *\uw$, and hence the desired claim follows in this case.

Assume now that $\ell(\uz)=\ell(\uy)$ (so that $z=*\uy$ and $\uy$ is a reduced expression for $z$).
If $zs>z$ then $*\uw = (*\uy) s$ and $\uz s$ is a reduced expression for $*\uw$; hence the claim is clear from definitions. Now, assume that $zs<z$ (so that $*\uw=z$). Choose a reduced expression $\uz'$ for $z$ ending with $s$, and a rex move $\uz \leadsto \uz'$.
By Lemma~\ref{lem:cone-rex-smaller}, the cone of the associated morphism $f : B_{\uz} \to B_{\uz'}$ belongs to $\BE_{\{<z\}}(\h,W)$; as above, using the induction hypothesis, this implies that the cone of $f \star B_s$ belongs to $\BE_{ \{\leq *\uw\}}(\h,W)$. Since
\[
B_{\uz'} \star B_s \cong B_{\uz'}(1) \oplus B_{\uz'}(-1)
\]
by~\eqref{eqn:BsBs}, so that $B_{\uz'} \star B_s$ belongs to $\BE_{\{\leq z\}}(\h,W)$, and since $z=*\uw$, we finally deduce that $B_{\uz s}$ belongs to $\BE_{\{\leq * \uw\}}$, as desired.
\end{proof}

\begin{rmk}\phantomsection
\label{rmk:generate}
\begin{enumerate}
\item
\label{it:generate}
Note that Lemma~\ref{lem:Buw-leq*uw} implies in particular that the category $\BE(\h,W)$ is generated (as a triangulated category) by the objects $B_\uw$ where $\uw$ is a reduced expression; in other words the canonical embedding $\BE_W(\h,W) \to \BE(\h,W)$ is an equivalence of categories. (Of course, this fact follows readily from~\cite[Theorem~6.26]{ew} when this result applies, i.e.~when $\bk$ is a field or a complete local ring.) In the rest of the paper we will identify these categories without further notice.
\item
Statements closely related to Lemma~\ref{lem:Buw-leq*uw} and the comment in~\eqref{it:generate}
appear as~\cite[Lemmas~5.23 and 5.24]{rw}. But the proof in~\cite{rw} has a gap (since a variant of Lemma~\ref{lem:cone-rex-smaller} is asserted without details). It turns out that the recollement formalism exactly provides the tools needed to fill this gap.
\end{enumerate}
\end{rmk}

Below we will also use the following consequence of Lemma~\ref{lem:Buw-leq*uw}.

\begin{cor}
\label{cor:multiplication-Bs}
Let $I \subset W$ be a closed subset, and let $s \in S$ be such that $I$ is stable under the map $x \mapsto xs$. Then the subcategory $\BE_I(\h,W)$ of $\BE(\h,W)$ is stable under right multiplication by $B_s$.
\end{cor}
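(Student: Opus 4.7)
The plan is to reduce the claim to checking it on a set of generators and then invoke Lemma~\ref{lem:Buw-leq*uw}. More precisely, by the definition of $\BE_I(\h,W) = \Kb \Diag_{\BS,I}^\oplus(\h,W)$ (see~\S\ref{ss:BE-category}), this triangulated subcategory of $\BE(\h,W)$ is generated, under shifts $[n]$ and $(n)$, by the objects $B_\uw$ with $\uw$ a reduced expression for some element of $I$. Since the functor $(-) \ustar B_s$ on $\BE(\h,W)$ is triangulated and commutes with $[n]$ and $(n)$, it suffices to check that $B_\uw \ustar B_s \in \BE_I(\h,W)$ for every such $\uw$.

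Now for $\uw$ reduced with $w := \pi(\uw) \in I$, we have $B_\uw \ustar B_s = B_{\uw s}$ in $\BE(\h,W)$. By Lemma~\ref{lem:Buw-leq*uw}, this object lies in $\BE_{\{\leq *(\uw s)\}}(\h,W)$. Using that $\uw$ is reduced so that $*\uw = w$, and the recursive definition of the Hecke product, one computes
\[
*(\uw s) = (*\uw) * s = w * s =
\begin{cases}
ws & \text{if } ws > w, \\
w & \text{if } ws < w.
\end{cases}
\]
In the first case, $ws \in I$ by the hypothesis that $I$ is stable under $x \mapsto xs$; in the second, $w \in I$ by assumption. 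Hence $w*s \in I$ in either case.

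Since $I$ is a closed subset of $W$, the principal down-set $\{\leq w*s\}$ is contained in $I$, and therefore $\BE_{\{\leq w*s\}}(\h,W)$ is a subcategory of $\BE_I(\h,W)$ (compatibly with their embeddings into $\BE(\h,W)$, by the transitivity of closed pushforward, see Lemma~\ref{le:recollement compositions for locally closed inclusions}). Combining with the previous paragraph, $B_{\uw s} \in \BE_I(\h,W)$, which completes the argument. The only non-trivial input is Lemma~\ref{lem:Buw-leq*uw}, which has already been proved; the present corollary is then essentially a bookkeeping exercise with the Hecke product, with no real obstacle.
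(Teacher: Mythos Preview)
Your proof is correct and follows essentially the same approach as the paper's own proof: reduce to generators $B_{\uw}$ with $\uw$ reduced, apply Lemma~\ref{lem:Buw-leq*uw} to get $B_{\uw s} \in \BE_{\{\leq *(\uw s)\}}(\h,W)$, and observe that $*(\uw s) = w*s \in I$ so that $\{\leq *(\uw s)\} \subset I$. You have simply spelled out a few steps (the case split on $ws \gtrless w$, the reduction to generators) that the paper leaves implicit.
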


\begin{proof}
We have to prove that if $\uw$ is a reduced expression for an element in $I$, then $B_{\uw} \ustar B_s = B_{\uw s}$ belongs to $\BE_I(\h,W)$. However Lemma~\ref{lem:Buw-leq*uw} implies that this object belongs to $\BE_{\{\leq *(\uw s)\}}(\h,W)$. Under our assumption $*(\uw s) \in I$, so $\{\leq *(\uw s)\} \subset I$, and the claim follows.
\end{proof}

\subsection{Inclusions of singletons}
\label{ss:inclu-singletons}

Let $I \subset W$ be a finite locally closed subset. Then for any $x \in I$, the subset $\{x\} \subset I$ is locally closed. Hence we can consider in particular the functors associated with this inclusion, which for simplicity will be denoted
\[
 (i_x^I)_*, \quad (i_x^I)_!, \quad (i_x^I)^*, \quad (i_x^I)^!.
\]

\begin{lema}
\label{lem:pushforward-pullback-singleton}
 If $J \subset I$ is a closed subset and if $x \notin J$ then
 \[
  (i_x^I)^! \circ (i_J^I)_* = 0 \quad \text{and} \quad (i_x^I)^* \circ (i_J^I)_* = 0.
 \]
\end{lema}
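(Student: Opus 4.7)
The plan is to exploit the fact that, since $J$ is closed in $I$ and $x \notin J$, the singleton $\{x\}$ is contained in the open complement $U := I \smallsetminus J$. The idea is to factor the locally closed inclusion $\{x\} \hookrightarrow I$ through $U$, so that the vanishing reduces to the elementary identity~\eqref{eqn:comp-open-closed-0} applied to the open embedding $U \hookrightarrow I$.

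Concretely, apply Lemma~\ref{le:recollement compositions for locally closed inclusions} to the chain of locally closed inclusions $\{x\} \subset U \subset I$ (noting that $I$ is finite, and that $U$ is open, hence locally closed, in $I$, while $\{x\}$ is locally closed in $U$). This yields canonical isomorphisms
\[
(i_x^I)^* \cong (i_x^U)^* \circ (i_U^I)^*, \qquad (i_x^I)^! \cong (i_x^U)^! \circ (i_U^I)^!.
\]
Since $U$ is open in $I$, the equality $(i_U^I)^! = (i_U^I)^*$ holds by~\eqref{eqn:pullback-open}. Composing either displayed isomorphism with $(i_J^I)_*$ on the right therefore factors through $(i_U^I)^* \circ (i_J^I)_*$, which is zero by~\eqref{eqn:comp-open-closed-0}. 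Both desired vanishings follow.

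There is really no serious obstacle here: once the locally closed pullback functors have been defined consistently and their composition law (Lemma~\ref{le:recollement compositions for locally closed inclusions}) is available, the statement is a direct consequence of the axiomatic identity $(i_U^I)^* \circ (i_J^I)_* = 0$ for complementary open and closed embeddings. The only minor point to verify in writing it up carefully is that the hypotheses of Lemma~\ref{le:recollement compositions for locally closed inclusions} are met, namely that $I$ is finite (given) and that the intermediate set $U$ is locally closed in $I$ (immediate, as an open subset).
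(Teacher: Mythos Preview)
Your proof is correct and follows essentially the same approach as the paper: factor the pullback through the open complement $U = I \smallsetminus J$ via Lemma~\ref{le:recollement compositions for locally closed inclusions} and then invoke~\eqref{eqn:comp-open-closed-0}. The only cosmetic difference is that for the $!$-case the paper deduces the vanishing from the $*$-case by duality (using~\eqref{eq:duality and recollement for locally closed inclusions} and~\eqref{eqn:pushforward-closed}), whereas you handle it directly via the identification $(i_U^I)^! = (i_U^I)^*$ from~\eqref{eqn:pullback-open}; both routes are equally short and valid.
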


\begin{proof}
 The first equality follows from the second one by duality, using~\eqref{eq:duality and recollement for locally closed inclusions} and~\eqref{eqn:pushforward-closed}. And to prove the second equality we remark that $(i_x^I)^* = (i_x^{I \smallsetminus J})^* \circ (i_{I \smallsetminus J}^I)^*$ by Lemma~\ref{le:recollement compositions for locally closed inclusions}, so that
 \[
  (i_x^I)^* \circ (i_J^I)_* = (i_x^{I \smallsetminus J})^* \circ (i_{I \smallsetminus J}^I)^* \circ (i_J^I)_* = 0
 \]
by~\eqref{eqn:comp-open-closed-0}.
\end{proof}

Lemma~\ref{lem:pushforward-pullback-singleton} implies that if $x \neq y$ are both in $I$, we have
\begin{equation}
\label{eqn:restriction-pushforward}
(i_x^I)^! \circ (i_y^I)_* = 0, \qquad (i_x^I)^* \circ (i_y^I)_! = 0
\end{equation}
(because $(i_y^I)_* = (i_{\{z \in I \mid z \leq y\}}^I)_* \circ (i_y^{\{z \in I \mid z \leq y\}})_*$ and similarly for $(i_y^I)_!$).
On the other hand,
for any $x \in I$ we have
\begin{equation}
\label{eqn:restriction-pushforward-2}
(i_x^I)^! \circ (i_x^I)_* \cong \id, \qquad (i_x^I)^* \circ (i_x^I)_! \cong \id.
\end{equation}
For instance, for the first isomorphism we remark that $(i_x^I)_* \cong (i_{\{z \in I \mid z \leq x\}}^I)_! \circ (i_x^{\{z \in I \mid z \leq x\}})_*$ by Lemma~\ref{le:recollement compositions for locally closed inclusions} and~\eqref{eqn:pushforward-closed} and $(i_x^I)^! \cong (i_x^{\{z \in I \mid z \leq x\}})^* \circ (i_{\{z \in I \mid z \leq x\}}^I)^!$ by Lemma~\ref{le:recollement compositions for locally closed inclusions} and~\eqref{eqn:pullback-open}. Then the claim follows from the invertibility of the morphisms in~\eqref{eqn:pullback-pushforward-inverse}.

\subsection{Definition of standard and costandard objects}
\label{ss:def-D-N}

Now, recall the object $b_w$ of $\Diag_{\BS,\{w\}}^\oplus(\h,W)$ defined in~\S\ref{ss:Diag-singleton}. Identifying this object with the complex concentrated in degree $0$ and with $0$th term $b_w$, it can be considered as an object in $\BE_{\{w\}}(\h,W)$. The corresponding \emph{standard} and \emph{costandard} objects in $\BE_I(\h,W)$ are defined by
\[
 \Delta^I_w := (i_w^I)_! b_w, \qquad \nabla^I_w := (i_w^I)_* b_w.
\]
The main property of these objects is the following.

\begin{lema}
\label{lem:Hom-BE-D-N}
Let $I \subset W$ be a finite locally closed subset, and let $x,y \in I$. Then we have
\begin{equation*}
 \Hom_{\BE_I(\h,W)}(\Delta^I_x, \nabla^I_y \langle n \rangle [m]) \cong
 \begin{cases}
   R^m=\mathrm{S}^{m/2}(V^*) & \text{if $x=y$ and $m=-n \in 2\Z_{\geq 0}$;}\\
   0 & \text{otherwise.}
 \end{cases}
\end{equation*}
\end{lema}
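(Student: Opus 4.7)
The plan is to reduce everything to a local computation using the adjunction between $(i_x^I)_!$ and $(i_x^I)^!$ established in~\S\ref{ss:pf-pb-locally-closed}. Applying this adjunction gives
\[
\Hom_{\BE_I(\h,W)}(\Delta^I_x, \nabla^I_y \langle n \rangle [m]) \cong \Hom_{\BE_{\{x\}}(\h,W)}\bigl(b_x, (i_x^I)^! (i_y^I)_* b_y \langle n \rangle [m]\bigr),
\]
using that $\langle n \rangle[m]$ commutes with $(i_x^I)^!$ (as all three are triangulated autoequivalences/functors; cf.\ Remark~\ref{rmk:loc-closed-tri}). If $x \neq y$, then~\eqref{eqn:restriction-pushforward} gives $(i_x^I)^! (i_y^I)_* b_y = 0$, so the Hom vanishes. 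If $x=y$, then~\eqref{eqn:restriction-pushforward-2} gives $(i_x^I)^! (i_x^I)_* b_x \cong b_x$, so we are reduced to computing
\[
\Hom_{\BE_{\{x\}}(\h,W)}(b_x, b_x \langle n \rangle [m]).
\]

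Next I would translate the grading shifts into the framework of Lemma~\ref{le:Dw is equivalent to free R}. The relation $(1) = \langle -1 \rangle [1]$ recalled in~\S\ref{ss:BE-RE} yields $\langle 1 \rangle = (-1)[1]$, and hence $\langle n \rangle [m] = (-n)[n+m]$. The equivalence $\gamma$ of Lemma~\ref{le:Dw is equivalent to free R} sends $b_x$ to $R$ and $(1)$ to the shift of grading on $\Free^{\fgen,\Z}(R)$ defined by $(M(1))^k = M^{k+1}$, and it induces a triangulated equivalence $\BE_{\{x\}}(\h,W) \cong \Kb \Free^{\fgen,\Z}(R)$ (which manifestly intertwines the cohomological shifts). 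Thus the Hom above is identified with
\[
\Hom_{\Kb \Free^{\fgen,\Z}(R)}\bigl(R, R(-n)[n+m]\bigr).
\]

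Finally, I would evaluate this last space directly. Both $R$ and $R(-n)$ are concentrated in cohomological degree~$0$, so in $\Kb \Free^{\fgen,\Z}(R)$ this Hom vanishes unless $n+m=0$. When $m=-n$, a degree-$0$ morphism $R \to R(-n) = R(m)$ in $\Free^{\fgen,\Z}(R)$ is determined by the image of $1$, which lies in $R(m)^0 = R^m$; hence the Hom equals $R^m$. Recalling from~\S\ref{ss:EW-category} that $R$ is the symmetric algebra on $V^*$ placed in degree~$2$, we have $R^m = \mathrm{S}^{m/2}(V^*)$ when $m$ is a nonnegative even integer and $R^m = 0$ otherwise, which completes the verification.

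The only real subtlety in this plan is the sign bookkeeping for the grading shifts $\langle n \rangle$, $(n)$, $[m]$ under the equivalence of Lemma~\ref{le:Dw is equivalent to free R}; everything else follows formally from the recollement apparatus and adjunctions already established in Section~\ref{sec:BE}.
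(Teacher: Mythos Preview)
Your proof is correct and follows essentially the same approach as the paper's own proof, which simply says the claim ``follows from adjunction, isomorphisms~\eqref{eqn:restriction-pushforward}--\eqref{eqn:restriction-pushforward-2} and Lemma~\ref{le:Dw is equivalent to free R}.'' You have merely spelled out in detail the grading bookkeeping and the final identification of $\Hom_{\Kb\Free^{\fgen,\Z}(R)}(R,R(m))$ with $R^m$, which the paper leaves implicit.
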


\begin{proof}
This follows from adjunction, isomorphisms~\eqref{eqn:restriction-pushforward}--\eqref{eqn:restriction-pushforward-2} and Lemma~\ref{le:Dw is equivalent to free R}.
\end{proof}

\begin{example}\phantomsection
\label{ex:Des}
\begin{enumerate}
\item
\label{it:D-N-e}
If $w$ is minimal in $I$, then $\Delta_w^I = \nabla_w^I$ by~\eqref{eqn:pushforward-closed}, and this object is the image of $B_{\uw}$ in $\BE_I(\h,W)$, where $\uw$ is any reduced expression for $w$. In particular, if $I$ contains the neutral element $e \in W$, then $\Delta^I_e=\nabla^I_e$ is the image of $B_{\varnothing}$.
\item
\label{it:Ds}
Let $s\in S$. 
In view of Example~\ref{ex:w-ws}, the complex $\Delta_s^{\{e,s\}}$ coincides with the complex
\[
 \cdots 0 \to B_s \xrightarrow{\begin{tikzpicture}[thick,scale=0.07,baseline]
      \draw (0,-5) to (0,0);
      \node at (0,0) {$\bullet$};
      \node at (0,-6.7) {\tiny $s$};
    \end{tikzpicture}} B_\varnothing(1) \to 0 \cdots
\]
where the nonzero terms are in degrees $0$ and $1$ respectively. Similarly, $\nabla_s^{\{e,s\}}$ is the complex
\[
 \cdots 0 \to B_\varnothing(-1) \xrightarrow{\begin{tikzpicture}[thick,baseline,xscale=0.07,yscale=-0.07]
      \draw (0,-5) to (0,0);
      \node at (0,0) {$\bullet$};
      \node at (0,-6.7) {\tiny $s$};
    \end{tikzpicture}} B_s \to 0 \cdots
\]
where the nonzero terms are in degrees $-1$ and $0$ respectively. These complexes in fact describe $\Delta_s^I$ and $\nabla_s^I$ for any $I$ containing $e$ and $s$. In particular, our present notation is compatible with that used in~\cite[Example~4.2.2]{amrw1}.
\end{enumerate}
\end{example}

It will sometimes be convenient to have standard and costandard objects also when $I$ is not finite. For a general $I$ and any $w \in I$, we define $\Delta_w^I$ and $\nabla_w^I$ by
\[
\Delta_w^I := (i_J^I)_* \Delta_w^J, \qquad \nabla_w^I := (i_J^I)_* \nabla_w^J
\]
where $J \subset I$ is any finite closed subset containing $w$. It is easy to check that these objects do not depend on the choice of $J$, up to canonical isomorphism, and that Lemma~\ref{lem:Hom-BE-D-N} still holds in this generality. When $I=W$, we will sometimes omit the superscript in this notation.

\subsection{First properties}
\label{ss:prop-D-N}

\begin{lema}
\label{lem:pf-pb-D-N}
Let $I \subset W$ be a finite locally closed subset, and let $J \subset I$ be a locally closed subset. Then for any $w \in J$ we have
\[
(i_J^I)_! \Delta_w^J \cong \Delta_w^I, \qquad (i_J^I)_* \nabla_w^J \cong \nabla_w^I,
\]
and for any $w \in I$ we have
\[
(i_J^I)^* \Delta_w^I \cong \begin{cases}
\Delta_w^J & \text{if $w \in J$;} \\
0 & \text{otherwise,} 
\end{cases} \qquad
(i_J^I)^! \nabla_w^I \cong \begin{cases}
\nabla_w^J & \text{if $w \in J$;} \\
0 & \text{otherwise.}
\end{cases}
\]
\end{lema}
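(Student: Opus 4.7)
The plan is to reduce all four isomorphisms to three ingredients already in hand: Lemma~\ref{le:recollement compositions for locally closed inclusions} (compatibility of $*,!,_*,_!$ with composition of locally closed embeddings), the unit/counit identities $(i_J^I)^* \circ (i_J^I)_! \cong \id$ and $(i_J^I)^! \circ (i_J^I)_* \cong \id$ from~\eqref{eqn:pullback-pushforward-inverse}, and the vanishing $(i_{I\smallsetminus J}^I)^* \circ (i_J^I)_* = 0$ of~\eqref{eqn:comp-open-closed-0}. The duality formulas~\eqref{eq:duality and recollement for locally closed inclusions}, together with the observation that $\D$ fixes the generator $b_w$ (hence $\D_I \Delta_w^I \cong \nabla_w^I$ and vice versa), will let me deduce the $\nabla$-claims as corollaries of the $\Delta$-claims.

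First, the pushforward assertions. Since $w \in J$, the embedding $i_w^I$ factors as $i_J^I \circ i_w^J$, and Lemma~\ref{le:recollement compositions for locally closed inclusions} yields
\[
(i_J^I)_! \Delta_w^J = (i_J^I)_! (i_w^J)_! b_w \cong (i_w^I)_! b_w = \Delta_w^I,
\]
and the same computation with $*$ in place of $!$ gives $(i_J^I)_* \nabla_w^J \cong \nabla_w^I$. Next, the pullback assertion for $w \in J$ follows by substituting into~\eqref{eqn:pullback-pushforward-inverse}:
\[
(i_J^I)^* \Delta_w^I \cong (i_J^I)^* (i_J^I)_! \Delta_w^J \cong \Delta_w^J,
\]
with the analogous computation for $(i_J^I)^! \nabla_w^I$.

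The only point that requires genuine thought is the vanishing when $w \notin J$. By the duality identity it suffices to treat $(i_J^I)^* \Delta_w^I = 0$. I would choose a factorization $J = J_0 \smallsetminus J_1$ with $J_1 \subset J_0 \subset I$ closed in $I$; since $w \notin J = J_0 \cap (I\smallsetminus J_1)$ there are exactly two possibilities. If $w \notin J_0$, then $w$ lies in the open subset $I \smallsetminus J_0$, so the pushforward statement already proved gives $\Delta_w^I \cong (i_{I \smallsetminus J_0}^I)_! \Delta_w^{I\smallsetminus J_0}$; applying $(i_{J_0}^I)^*$ kills this, because $(i_{J_0}^I)^* \circ (i_{I\smallsetminus J_0}^I)_! = 0$ by adjunction from~\eqref{eqn:comp-open-closed-0}, and then the factorization $(i_J^I)^* = (i_J^{J_0})^* \circ (i_{J_0}^I)^*$ yields the conclusion. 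If instead $w \in J_1$, then $\Delta_w^I \cong (i_{J_1}^I)_* \Delta_w^{J_1}$ (here $_! = _*$ because $J_1$ is closed in $I$), and $(i_{I\smallsetminus J_1}^I)^* \circ (i_{J_1}^I)_* = 0$ directly by~\eqref{eqn:comp-open-closed-0}; the factorization $(i_J^I)^* = (i_J^{I\smallsetminus J_1})^* \circ (i_{I\smallsetminus J_1}^I)^*$ then gives the vanishing. The main obstacle --- if it can be called one --- is simply the need to keep both decompositions of $(i_J^I)^*$ available and to select the one adapted to each of the two cases.
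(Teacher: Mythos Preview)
Your argument is correct and runs parallel to the paper's: both proofs rest on Lemma~\ref{le:recollement compositions for locally closed inclusions}, the full-faithfulness identities~\eqref{eqn:pullback-pushforward-inverse}, and the open/closed vanishing~\eqref{eqn:comp-open-closed-0}. The paper organizes the pullback case by first reducing to $J$ open or $J$ closed (and in the closed case with $w\notin J$ it invokes Lemma~\ref{lem:pushforward-pullback-singleton} rather than adjunction from~\eqref{eqn:comp-open-closed-0}), whereas you keep $J$ general and split on whether $w\notin J_0$ or $w\in J_1$. These are two ways of traversing the same factorization $J\hookrightarrow J_0\hookrightarrow I$, and neither buys anything the other does not.

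One imprecision to fix: what~\eqref{eqn:pullback-pushforward-inverse} actually records is $(i_J^I)^*\circ (i_J^I)_*\cong\id$ and $(i_J^I)^!\circ (i_J^I)_!\cong\id$, not the mixed identities $(i_J^I)^*\circ (i_J^I)_!\cong\id$ and $(i_J^I)^!\circ (i_J^I)_*\cong\id$ that you invoke in the $w\in J$ step. The mixed identities are true, but they require a one-line justification: factor $(i_J^I)_! \cong (i_{J_0}^I)_*\circ (i_J^{J_0})_!$ and $(i_J^I)^*\cong (i_J^{J_0})^*\circ (i_{J_0}^I)^*$ via Lemma~\ref{le:recollement compositions for locally closed inclusions} and~\eqref{eqn:pushforward-closed}, then apply~\eqref{eqn:pullback-pushforward-inverse} twice together with~\eqref{eqn:pullback-open}. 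Alternatively, adopting the paper's reduction to $J$ open or closed makes the mixed identities collapse (via~\eqref{eqn:pushforward-closed} or~\eqref{eqn:pullback-open}) to the ones literally stated.
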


\begin{proof}
The first two isomorphisms follow from Lemma~\ref{le:recollement compositions for locally closed inclusions}. For the other isomorphisms, we treat the case of $(i_J^I)^* \Delta_w^I$; the case of $(i_J^I)^! \nabla_w^I$ is similar. It suffices to prove these isomorphisms when $J$ is either closed or open. First, assume that $J$ is closed.  If $w \notin J$, then the desired vanishing follows from the first isomorphism in Lemma~\ref{lem:pushforward-pullback-singleton} and adjunction. 
If $w \in J$, then we have
\[
(i_J^I)^* \Delta_w^I \cong (i_J^I)^* (i_J^I)_* \Delta_w^J
\]
by~\eqref{eqn:pushforward-closed} and Lemma~\ref{le:recollement compositions for locally closed inclusions}, and the claim follows from the invertibility of the first morphism in~\eqref{eqn:pullback-pushforward-inverse}.  Now, assume that $J$ is open. If $w \in J$, then using~\eqref{eqn:pullback-open} we have
\[
(i_J^I)^* \Delta_w^I \cong (i_J^I)^! (i_J^I)_! \Delta_w^J \cong \Delta_w^J.
\]
And if $w \notin J$ then
\[
(i_J^I)^* \Delta_w^I \cong (i_J^I)^* (i_{I \smallsetminus J}^I)_* \Delta_w^{I \smallsetminus J}
\]
and the desired vanishing holds by~\eqref{eqn:comp-open-closed-0}.
\end{proof}

Another important property of standard and costandard objects is provided by the following observation.

\begin{lema}
\label{lema:BE-generated-D-N}
For any locally closed subset $I \subset W$, the category $\BE_I(\h,W)$ is generated, as a triangulated category, by the objects of the form $\Delta^I_w(m)$ with $w \in I$ and $m \in \Z$, as well as by the objects of the form $\nabla^I_w(m)$ with $w \in I$ and $m \in \Z$.
\end{lema}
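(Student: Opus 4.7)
The plan is to reduce first to the case where $I$ is finite, and then induct on $|I|$ via the recollement of Proposition~\ref{prop:recollement}; the $\nabla$-case will then follow from the $\Delta$-case by applying the duality $\D_I$.

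For the reduction, observe that any object of $\BE_I(\h,W) = \Kb\Diag^\oplus_{\BS,I}(\h,W)$ is a bounded complex whose terms involve only finitely many $B_\uw(m)$'s; call $J_0 \subset I$ the finite collection of elements $w$ appearing. Then $J := \{x \in I : x \leq y \text{ for some } y \in J_0\}$ is closed in $I$ and is finite, because each Bruhat interval in $W$ is finite. By construction our complex lies in the essential image of the fully faithful $(i_J^I)_* : \BE_J(\h,W) \to \BE_I(\h,W)$ of~\S\ref{ss:BE-category}, and the very definition of $\Delta^I_w$ and $\nabla^I_w$ for infinite $I$ in~\S\ref{ss:def-D-N} gives $(i_J^I)_* \Delta^J_v = \Delta^I_v$ and $(i_J^I)_* \nabla^J_v = \nabla^I_v$ for $v \in J$. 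Thus it suffices to treat the case of finite $I$, and I will induct on $|I|$. The base case $|I| = 1$, say $I = \{w\}$, follows from Lemma~\ref{le:Dw is equivalent to free R}: the equivalence $\BE_{\{w\}}(\h,W) \simeq \Kb\Free^{\fgen,\Z}(R)$ is generated by $R(m) = b_w(m) = \Delta^{\{w\}}_w(m) = \nabla^{\{w\}}_w(m)$.

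For the inductive step, I would choose a minimal element $w \in I$, so that $\{w\}$ is closed and $K := I \smallsetminus \{w\}$ is open. The recollement triangle~\eqref{eqn:triangle-closed-open} writes each $\mathscr{F} \in \BE_I(\h,W)$ as an extension
\[
(i_K^I)_! (i_K^I)^* \mathscr{F} \to \mathscr{F} \to (i_{\{w\}}^I)_* (i_{\{w\}}^I)^* \mathscr{F} \xrightarrow{[1]}.
\]
By induction $(i_K^I)^* \mathscr{F}$ lies in the triangulated subcategory of $\BE_K(\h,W)$ generated by $\{\Delta^K_v(m)\}$; since $(i_K^I)_!$ is triangulated (Remark~\ref{rmk:loc-closed-tri}) and sends $\Delta^K_v$ to $\Delta^I_v$ by Lemma~\ref{lem:pf-pb-D-N}, the first term of the triangle lies in the subcategory $\mathcal{T}_\Delta \subset \BE_I(\h,W)$ generated by the $\Delta^I_v(m)$. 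Similarly $(i_{\{w\}}^I)^* \mathscr{F}$ is generated by the $b_w(m)$, and $(i_{\{w\}}^I)_* b_w = \nabla^I_w = \Delta^I_w$, where the second equality uses that $w$ is minimal together with~\eqref{eqn:pushforward-closed}. So the third term belongs to $\mathcal{T}_\Delta$ as well, hence so does $\mathscr{F}$, proving the $\Delta$-case.

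To conclude, the $\nabla$-version is deduced by applying the contravariant autoequivalence $\D_I$. Since $\D$ fixes each $B_\uw$ we have $\D_{\{w\}} b_w \cong b_w$, so by~\eqref{eq:duality and recollement for locally closed inclusions} we get $\D_I \Delta^I_w \cong \nabla^I_w$; thus $\D_I$ maps the triangulated subcategory generated by $\{\Delta^I_w(m) : w \in I,\, m \in \Z\}$ onto that generated by $\{\nabla^I_w(-m) : w \in I,\, m \in \Z\} = \{\nabla^I_w(m) : w \in I,\, m \in \Z\}$, so the latter also equals $\BE_I(\h,W)$. The argument is fairly formal once all the machinery is available; the only delicate point is the reduction to finite $I$, which relies on the finiteness of Bruhat intervals in $W$ and on the compatibility of $(i_J^I)_*$ with the definition of standard and costandard objects in the infinite case.
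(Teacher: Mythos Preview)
Your proof is correct and follows essentially the same approach as the paper: reduce to finite $I$, induct on $|I|$ using the recollement triangle for a minimal $w \in I$, and handle the $\nabla$-case by duality (the paper says ``the other case is similar (or follows by duality)'' without writing out the duality argument you supply). Your reduction to finite $I$ is spelled out more carefully than in the paper, which simply asserts it.
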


\begin{proof}
We treat the case of the standard objects; the other case is similar (or follows by duality). We can clearly assume that $I$ is finite, and proceed by induction on $|I|$.

In case $|I|=1$,
the lemma is clear from Lemma~\ref{le:Dw is equivalent to free R}. Now, assume $|I|>1$, and choose $w \in I$ minimal. Then any object $\mathscr{F}$ in $\BE_I(\h,W)$ fits in a distinguished triangle
\[
(i_{I \smallsetminus \{w\}}^I)_! (i_{I \smallsetminus \{w\}}^I)^* \mathscr{F} \to \mathscr{F} \to (i_w^I)_* (i_w^I)^* \mathscr{F} \xrightarrow{[1]}.
\]
By induction $(i_{I \smallsetminus \{w\}}^I)^* \mathscr{F}$ belongs to the triangulated subcategory of $\BE_{I \smallsetminus \{w\}}(\h,W)$ generated by the objects $\Delta_x^{I \smallsetminus \{w\}}(m)$ with $x \in I \smallsetminus \{w\}$. Since $(i_{I \smallsetminus \{w\}}^I)_! \Delta_x^{I \smallsetminus \{w\}} \cong \Delta_x^I$ for such $x$ (see Lemma~\ref{lem:pf-pb-D-N}), we deduce that $(i_{I \smallsetminus \{w\}}^I)_! (i_{I \smallsetminus \{w\}}^I)^* \mathscr{F}$ belongs to the triangulated subcategory of $\BE_I(\h,W)$ under consideration. It is easy to see that $(i_w^I)_* (i_w^I)^* \mathscr{F}$ belongs to the triangulated subcategory generated by the objects $\Delta_w^I(m)=\nabla_w^I(m)$, and the proof is complete.
\end{proof}

\subsection{Convolution of standard and costandard objects}

\begin{lema}
\label{lem:triangle-Dw-ws}
Let $w\in W$ and $s\in S$ be such that $ws>w$. Then there exist distinguished triangles
\begin{gather*}
\Delta_{w}\langle-1\rangle \to \Delta_{ws} \to \Delta_w\ustar B_{s} \xrightarrow{[1]}, \quad
\nabla_w\ustar B_{s} \to \nabla_{ws} \to \nabla_{w}\langle1\rangle \xrightarrow{[1]}
\end{gather*}
in $\BE(\h,W)$, in which the third arrows
are generators of the free rank-$1$ $\bk$-modules
\[
\Hom_{\BE(\h,W)}(\Delta_w\ustar B_{s}, \Delta_w \langle -1 \rangle [1]) \quad \text{and} \quad \Hom_{\BE(\h,W)}(\nabla_{w}\langle1\rangle, \nabla_w\ustar B_{s}[1])
\]
respectively.
\end{lema}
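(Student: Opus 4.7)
The plan is to transfer the triangles of Example~\ref{ex:w-ws} for the locally closed subset $J=\{w,ws\}\subset W$ into $\BE(\h,W)$, and then identify the resulting ``extension'' objects with $\Delta_w\ustar B_s$ and $\nabla_w\ustar B_s$. Since $w\lessdot ws$, the Bruhat interval $[w,ws]$ equals $\{w,ws\}$, so $J$ is open in $L:=\{\leq ws\}$ with closed complement $\{<w\}$, and $B_\uw=\Delta^J_w=\nabla^J_w$ as $w$ is minimal in $J$.

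First, I would apply the triangulated functor $(i^L_J)_!$ (resp.\ $(i^L_J)_*$) to the first (resp.\ second) triangle of Example~\ref{ex:w-ws}, and then push forward along the closed embedding $(i^W_L)_*$. Using Lemma~\ref{lem:pf-pb-D-N} together with the definitions of $\Delta_w,\Delta_{ws},\nabla_w,\nabla_{ws}$ in $\BE$, this yields the triangles
\[
\Delta_w\langle-1\rangle\to\Delta_{ws}\to X\xrightarrow{[1]} \quad\text{and}\quad Y\to\nabla_{ws}\to\nabla_w\langle1\rangle\xrightarrow{[1]}
\]
in $\BE$, where $X:=(i^W_L)_*(i^L_J)_!B_{\uw s}$ and $Y:=(i^W_L)_*(i^L_J)_*B_{\uw s}$.

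Next, I would identify $X\cong\Delta_w\ustar B_s$; the identification $Y\cong\nabla_w\ustar B_s$ would be strictly parallel. First, $\Delta_w\ustar B_s\in\BE_L$ because $\BE_{\{\leq w\}}$ is generated by the $B_{\uw'}$ with $\uw'$ a reduced expression for $w'\leq w$, and for such $\uw'$ one has $B_{\uw'}\ustar B_s=B_{\uw' s}\in\BE_{\{\leq w'*s\}}\subset\BE_L$ by Lemma~\ref{lem:Buw-leq*uw} combined with the monotonicity $w'*s\leq w*s=ws$ of the Hecke product (a consequence of Bruhat lifting). Next, the adjunction $\Hom_{\BE_L}(\Delta_w,B_\uw)=\Hom_{\BE_{\{w\}}}(b_w,(i^L_w)^*B_\uw)$, combined with Lemma~\ref{le:recollement compositions for locally closed inclusions} which gives $(i^L_w)^*=(i^J_w)^*\circ(i^L_J)^*$, and the identification $B_\uw\cong(i^J_w)_*b_w$ in $\BE_J$ (from minimality of $w$ in $J$), shows that $(i^L_w)^*B_\uw=b_w$, thereby producing a canonical morphism $\alpha:\Delta_w\to B_\uw$. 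Convolving yields $\alpha\ustar B_s:\Delta_w\ustar B_s\to B_\uw\ustar B_s=B_{\uw s}$, which by the universal property of $X=(i^L_J)_!B_{\uw s}$ lifts canonically to a morphism $\Delta_w\ustar B_s\to X$; I would show this is an isomorphism by verifying $(i^L_{<w})^*(\Delta_w\ustar B_s)=0$ and $(i^L_J)^*(\Delta_w\ustar B_s)\cong B_{\uw s}$.

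The main obstacle lies precisely in these two restriction computations, since the functors $(i^L_?)^*$ do not commute with $-\ustar B_s$ in general (as $L$ is not stable under right multiplication by $s$, so Corollary~\ref{cor:multiplication-Bs} does not apply to $L$ itself). The approach would be to resolve $\Delta_w$ via the recollement triangle so as to reduce the two vanishing/identification statements to term-wise verifications for objects $B_{\uw'}$, where restrictions can be computed directly via the light leaves basis from~\S\ref{ss:Diag-loc-closed}. Once the desired triangles are in hand, the rank-$1$ generator property of the third arrow follows by applying $\Hom_{\BE}(-,\Delta_w\langle-1\rangle[1])$ (respectively $\Hom_{\BE}(\nabla_w\langle1\rangle,-[1])$) to the triangle and invoking Lemma~\ref{lem:Hom-BE-D-N} together with the vanishing of Hom groups between $\Delta$- and $\nabla$-objects in neighboring shifts.
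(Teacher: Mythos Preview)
Your overall strategy matches the paper's: push forward the triangles from Example~\ref{ex:w-ws} via $(i_{\{w,ws\}}^L)_!$, then identify the third term with $\Delta_w \ustar B_s$ by checking the appropriate restrictions. There are, however, two related errors that lead you to a false obstacle and an unnecessarily complicated workaround.

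First, the closed complement of $\{w,ws\}$ in $L=\{\leq ws\}$ is not $\{<w\}$; it is
\[
J' := \{z \in W \mid z < ws, \ z \neq w\},
\]
which is strictly larger in general (e.g.\ in $S_3$ with $w=s_2$, $s=s_1$, the element $s_1$ lies in $J'$ but not in $\{<w\}$). Second, $L$ \emph{is} stable under $x \mapsto xs$: since $(ws)s = w < ws$, the lifting property gives $x \leq ws \Rightarrow xs \leq ws$. More to the point, the correct complement $J'$ is also $s$-stable (if $x \in J'$ then $xs \neq w$ and $xs \neq ws$ since $x \notin \{w,ws\}$, and $xs \leq ws$ by the same lifting argument).

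This $s$-stability is exactly what dissolves your ``main obstacle.'' The paper proves $(i_{J'}^L)^*(\Delta_w \ustar B_s)=0$ in one line: for any $\mathscr{F} \in \BE_{J'}(\h,W)$, adjunction gives
\[
\Hom\bigl((i_{J'}^L)^*(\Delta_w \ustar B_s), \mathscr{F}\bigr) \cong \Hom\bigl(\Delta_w, ((i_{J'}^L)_*\mathscr{F}) \ustar B_s\bigr),
\]
and the right-hand side vanishes because $((i_{J'}^L)_*\mathscr{F}) \ustar B_s \in \BE_{J'}(\h,W)$ by Corollary~\ref{cor:multiplication-Bs} and $(i_{J'}^L)^*\Delta_w = 0$. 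For the identification $(i_{\{w,ws\}}^L)^*(\Delta_w \ustar B_s) \cong B_{\uw s}$, the paper applies $(i_{\{w,ws\}}^L)^*\bigl((-) \ustar B_s\bigr)$ to the recollement triangle $\Delta_w \to B_{\uw} \to (i_{J'}^L)_*(i_{J'}^L)^* B_{\uw} \xrightarrow{[1]}$; the third term dies by Corollary~\ref{cor:multiplication-Bs} again. No term-wise light leaves computations are needed.

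Finally, for the rank-$1$ generator claim, your long-exact-sequence argument requires knowing $\Hom(\Delta_{ws}, \Delta_w\langle -1\rangle[k])$ for $k=0,1$, which is not covered by Lemma~\ref{lem:Hom-BE-D-N} (that lemma concerns $\Hom(\Delta,\nabla)$). The paper instead uses that $(i_{\{w,ws\}}^L)_!$ is fully faithful to reduce to computing $\Hom_{\BE_{\{w,ws\}}(\h,W)}(B_{\uw s}, B_{\uw}\langle -1\rangle[1])$, which is immediate from the explicit description in Example~\ref{ex:w-ws}.
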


\begin{proof}
 We will construct the first triangle; the second one can then be obtained by duality (or by similar arguments). We set $I:=\{z \in W \mid z \leq ws\}$. In this triangle all the objects live in $\BE_I(\h,W)$ (see Corollary~\ref{cor:multiplication-Bs} for the third term); therefore we can perform all the computations in this subcategory. To simplify notation, we will also set $J:=I \smallsetminus \{w,ws\}$ (a closed subset of $I$).
 
 Let $\uw$ be a reduced expression for $w$, and recall the triangle constructed in Example~\ref{ex:w-ws}. Applying the functor $(i_{\{w,ws\}}^I)_!$ we deduce (cf.~Remark~\ref{rmk:loc-closed-tri}) a distinguished triangle
 \begin{equation}
 \label{eqn:triangle-w-ws}
  \Delta^I_{w}\langle-1\rangle \to \Delta^I_{ws} \to \left( i_{\{w,ws\}}^I \right)_! (B_{\uw s})\xrightarrow{[1]}
 \end{equation}
in $\BE_I(\h,W)$ (where we write $B_{\uw s}$ for the image of this object in the category $\BE_{\{w,ws\}}(\h,W)$). Hence to conclude the construction of the triangle it suffices to construct an isomorphism
\begin{equation}
\label{eqn:isom-triangle-Dw-Dws}
 \left( i_{\{w,ws\}}^I \right)_! (B_{\uw s}) \cong \Delta^I_w\ustar B_{s}.
\end{equation}

First, we remark that
\begin{equation}
\label{eqn:isom-triangle-Dw-Dws-2}
( i_{J}^I )^*(\Delta^I_w\ustar B_{s})=0.
\end{equation}
In fact, if $\mathscr{F}$ belongs to $\BE_{J}(\h,W)$ we have
\begin{align*}
 \Hom_{\BE_{J}(\h,W)} \bigl( ( i_{J}^I \bigr)^*(\Delta^I_w\ustar B_{s}), \mathscr{F} \bigr)
 &\cong \Hom_{\BE_{I}(\h,W)}\bigl( \Delta^I_w\ustar B_{s}, ( i_{J}^I )_* \mathscr{F} \bigr) \\
 &\cong \Hom_{\BE_{I}(\h,W)}\Bigl( \Delta^I_w, \bigl( ( i_{J}^I )_* \mathscr{F} \bigr) \ustar B_{s} \Bigr).
\end{align*}
It follows from Corollary~\ref{cor:multiplication-Bs} that $\bigl( ( i_{J}^I )_* \mathscr{F} \bigr) \ustar B_{s}$ belongs to the essential image of $\BE_J(\h,W)$, and then from~\eqref{eqn:comp-open-closed-0} we deduce that
\[
 \Hom_{\BE_{J}(\h,W)} \bigl( ( i_{J}^I \bigr)^*(\Delta^I_w\ustar B_{s}), \mathscr{F} \bigr)=0,
\]
which implies~\eqref{eqn:isom-triangle-Dw-Dws-2}.

From~\eqref{eqn:isom-triangle-Dw-Dws-2} we deduce that adjunction induces an isomorphism
\[
 (i_{\{w,ws\}}^I)_! (i_{\{w,ws\}}^I)^* (\Delta^I_w\ustar B_{s}) \xrightarrow{\sim} \Delta^I_w\ustar B_{s}.
\]
Hence to prove~\eqref{eqn:isom-triangle-Dw-Dws} it suffices to prove that
\begin{equation}
\label{eqn:isom-triangle-Dw-Dws-3}
(i_{\{w,ws\}}^I)^* (\Delta^I_w\ustar B_{s}) \cong B_{\uw s}
\end{equation}
in $\BE_{\{w,ws\}}(\h,W)$.
However there exists a natural distinguished triangle
\[
 \Delta^I_w \to B_{\uw} \to (i_J^I)_* (i_J^I)^* B_{\uw} \xrightarrow{[1]}.
\]
Applying the functor $(i_{\{w,ws\}}^I)^* ( - \ustar B_{s})$ and Corollary~\ref{cor:multiplication-Bs} once again we deduce the isomorphism~\eqref{eqn:isom-triangle-Dw-Dws-3}, and hence finally~\eqref{eqn:isom-triangle-Dw-Dws}.

To conclude the proof, it remains to prove that the $\bk$-module
\[
\Hom_{\BE(\h,W)} \bigl( ( i_{\{w,ws\}}^I )_! (B_{\uw s}), \Delta^I_{w}\langle-1\rangle[1] \bigr)
\]
is free of rank $1$, and generated by the morphism
appearing in~\eqref{eqn:triangle-w-ws}. However, 
as noted after~\eqref{eqn:pullback-pushforward-inverse}, the functor $(i_{\{w,ws\}}^I)_!$ is fully faithful. Hence it suffices to prove the corresponding claim for
\[
\Hom_{\BE_{\{w,ws\}}(\h,W)}( B_{\uw s}, B_{\uw} \langle-1\rangle[1]).
\]
This claim is clear from the construction in Example~\ref{ex:w-ws}.
\end{proof}

The next proposition solves a question raised in~\cite[Remark~4.2.3]{amrw1}.

\begin{prop}
\label{prop:D-N-convolution}
Let $w \in W$.
\begin{enumerate}
\item
\label{it:Delta-nabla-convolution-1}
If $(s_1, \ldots, s_r)$ is a reduced expression for $w$, then we have
\[
\Delta_w \cong \Delta_{s_1} \ustar \Delta_{s_2} \ustar \cdots \ustar \Delta_{s_r}, \qquad
\nabla_w \cong \nabla_{s_1} \ustar \nabla_{s_2} \ustar \cdots \ustar \nabla_{s_r}.
\]
\item
\label{it:Delta-nabla-convolution-2}
We have isomorphisms
\[
\Delta_w \ustar \nabla_{w^{-1}} \cong \nabla_{w^{-1}} \ustar \Delta_w \cong B_\varnothing.
\]
\end{enumerate}
\end{prop}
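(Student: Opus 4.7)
My plan is to prove (1) and (2) simultaneously by induction on $r = \ell(w)$; the two assertions are intertwined because the key step in bootstrapping (1) at length $r$ uses the inductive hypothesis for (2) at length $r-1$ to promote $\Delta_{w'} \ustar (-)$ (with $w'$ of length $r-1$) to an autoequivalence of $\BE(\h,W)$. The base case $r=0$ is immediate. For $r=1$, assertion (1) is vacuous; for (2), I would verify $\Delta_s \ustar \nabla_s \cong \nabla_s \ustar \Delta_s \cong B_\varnothing$ by a direct calculation in the homotopy category. Using the two-term descriptions of $\Delta_s$ and $\nabla_s$ from Example~\ref{ex:Des}\eqref{it:Ds}, the total complex computing $\Delta_s \ustar \nabla_s$ has nontrivial pieces only in cohomological degrees $-1,0,1$; invoking the decomposition $B_s \star B_s \cong B_s(1) \oplus B_s(-1)$ of~\eqref{eqn:BsBs} and the ``barbell'' relations encoding the counit/unit of the self-adjunction of $B_s \ustar (-)$, one recognises a pair of contractible two-term subcomplexes whose removal leaves $B_\varnothing$ concentrated in degree $0$; the identity $\nabla_s \ustar \Delta_s \cong B_\varnothing$ is symmetric.

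For the inductive step at length $r \geq 2$, fix a reduced expression $(s_1, \ldots, s_r)$ for $w$, set $w' := s_1 \cdots s_{r-1}$ and $s := s_r$. By the inductive hypothesis (2) for $w'$, the functor $\Delta_{w'} \ustar (-)$ is an equivalence with quasi-inverse $\nabla_{(w')^{-1}} \ustar (-)$. To prove (1) for $w$, given the induction hypothesis it remains to establish $\Delta_w \cong \Delta_{w'} \ustar \Delta_s$. I would apply $\Delta_{w'} \ustar (-)$ to the $(e,s)$-instance of Lemma~\ref{lem:triangle-Dw-ws} to obtain a distinguished triangle
\[
 \Delta_{w'} \langle -1 \rangle \to \Delta_{w'} \ustar \Delta_s \to \Delta_{w'} \ustar B_s \xrightarrow{\id_{\Delta_{w'}} \ustar \delta_0} \Delta_{w'} \langle -1 \rangle [1],
\]
where $\delta_0$ is a generator of the free rank-$1$ $\bk$-module $\Hom(B_s, B_\varnothing \langle -1 \rangle[1])$. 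Lemma~\ref{lem:triangle-Dw-ws} applied to $(w',s)$ furnishes another triangle with the same first and third terms,
\[
 \Delta_{w'} \langle -1 \rangle \to \Delta_w \to \Delta_{w'} \ustar B_s \xrightarrow{\eta} \Delta_{w'} \langle -1 \rangle [1],
\]
with $\eta$ a generator of $\Hom(\Delta_{w'} \ustar B_s, \Delta_{w'} \langle -1 \rangle[1])$, which is free of rank $1$ over $\bk$. Because $\Delta_{w'} \ustar (-)$ is an equivalence, the map $\id_{\Delta_{w'}} \ustar (-)$ between the two rank-$1$ free Hom modules is a bijection, so $\id_{\Delta_{w'}} \ustar \delta_0$ is also a generator and differs from $\eta$ by a unit of $\bk$; TR3 combined with the five-lemma then delivers an isomorphism $\Delta_{w'} \ustar \Delta_s \cong \Delta_w$. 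The $\nabla$-half of (1) follows by applying the monoidal contravariant duality $\D$, which satisfies $\D \Delta_s \cong \nabla_s$ (an immediate consequence of~\eqref{eq:duality and recollement}) and hence $\D \Delta_w \cong \nabla_w$.

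With (1) established at length $r$, I deduce (2) at length $r$ by writing $\Delta_w \ustar \nabla_{w^{-1}} \cong \Delta_{s_1} \ustar \cdots \ustar \Delta_{s_r} \ustar \nabla_{s_r} \ustar \cdots \ustar \nabla_{s_1}$, collapsing the middle pair via the base case $\Delta_{s_r} \ustar \nabla_{s_r} \cong B_\varnothing$, and invoking the inductive hypothesis (2) for $w'$; the other composition $\nabla_{w^{-1}} \ustar \Delta_w$ is analogous. The principal technical obstacle is the base-case computation $\Delta_s \ustar \nabla_s \cong B_\varnothing$, which demands careful bookkeeping of signs in the total complex and explicit use of the Elias--Williamson diagrammatic relations; once it is in place, the interlocking induction is essentially formal, the crucial leverage being that the inductive hypothesis for (2) upgrades $\Delta_{w'} \ustar (-)$ to an autoequivalence and thereby forces the connecting morphism $\id_{\Delta_{w'}} \ustar \delta_0$ to be a generator of the relevant Hom module.
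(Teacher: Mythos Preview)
Your proposal is correct and follows essentially the same approach as the paper: an interlocking induction on $\ell(w)$ in which the inductive hypothesis for (2) at length $r-1$ makes $\Delta_{w'}\ustar(-)$ an equivalence, so that applying it to the $(e,s)$-triangle of Lemma~\ref{lem:triangle-Dw-ws} produces a triangle matching the $(w',s)$-triangle via the rank-one Hom argument. The only minor difference is that for the base case $\Delta_s\ustar\nabla_s\cong\nabla_s\ustar\Delta_s\cong B_\varnothing$ the paper simply cites~\cite[Lemma~4.2.4]{amrw1}, whereas you sketch the underlying computation directly.
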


\begin{proof}
We will prove the claims by induction on $\ell(w)$. We note here that~\eqref{it:Delta-nabla-convolution-2} holds when $\ell(w)=1$ by~\cite[Lemma~4.2.4]{amrw1}. In particular, it follows that~\eqref{it:Delta-nabla-convolution-2} is a consequence of~\eqref{it:Delta-nabla-convolution-1} (applied to $w$ and $w^{-1}$).

Of course, if $\ell(w)=0$ there is nothing to prove. Now let $w \in W \smallsetminus \{e\}$, and assume the claims are known for elements of length strictly smaller than that of $w$. We will prove the first isomorphism in~\eqref{it:Delta-nabla-convolution-1} for $w$; the second one can be proved similarly (or follows by duality), and as noted above~\eqref{it:Delta-nabla-convolution-2} will follow. Let $(s_1, \ldots, s_r)$ be a reduced expression for $w$, and let $y:=s_1 \cdots s_{r-1}$ and $s := s_r$ (so that $w=ys$). Using~\eqref{it:Delta-nabla-convolution-1} for $y$ (which is known by induction) we know that
\[
\Delta_y \cong \Delta_{s_1} \ustar \Delta_{s_2} \ustar \cdots \ustar \Delta_{s_{r-1}},
\]
hence to conclude it suffices to prove that
\[
\Delta_w \cong \Delta_y \ustar \Delta_s.
\]

The special case of Lemma~\ref{lem:triangle-Dw-ws} for the neutral element $e$ provides a distinguished triangle
\[
B_\varnothing \langle -1 \rangle \to \Delta_s \to B_s \xrightarrow{[1]}
\]
in which the third arrow is a generator of $\Hom_{\BE(\h,W)}(B_s, B_\varnothing \langle -1 \rangle[1])$, a free rank-$1$ $\bk$-module. Now~\eqref{it:Delta-nabla-convolution-2} for $y$ implies that the functor
\[
\Delta_y \ustar (-) : \BE(\h,W) \to \BE(\h,W)
\]
is an equivalence of triangulated categories (with quasi-inverse $\nabla_{y^{-1}} \ustar (-)$). Hence applying this functor we obtain a distinguished triangle
\[
\Delta_y \langle -1 \rangle \to \Delta_y \ustar \Delta_s \to \Delta_y \ustar B_s \xrightarrow{[1]}
\]
in which the third arrow is a generator of $\Hom_{\BE(\h,W)}(\Delta_y \ustar B_s, \Delta_y \langle -1 \rangle[1])$ (a free rank-$1$ $\bk$-module). Comparing with the triangle of Lemma~\ref{lem:triangle-Dw-ws} (now for $y$) we deduce the isomorphism $\Delta_w \cong \Delta_y \ustar \Delta_s$, as expected.
\end{proof}

\begin{rmk}\phantomsection
\label{rmk:rouquier-complexes}
\begin{enumerate}
\item
\label{it:rouquier-complexes}
Proposition~\ref{prop:D-N-convolution}\eqref{it:Delta-nabla-convolution-1} shows that the objects $\Delta_w$ ($w \in W$) are generalizations of the \emph{Rouquier complexes} from~\cite{rouquier} associated with canonical lifts of elements of $W$ to the braid group of $(W,S)$.
More precisely, consider a reflection faithful representation $V$ of $(W,S)$ over $\bk=\mathbb{R}$ as constructed by Soergel or arising from a symmetrizable Kac--Moody group; see~\cite[Proposition~1.1]{riche-bourbaki}. Then, as explained in~\cite[Example~3.3(2)--(4)]{ew}, there exists a natural realization $\h$ of $(W,S)$ with underlying vector space $V$. Moreover, by~\cite[Theorem~6.30]{ew} there exists a canonical equivalence of graded additive categories between the Karoubian envelope of $\DiagBS^\oplus(\h,W)$ and the associated category of Soergel bimodules. Under the induced equivalence between bounded homotopy categories (see~\cite[Lemma~4.9.1]{amrw1}), Proposition~\ref{prop:D-N-convolution} shows that $\Delta_w$ corresponds to the Rouquier complex (as defined in~\cite[Proposition~9.4]{rouquier}) associated with the canonical lift of $w$ to the braid group. From this point of view, Lemma~\ref{lem:Hom-BE-D-N} is a generalization of the main result of~\cite{lw}.
\item
Proposition~\ref{prop:D-N-convolution}\eqref{it:Delta-nabla-convolution-1} suggests a different approach to our study, starting with a ``direct'' definition of standard and costandard objects. However, from such a definition it seems to be difficult (at least to the authors) to prove that such objects have the properties they ought to possess (like independence of the reduced expression, or Lemma~\ref{lem:Hom-BE-D-N}).
\end{enumerate}
\end{rmk}

\subsection{Application}
\label{ss:Groth-gp}

In this subsection we apply the results of this section to describe the split Grothendieck group of the additive category $\Diag_{\BS}^\oplus(\h,W)$. If $\mathcal{A}$ is an essentially small triangulated category, resp.~
additive category, we denote by $[\mathcal{A}]_\Delta$, resp.~$[\mathcal{A}]_\oplus$, the Grothendieck group of $\mathcal{A}$, resp.~the split Grothendieck group of $\mathcal{A}$. Recall also the Hecke algebra $\mathcal{H}_{(W,S)}$ associated with the Coxeter system $(W,S)$, where we follow the conventions of~\cite{soergel}. With this notation introduced, we can state our result more precisely, as follows.

\begin{theorem}
\label{thm:Groth-gp}
There exists a unique ring isomorphism
\[
\mathcal{H}_{(W,S)} \xrightarrow{\sim} [\Diag_{\BS}^\oplus(\h,W)]_\oplus
\]
sending $v$ to $[B_\varnothing(1)]$ and $\underline{H}_s=H_s+v$ to $[B_s]$ (for any $s \in S$).
\end{theorem}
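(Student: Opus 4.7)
The strategy is to work with the triangulated Grothendieck group $[\BE(\h,W)]_\Delta$, which via the standard isomorphism $[\Kb \mathcal{A}]_\Delta \cong [\mathcal{A}]_\oplus$ (valid for any additive category $\mathcal{A}$) identifies canonically with $[\Diag_{\BS}^\oplus(\h,W)]_\oplus$. Under this identification, $v := [B_\varnothing(1)]$ corresponds to the shift autoequivalence $(1)$ and makes the group a $\Z[v^{\pm 1}]$-algebra. I would present $\mathcal{H}_{(W,S)}$ via the generators $H_s$ ($s\in S$), the quadratic relations $H_s^2 = (v^{-1}-v)H_s + 1$, and the braid relations; recall $\underline{H}_s = H_s + v$. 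I define the candidate homomorphism $\phi$ by $\phi(H_s) := [\Delta_s]$ and $\phi(v) := [B_\varnothing(1)]$.

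To verify the quadratic relation, the two triangles in Example~\ref{ex:Des}\eqref{it:Ds} give $[\Delta_s] = [B_s] - v$ and $[\nabla_s] = [B_s] - v^{-1}$, whence $[\nabla_s] = [\Delta_s] + (v - v^{-1})$; combined with $[\Delta_s][\nabla_s] = 1$ from Proposition~\ref{prop:D-N-convolution}\eqref{it:Delta-nabla-convolution-2}, this yields $[\Delta_s]^2 = (v^{-1}-v)[\Delta_s] + 1$. The braid relations follow from Proposition~\ref{prop:D-N-convolution}\eqref{it:Delta-nabla-convolution-1}: both products $[\Delta_s][\Delta_t][\Delta_s]\cdots$ and $[\Delta_t][\Delta_s][\Delta_t]\cdots$ (each with $m_{st}$ factors) equal $[\Delta_w]$, where $w$ is the common longest element of the dihedral subgroup $\langle s,t\rangle$. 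One then reads off $\phi(\underline{H}_s) = [\Delta_s] + v = [B_s]$, matching the target condition. Surjectivity is immediate: every object of $\Diag_{\BS}^\oplus(\h,W)$ is a direct sum of shifts of Bott--Samelson objects $B_\uw = B_{s_1}\star\cdots\star B_{s_r}$, and $[B_\uw] = \phi(\underline{H}_{s_1}\cdots\underline{H}_{s_r})$.

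The substantive step is injectivity. Since $\{H_w\}_{w \in W}$ is a $\Z[v^{\pm 1}]$-basis of $\mathcal{H}_{(W,S)}$, and $\phi(H_w) = [\Delta_w]$ by another application of Proposition~\ref{prop:D-N-convolution}\eqref{it:Delta-nabla-convolution-1} to the product along a reduced expression, it suffices to show that the classes $[\Delta_w]$ are $\Z[v^{\pm 1}]$-linearly independent in $[\BE(\h,W)]_\Delta$. Here I invoke the recollement developed in Section~\ref{sec:BE}: each locally closed embedding $\{w\} \hookrightarrow W$ yields a triangulated functor $(i_w^W)^* : \BE(\h,W) \to \BE_{\{w\}}(\h,W)$ which by Lemma~\ref{lem:pf-pb-D-N} sends $\Delta_x$ to $b_w$ if $x=w$ and to $0$ otherwise. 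Lemma~\ref{le:Dw is equivalent to free R} identifies $[\BE_{\{w\}}(\h,W)]_\Delta$ with the free $\Z[v^{\pm 1}]$-module on $[b_w]$, so any finite relation $\sum_{w} f_w(v)[\Delta_w] = 0$ forces $f_w(v) = 0$ for every $w$ in its support. Uniqueness of $\phi$ is automatic since $v$ and the classes $[B_s]$ generate the target. The mild subtlety is the choice to verify the presentation in terms of $H_s$ rather than $\underline{H}_s$, since for $H_s$ the braid relations hold ``on the nose''; all the genuine work has already been done in Proposition~\ref{prop:D-N-convolution} and the recollement machinery, so I anticipate no real obstacle in carrying out this plan.
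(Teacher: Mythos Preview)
Your proposal is correct and follows essentially the same route as the paper: reduce to $[\BE(\h,W)]_\Delta$ via Rose's lemma, use Proposition~\ref{prop:D-N-convolution} to verify the Hecke relations, and use the recollement functors together with Lemma~\ref{le:Dw is equivalent to free R} to establish that the classes $[\Delta_w]$ are linearly independent. Two minor remarks: (i) your linear independence step is actually cleaner than the paper's, which invokes Thomason's lemma to pass to explicit triangles before applying $(i_x^I)^*$, whereas you simply use that a triangulated functor induces a homomorphism on Grothendieck groups; (ii) the functor $(i_w^W)^*$ is only constructed in the paper when the ambient set is finite (see~\S\ref{ss:pf-pb-locally-closed}), so you should route it through a finite closed $I$ containing the support of your relation, using Proposition~\ref{prop:recollement} to get $(i_I^W)^*$ and then $(i_w^I)^*$.
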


\begin{rmk}
In the case $\bk$ is a complete local ring, this result appears as~\cite[Corollary~6.27]{ew}. (In this reference, the result is stated in terms of the Karoubian hull of $\DiagBS^\oplus(\h,W)$. However, it is easy to deduce from~\cite[Theorem~6.26]{ew} that under their assumption the natural functor from $\DiagBS^\oplus(\h,W)$ to its Karoubian hull induces an isomorphism on split Grothendieck groups.) The fact that our methods might allow one to generalize this result was suggested to one of us by G.~Williamson.
\end{rmk}

The proof of Theorem~\ref{thm:Groth-gp} will use the following lemma, which is the main result of~\cite{rose}.

\begin{lema}
\label{lem:rose}
For any essentially small additive category $\mathcal{A}$, the natural group morphism
\[
[\mathcal{A}]_\oplus \to [\Kb \mathcal{A}]_\Delta
\]
is an isomorphism.
\end{lema}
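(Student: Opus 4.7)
The plan is to construct an explicit inverse $\psi \colon [\Kb\mathcal{A}]_\Delta \to [\mathcal{A}]_\oplus$ to the natural map $\phi$ via an Euler characteristic. For a bounded complex $C^\bullet$, define
\[
\chi(C^\bullet) := \sum_{n \in \Z} (-1)^n [C^n]_\oplus \in [\mathcal{A}]_\oplus,
\]
a finite sum depending only on the termwise isomorphism class of $C^\bullet$ as a chain complex. From the stupid-truncation distinguished triangles $\sigma^{\geq n+1} C^\bullet \to \sigma^{\geq n} C^\bullet \to C^n[-n] \xrightarrow{[1]}$, an induction on the length of $C^\bullet$ yields the identity $[C^\bullet]_\Delta = \sum_n (-1)^n [C^n]_\Delta$ in $[\Kb\mathcal{A}]_\Delta$. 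In particular $\phi$ is surjective, and the relations $\phi\circ\psi=\id$ and $\psi\circ\phi=\id$ will both hold automatically once $\chi$ descends to a well-defined homomorphism $\psi$ from $[\Kb\mathcal{A}]_\Delta$.

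The mapping-cone formula $\operatorname{cone}(f)^n = X^{n+1}\oplus Y^n$ gives $\chi(\operatorname{cone}(f)) = \chi(Y) - \chi(X)$ directly, and every distinguished triangle in $\Kb\mathcal{A}$ is isomorphic to a cone triangle; so the only nontrivial point is to establish that $\chi$ is invariant under isomorphism in $\Kb\mathcal{A}$. Since isomorphisms in $\Kb\mathcal{A}$ are chain-homotopy equivalences, and $f$ is such an equivalence exactly when $\operatorname{cone}(f)$ is contractible, the entire argument reduces to the key claim: \emph{if $C^\bullet$ is a bounded contractible complex in $\mathcal{A}$, then $\sum_n (-1)^n [C^n]_\oplus = 0$ in $[\mathcal{A}]_\oplus$.}

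To prove the key claim I would exhibit an isomorphism $E \cong O$ in $\mathcal{A}$, where $E := \bigoplus_{n \text{ even}} C^n$ and $O := \bigoplus_{n \text{ odd}} C^n$. Fix a contraction $h$ with $\id = dh + hd$, and define the parity-changing maps $\alpha := d + h \colon E \to O$ and $\beta := d + h \colon O \to E$. A direct computation using $d^2 = 0$ and the contraction identity yields
\[
\beta\alpha = \id_E + H_E, \qquad \alpha\beta = \id_O + H_O,
\]
where $H_E$, $H_O$ are the operators induced by the components $h^{n-1}\circ h^n \colon C^n \to C^{n-2}$. These correction terms strictly lower degree by two, so by boundedness of $C^\bullet$ they are nilpotent; hence $\beta\alpha$ and $\alpha\beta$ are automorphisms (each with inverse given by a finite Neumann series). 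Thus $\alpha$ is simultaneously split mono and split epi, and therefore an isomorphism in the additive category $\mathcal{A}$, giving $[E]_\oplus = [O]_\oplus$ and hence $\chi(C^\bullet) = 0$.

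The main obstacle is precisely this key claim. The classical approach would decompose a contractible complex as a direct sum of disks, but this requires splitting the idempotents $d^{n-1}h^n$, and $\mathcal{A}$ is not assumed to be idempotent-complete, so that route is unavailable in this generality. The maps $\alpha$ and $\beta$ above are designed to bypass this issue: they produce the isomorphism $E \cong O$ directly, entirely within $\mathcal{A}$, without splitting any idempotent or passing to a Karoubi envelope.
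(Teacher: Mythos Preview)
Your argument is correct. The paper does not supply its own proof of this lemma but simply cites it as the main result of~\cite{rose}; the approach you give---showing that for a bounded contractible complex the map $d+h$ yields an isomorphism between the even and odd parts, with inverse obtained from the nilpotence of $h^2$---is precisely Rose's argument.
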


\begin{proof}[Proof of Theorem~{\rm \ref{thm:Groth-gp}}]
In view of Lemma~\ref{lem:rose}, the natural morphism
\[
[\Diag_{\BS}^\oplus(\h,W)]_\oplus \to [\BE(\h,W)]_\Delta
\]
is an isomorphism. Moreover this morphism is clearly a ring morphism. Therefore, to prove the theorem we only have to prove that there exists a unique isomorphism
\[
\mathcal{H}_{(W,S)} \xrightarrow{\sim} [\BE(\h,W)]_\Delta
\]
sending $v$ to $[B_\varnothing(1)]$ and $H_s+v$ to $[B_s]$. Uniqueness is clear, since $\mathcal{H}_{(W,S)}$ is generated (as a ring) by $v$ and the elements $H_s+v$ for $s \in S$.

To prove existence, we first remark that the classes of the standard objects $[\Delta_w(m)]$ form a $\Z$-basis of $[\BE(\h,W)]_\Delta$. In fact, Lemma~\ref{lema:BE-generated-D-N} and Remark~\ref{rmk:generate}\eqref{it:generate} imply that these classes span $[\BE(\h,W)]_\Delta$. On the other hand, assume for a contradiction that there exists a relation
\[
\sum_{\substack{x \in Y_1 \\ m \in \Z}} \lambda_{x,m} \cdot [\Delta_x(m)] = \sum_{\substack{y \in Y_2 \\ n \in \Z}} \lambda_{y,n} \cdot [\Delta_y(n)]
\]
for some disjoint finite subsets $Y_1,Y_2 \subset W$ (with $Y_1 \neq \varnothing$ and $\lambda_{x,m} \neq 0$ for at least one $x \in Y_1$ and $m \in \Z$) and some integers $\lambda_{y,n} \in \Z_{\geq 0}$ (with $\lambda_{x,m}=0$ and $\lambda_{y,n}=0$ for all but finitely many $m$'s and $n$'s). Then, if we set
 \[
 \mathscr{X}_1:= \bigoplus_{\substack{x \in Y_1 \\ m \in \Z}} \bigl( \Delta_x(m) \bigr)^{\oplus \lambda_{x,m}}, \quad
 \mathscr{X}_2:= \bigoplus_{\substack{y \in Y_2 \\ n \in \Z}} \bigl( \Delta_y(n) \bigr)^{\oplus \lambda_{y,n}},
 \]
 by~\cite[Lemma~2.4]{thomason} there exist objects $\mathscr{C}$, $\mathscr{C}'$, $\mathscr{C}''$ and distinguished triangles
 \[
 \mathscr{C} \oplus  \mathscr{X}_1 \to \mathscr{C}' \to \mathscr{C}'' \xrightarrow{[1]}, \quad
 \mathscr{C} \oplus  \mathscr{X}_2 \to \mathscr{C}' \to \mathscr{C}'' \xrightarrow{[1]}.
 \]
 There exists a finite closed subset $I \subset W$ such that all the objects above belong to $\BE_I(\h,W)$. Then
 choose $x \in Y_1$ such that $\lambda_{x,m} \neq 0$ for at least one $m$. Applying $(i_x^I)^*$ and using Lemma~\ref{lem:pf-pb-D-N} we obtain distinguished triangles
 \[
 (i_x^I)^*\mathscr{C} \oplus  (i_x^I)^* \mathscr{X}_1 \to (i_x^I)^*\mathscr{C}' \to (i_x^I)^*\mathscr{C}'' \xrightarrow{[1]}, \quad
 (i_x^I)^*\mathscr{C} \to (i_x^I)^*\mathscr{C}' \to (i_x^I)^*\mathscr{C}'' \xrightarrow{[1]}.
 \]
 Hence the class of $(i_x^I)^* \mathscr{X}_1$ in $[\BE_{\{x\}}(\h,W)]_\Delta$ vanishes. But Lemma~\ref{le:Dw is equivalent to free R} and Lem\-ma~\ref{lem:rose} imply that the classes $[b_x(m)]$ with $m \in \Z$ form a basis of $[\BE_{\{x\}}(\h,W)]_\Delta$, and by construction the coefficient of $(i_x^I)^* \mathscr{X}_1$ on $[b_x(m)]$ is $\lambda_{x,m}$. One of these coefficients is nonzero, providing the desired contradiction.
 
 We now prove that the assignment
 \[
 v \mapsto [B_\varnothing(1)], \qquad H_w \mapsto [\Delta_w] \ \text{ ($w \in W$)}
 \]
 induces a ring morphism $\mathcal{H}_{(W,S)} \to [\BE(\h,W)]_\Delta$. For this we have to prove that
 \begin{equation}
 \label{eqn:rel-H-1}
 \bigl( [\Delta_s] \bigr)^2 = [\Delta_e] + [\Delta_s(-1)] - [\Delta_s(1)]
 \end{equation}
 for $s \in S$ and that for $x,y \in W$ such that $\ell(xy)=\ell(x)+\ell(y)$ we have
 \begin{equation}
 \label{eqn:rel-H-2}
 [\Delta_{xy}] = [\Delta_x \ustar \Delta_y].
 \end{equation}
 Here~\eqref{eqn:rel-H-2} follows from Proposition~\ref{prop:D-N-convolution}, while~\eqref{eqn:rel-H-1} follows from the fact that $[\Delta_s] = [B_s] - [\Delta_e(1)]$ (see Example~\ref{ex:Des}\eqref{it:Ds}) and the isomorphism~\eqref{eqn:BsBs}.

 Finally we argue that our morphism $\mathcal{H}_{(W,S)} \to [\BE(\h,W)]_\Delta$ is invertible because it sends a $\Z$-basis of $\mathcal{H}_{(W,S)}$ to a $\Z$-basis of $[\BE(\h,W)]_\Delta$; moreover it sends $v$ to $B_{\varnothing}(1)$ by definition, and $H_s+v$ to $[B_s]$ since (as already noticed above) we have $[\Delta_s] = [B_s] - [\Delta_e(1)]$ in $[\BE(\h,W)]_\Delta$.
\end{proof}

\begin{rmk}
 Let us notice that, viewed as an isomorphism $\mathcal{H}_{(W,S)} \xrightarrow{\sim} [\BE(\h,W)]_\Delta$, the isomorphism of Theorem~\ref{thm:Groth-gp} is very explicit: it sends $H_w$ to $[\Delta_w]$.
\end{rmk}

\section{The perverse t-structure}\label{sec:BE t structure}

From now on we assume that $\bk$ satisfies the assumptions of~\S\ref{ss:rings}.
The goal of this section is to endow the biequivariant category $\BE(\h,W)$ with a bounded 
t-structure and investigate its heart. 

\subsection{t-structure for categories associated with singletons}
\label{ss:t-structure-singleton}

We start by considering singleton sets, in analogy with \cite[Lemmas~3.1 and Lemma~3.18]{modrap2}. 
Recall the equivalence $\gamma$ of Lemma~\ref{le:Dw is equivalent to free R}. Passing to bounded homotopy categories, we obtain an equivalence
\[
 \BE_{\{w\}}(\h,W) \cong \Kb \Free^{\fgen,\Z}(R).
\]
Composing with the equivalence of Lemma~\ref{lem:DbMod-KbFree} we deduce an equivalence of triangulated categories
\begin{equation}
\label{eqn:equiv-BEw}
\BE_{\{w\}}(\h,W) \cong \Db \Mod^{\fgen,\Z}(R).
\end{equation}
Here, the autoequivalence $(1)$ of $\BE_{\{w\}}(\h,W)$ corresponds to the autoequivalence of $\Db \Mod^{\fgen,\Z}(R)$ sending a complex $(M^n, d^n)_{n \in \mathbb{Z}}$ to the complex $(M^n(1),-d^n)_{n \in \mathbb{Z}}$ where $(1)$ is as in Lemma~\ref{le:Dw is equivalent to free R}. This autoequivalence will also be denoted $(1)$.

Now, let us recall the ``linear Koszul duality'' construction of~\cite[Section~4]{prinblock} (see also~\cite{mr} for a slightly different and more general construction).
Let $\mathsf{\Lambda}$ be the differential graded algebra defined as the exterior algebra of the free $\bk$-module $V$ placed in degree $-1$, with trivial differential. We will consider $\mathsf{\Lambda}$ as a $\mathbb{Z}$-graded dg-algebra (sometimes called a differential graded graded algebra or a $\mathbb{G}_{\mathbf{m}}$-equivariant dg-algebra), where $V$ is in degree $-2$ for this new grading. Then, composing the ``Koszul duality'' equivalence of~\cite[Theorem~4.1]{prinblock} with the ``regrading'' equivalence denoted $\xi$ in~\cite[\S 4.2]{prinblock} we obtain an equivalence of triangulated categories
\[
 \Db \Mod^{\fgen,\Z}(R) \xrightarrow{\sim} D^{\fgen}_{\Z}(\mathsf{\Lambda}),
\]
where the right-hand side is the derived category of $\Z$-graded $\mathsf{\Lambda}$-dg-modules whose cohomology is finitely generated over $\bk$.\footnote{In~\cite{prinblock}, for simplicity this claim is stated only in the case $\bk$ is a field. But the same arguments apply in the present generality; see e.g.~\cite{mr} for similar constructions.} 
Composing with~\eqref{eqn:equiv-BEw} we deduce an equivalence
\begin{equation}
\label{eqn:equiv-t-structure}
\BE_{\{w\}}(\h,W) \xrightarrow{\sim} D^{\fgen}_{\Z}(\mathsf{\Lambda}).
\end{equation}

Since $\mathsf{\Lambda}$ is concentrated in non-positive cohomological degrees, the right-hand side has a canonical t-structure defined by
\begin{align*}
 \bigl( D^{\fgen}_{\Z}(\mathsf{\Lambda}) \bigr)^{\leq 0} &= \{M \in D^{\fgen}_{\Z}(\mathsf{\Lambda}) \mid \mathsf{H}^{>0}(M)=0\}, \\
 \bigl( D^{\fgen}_{\Z}(\mathsf{\Lambda}) \bigr)^{\geq 0} &= \{M \in D^{\fgen}_{\Z}(\mathsf{\Lambda}) \mid \mathsf{H}^{<0}(M)=0\}.
\end{align*}
The \emph{perverse t-structure} on $\BE_{\{w\}}(\h,W)$, denoted
\[
 \bigl( {}^p \hspace{-1pt} \BE_{\{w\}}(\h,W)^{\leq 0}, {}^p \hspace{-1pt} \BE_{\{w\}}(\h,W)^{\geq 0} \bigr),
\]
is defined as the transport of the t-structure on $D^{\fgen}_{\Z}(\mathsf{\Lambda})$ considered above along the equivalence~\eqref{eqn:equiv-t-structure}. It can be checked from the definitions that, under this equivalence,
the autoequivalence $\langle 1 \rangle$ of $\BE_{\{w\}}(\h,W)$ corresponds to the autoequivalence of the category $D^{\fgen}_{\Z}(\mathsf{\Lambda})$ sending a $\Z$-graded dg-module $M$ to the same dg-module, with degree-$j$ part (for the ``extra'' $\Z$-grading) the degree-$(j-1)$ part of $M$. The latter equivalence is clearly t-exact; hence so is the autoequivalence $\langle 1 \rangle$ on $\BE_{\{w\}}(\h,W)$.

It is clear also that the object $b_w$ considered in~\S\ref{ss:def-D-N} belongs to the heart of the perverse t-structure on $\BE_{\{w\}}(\h,W)$. In fact, this object characterizes the 
t-structure in the following sense.

\begin{lema}
\label{lem:subcat-generated-1}
The subcategory ${}^p \hspace{-1pt} \BE_{\{w\}}(\h,W)^{\leq 0}$ is generated under extensions by the objects $b_w \langle m \rangle[n]$ with $m \in \Z$ and $n \in \Z_{\geq 0}$.
\end{lema}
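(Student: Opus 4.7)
The plan is to transport the problem via the equivalence~\eqref{eqn:equiv-t-structure} to $D^{\fgen}_\Z(\mathsf{\Lambda})$, where (by the usual Koszul-duality dictionary) the object $b_w$ corresponds to the trivial $\mathsf{\Lambda}$-dg-module $\bk$ concentrated in cohomological degree zero. The inclusion ``$\supseteq$'' of the claimed equality is then immediate: each generator $b_w\langle m\rangle[n]$ with $n \geq 0$ lies in ${}^p\hspace{-1pt}\BE_{\{w\}}(\h,W)^{\leq 0}$, since $b_w$ is in the heart, $\langle 1\rangle$ is t-exact, and $[n]$ with $n \geq 0$ sends ${}^p\hspace{-1pt}\BE_{\{w\}}(\h,W)^{\leq 0}$ into itself; and this subcategory is closed under extensions.

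For the converse, write $\mathcal{E}$ for the extension closure in $D^{\fgen}_\Z(\mathsf{\Lambda})$ of the objects $\bk\langle m\rangle[n]$ with $m \in \Z$ and $n \geq 0$, and observe that $\mathcal{E}$ is stable under $[1]$ since its generating set is (applying $[1]$ to a distinguished triangle yields another distinguished triangle). Given $M \in \bigl( D^{\fgen}_\Z(\mathsf{\Lambda}) \bigr)^{\leq 0}$, I would use the standard truncation triangles
\[
\tau^{\leq n-1} M \to \tau^{\leq n} M \to \mathsf{H}^n(M)[-n] \xrightarrow{+1}
\]
for $n = 0, -1, \ldots$, and induct on cohomological amplitude (which is finite thanks to the equivalence $D^{\fgen}_\Z(\mathsf{\Lambda}) \cong \Db \Mod^{\fgen,\Z}(R)$) to reduce the problem to showing that every object of the heart belongs to $\mathcal{E}$.

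Next I would identify this heart with $\Mod^{\fgen,\Z}(\bk)$: for degree reasons, the action of $\mathsf{\Lambda}^{-1} = V$ on a dg-module concentrated in cohomological degree zero is forced to be trivial, so such an object is simply a finitely generated $\Z$-graded $\bk$-module. For any such module $N$, Assumptions~\eqref{it:ass-2}--\eqref{it:ass-3} of~\S\ref{ss:rings} guarantee a finite resolution by finitely generated graded free $\bk$-modules, each of which is a finite direct sum of shifts $\bk\langle m\rangle$. An induction on the projective dimension of $N$ then finishes the job: picking a short exact sequence $0 \to K \to F_0 \to N \to 0$ with $F_0$ free and $K$ of strictly smaller projective dimension, the rotated distinguished triangle $F_0 \to N \to K[1] \xrightarrow{+1}$ exhibits $N$ as an extension of $K[1]$ (which lies in $\mathcal{E}$ by the induction hypothesis and the stability of $\mathcal{E}$ under $[1]$) by $F_0 \in \mathcal{E}$.

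The main conceptual point is the Koszul-duality identification of $b_w$ with the augmentation module of $\mathsf{\Lambda}$; granting this, the argument reduces to a routine two-step induction on cohomological amplitude and projective dimension, with Assumption~\eqref{it:ass-3} of~\S\ref{ss:rings} providing the crucial guarantee that projective resolutions stay inside the extension closure of our prescribed free generators.
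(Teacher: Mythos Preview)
Your proof is correct and follows essentially the same approach as the paper: transport via the Koszul-duality equivalence~\eqref{eqn:equiv-t-structure} (under which $b_w$ corresponds to the trivial $\mathsf{\Lambda}$-dg-module $\bk$), use truncation triangles to reduce to objects concentrated in a single cohomological degree, and then use finite free resolutions over $\bk$ to conclude. The paper's proof is more terse (it simply says ``choosing finite free resolutions of these $\bk$-modules we obtain the desired claim''), whereas you spell out the identification of the heart with $\Mod^{\fgen,\Z}(\bk)$ and the induction on projective dimension explicitly; but the underlying argument is the same.
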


\begin{proof}
The equivalence~\eqref{eqn:equiv-t-structure} sends $b_w$ to $\bk$ (the trivial $\mathsf{\Lambda}$-dg-module, concentrated in degree $0$). Hence the statement amounts to the claim that $\bigl( D^{\fgen}_{\Z}(\mathsf{\Lambda}) \bigr)^{\leq 0}$ is generated by the dg-modules which are concentrated in one cohomological degree $n \leq 0$, and free as a graded $\bk$-module. However, using truncation functors we see that any object of $\bigl( D^{\fgen}_{\Z}(\mathsf{\Lambda}) \bigr)^{\leq 0}$ is an extension of dg-modules concentrated in one cohomological degree $n$ (where $n$ varies in $\Z_{\leq 0}$). Choosing finite free resolutions of these $\bk$-modules (which exist under our assumptions) we obtain the desired claim.
\end{proof}

\subsection{Definition of the t-structure}
\label{ss:def-t-structure}

We are now ready to introduce our main definition, following~\cite[Definition~3.18]{modrap2}. 

\begin{definition}
\label{def:t structure}
Let $I$ be a finite locally closed subset of $W$. 
The {\it perverse $t$-structure} on $\BE_{I}(\h,W)$ is the bounded $t$-structure given by
\begin{align*}
{}^p \hspace{-1pt} \BE_{I}(\h,W)^{\leq0}=&\bigl\{\rF\in\BE_{I}(\h,W)\mid \forall w\in I, \ (i_w^I)^*(\rF)\in{}^p \hspace{-1pt} \BE_{w}(\h,W)^{\leq0} \bigr\},\\ 
{}^p \hspace{-1pt} \BE_{I}(\h,W)^{\geq0}=&\bigl\{\rF\in\BE_{I}(\h,W)\mid \forall w\in I, \ (i_w^I)^!(\rF)\in{}^p \hspace{-1pt} \BE_{w}(\h,W)^{\geq0}\}.
\end{align*}
\end{definition}

Here, the fact that this pair of subcategories indeed forms a bounded t-structure follows from the general theory of recollement (see~\cite[Th\'eor\`eme~1.4.10]{bbd}) together with
Lemma~\ref{le:recollement compositions for locally closed inclusions}.

\begin{lema}
\label{lem:basic-functors-exact}
 The following functors are t-exact:
 \begin{enumerate}
  \item $\langle 1 \rangle$;
  \item $(i_J^I)_*$ for $I \subset W$ a finite locally closed subset and $J \subset I$ a closed subset;
  \item
  $(i_K^I)^*$ for $I \subset W$ a finite locally closed subset and $K \subset I$ an open subset.
 \end{enumerate}
\end{lema}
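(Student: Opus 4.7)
The plan is to check the three parts directly from Definition~\ref{def:t structure}, reducing everything to the singleton case studied in~\S\ref{ss:t-structure-singleton}. For a functor $F$ between two such categories $\BE_{I'}(\h,W) \to \BE_I(\h,W)$, showing t-exactness amounts to verifying, for each $w \in I$, that $(i_w^I)^* \circ F$ preserves ${}^p\hspace{-1pt}\BE^{\leq 0}$ and $(i_w^I)^! \circ F$ preserves ${}^p\hspace{-1pt}\BE^{\geq 0}$, where the target condition is formulated on the singleton $\{w\}$. The main point is thus to compute these compositions using Lemma~\ref{le:recollement compositions for locally closed inclusions} and the vanishing results of~\S\ref{ss:inclu-singletons}.

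For (1), I would invoke the fact, noted just after~\eqref{eqn:equiv-t-structure}, that $\langle 1 \rangle$ is t-exact on $\BE_{\{w\}}(\h,W)$ (since under the Koszul duality equivalence it corresponds to the internal grading shift on $D^{\fgen}_{\Z}(\mathsf{\Lambda})$, which obviously preserves the canonical t-structure). Since $\langle 1 \rangle$ is a natural autoequivalence that commutes (up to canonical isomorphism) with every triangulated functor coming from recollement, in particular with $(i_w^I)^*$ and $(i_w^I)^!$, the t-exactness on $\BE_I(\h,W)$ follows from the defining conditions.

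For (2), fix $\mathscr{F} \in \BE_J(\h,W)$ and $w \in I$. If $w \notin J$, then both $(i_w^I)^*(i_J^I)_*\mathscr{F}$ and $(i_w^I)^!(i_J^I)_*\mathscr{F}$ vanish by Lemma~\ref{lem:pushforward-pullback-singleton}, so there is nothing to check. If $w \in J$, then using $(i_w^I)_* \cong (i_J^I)_* \circ (i_w^J)_*$ (Lemma~\ref{le:recollement compositions for locally closed inclusions}) and passing to left/right adjoints I obtain $(i_w^I)^* \cong (i_w^J)^* \circ (i_J^I)^*$ and $(i_w^I)^! \cong (i_w^J)^! \circ (i_J^I)^!$; combined with the invertibility of $(i_J^I)^*(i_J^I)_* \to \id$ (see~\eqref{eqn:pullback-pushforward-inverse}) and, since $J$ is closed, the identification $(i_J^I)_* = (i_J^I)_!$ from~\eqref{eqn:pushforward-closed}, these compositions simplify to $(i_w^J)^*\mathscr{F}$ and $(i_w^J)^!\mathscr{F}$ respectively, and the conclusion is immediate from the hypothesis on $\mathscr{F}$.

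For (3), fix $\mathscr{F} \in \BE_I(\h,W)$ and $w \in K$. By Lemma~\ref{le:recollement compositions for locally closed inclusions} we have $(i_w^K)^* \circ (i_K^I)^* \cong (i_w^I)^*$. Since $K$ is open, \eqref{eqn:pullback-open} gives $(i_K^I)^! = (i_K^I)^*$, and likewise $(i_w^K)^! \circ (i_K^I)^* = (i_w^K)^! \circ (i_K^I)^! \cong (i_w^I)^!$, again by Lemma~\ref{le:recollement compositions for locally closed inclusions}. Both conditions of Definition~\ref{def:t structure} then transfer verbatim from $\mathscr{F}$ to $(i_K^I)^*\mathscr{F}$. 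In none of the three cases is there any genuine obstacle: everything is bookkeeping, and the mild subtlety to watch for is simply the correct application of the identifications $(i_J^I)_* = (i_J^I)_!$ (for $J$ closed) and $(i_K^I)^* = (i_K^I)^!$ (for $K$ open) when computing the relevant adjoints.
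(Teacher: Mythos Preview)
Your proposal is correct and follows essentially the same approach as the paper: reduce to singletons via Definition~\ref{def:t structure}, use the t-exactness of $\langle 1\rangle$ on $\BE_{\{w\}}(\h,W)$ established in~\S\ref{ss:t-structure-singleton} together with its commutation with the recollement functors, and for (2) and (3) compute $(i_w^I)^*$ and $(i_w^I)^!$ composed with $(i_J^I)_*$ or $(i_K^I)^*$ via Lemma~\ref{le:recollement compositions for locally closed inclusions}, Lemma~\ref{lem:pushforward-pullback-singleton}, \eqref{eqn:pullback-pushforward-inverse}, \eqref{eqn:pushforward-closed}, and \eqref{eqn:pullback-open}. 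The only cosmetic difference is that the paper deduces the $!$-isomorphisms in part~(2) from the $*$-isomorphisms by duality, whereas you verify them directly using $(i_J^I)_*=(i_J^I)_!$ and the second adjunction in~\eqref{eqn:pullback-pushforward-inverse}; both arguments are equally valid.
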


\begin{proof}
 The case of $\langle 1 \rangle$ is an immediate consequence of the special case when $I$ is a singleton, which was justified in~\S\ref{ss:t-structure-singleton}, and the case of $(i_K^I)^*$ follows from Lemma~\ref{le:recollement compositions for locally closed inclusions} and~\eqref{eqn:pullback-open}. To justify the exactness of $(i_J^I)_*$, it suffices to prove that for $w \in I$ we have
 \[
  (i_w^I)^* (i_J^I)_* \cong \begin{cases}
                             (i_w^J)^* & \text{if $w \in J$;}\\
                             0 & \text{otherwise}
                            \end{cases}
\quad \text{and} \quad   (i_w^I)^! (i_J^I)_* \cong \begin{cases}
                             (i_w^J)^! & \text{if $w \in J$;}\\
                             0 & \text{otherwise.}
                            \end{cases}
 \]
Here the isomorphisms on the right-hand side follow from those on the left-hand side by duality. For the left-hand side, if $w \in J$ then
 \[
  (i_w^I)^* (i_J^I)_* \cong (i_w^J)^* (i_J^I)^* (i_J^I)_* \cong (i_w^J)^*
 \]
by Lemma~\ref{le:recollement compositions for locally closed inclusions} and the invertibility of the first morphism in~\eqref{eqn:pullback-pushforward-inverse}. If $w \notin J$ then the claim follows from Lemma~\ref{lem:pushforward-pullback-singleton}.
\end{proof}

Using Lemma~\ref{lem:basic-functors-exact}, the definition of the perverse t-structure can be generalized to any locally closed subset $I \subset W$ as follows. By definition, $\BE_{I}(\h,W)$ is the direct limit of the categories $\BE_{J}(\h,W)$ for $J \subset I$ a finite closed subset (for the embeddings $(i_J^{J'})_* : \BE_{J}(\h,W) \to \BE_{J'}(\h,W)$ for $J \subset J' \subset I$ closed subsets). Since under these embeddings
we have
\begin{align*}
 {}^p \hspace{-1pt} \BE_{J}(\h,W)^{\leq 0} &= \BE_J(\h,W) \cap {}^p \hspace{-1pt} \BE_{J'}(\h,W)^{\leq 0}, \\
 {}^p \hspace{-1pt} \BE_{J}(\h,W)^{\geq 0} &= \BE_J(\h,W) \cap {}^p \hspace{-1pt} \BE_{J'}(\h,W)^{\geq 0}
\end{align*}
(see in particular Lemma~\ref{lem:basic-functors-exact}), we can define ${}^p \hspace{-1pt} \BE_{I}(\h,W)^{\leq0}$ and ${}^p \hspace{-1pt} \BE_{I}(\h,W)^{\geq0}$ as the direct limits of the categories ${}^p \hspace{-1pt} \BE_{J}(\h,W)^{\leq0}$ and ${}^p \hspace{-1pt} \BE_{J}(\h,W)^{\geq0}$ respectively, for $J$ running over finite closed subsets of $I$. It is clear 
Lemma~\ref{lem:basic-functors-exact} then holds also without the assumption that $I$ is finite.

Let us immediately note the following consequence of the existence of the perverse t-structure, in view of the main result of~\cite{lc}.

\begin{cor}
\label{cor:Karoubian}
For any locally closed subset $I \subset W$,
the category $\BE_I(\h,W)$ is Karoubian.
\end{cor}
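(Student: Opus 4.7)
The plan is essentially a one-line invocation of the cited result. The main result of~\cite{lc} asserts that any triangulated category admitting a bounded t-structure is automatically Karoubian (i.e., idempotent complete). In Definition~\ref{def:t structure} and the discussion that follows (culminating just before the corollary), we have endowed $\BE_I(\h,W)$ with the perverse t-structure, and we have already noted that this is a \emph{bounded} t-structure: for a finite locally closed $I$, boundedness comes from the general recollement formalism applied to the filtration by singletons (each piece $\BE_{\{w\}}(\h,W)$ is equipped with a bounded t-structure via the equivalence~\eqref{eqn:equiv-t-structure}), and for a general locally closed $I$, boundedness of an individual object passes through one of the finite closed subsets $J \subset I$ with $\BE_J(\h,W)$ containing it.

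Therefore the strategy is:
\begin{enumerate}
\item Recall (or verify for arbitrary locally closed $I$) that the perverse t-structure on $\BE_I(\h,W)$ defined in~\S\ref{ss:def-t-structure} is bounded. For finite $I$ this is built into the recollement construction; for infinite $I$ it follows because every object of $\BE_I(\h,W)$ lies in $\BE_J(\h,W)$ for some finite closed $J \subset I$, and the embedding $(i_J^I)_*$ is t-exact by Lemma~\ref{lem:basic-functors-exact}.
\item Apply the main theorem of~\cite{lc}, which states that a triangulated category equipped with a bounded t-structure is Karoubian, to $\BE_I(\h,W)$.
\end{enumerate}

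There is no real obstacle here, since the hard work has already been done in constructing the t-structure; the only thing one should be careful about is ensuring that the extension of the perverse t-structure to non-finite $I$ (via direct limits of the categories $\BE_J(\h,W)$ for $J \subset I$ finite closed) is indeed bounded, which is immediate from the fact that every object is already supported on a finite closed subset. The corollary then follows at once.
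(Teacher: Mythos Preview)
Your proposal is correct and follows exactly the same approach as the paper: the corollary is stated immediately after the construction of the bounded perverse t-structure, with the paper simply remarking that it is a ``consequence of the existence of the perverse t-structure, in view of the main result of~\cite{lc}.'' Your additional care in spelling out boundedness for non-finite $I$ is appropriate and matches the paper's setup in~\S\ref{ss:def-t-structure}.
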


The subcategory ${}^p \hspace{-1pt} \BE_{I}(\h,W)^{\leq0}$ can be described in more concrete terms as follows.

\begin{lema}
\label{lem:t-structure-D-N}
Let $I \subset W$ be a locally closed subset.
\begin{enumerate}
\item
\label{it:D-}
The subcategory
 ${}^p \hspace{-1pt} \BE_{I}(\h,W)^{\leq0}$
 is generated under extensions by the objects $\Delta^I_w \langle m \rangle [n]$ with $w \in W$, $m \in \Z$ and $n \in \Z_{\geq 0}$.
\item
\label{it:D+}
The subcategory 
 ${}^p \hspace{-1pt} \BE_{I}(\h,W)^{\geq0}$
 contains the objects $\nabla^I_w \langle m \rangle [n]$ with $w \in W$, $m \in \Z$ and $n \in \Z_{\leq 0}$.
\end{enumerate}
\end{lema}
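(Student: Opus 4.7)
The plan is to reduce both claims to the singleton case (where the $t$-structure was defined directly via the equivalence with $\Db\Mod^{\fgen,\Z}(R)$) by restricting via the recollement and using Lemma~\ref{lem:pf-pb-D-N}.

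\emph{Containments.} For~(\ref{it:D-}), by the definition of the perverse $t$-structure it suffices to check that $(i_x^I)^*\Delta_w^I\langle m\rangle[n]$ lies in ${}^p\BE_{\{x\}}(\h,W)^{\leq 0}$ for every $x\in I$. Lemma~\ref{lem:pf-pb-D-N} gives this restriction as zero unless $x=w$, where it equals $b_w\langle m\rangle[n]$; since $b_w$ is in the heart (see~\S\ref{ss:t-structure-singleton}), $\langle 1\rangle$ is $t$-exact (Lemma~\ref{lem:basic-functors-exact}), and $n\geq 0$, the containment follows. Statement~(\ref{it:D+}) is checked identically: Lemma~\ref{lem:pf-pb-D-N} yields $(i_x^I)^!\nabla_w^I\langle m\rangle[n]$ equal to $b_w\langle m\rangle[n]$ if $x=w$ and zero otherwise, and this lies in ${}^p\BE_{\{w\}}(\h,W)^{\geq 0}$ for $n\leq 0$.

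\emph{Generation in~(\ref{it:D-}).} I argue by induction on $|I|$, assuming $I$ finite, and then deduce the general case from the direct limit description of~\S\ref{ss:def-t-structure} combined with the $t$-exactness of $(i_J^I)_*$ for closed $J\subset I$ (Lemma~\ref{lem:basic-functors-exact}). The base case $|I|=1$ is precisely Lemma~\ref{lem:subcat-generated-1}, since $\Delta_w^{\{w\}}=b_w$. For the inductive step, pick $w\in I$ minimal and set $K:=I\smallsetminus\{w\}$, so $\{w\}\subset I$ is closed and $K\subset I$ is open. For $F\in{}^p\BE_I(\h,W)^{\leq 0}$ the recollement triangle
\[
(i_K^I)_!(i_K^I)^*F\to F\to (i_w^I)_*(i_w^I)^*F\xrightarrow{[1]}
\]
reduces matters to analyzing the two endpoints. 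Since $(i_K^I)^*$ is $t$-exact (Lemma~\ref{lem:basic-functors-exact}) and $(i_w^I)^*F$ lies in ${}^p\BE_{\{w\}}(\h,W)^{\leq 0}$ by definition, the inductive hypothesis together with Lemma~\ref{lem:subcat-generated-1} show that $(i_K^I)^*F$ and $(i_w^I)^*F$ are generated under extensions by $\Delta_y^K\langle m\rangle[n]$ ($y\in K$, $n\geq 0$) and $b_w\langle m\rangle[n]$ ($n\geq 0$), respectively. Applying the triangulated functors $(i_K^I)_!$ and $(i_w^I)_*$, and using Lemma~\ref{lem:pf-pb-D-N} (which identifies $(i_K^I)_!\Delta_y^K\cong\Delta_y^I$) together with Example~\ref{ex:Des}\eqref{it:D-N-e} (which, $w$ being minimal, identifies $(i_w^I)_*b_w=\nabla_w^I$ with $\Delta_w^I$), both endpoints end up in the subcategory generated under extensions by the objects $\Delta_y^I\langle m\rangle[n]$ with $n\geq 0$, completing the induction. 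The underlying elementary fact used here is that a triangulated functor carries any extension-closed subcategory generated by a family $\{A_i\}$ into the extension-closed subcategory generated by the images $\{F(A_i)\}$, which is immediate from preservation of distinguished triangles.

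I do not foresee any serious obstacle: the whole argument is a routine unwinding of the recollement together with Lemma~\ref{lem:pf-pb-D-N}, bootstrapped from the singleton description furnished by Lemma~\ref{lem:subcat-generated-1}. The mildly delicate point is being careful that one chooses $w$ \emph{minimal} (so that $\{w\}$ is closed and $\Delta_w^I=\nabla_w^I$ agrees with the direct image $(i_w^I)_*b_w$); taking $w$ maximal would instead naturally produce costandard objects at the closed stratum.
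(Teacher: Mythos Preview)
Your proof is correct and follows essentially the same strategy as the paper: reduce to finite $I$, then induct on $|I|$ using a recollement triangle and the singleton case (Lemma~\ref{lem:subcat-generated-1}), together with Lemma~\ref{lem:pf-pb-D-N}. The only difference is cosmetic: the paper picks $w\in I$ \emph{maximal} (so $\{w\}$ is open and $I\smallsetminus\{w\}$ is closed) and uses the triangle $(i_w^I)_!(i_w^I)^*\mathscr{F}\to\mathscr{F}\to (i_{I\smallsetminus\{w\}}^I)_*(i_{I\smallsetminus\{w\}}^I)^*\mathscr{F}\xrightarrow{[1]}$, whereas you take $w$ minimal. Both decompositions work equally well.

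One small correction to your closing parenthetical: choosing $w$ maximal does \emph{not} ``naturally produce costandard objects at the closed stratum.'' In that case $\{w\}$ is open, so $(i_w^I)_! b_w=\Delta_w^I$ directly; on the closed complement $I\smallsetminus\{w\}$ one has $(i_{I\smallsetminus\{w\}}^I)_!=(i_{I\smallsetminus\{w\}}^I)_*$ by~\eqref{eqn:pushforward-closed}, and this sends $\Delta_y^{I\smallsetminus\{w\}}$ to $\Delta_y^I$ by Lemma~\ref{lem:pf-pb-D-N}. So the maximal choice is, if anything, marginally cleaner (no need to invoke $\Delta_w^I=\nabla_w^I$), and it is the route the paper takes.
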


\begin{proof}
We can (and shall) assume that $I$ is finite. Observe first that~\eqref{eqn:restriction-pushforward} and~\eqref{eqn:restriction-pushforward-2} imply that $\Delta^I_w$ belongs to $\BE_{I}(\h,W)^{\leq0}$, and that $\nabla^I_w$ belongs to $\BE_{I}(\h,W)^{\geq0}$. 
 
 Since $\langle 1 \rangle$ is a t-exact equivalence (see Lemma~\ref{lem:basic-functors-exact}), we deduce the containments
 \begin{align*}
  \langle \Delta^I_w \langle m \rangle [n] : w \in W, \, m \in \Z, \, n \in \Z_{\geq 0} \rangle_{\mathrm{ext}} \subset {}^p \hspace{-1pt} \BE_{I}(\h,W)^{\leq0} \\
  \langle \nabla^I_w \langle m \rangle [n] : w \in W, \, m \in \Z, \, n \in \Z_{\leq 0} \rangle_{\mathrm{ext}} \subset {}^p \hspace{-1pt} \BE_{I}(\h,W)^{\geq0}
 \end{align*}
 (where the left-hand side denotes the subcategory generated under extensions by the objects indicated).
 
We prove the reverse containment by induction on $|I|$.
If $|I|=1$, then the desired claim was proved in Lemma~\ref{lem:subcat-generated-1}. Then for a general $I$, choose $w \in I$ maximal (so that $\{w\}$ is open), and for $\mathscr{F}$ in ${}^p \hspace{-1pt} \BE_{I}(\h,W)^{\leq0}$ consider the distinguished triangle
\[
 (i_w^I)_! (i_w^I)^* \mathscr{F} \to \mathscr{F} \to (i_{I \smallsetminus \{w\}}^I)_! (i_{I \smallsetminus \{w\}}^I)^* \mathscr{F} \xrightarrow{[1]}.
\]
From the definitions we see that $(i_w^I)^* \mathscr{F}$ belongs to ${}^p \hspace{-1pt} \BE_{\{w\}}(\h,W)^{\leq0}$ and that $(i_{I \smallsetminus \{w\}}^I)^*$ belongs to ${}^p \hspace{-1pt} \BE_{I \smallsetminus \{w\}}(\h,W)^{\leq0}$. Using induction and Lemma~\ref{lem:pf-pb-D-N},
we deduce that $\mathscr{F}$ belongs to $\langle \Delta^I_w \langle m \rangle [n] : w \in W, \, m \in \Z, \, n \in \Z_{\geq 0} \rangle_{\mathrm{ext}}$, as desired.
\end{proof}

\begin{rmk}
When $\bk$ is a field, Lemma~\ref{lem:t-structure-D-N} can be made symmetric: in that case, ${}^p \hspace{-1pt} \BE_{I}(\h,W)^{\geq0}$ is generated under extensions by the objects $\nabla^I_w \langle m \rangle [n]$ with $w \in W$, $m \in \Z$ and $n \in \Z_{\leq 0}$ (and as a consequence, the functor $\D_I$ is t-exact). But this statement is \emph{not} true for general coefficients. (Indeed, it can fail already when $I$ is a singleton.)
\end{rmk}

\subsection{Standard and costandard objects are perverse}

The heart of the t-structure on $\BE_{I}(\h,W)$ constructed in~\S\ref{ss:def-t-structure} will be denoted 
\begin{align*}
\fP_I^{\BE}(\h,W).
\end{align*}
(When $I=W$, the subscript will sometimes be omitted.)
The objects which belong to this heart will be called {\it perverse}.

Our next goal is to show that standard and costandard objects 
are perverse.

\begin{lema}
\label{lem:D-N-exactness}
If $w\in W$, then
\begin{enumerate}
 \item 
 \label{it:N-exact}
 the functors 
 \[
  (-)\ustar\nabla_w,\,\nabla_w\ustar(-):\BE(\h,W)\longrightarrow\BE(\h,W)
 \]
are right t-exact with respect to the perverse t-structure;
\item
\label{it:D-exact}
the functors 
 \[
  (-)\ustar\Delta_w,\,\Delta_w\ustar(-):\BE(\h,W)\longrightarrow\BE(\h,W)
 \]
are left t-exact with respect to the perverse t-structure.
\end{enumerate}
\end{lema}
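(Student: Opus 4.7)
The strategy is to deduce (2) from (1) via an equivalence argument, reduce (1) to the case $w = s \in S$, and finish with a direct case analysis based on the triangles of Lemma~\ref{lem:triangle-Dw-ws}.

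For the first reduction, Proposition~\ref{prop:D-N-convolution}\eqref{it:Delta-nabla-convolution-2} tells us that $\Delta_w \ustar (-)$ and $\nabla_{w^{-1}} \ustar (-)$ are quasi-inverse autoequivalences of $\BE(\h,W)$, and similarly for the right variants. An autoequivalence is left t-exact if and only if its quasi-inverse is right t-exact; therefore (2) for $w$ will follow from (1) applied to $w^{-1}$. For the second reduction, Proposition~\ref{prop:D-N-convolution}\eqref{it:Delta-nabla-convolution-1} decomposes $\nabla_w \ustar (-)$ and $(-) \ustar \nabla_w$ as compositions of the analogous functors for the letters of a reduced expression of $w$; since a composition of right t-exact functors is right t-exact, it suffices to treat $w = s \in S$.

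By Lemma~\ref{lem:t-structure-D-N}\eqref{it:D-}, the subcategory ${}^p \hspace{-1pt} \BE(\h,W)^{\leq 0}$ is generated under extensions by the objects $\Delta_x \langle m \rangle [n]$ with $x \in W$, $m \in \Z$, $n \geq 0$. Since convolution is a triangulated bifunctor, $\langle 1 \rangle$ is t-exact by Lemma~\ref{lem:basic-functors-exact}, and $[n]$ with $n \geq 0$ preserves ${}^p \hspace{-1pt} \BE(\h,W)^{\leq 0}$, it is enough to show that $\Delta_x \ustar \nabla_s$ and $\nabla_s \ustar \Delta_x$ lie in ${}^p \hspace{-1pt} \BE(\h,W)^{\leq 0}$ for every $x \in W$. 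I will treat $\Delta_x \ustar \nabla_s$; the other case is handled symmetrically, using the $w = e$ $\Delta$-triangle of Lemma~\ref{lem:triangle-Dw-ws} convolved on the right with $\Delta_x$. If $xs < x$, write $x = ys$ with $ys > y$; Proposition~\ref{prop:D-N-convolution} then gives $\Delta_x \ustar \nabla_s \cong \Delta_y \ustar \Delta_s \ustar \nabla_s \cong \Delta_y$, which is perverse. If $xs > x$, applying $\Delta_x \ustar (-)$ to the $w = e$ instance of the $\nabla$-triangle in Lemma~\ref{lem:triangle-Dw-ws} produces a distinguished triangle $\Delta_x \ustar B_s \to \Delta_x \ustar \nabla_s \to \Delta_x \langle 1 \rangle \xrightarrow{[1]}$. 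The object $\Delta_x \ustar B_s$ in turn sits in the triangle $\Delta_x \langle -1 \rangle \to \Delta_{xs} \to \Delta_x \ustar B_s \xrightarrow{[1]}$ from Lemma~\ref{lem:triangle-Dw-ws} applied to $x$, whose first two terms are perverse; the long exact sequence in perverse cohomology then forces $\Delta_x \ustar B_s$ into ${}^p \hspace{-1pt} \BE(\h,W)^{\leq 0}$, and combined with perversity of $\Delta_x \langle 1 \rangle$ this yields the desired containment for $\Delta_x \ustar \nabla_s$.

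No substantive obstacle is expected: all principal ingredients — the inverse pair $\Delta_s \ustar \nabla_s \cong B_\varnothing$ and the decomposition into simple reflections from Proposition~\ref{prop:D-N-convolution}, together with the triangles of Lemma~\ref{lem:triangle-Dw-ws} — are already in place, and the proof is a short case analysis once they are combined.
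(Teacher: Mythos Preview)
Your approach is essentially the same as the paper's: reduce (2) to (1) via the inverse-equivalence argument, reduce to $w=s\in S$ via Proposition~\ref{prop:D-N-convolution}\eqref{it:Delta-nabla-convolution-1}, then use Lemma~\ref{lem:t-structure-D-N}\eqref{it:D-} to reduce to checking $\Delta_x \ustar \nabla_s \in {}^p\BE(\h,W)^{\leq 0}$, and finish with the two triangles from Lemma~\ref{lem:triangle-Dw-ws}.

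There is one imprecision to fix. You write that $\Delta_x\langle -1\rangle$, $\Delta_{xs}$, and $\Delta_x\langle 1\rangle$ are ``perverse'' and invoke the long exact sequence in perverse cohomology. But perversity of standard objects is only established in Proposition~\ref{prop:D-N-perverse}, which \emph{uses} the present lemma; invoking it here is circular. What you actually need, and what is available from Lemma~\ref{lem:t-structure-D-N}\eqref{it:D-}, is that these objects lie in ${}^p\BE(\h,W)^{\leq 0}$. From the triangle $\Delta_x\langle -1\rangle \to \Delta_{xs} \to \Delta_x\ustar B_s \xrightarrow{[1]}$ with the first two terms in ${}^p\BE(\h,W)^{\leq 0}$, the third term is automatically in ${}^p\BE(\h,W)^{\leq 0}$ (since ${}^p\BE(\h,W)^{\leq 0}$ is closed under extensions and $\Delta_x\langle -1\rangle[1] \in {}^p\BE(\h,W)^{\leq -1}$); the same closure argument then handles the second triangle. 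Once you replace ``perverse'' by ``in ${}^p\BE(\h,W)^{\leq 0}$'' throughout that paragraph, the proof is complete and matches the paper's.
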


\begin{proof}
\eqref{it:N-exact} We prove the right exactness of $\nabla_w \ustar (-)$; the other functor can be treated similarly. In view of Proposition~\ref{prop:D-N-convolution} we can assume that $\ell(\uw)=1$, i.e.~that $\uw=(s)$ for some $s \in S$. Then, Lemma~\ref{lem:t-structure-D-N} shows that to conclude it suffices to prove that for any $w \in W$ we have
\[
\nabla_s \ustar \Delta_w \in {}^p \hspace{-1pt} \BE(\h,W)^{\leq 0}.
\]
If $sw<w$ then $\nabla_s \ustar \Delta_w \cong \Delta_{sw}$ by Proposition~\ref{prop:D-N-convolution}, so the claim is clear in this case. If $sw>w$, then we use the triangles of Lemma~\ref{lem:triangle-Dw-ws} (for $w=e$) to deduce distinguished triangles
\[
B_s \ustar \Delta_w \to \nabla_s \ustar \Delta_w \to \Delta_w \langle 1 \rangle \xrightarrow{[1]}, \qquad \Delta_w \langle -1 \rangle \to \Delta_s \ustar \Delta_w \to B_s \ustar \Delta_w \xrightarrow{[1]}.
\]
In the second triangle, the second term is isomorphic to $\Delta_{sw}$ by Proposition~\ref{prop:D-N-convolution}, so that the third term belongs to ${}^p \hspace{-1pt} \BE(\h,W)^{\leq 0}$. Once this information is known, the first triangle shows that $\nabla_s \ustar \Delta_w$ belongs to ${}^p \hspace{-1pt} \BE(\h,W)^{\leq 0}$, as desired.

\eqref{it:D-exact} 
The left exactness of our functors follows from the right-exact of their inverses (proved in~\eqref{it:N-exact}) in view of~\cite[Corollary~10.1.18]{ks}.
\end{proof}

\begin{prop}
\label{prop:D-N-perverse}
If $w,y\in W$, then $\Delta_{w}\ustar\nabla_{y}$ and $\nabla_{y}\ustar\Delta_{w}$ are perverse.
In particular, $\Delta_w$ and $\nabla_w$ belong to $\fP^{\BE}(\h,W)$.
\end{prop}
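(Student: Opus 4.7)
My plan is to prove the main assertion about $\Delta_w \ustar \nabla_y$ and $\nabla_y \ustar \Delta_w$ directly, and then to deduce that $\Delta_w$ and $\nabla_w$ are perverse by specializing to $y = e$ (respectively $w = e$) and using that $\nabla_e = \Delta_e = B_\varnothing$ is the unit of the monoidal product $\ustar$ (see Example~\ref{ex:Des}\eqref{it:D-N-e} and~\S\ref{ss:BE-RE}). So the substantive content lies entirely in verifying that $\Delta_w \ustar \nabla_y$ belongs to both halves of the t-structure, and likewise for $\nabla_y \ustar \Delta_w$.

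To check that $\Delta_w \ustar \nabla_y \in {}^p\BE(\h,W)^{\leq 0}$, I will invoke Lemma~\ref{lem:t-structure-D-N}\eqref{it:D-}, which gives $\Delta_w \in {}^p\BE(\h,W)^{\leq 0}$, together with the right t-exactness of $(-) \ustar \nabla_y$ established in Lemma~\ref{lem:D-N-exactness}\eqref{it:N-exact}. To check that $\Delta_w \ustar \nabla_y \in {}^p\BE(\h,W)^{\geq 0}$, I will combine Lemma~\ref{lem:t-structure-D-N}\eqref{it:D+}, which gives $\nabla_y \in {}^p\BE(\h,W)^{\geq 0}$, with the left t-exactness of $\Delta_w \ustar (-)$ from Lemma~\ref{lem:D-N-exactness}\eqref{it:D-exact}. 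Combining the two memberships yields perversity. The argument for $\nabla_y \ustar \Delta_w$ is symmetric, using the other functors appearing in Lemma~\ref{lem:D-N-exactness}.

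Finally, for the ``in particular'' assertion, I will specialize: taking $y = e$ and using $\Delta_w \ustar \nabla_e = \Delta_w \ustar B_\varnothing = \Delta_w$ shows $\Delta_w$ is perverse, and taking $w = e$ gives $\nabla_y$ perverse. There is no genuine obstacle to overcome at this stage, since all the real work has already been carried out in Lemma~\ref{lem:triangle-Dw-ws}, Proposition~\ref{prop:D-N-convolution}, Lemma~\ref{lem:t-structure-D-N}, and Lemma~\ref{lem:D-N-exactness}; the present statement is essentially a formal consequence assembling these inputs.
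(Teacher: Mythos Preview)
Your proof is correct and follows essentially the same approach as the paper: combine Lemma~\ref{lem:t-structure-D-N} with the half-exactness results of Lemma~\ref{lem:D-N-exactness} to place $\Delta_w \ustar \nabla_y$ (and symmetrically $\nabla_y \ustar \Delta_w$) in both halves of the t-structure, then specialize to the unit to deduce perversity of $\Delta_w$ and $\nabla_w$.
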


\begin{proof}
Lemma~\ref{lem:t-structure-D-N}\eqref{it:D-} and Lemma~\ref{lem:D-N-exactness}\eqref{it:N-exact} imply that $\Delta_{w}\ustar\nabla_{y}$ belongs to the subcategory ${}^p \hspace{-1pt} \BE(\h,W)^{\leq 0}$, and Lemma~\ref{lem:t-structure-D-N}\eqref{it:D+} and Lemma~\ref{lem:D-N-exactness}\eqref{it:D-exact} imply that $\Delta_{w}\ustar\nabla_{y}$ belongs to ${}^p \hspace{-1pt} \BE(\h,W)^{\geq 0}$. Hence this object is perverse. Similar considerations show that $\nabla_{y}\ustar\Delta_{w}$ is perverse. The final claims are obtained by setting $y=e$.
\end{proof}

Once Proposition~\ref{prop:D-N-perverse} is established, its final claim can be extended to the categories $\BE_I(\h,W)$, as follows.

\begin{cor}
\label{cor:D-N-perverse-I}
For any locally closed subset $I \subset W$ and any $w \in I$, the objects $\Delta^I_w$ and $\nabla^I_w$ are perverse.
\end{cor}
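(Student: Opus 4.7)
The plan is to reduce Corollary~\ref{cor:D-N-perverse-I} to Proposition~\ref{prop:D-N-perverse} by transporting perversity along t-exact pullback and pushforward functors, using Lemma~\ref{lem:basic-functors-exact}.

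First, I would handle the non-finite case. By~\S\ref{ss:def-D-N}, for any locally closed $I \subset W$ and $w \in I$, one has $\Delta_w^I = (i_J^I)_* \Delta_w^J$ and $\nabla_w^I = (i_J^I)_* \nabla_w^J$ with $J := \{z \in I \mid z \leq w\}$, which is finite (as Bruhat intervals in Coxeter groups are finite) and closed in $I$. Since $(i_J^I)_*$ is t-exact, it suffices to prove perversity for $J$. Next, assume $I$ is finite, and set $I_0 := \{z \in W \mid \exists y \in I,\ z \leq y\}$, a finite closed subset of $W$ in which $I$ is open. By Lemma~\ref{lem:pf-pb-D-N} together with~\eqref{eqn:pullback-open}, we have $\Delta_w^I \cong (i_I^{I_0})^* \Delta_w^{I_0}$ and $\nabla_w^I \cong (i_I^{I_0})^! \nabla_w^{I_0} \cong (i_I^{I_0})^* \nabla_w^{I_0}$, and since $(i_I^{I_0})^*$ is t-exact we may replace $I$ by $I_0$ and assume $I$ is finite closed in $W$.

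Finally, with $I$ finite closed in $W$, Lemma~\ref{lem:pf-pb-D-N} and~\eqref{eqn:pushforward-closed} give $(i_I^W)_* \Delta_w^I \cong \Delta_w$ and $(i_I^W)_* \nabla_w^I \cong \nabla_w$, both perverse by Proposition~\ref{prop:D-N-perverse}. The only remaining point, and the step I expect to require the most care, is to check that the fully faithful t-exact functor $(i_I^W)_*$ not only preserves but also \emph{reflects} perversity. For $X \in \BE_I(\h,W)$ and $y \in W$, composition (Lemma~\ref{le:recollement compositions for locally closed inclusions}) together with the isomorphisms $(i_I^W)^* (i_I^W)_* \cong \id \cong (i_I^W)^! (i_I^W)_*$ (combining~\eqref{eqn:pushforward-closed} and~\eqref{eqn:pullback-pushforward-inverse}) identifies $(i_y^W)^\bullet (i_I^W)_* X$ with $(i_y^I)^\bullet X$ for $\bullet \in \{*,!\}$ whenever $y \in I$; while for $y \notin I$, factoring through the open inclusion $W \smallsetminus I \hookrightarrow W$ and invoking~\eqref{eqn:comp-open-closed-0} (together with~\eqref{eqn:pullback-open} for the $!$-case) gives vanishing, which trivially lies in both ${}^p \hspace{-1pt} \BE_{\{y\}}(\h,W)^{\leq 0}$ and ${}^p \hspace{-1pt} \BE_{\{y\}}(\h,W)^{\geq 0}$. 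Thus $(i_I^W)_* X$ is perverse in $\BE(\h,W)$ if and only if $X$ is perverse in $\BE_I(\h,W)$, completing the reduction.
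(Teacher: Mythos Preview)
Your proof is correct and follows essentially the same route as the paper: reduce to finite $I$, pass to a finite closed $I_0 \supset I$ via the t-exact open pullback $(i_I^{I_0})^*$, and then use that the fully faithful t-exact closed pushforward $(i_{I_0}^W)_*$ reflects perversity so as to invoke Proposition~\ref{prop:D-N-perverse}. The paper dispatches the reflection step in one line (``t-exact and does not kill any object''), whereas your stalk-by-stalk verification is more explicit but tacitly uses the functors $(i_y^W)^\bullet$, which are only set up for finite ambient sets---harmless, since one can run your argument inside $I_0$ rather than $W$.
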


\begin{proof}
We can (and shall) assume that $I$ is finite. Choose some finite closed subset $J \subset W$ containing $I$ and in which $I$ is open. Then since the functor $(i_J^W)_*$ is t-exact (see Lemma~\ref{lem:basic-functors-exact}) and does not kill any object (since it is fully faithful), we see that for any $w \in W$ the object $\Delta_w^J$ belongs to $\fP^{\BE}_J(\h,W)$. Then, since the functor $(i_I^J)^*$ is t-exact (see again Lemma~\ref{lem:basic-functors-exact}), we obtain the desired claim.
\end{proof}

\section{The case of field coefficients}\label{sec:field}

In this section we assume that $\bk$ is a field. 

\subsection{Simple perverse objects}
\label{ss:simple-perverse}

In the present setting where $\bk$ is a field, the recollement formalism provides a description of the simple objects in $\fP^{\BE}_I(\h,W)$; see in particular~\cite[Proposition~1.4.26]{bbd}. More precisely, for any $w \in I$, by Lemma~\ref{lem:Hom-BE-D-N} there exists (up to scalar) a unique nonzero morphism $\Delta^I_w \to \nabla^I_w$. If we denote the image of this morphism (in the abelian category $\fP^\BE_I(\h,W)$) by $\rL^I_w$, then $\rL^I_w$ is simple, and the isomorphism classes of simple objects in $\fP_I^{\BE}(\h,W)$ are in bijection with $I \times \Z$ via the map $(w,n) \mapsto \rL^I_w \langle n \rangle$. Moreover, the same proof as in~\cite[Th\'eor\`eme~4.3.1(i)]{bbd} shows that $\fP^\BE_I(\h,W)$ is a finite length category. With this in hand, for any closed subset $J \subset I$ one can identify $\fP^{\BE}_J(\h,W)$ as the Serre subcategory of $\fP^{\BE}_I(\h,W)$ generated by the simple objects $\rL^I_w \langle n \rangle$ with $n \in \Z$ and $w \in J$; in this setting we will sometimes consider $\rL_w^I$ as an object of $\fP^\BE_J(\h,W)$. As usual, when $I=W$ we sometimes omit it from the notation.

By the general recollement formalism, the object $\rL^I_w$ is characterized by the conditions that it belongs to $\BE_{\{\leq w\} \cap I}(\h,W)$, that
\begin{equation}
\label{eqn:conditions-Lw0}
(i_w^{\{\leq w\} \cap I})^* \rL^I_w \cong b_w,
\end{equation}
and that for any $y \in I$ such that $y<w$ we have
\begin{equation}
\label{eqn:conditions-Lw}
(i_y^{\{\leq w\} \cap I})^* \rL^I_w \in {}^p \hspace{-1pt} \BE_{\{y\}}(\h,W)^{\leq -1}, \quad (i_y^{\{\leq w\} \cap I})^! \rL^I_w \in {}^p \hspace{-1pt} \BE_{\{y\}}(\h,W)^{\geq 1};
\end{equation}
see~\cite[Corollaire~1.4.24]{bbd}. From this characterization, we deduce in particular that if $J \subset I$ is an open subset containing $w$, then we have
\begin{equation}
\label{eqn:restriction-open-simple}
 (i_J^I)^* \rL_w^I \cong \rL_w^J.
\end{equation}

\begin{example}
\label{ex:Bs}
 When $w=s$, it is easy to check that $B_s$ satisfies conditions~\eqref{eqn:conditions-Lw0} and~\eqref{eqn:conditions-Lw}. Therefore, $\rL_s \cong B_s$.
\end{example}

\subsection{More properties of standard and costandard objects}

It is easy to see that $\rL_w$ is the head of $\Delta_w$ and the socle of $\nabla_w$. Let us record the following fact about the other possible composition factors of these objects.

\begin{lema}
\label{lem:ker-standard-object-simple}
If $w \in W$,
all the composition factors of the kernel of the surjection $\Delta_w\twoheadrightarrow \rL_w$ and the cokernel of the embedding $\rL_w\hookrightarrow\nabla_w$ are of the form $\rL_v \langle n \rangle$ with $n \in \Z$ and $v \in W$ which satisfies $v<w$.
\end{lema}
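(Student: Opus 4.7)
The plan is to work inside the finite closed subset $I := \{z \in W \mid z \leq w\}$ and to exploit the fact that $w$ is maximal in $I$, so that $\{w\}$ is open in $I$. By Corollary~\ref{cor:D-N-perverse-I}, $\Delta_w^I$ and $\nabla_w^I$ lie in $\fP^\BE_I(\h,W)$, hence so does their image $\rL_w^I$; set $K := \ker(\Delta_w^I \twoheadrightarrow \rL_w^I)$ and $C := \coker(\rL_w^I \hookrightarrow \nabla_w^I)$, both perverse in $\fP^\BE_I(\h,W)$.

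Since $\{w\}$ is open in $I$, by~\eqref{eqn:pullback-open} we have $(i_w^I)^* = (i_w^I)^!$, and this functor is t-exact by Lemma~\ref{lem:basic-functors-exact}. Applying it to the short exact sequence $0 \to K \to \Delta_w^I \to \rL_w^I \to 0$, I obtain, using $(i_w^I)^* \Delta_w^I \cong b_w$ (from~\eqref{eqn:restriction-pushforward-2}) and $(i_w^I)^* \rL_w^I \cong b_w$ (from~\eqref{eqn:conditions-Lw0}), a short exact sequence
\[
0 \to (i_w^I)^* K \to b_w \to b_w \to 0
\]
in $\fP^\BE_{\{w\}}(\h,W)$. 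The composition of the right-hand map $b_w \to b_w$ with the analogous isomorphism $(i_w^I)^* \rL_w^I \xrightarrow{\sim} (i_w^I)^* \nabla_w^I$ is $(i_w^I)^*$ applied to the canonical nonzero morphism $\Delta_w^I \to \nabla_w^I$; by the construction of this morphism via adjunction together with~\eqref{eqn:restriction-pushforward-2}, this composition is nonzero (in fact equal to $\id_{b_w}$ for a suitable normalization). In particular the map $b_w \to b_w$ above is nonzero; since $b_w$ is simple in $\fP^\BE_{\{w\}}(\h,W)$ with $\End(b_w) = \bk$ (as follows from Lemma~\ref{lem:Hom-BE-D-N} in the case $x=y=w$), it is in fact an isomorphism, and consequently $(i_w^I)^* K = 0$.

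By the recollement formalism of Proposition~\ref{prop:recollement} together with~\eqref{eqn:pushforward-closed}, it follows that $K$ lies in the essential image of the t-exact fully faithful functor $(i_{\{<w\}}^I)_* : \BE_{\{<w\}}(\h,W) \hookrightarrow \BE_I(\h,W)$. Hence $K$ belongs to the Serre subcategory $\fP^\BE_{\{<w\}}(\h,W) \subset \fP^\BE_I(\h,W)$, whose simple objects are precisely the $\rL_v \langle n \rangle$ with $v < w$ and $n \in \Z$ (see~\S\ref{ss:simple-perverse}). This proves the claim for $K$. The argument for $C$ is entirely parallel: one applies the same t-exact functor $(i_w^I)^! = (i_w^I)^*$ to the short exact sequence $0 \to \rL_w^I \to \nabla_w^I \to C \to 0$, using $(i_w^I)^! \nabla_w^I \cong b_w$ from the other half of~\eqref{eqn:restriction-pushforward-2}.

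The only nontrivial verification is that the induced map $b_w \to b_w$ is nonzero; given the simplicity of $b_w$, this is the crux of the argument. Once this is settled, the conclusion follows cleanly from the recollement machinery developed in Section~\ref{sec:BE} and the description of simple perverse objects recalled in~\S\ref{ss:simple-perverse}.
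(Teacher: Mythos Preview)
Your proof is correct and follows essentially the same route as the paper's: work in $I=\{\leq w\}$, use that $\{w\}$ is open so $(i_w^I)^*$ is t-exact, observe that the canonical map $\Delta_w^I \to \nabla_w^I$ restricts to an isomorphism of $b_w$, and conclude via recollement that the kernel and cokernel are supported on $\{<w\}$. The paper states in one sentence that $(i_w^{\{\leq w\}})^*$ applied to $\Delta_w \to \nabla_w$ is $\id_{b_w}$ and then deduces directly that both $K$ and $C$ are killed; you reach the same conclusion by treating the two short exact sequences separately and arguing via simplicity of $b_w$ and the fact that $\End(b_w)=\bk$, which is a slightly longer but equally valid path.
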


\begin{proof}
By definition, $\Delta_w$ and $\nabla_w$ belong to $\fP^{\BE}_{\{\leq w\}}(\h,W)$. Moreover, the image of the canonical morphism $\Delta_w \to \nabla_w$ under $(i_w^{\{\leq w\}})^*$ is the identity map of $b_w$. Hence the kernel of the surjection $\Delta_w \twoheadrightarrow \rL_w$ and the cokernel of the embedding of $\rL_w \hookrightarrow \nabla_w$ are annihilated by $(i_w^{\{\leq w\}})^*$, so they belong to $\BE_{\{< w\}}(\h,W)$. Since they are perverse, they in fact belong to $\fP^{\BE}_{\{< w\}}(\h,W)$, which finishes the proof.
\end{proof}

We will now prove the following claim, which is an analogue of a well-known result in usual category $\mathscr{O}$ (see~\cite[\S\S 4.1--4.2]{humphreys} for an algebraic proof and~\cite[\S 2.1]{bbm} for a geometric approach).

\begin{prop}
\label{prop:the socle of the standard objects}
Let $w\in W$. Then
\begin{enumerate}
\item
\label{it:socle-D}
The socle of $\Delta_w$ is isomorphic to $\rL_e\langle-\ell(w)\rangle$, and the cokernel of the inclusion $\rL_e\langle-\ell(w)\rangle\hookrightarrow\Delta_w$ has no composition factor 
of the form $\rL_e\langle n\rangle$.
\item 
\label{it:top-N}
The head of $\nabla_w$ is isomorphic to $\rL_e\langle\ell(w)\rangle$, and the kernel of the surjection $\nabla_w\twoheadrightarrow \rL_e\langle\ell(w)\rangle$ has no composition 
factor
of the form $\rL_e\langle n\rangle$.
\end{enumerate}
\end{prop}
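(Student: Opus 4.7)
The plan is to establish part (2) by induction on $\ell(w)$ and to deduce (1) as a corollary by applying the duality $\D_W$, which swaps $\nabla_w \leftrightarrow \Delta_w$ and $\rL_e\langle n\rangle \leftrightarrow \rL_e\langle -n\rangle$ while exchanging the roles of socle and head. The case $w = e$ is trivial; for $w = s \in S$, the second triangle of Lemma~\ref{lem:triangle-Dw-ws} applied to the identity, together with Example~\ref{ex:Bs}, rotates into a short exact sequence $0 \to \rL_s \to \nabla_s \to \rL_e\langle 1\rangle \to 0$ in $\fP^{\BE}(\h,W)$, handling the base case.

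For the inductive step with $\ell(w) \geq 2$, I would pick $s \in S$ with $ws < w$ and set $y := ws$, so that $w = ys$, $ys > y$, and $\ell(y) = \ell(w)-1$. Lemma~\ref{lem:triangle-Dw-ws} produces a distinguished triangle $\nabla_y \ustar B_s \to \nabla_w \to \nabla_y\langle 1\rangle \xrightarrow{[1]}$. Right t-exactness of $\nabla_y \ustar (-)$ (Lemma~\ref{lem:D-N-exactness}) places $\nabla_y \ustar B_s$ in ${}^p \hspace{-1pt} \BE(\h,W)^{\leq 0}$; combined with the perversity of $\nabla_w$ (Corollary~\ref{cor:D-N-perverse-I}) and $\nabla_y\langle 1\rangle$ and the long exact sequence of perverse cohomology attached to the triangle, this forces $\nabla_y \ustar B_s$ to be perverse, yielding a short exact sequence
\[
0 \to \nabla_y \ustar B_s \to \nabla_w \to \nabla_y\langle 1\rangle \to 0
\]
in $\fP^{\BE}(\h,W)$. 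By the inductive hypothesis, $\nabla_y\langle 1\rangle$ has head $\rL_e\langle \ell(w)\rangle$ of multiplicity one, and the kernel of its surjection onto this head contains no composition factor of the form $\rL_e\langle n\rangle$.

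The crux, and the main obstacle, is the sublemma that $\nabla_y \ustar B_s$ has no composition factor of the form $\rL_e\langle n\rangle$. My two-pronged approach starts from the self-adjunction of $(-)\ustar B_s$, which gives
\[
\Hom^\bullet_{\BE}(\rL_e\langle n\rangle, \nabla_y \ustar B_s) \cong \Hom^\bullet_{\BE}(\rL_s\langle n\rangle, \nabla_y)
\]
together with its dual. Feeding in the base-case short exact sequences $0 \to \rL_e\langle -1\rangle \to \Delta_s \to \rL_s \to 0$ and $0 \to \rL_s \to \nabla_s \to \rL_e\langle 1\rangle \to 0$, the long exact sequences of Ext reduce both sides to $\Hom^\bullet(\Delta_e\langle n'\rangle, \nabla_y)$ and $\Hom^\bullet(\Delta_s\langle n'\rangle, \nabla_y)$; these vanish for $y \notin \{e,s\}$ by Lemma~\ref{lem:Hom-BE-D-N} ($y = s$ is forbidden by $ys > y$, and $y = e$ gives $\nabla_e \ustar B_s = \rL_s$, which is manifestly $\rL_e$-free). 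This rules out $\rL_e\langle n\rangle$ as a subobject or a quotient of $\nabla_y \ustar B_s$. Bridging from Hom-vanishing to the absence of $\rL_e\langle n\rangle$ as an \emph{interior} composition factor is the main technical hurdle; the cleanest route I would pursue is to compute $[\nabla_y \ustar B_s] = [\nabla_y]\cdot[B_s]$ explicitly in the Hecke algebra via Theorem~\ref{thm:Groth-gp}, expand it in the basis of classes of simples, and verify combinatorially that no $[\rL_e\langle n\rangle]$-coefficient appears; together with the non-negativity of composition-factor multiplicities, this yields the sublemma.

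Granting the sublemma, the short exact sequence above forces $[\nabla_w : \rL_e\langle n\rangle] = \delta_{n,\ell(w)}$; the composition $\nabla_w \twoheadrightarrow \nabla_y\langle 1\rangle \twoheadrightarrow \rL_e\langle \ell(w)\rangle$ exhibits $\rL_e\langle \ell(w)\rangle$ as a simple quotient, and since it accounts for the unique $\rL_e$-type composition factor of $\nabla_w$, it is the head; the kernel of this surjection, being an extension of the two $\rL_e$-free pieces $\nabla_y \ustar B_s$ and $\ker(\nabla_y\langle 1\rangle \twoheadrightarrow \rL_e\langle \ell(w)\rangle)$, is itself $\rL_e$-free, completing the inductive step for (2) and hence the proposition.
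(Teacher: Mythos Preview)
Your overall architecture (induction via the short exact sequence coming from Lemma~\ref{lem:triangle-Dw-ws}, plus a ``sublemma'' controlling the composition factors of $\nabla_y \ustar B_s$) is exactly the paper's strategy, but there are two genuine gaps.

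First, your proposed route to the sublemma does not work. The Hom-vanishing arguments only exclude $\rL_e\langle n\rangle$ as a subobject or quotient; for interior composition factors you appeal to a Hecke-algebra computation via Theorem~\ref{thm:Groth-gp}. But that theorem gives you $[\nabla_y \ustar B_s]$ only in the basis $\{[\Delta_x]\}$ (equivalently $\{H_x\}$). To read off the coefficient of $[\rL_e\langle n\rangle]$ you would need the change-of-basis matrix from standards to simples, i.e.\ the multiplicities $[\Delta_x:\rL_e\langle n\rangle]$; these are precisely the numbers the proposition is computing, so the argument is circular. (In general these multiplicities are the $p$-canonical basis coefficients, which are not available combinatorially.) The paper instead proves the much stronger Lemma~\ref{lem:DwBs-composition-factors}: \emph{every} composition factor of $\Delta_w\ustar B_s$ has the form $\rL_y\langle n\rangle$ with $ys<y$. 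This requires building an auxiliary recollement and t-structure on the subcategory $\BE(\h,W|s)$ generated by the image of $(-)\ustar B_s$ (\S\S\ref{ss:Bs-preliminaries}--\ref{ss:perverse-Ws}) and identifying its simple perverse objects with the $\rL_y$ for $ys<y$.

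Second, even granting the sublemma, your conclusion that ``since $\rL_e\langle\ell(w)\rangle$ accounts for the unique $\rL_e$-type composition factor, it is the head'' is unjustified: nothing yet prevents $\nabla_w$ from having a simple quotient $\rL_x\langle n\rangle$ with $x\neq e$. The paper handles this (in the dual $\Delta$-picture) by a separate argument: if $\rL_x\langle n\rangle \hookrightarrow \Delta_w$ with $x\neq e$, then Lemma~\ref{lem:DwBs-composition-factors} forces $xs<x$, and one transports the problem through the t-exact equivalence $(-)\ustar\Delta_s$ to obtain a nonzero map $\Delta_{xs}\langle n\rangle\to\Delta_{ws}$; by the inductive hypothesis applied to both $\Delta_{xs}$ and $\Delta_{ws}$ such a map must be injective, contradicting the factorization $\Delta_x\langle n\rangle \twoheadrightarrow \rL_x\langle n\rangle \hookrightarrow \Delta_w$. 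Note that this step uses the full strength of Lemma~\ref{lem:DwBs-composition-factors} (the condition $ys<y$ for all factors), not merely the absence of $\rL_e$-type factors.
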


The proof of Proposition~\ref{prop:the socle of the standard objects} will exploit two lemma.

\begin{lema}
\label{lem:ses-D-w-ws}
Let $w \in W$ and $s \in S$ be such that $ws>w$. Then $\Delta_w\ustar B_{s}$ is perverse, and there exists a short exact sequence
\[
\Delta_{w}\langle-1\rangle\hookrightarrow\Delta_{ws}\twoheadrightarrow\Delta_w\ustar B_{s}
\]
in $\fP^{\BE}(\h,W)$.
\end{lema}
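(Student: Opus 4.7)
My plan is to derive the short exact sequence directly from the distinguished triangle
\[
\Delta_w \langle -1 \rangle \to \Delta_{ws} \to \Delta_w \ustar B_s \xrightarrow{[1]}
\]
provided by Lemma~\ref{lem:triangle-Dw-ws}. By Proposition~\ref{prop:D-N-perverse}, both $\Delta_w$ and $\Delta_{ws}$ are perverse, and $\langle -1 \rangle$ is t-exact by Lemma~\ref{lem:basic-functors-exact}, so the first two terms of this triangle lie in $\fP^{\BE}(\h,W)$. Hence the whole statement reduces to showing that the third term $\Delta_w \ustar B_s$ is perverse: once this is established, the long exact sequence of perverse cohomology attached to the triangle collapses to the desired short exact sequence in the heart.

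The perversity of $\Delta_w \ustar B_s$ will be proved by checking the two conditions separately. For membership in ${}^p \hspace{-1pt} \BE(\h,W)^{\geq 0}$, I will use that $B_s \cong \rL_s$ by Example~\ref{ex:Bs}, so in particular $B_s$ is perverse and thus lies in ${}^p \hspace{-1pt} \BE(\h,W)^{\geq 0}$; then Lemma~\ref{lem:D-N-exactness}\eqref{it:D-exact}, which asserts left t-exactness of $\Delta_w \ustar (-)$, implies $\Delta_w \ustar B_s \in {}^p \hspace{-1pt} \BE(\h,W)^{\geq 0}$.

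For membership in ${}^p \hspace{-1pt} \BE(\h,W)^{\leq 0}$, the direct approach via exactness fails because $\Delta_w \ustar (-)$ is not a priori right t-exact. Instead I will exploit the triangle itself: it exhibits $\Delta_w \ustar B_s$ as an extension of $\Delta_w \langle -1 \rangle [1]$ by $\Delta_{ws}$. Since $\Delta_{ws}$ is perverse and $\Delta_w \langle -1 \rangle[1]$ lies in ${}^p \hspace{-1pt} \BE(\h,W)^{\leq -1} \subset {}^p \hspace{-1pt} \BE(\h,W)^{\leq 0}$, and ${}^p \hspace{-1pt} \BE(\h,W)^{\leq 0}$ is closed under extensions, we conclude $\Delta_w \ustar B_s \in {}^p \hspace{-1pt} \BE(\h,W)^{\leq 0}$.

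Combining the two inclusions gives $\Delta_w \ustar B_s \in \fP^{\BE}(\h,W)$. At this point all three terms of the triangle are perverse, so applying ${}^p H^*$ produces $0 \to \Delta_w \langle -1 \rangle \to \Delta_{ws} \to \Delta_w \ustar B_s \to 0$, which is the claimed short exact sequence. The only conceptual subtlety — and the point where I would want to be most careful — is the ${}^p \hspace{-1pt} \BE(\h,W)^{\leq 0}$ bound, since it is obtained by a small sleight of hand (using the very triangle one is trying to analyse), but once the left t-exactness of $\Delta_w \ustar (-)$ and Example~\ref{ex:Bs} are invoked, everything else is formal.
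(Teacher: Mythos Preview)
Your proof is correct and follows essentially the same route as the paper: you use the triangle from Lemma~\ref{lem:triangle-Dw-ws}, deduce $\Delta_w\ustar B_s\in{}^p\BE(\h,W)^{\leq 0}$ from the perversity of the first two terms, and $\Delta_w\ustar B_s\in{}^p\BE(\h,W)^{\geq 0}$ from Example~\ref{ex:Bs} together with the left t-exactness of $\Delta_w\ustar(-)$ (Lemma~\ref{lem:D-N-exactness}\eqref{it:D-exact}). The ``sleight of hand'' you worry about is exactly how the paper argues as well, and is entirely legitimate.
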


\begin{proof}
Recall the first distinguished triangle in Lemma \ref{lem:triangle-Dw-ws}. By Proposition~\ref{prop:D-N-perverse} the first two terms in this triangle belong to $\fP^{\BE}(\h,W)$, so the third term must lie in ${}^p \hspace{-1pt} \BE(\h,W)^{\leq0}$.  
On the other hand, by Example~\ref{ex:Bs}, $B_s$ belongs to $\fP^{\BE}(\h,W)$, so Lemma~\ref{lem:D-N-exactness}\eqref{it:D-exact} tells us that $\Delta_w\ustar B_{s}$ belongs to ${}^p \hspace{-1pt} \BE(\h,W)^{\geq0}$. We conclude that $\Delta_w\ustar B_{s}$ in fact belongs to $\fP^{\BE}(\h,W)$, and that the triangle under consideration is a short exact sequence in $\fP^{\BE}(\h,W)$.
\end{proof}

The following lemma is more subtle; its proof will be given in~\S\ref{ss:perverse-Ws} below.

\begin{lema}
\label{lem:DwBs-composition-factors}
 Let $w \in W$ and $s \in S$ be such that $ws>w$. Then all the composition factors of $\Delta_w \ustar B_s$ are of the form $\rL_y \langle n \rangle$ with $ys<y$.
\end{lema}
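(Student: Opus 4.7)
The plan is to proceed by induction on $\ell(w)$, using as a crucial tool the t-exactness of the convolution functor $(-)\ustar B_s$ (and symmetrically $B_t\ustar(-)$ for any $t\in S$) with respect to the perverse t-structure. This t-exactness follows from self-adjointness of convolution with $B_s$ (see \S\ref{ss:EW-category}): the triangle $B_s\to\nabla_s\to B_\varnothing\langle 1\rangle\xrightarrow{[1]}$ from Example~\ref{ex:Des}\eqref{it:Ds}, combined with Lemma~\ref{lem:D-N-exactness}\eqref{it:N-exact} and t-exactness of $\langle 1\rangle$, shows that $(-)\ustar B_s$ is right t-exact; a self-adjoint triangulated endofunctor that is right t-exact must also be left t-exact. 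The base case $\ell(w)=0$ is then immediate: $\Delta_e\ustar B_s=B_s=\rL_s$ by Example~\ref{ex:Bs}, and $s\cdot s=e<s$.

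For the inductive step, assume $\ell(w)\geq 1$ and $ws>w$. Pick $t\in S$ with $tw<w$ and set $v:=tw$, so $w=tv$ is reduced with $\ell(v)=\ell(w)-1$. A length computation forces $vs>v$: otherwise $\ell(ws)=\ell(tvs)\leq 1+\ell(vs)\leq\ell(v)<\ell(w)+1=\ell(ws)$, a contradiction. By Proposition~\ref{prop:D-N-convolution}\eqref{it:Delta-nabla-convolution-1} we then have $\Delta_w\cong\Delta_t\ustar\Delta_v$, and by the inductive hypothesis applied to $v$, all composition factors of $\Delta_v\ustar B_s$ lie in the Serre subcategory $\mathcal{S}_s\subset\fP^\BE(\h,W)$ generated by $\{\rL_y\langle n\rangle:ys<y\}$. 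Convolving the triangle $B_\varnothing\langle-1\rangle\to\Delta_t\to B_t\xrightarrow{[1]}$ of Example~\ref{ex:Des}\eqref{it:Ds} with $\Delta_v\ustar B_s$ on the right, and invoking the t-exactness statements above together with the perversity of $\Delta_w\ustar B_s$ supplied by Lemma~\ref{lem:ses-D-w-ws}, yields a short exact sequence in $\fP^\BE(\h,W)$:
$$
0\to(\Delta_v\ustar B_s)\langle-1\rangle\to\Delta_w\ustar B_s\to B_t\ustar(\Delta_v\ustar B_s)\to 0.
$$

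The left-hand term has composition factors in $\mathcal{S}_s$ by induction, so the remaining task, and the main obstacle, is to prove the same for $B_t\ustar(\Delta_v\ustar B_s)$. By exactness of $B_t\ustar(-)$ and closure of $\mathcal{S}_s$ under extensions, this reduces to showing $B_t\ustar\rL_y\in\mathcal{S}_s$ whenever $ys<y$. The key length observation is that if $ys<y$ and $ty>y$, then $(ty)s=t(ys)$ has length $\leq\ell(y)<\ell(ty)$, so $(ty)s<ty$ and the right descent at $s$ is automatically preserved; the case $ty<y$ is handled by a secondary induction on $\ell(y)$, convolving the dual triangle $B_t\to\nabla_t\to B_\varnothing\langle 1\rangle\xrightarrow{[1]}$ with $\rL_y$ on the left and using the symmetric left-sided analogue of the argument. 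Combining these two inductions closes the proof.
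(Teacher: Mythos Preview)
Your proof rests on a false premise: the functor $(-)\ustar B_s$ is \emph{not} t-exact for the perverse t-structure. Your argument from the triangle $B_s\to\nabla_s\to B_\varnothing\langle 1\rangle$ only shows that for $\mathscr{F}\in{}^p\BE(\h,W)^{\leq 0}$ the object $\mathscr{F}\ustar B_s$, being the fiber of a map between two objects of ${}^p\BE(\h,W)^{\leq 0}$, lies in ${}^p\BE(\h,W)^{\leq 1}$; it does not give $\leq 0$. And the bound is sharp: using $B_s\star B_s\cong B_s(1)\oplus B_s(-1)$ together with the Frobenius unit relation one checks that $\Delta_s\ustar B_s\cong B_s(-1)=B_s\langle 1\rangle[-1]$, which sits in perverse degree~$1$; more generally $\Delta_w\ustar B_s\cong(\Delta_{ws}\ustar B_s)\langle 1\rangle[-1]$ whenever $ws<w$. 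So neither $(-)\ustar B_s$ nor $B_t\ustar(-)$ is right t-exact, and you cannot conclude that $B_t\ustar(\Delta_v\ustar B_s)$ is perverse, so your displayed short exact sequence is not justified. Even granting it, the ``secondary induction'' is incomplete: in the case $ty>y$ your length observation shows only that $(ty)s<ty$, which says nothing about the composition factors of $B_t\ustar\rL_y$; in the case $ty<y$, the ``left-sided analogue'' you invoke would control left $t$-descents, not the right $s$-descent you need.

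The paper's proof avoids these problems by a genuinely different mechanism: it introduces, for right $s$-stable $I$, the full triangulated subcategory $\BE_I(\h,W|s)$ generated by the image of $(-)\ustar B_s$, equips it with its own recollement (over doubled strata $\{ws,w\}$, $w\in W^s$) and perverse t-structure, and proves that this t-structure is the restriction of the one on $\BE_I(\h,W)$. The simple objects in its heart are then identified with the $\rL_y$ for $ys<y$. Since $\Delta_w\ustar B_s$ is perverse and manifestly lies in $\BE(\h,W|s)$, the lemma follows. No global t-exactness of $(-)\ustar B_s$ is asserted or needed.
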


\begin{proof}[Proof of Proposition~{\rm \ref{prop:the socle of the standard objects}}]
We will prove~\eqref{it:socle-D} by induction on $\ell(w)$; then~\eqref{it:top-N} follows by duality. 

If $w=e$, we have $\Delta_e\cong \rL_e$; see Example~\ref{ex:Des}\eqref{it:D-N-e}. Thus, there is nothing to prove in this case.
Now, let $w \in W \smallsetminus \{e\}$, and assume the claim is known for elements $y \in W$ with $\ell(y)<\ell(w)$. Choose $s \in S$ such that $ws<w$, and consider the exact sequence
\[
\Delta_{ws}\langle-1\rangle\hookrightarrow\Delta_{w}\twoheadrightarrow\Delta_{ws}\ustar B_{s}
\]
provided by Lemma~\ref{lem:ses-D-w-ws}. By induction we know that there exists an embedding $\rL_e \langle -\ell(w) \rangle \hookrightarrow \Delta_{ws}\langle-1\rangle$ whose cokernel has no composition factor of the form $\rL_e \langle n \rangle$. On the other hand, Lemma~\ref{lem:DwBs-composition-factors} ensures that $\Delta_{ws}\ustar B_{s}$ has no composition factor of this form either. Hence we obtain an embedding $\rL_e \langle -\ell(w) \rangle \hookrightarrow \Delta_w$ whose cokernel has no composition factor of the form $\rL_e \langle n \rangle$. To finish the proof, it suffices to show that $\Delta_{w}$ has no subobject of the form $\rL_x\langle n\rangle$ with $x\neq e$.

Assume for a contradiction that there exists an injective morphism $\rL_x\langle n\rangle \hookrightarrow \Delta_w$ with $x \neq e$. Using induction we see that this morphism does not factor through $\Delta_{ws} \langle -1 \rangle$; hence its composition with the surjection $\Delta_{w}\twoheadrightarrow\Delta_{ws}\ustar B_{s}$ is nonzero. In view of Lemma~\ref{lem:DwBs-composition-factors}, this implies that $xs<x$, and hence that $\Delta_x \cong \Delta_{xs} \ustar \Delta_s$ (see Proposition~\ref{prop:D-N-convolution}). Proposition~\ref{prop:D-N-convolution} also shows that the functor
\begin{equation}
\label{eqn:functor-Ds}
(-) \ustar \Delta_s : \BE(\h,W) \to \BE(\h,W)
\end{equation}
is invertible; in particular it induces an isomorphism
\[
\Hom_{\fP^{\BE}(\h,W)}(\Delta_{xs}\langle n \rangle, \Delta_{ws}) \xrightarrow{\sim} \Hom_{\fP^{\BE}(\h,W)}(\Delta_{x}\langle n \rangle, \Delta_{w}).
\]
By induction we know that any nonzero subobject of $\Delta_{ws}$ must admit $\rL_e \langle -\ell(ws) \rangle$ as a composition factor. Applying the induction hypothesis 
to $\Delta_{xs}$ also, we deduce that any nonzero morphism $\Delta_{xs} \langle n \rangle \to \Delta_{ws}$ must be injective. Since~\eqref{eqn:functor-Ds} is left t-exact (see 
Lemma~\ref{lem:D-N-exactness}\eqref{it:D-exact}), we finally obtain that any nonzero morphism $\Delta_{x} \langle n \rangle \to \Delta_{w}$ is injective. However, since (by 
assumption) there exists an embedding $\rL_x\langle n\rangle \hookrightarrow \Delta_w$, we can construct a nonzero and noninjective morphism of this form as the composition
\[
\Delta_x \langle n \rangle \twoheadrightarrow \rL_x\langle n\rangle \hookrightarrow \Delta_w
\]
where the first morphism is the natural one. This provides the desired contradiction.
\end{proof}

For completeness we record the following consequence of Proposition~\ref{prop:the socle of the standard objects}.

\begin{prop}
\label{prop:morphisms between standard objects}
Let $w,y\in W$. Then
\begin{align*}
\dim\Hom_{\BE(\h,W)}(\Delta_w,\Delta_y\langle n\rangle)=\begin{cases}
                                                          1 & \mbox{if $w\leq y$ and $n=\ell(y)-\ell(w)$,} \\
                                                          0 & \mbox{otherwise.} 
                                                          \end{cases}
\end{align*}
Moreover, if $w \leq y$, any nonzero morphism $\Delta_w\rightarrow\Delta_y\langle\ell(y)-\ell(w)\rangle$ is injective.
\end{prop}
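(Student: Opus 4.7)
The cornerstone of the argument is Proposition~\ref{prop:the socle of the standard objects}, which asserts that $\Delta_w$ has simple socle $\rL_e\langle -\ell(w)\rangle$ and that the cokernel of the socle inclusion contains no composition factor of the form $\rL_e\langle m\rangle$. I first claim that any nonzero morphism $\phi\colon \Delta_w \to \Delta_y\langle n\rangle$ is injective: otherwise $\ker(\phi)$ contains the socle of $\Delta_w$, so $\phi$ factors through $\Delta_w/\rL_e\langle -\ell(w)\rangle$, and its image is a nonzero subobject of $\Delta_y\langle n\rangle$ with no composition factor of the form $\rL_e\langle m\rangle$, contradicting the fact that this image must contain the simple socle $\rL_e\langle n-\ell(y)\rangle$ of $\Delta_y\langle n\rangle$. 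Given injectivity, matching of socles forces $n=\ell(y)-\ell(w)$, and since the head $\rL_w$ of $\Delta_w$ then appears as a composition factor of $\Delta_y\langle n\rangle$, Lemma~\ref{lem:ker-standard-object-simple} forces $w\le y$. This settles the vanishing in every other case, as well as the final injectivity assertion of the proposition.

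When $w\le y$ and $n=\ell(y)-\ell(w)$, the same socle argument shows that precomposition with the socle inclusion $\iota\colon\rL_e\langle -\ell(w)\rangle\hookrightarrow \Delta_w$ defines an injective map
\[
\iota^{*}\colon \Hom_{\BE(\h,W)}\bigl(\Delta_w,\Delta_y\langle n\rangle\bigr)\hookrightarrow \Hom_{\BE(\h,W)}\bigl(\rL_e\langle -\ell(w)\rangle,\Delta_y\langle n\rangle\bigr),
\]
since any $\phi$ in its kernel would vanish on the socle of $\Delta_w$ and run into the contradiction of the previous paragraph. As the socle of $\Delta_y\langle n\rangle$ is precisely $\rL_e\langle -\ell(w)\rangle$ when $n=\ell(y)-\ell(w)$, the right-hand side is $\bk$, so $\dim\Hom(\Delta_w,\Delta_y\langle n\rangle)\le 1$.

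It remains to exhibit a nonzero morphism when $w\le y$ and $n=\ell(y)-\ell(w)$; I will do this by induction on $\ell(y)$, the base case $y=e$ being immediate. For $\ell(y)>0$, choose $s\in S$ with $ys<y$. The subword characterization of the Bruhat order, applied to a reduced expression of $y$ ending in $s$, shows that at least one of the following holds: (a)~$w\le ys$, or (b)~$ws<w$ and $ws\le ys$. In case~(a), induction supplies a nonzero morphism $\Delta_w\to\Delta_{ys}\langle\ell(ys)-\ell(w)\rangle$; composing it with the injection $\Delta_{ys}\langle -1\rangle\hookrightarrow\Delta_y$ provided by Lemma~\ref{lem:ses-D-w-ws} (suitably shifted) yields the desired nonzero morphism. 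In case~(b), Proposition~\ref{prop:D-N-convolution}\eqref{it:Delta-nabla-convolution-1} gives $\Delta_w\cong\Delta_{ws}\ustar\Delta_s$ and $\Delta_y\cong\Delta_{ys}\ustar\Delta_s$, and applying the autoequivalence $(-)\ustar\Delta_s$ (invertible by Proposition~\ref{prop:D-N-convolution}\eqref{it:Delta-nabla-convolution-2}) to the morphism $\Delta_{ws}\to\Delta_{ys}\langle\ell(y)-\ell(w)\rangle$ supplied by induction yields the required map. The main technical obstacle is precisely this existence step: a Bruhat cover $w\lessdot y$ is not in general of the form $y=ws$ with $s$ a simple reflection, so one cannot iterate covers directly, and the lifting property of the Bruhat order is exactly what allows the induction to proceed via a right-descent reduction of $y$.
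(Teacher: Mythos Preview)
Your argument is correct. The overall architecture is close to the paper's, but you organize the computation of the dimension differently, and this is worth noting.

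For the vanishing when $w\not\leq y$, the paper uses adjunction and the recollement Lemma~\ref{lem:pushforward-pullback-singleton} directly, whereas you deduce it from injectivity plus Lemma~\ref{lem:ker-standard-object-simple}. Both are fine; yours stays entirely inside the abelian category.

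For the dimension count, the paper does not separate the upper and lower bounds: it proves $\dim=1$ in a single induction on $\ell(y)$. In the case $ws>w$ it applies $\Hom(\Delta_w,-)$ to the exact sequence of Lemma~\ref{lem:ses-D-w-ws} and uses Lemma~\ref{lem:DwBs-composition-factors} to kill the term $\Hom(\Delta_w,\Delta_{ys}\ustar B_s\langle m\rangle)$, obtaining an isomorphism $\Hom(\Delta_w,\Delta_y\langle m\rangle)\cong\Hom(\Delta_w,\Delta_{ys}\langle m-1\rangle)$; in the case $ws<w$ it uses the autoequivalence $(-)\ustar\Delta_s$ exactly as you do. Your route instead gets the bound $\dim\leq 1$ by the clean observation that precomposition with the socle inclusion is injective into a one-dimensional space, and then constructs a nonzero morphism explicitly. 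The advantage of your approach is that it does not invoke Lemma~\ref{lem:DwBs-composition-factors} directly in this proof (though of course that lemma is still used upstream in Proposition~\ref{prop:the socle of the standard objects}); the advantage of the paper's is economy, since one induction does both jobs at once. Your Bruhat-order dichotomy (cases (a) and (b)) is exactly the lifting property and is correct.
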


\begin{proof}
If $w\not\leq y$, then the $\Hom$-space under consideration vanishes by adjunction and Lemma~\ref{lem:pushforward-pullback-singleton}.

Assume now that $w\leq y$ and set $m=\ell(y)-\ell(w)$. If $f : \Delta_w \to \Delta_y \langle n \rangle$ is a nonzero morphism, its image must admit $\rL_e \langle n-\ell(y) \rangle$ as a composition factor; therefore its kernel cannot contain the socle of $\Delta_w$. This means that the kernel is trivial, and $f$ is injective. Moreover we must have $n-\ell(y) = -\ell(w)$, i.e.~$n=m$.

To conclude, it remains to show that $\dim\Hom_{\BE(\h,W)}(\Delta_w,\Delta_y\langle m\rangle)=1$ (where, as above, we assume that $w\leq y$ and set $m=\ell(y)-\ell(w)$). We proceed by induction on $\ell(y)$, the case $\ell(y)=0$ being obvious. Assume that $\ell(y)>0$, and choose $s \in S$ such that $ys<y$. If $ws<w$, then as in the proof of Proposition~\ref{prop:the socle of the standard objects} we have
\[
\Hom_{\fP^\BE(\h,W)}(\Delta_w, \Delta_y \langle m \rangle) \cong \Hom_{\fP^\BE(\h,W)}(\Delta_{ws}, \Delta_{ys} \langle m \rangle),
\]
and the result follows from the induction hypothesis. If now $ws>w$, then the exact sequence of Lemma~\ref{lem:ses-D-w-ws} (applied to $ys$) induces an exact sequence of $\bk$-vector spaces
\begin{multline*}
0\to\Hom_{\fP^{\BE}(\h,W)}(\Delta_{w},\Delta_{ys}\langle m-1\rangle)\to\Hom_{\fP^{\BE}(\h,W)}(\Delta_{w},\Delta_{y}\langle m\rangle)\\
\to\Hom_{\fP^{\BE}(\h,W)}(\Delta_{w},\Delta_{ys}\ustar B_{s}\langle m\rangle).
\end{multline*}
Here the last space must vanish, because $\Delta_{ys} \ustar B_s \langle m \rangle$ does not admit $\rL_w$ as a composition factor (see Lemma~\ref{lem:DwBs-composition-factors}).
We deduce that
\[
\Hom_{\fP^{\BE}(\h,W)}(\Delta_{w},\Delta_{ys}\langle m-1\rangle)\cong\Hom_{\fP^{\BE}(\h,W)}(\Delta_{w},\Delta_{y}\langle m\rangle),
\]
and again the desired result follows from the induction hypothesis.
\end{proof}

By duality we have analogous properties for costandard objects.

\begin{prop}
\label{prop:morphisms between costandard objects}
Let $w,y\in W$. Then
\begin{align*}
\dim\Hom_{\BE(\h,W)}(\nabla_y\langle n\rangle,\nabla_w)=\begin{cases}
                                                          1 & \mbox{if $w\leq y$ and $n=\ell(w)-\ell(y)$,} \\
                                                          0 & \mbox{otherwise.} 
                                                          \end{cases}
\end{align*}
Moreover, if $w \le y$, any nonzero morphism $\nabla_y\langle\ell(w)-\ell(y)\rangle\rightarrow\nabla_w$ is surjective.\qed
\end{prop}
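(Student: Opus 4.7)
The plan is to deduce the statement from Proposition~\ref{prop:morphisms between standard objects} by means of the duality $\D$. The key preliminary point is to show that $\D(\Delta_w) \cong \nabla_w$ for every $w \in W$. Indeed, by~\eqref{eq:duality and recollement for locally closed inclusions} we have $\D \circ (i_w^W)_! \cong (i_w^W)_* \circ \D_{\{w\}}$, so it is enough to see that $\D_{\{w\}}(b_w) \cong b_w$; this holds because $b_w$ is by definition the image in $\Diag_{\BS,\{w\}}^\oplus(\h,W)$ of the object $B_\uw$ (for any reduced expression $\uw$ of $w$), viewed as a complex concentrated in cohomological degree $0$, and $\D$ fixes $B_\uw$ by construction.

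Using that $\D$ is a contravariant autoequivalence of $\BE(\h,W)$ with $\D \circ \langle n \rangle \cong \langle -n \rangle \circ \D$ and $\D \circ \D \cong \id$, we then obtain a natural isomorphism
\[
\Hom_{\BE(\h,W)}(\nabla_y\langle n\rangle, \nabla_w) \cong \Hom_{\BE(\h,W)}(\Delta_w, \Delta_y\langle -n\rangle).
\]
Applying Proposition~\ref{prop:morphisms between standard objects} to the right-hand side shows that this space is one-dimensional exactly when $w \le y$ and $-n = \ell(y)-\ell(w)$, that is, $n = \ell(w)-\ell(y)$, and is zero otherwise; this gives the dimension formula.

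For the surjectivity claim, I invoke the remark just after Lemma~\ref{lem:t-structure-D-N}: since $\bk$ is a field, the functor $\D$ is t-exact and therefore restricts to a contravariant exact equivalence on the heart $\fP^{\BE}(\h,W)$, which in particular sends injective morphisms to surjective ones. Given $w \le y$, Proposition~\ref{prop:morphisms between standard objects} provides a nonzero, hence injective, morphism $\Delta_w \to \Delta_y\langle \ell(y)-\ell(w)\rangle$ in $\fP^{\BE}(\h,W)$; applying $\D$ yields a nonzero surjection $\nabla_y\langle \ell(w)-\ell(y)\rangle \twoheadrightarrow \nabla_w$. Since the relevant Hom space is one-dimensional, every nonzero morphism is a scalar multiple of this one and is therefore surjective. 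The only mildly delicate point in this plan is the identification $\D(\Delta_w) \cong \nabla_w$; once it is in place, everything else follows formally from the duality and t-exactness of $\D$.
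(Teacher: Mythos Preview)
Your proof is correct and follows exactly the approach the paper indicates: the paper simply writes ``By duality we have analogous properties for costandard objects'' and places a \qedsymbol, so you have faithfully unpacked that sentence by establishing $\D(\Delta_w)\cong\nabla_w$ and using the t-exactness of $\D$ over a field (from the remark after Lemma~\ref{lem:t-structure-D-N}) to transfer both the dimension formula and the injectivity/surjectivity claim.
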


\subsection{A category attached to a simple reflection.}
\label{ss:Bs-preliminaries}

The goal of \S\S\ref{ss:Bs-preliminaries}--\ref{ss:perverse-Ws} is to prove Lemma~\ref{lem:DwBs-composition-factors}. These results will not be used in the rest of the paper. Most of our constructions could be performed for general coefficients; but for simplicity we continue to assume that $\bk$ is a field.

We fix $s \in S$ and denote by $\BE(\h,W|s)$ the full triangulated subcategory of $\BE(\h,W)$ generated by the image of the functor $(-) \ustar B_s$. Our first objective is to endow this category with the same kind of structure (local versions, recollement and perverse t-structure) as for $\BE(\h,W)$.

Let $W^s = \{ w \in W \mid ws < w \}$.  A locally closed subset $I \subset W$ is said to be \emph{right $s$-stable} if $w \in I$ implies $ws \in I$.

Recall from Corollary~\ref{cor:multiplication-Bs} that if $I$ is closed and right $s$-stable, then the full subcategory $\BE_I(\h,W)$ of $\BE(\h,W)$ is stable under the functor $(-) \ustar B_s$. If $I \subset W$ is now \emph{locally} closed and finite, one can write $I=I_0 \smallsetminus I_1$ with $I_1 \subset I_0 \subset W$ finite, closed, and right $s$-stable. By Remark~\ref{rmk:recollement-quotient}, the category $\BE_I(\h,W)$ identifies with the Verdier quotient $\BE_{I_0}(\h,W) / \BE_{I_1}(\h,W)$. Then the functor $(-) \ustar B_s : \BE_{I_0}(\h,W) \to \BE_{I_0}(\h,W)$ induces an endofunctor of $\BE_I(\h,W)$, which will also be denoted $(-) \ustar B_s$. This functor is clearly self-adjoint.

In this setting, we define
\[
 \BE_I(\h,W|s)
\]
to be the full triangulated subcategory of $\BE_I(\h,W)$ generated by the image of the functor $(-) \ustar B_s$.

\begin{lema}
\label{lem:bws}
Let $w \in W^s$, and let $\uw$ and $\uw'$ be two reduced expressions for $w$.  The images of $B_\uw$ and $B_{\uw'}$ in $\BE_{\{ws,w\}}(\h,W)$ are canonically isomorphic.
\end{lema}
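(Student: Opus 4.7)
The natural candidate for the canonical isomorphism is the image in $\BE_{\{ws,w\}}(\h,W)$ of any rex-move morphism $f\colon B_\uw\to B_{\uw'}$ arising from a sequence of braid moves transforming $\uw$ into $\uw'$ in $\DiagBS^\oplus(\h,W)$. The plan is to verify first that $f$ induces an isomorphism in $\BE_{\{ws,w\}}(\h,W)$, and then that the resulting isomorphism does not depend on the chosen rex move.

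For the invertibility, I would observe that each elementary braid-move generator is already an isomorphism in $\DiagBS^\oplus(\h,W)$: by one of the defining relations of the Elias--Williamson presentation, the composition of two $2m$-valent vertices for a pair $(s,t)$ with $m=m_{st}$ equals the identity on $B_{(s,t,\ldots)}$ (see \cite{ew} and \cite[\S 2.2]{amrw1}). Composing, any rex-move morphism is an isomorphism in $\DiagBS^\oplus(\h,W)$, and hence a fortiori in the quotient $\Diag_{\BS,\{ws,w\}}^\oplus(\h,W)$ and in the homotopy category $\BE_{\{ws,w\}}(\h,W)$.

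For the independence, let $f,f'\colon B_\uw\to B_{\uw'}$ be two rex-move morphisms. By~\cite[Lemmas~7.4 and~7.5]{ew}, their images in $\Diag_{\BS,\{w\}}^\oplus(\h,W)$ coincide, so $f-f'$ factors in $\DiagBS^\oplus(\h,W)$ through a direct sum of objects $B_\ux$ with $\pi(\ux)<w$. Expanding $f-f'$ in the double-leaves basis for $\Hom^\bullet_{\Diag_{\BS,\{ws,w\}}^\oplus(\h,W)}(B_\uw,B_{\uw'})$ from \S\ref{ss:Diag-loc-closed}, the $x=w$ coefficient must vanish by the singleton case, and what remains is an $R$-linear combination of double leaves $\LL^{\uw,\uw'}_{ws,\f,\e}$ with possibly non-trivial polynomial coefficients.

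The crux of the proof is to show that this residual $x = ws$ contribution also vanishes in $\Diag_{\BS,\{ws,w\}}^\oplus(\h,W)$. My plan is to use the recollement of $\BE_{\{ws,w\}}(\h,W)$ attached to the stratification $\{ws\}\subset\{ws,w\}\supset\{w\}$. Both $f$ and $f'$ induce the same isomorphism under $(i_w^{\{ws,w\}})^*$, so the morphisms $\Delta_w\to B_{\uw'}$ (obtained by precomposing with the canonical map $\Delta_w\to B_\uw$) and the compositions $B_\uw\to\nabla_w$ agree for $f$ and $f'$. A diagram chase through the recollement triangles $\Delta_w\to B_{\uw'}\to(i_{ws}^{\{ws,w\}})_*(i_{ws}^{\{ws,w\}})^*B_{\uw'}\xrightarrow{[1]}$ and its dual, combined with Lemma~\ref{lem:Hom-BE-D-N} applied to the singleton $\{ws\}$ (which controls Hom-spaces from standard to costandard objects), should force the coefficients of the surviving $\LL^{\uw,\uw'}_{ws,\f,\e}$ to be zero. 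The main technical obstacle is carrying this diagram chase out rigorously, since the objects $(i_{ws}^{\{ws,w\}})^*B_\uw$ and $(i_{ws}^{\{ws,w\}})^*B_{\uw'}$ need not be concentrated in a single cohomological degree, so one must keep track of the full contribution from the closed stratum; once this is settled, the difference $f - f'$ vanishes in $\BE_{\{ws,w\}}(\h,W)$, yielding the desired canonical isomorphism.
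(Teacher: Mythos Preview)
Your invertibility step contains a genuine error. The assertion that ``the composition of two $2m$-valent vertices for a pair $(s,t)$ with $m=m_{st}$ equals the identity on $B_{(s,t,\ldots)}$'' is false for $m_{st}\ge 3$. A clean way to see this: when $m_{st}=3$, in the category of Soergel bimodules one has $B_{(s,t,s)}\cong B_{sts}\oplus B_s$ while $B_{(t,s,t)}\cong B_{sts}\oplus B_t$, so $B_{(s,t,s)}$ and $B_{(t,s,t)}$ are not even isomorphic in $\DiagBS^\oplus(\h,W)$, and hence no morphism between them can be invertible there. What \emph{is} true (and is what \cite[Lemmas~7.4--7.5]{ew} actually say) is that the composition of a rex move with its reverse equals the identity \emph{plus a morphism factoring through objects $B_\ux$ with $\pi(\ux)<w$}. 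In $\Diag_{\BS,\{ws,w\}}^\oplus(\h,W)$ those lower terms do not all die: the ones passing through $ws$ survive, and you have given no argument that they vanish. So the rex move is not obviously an isomorphism in $\BE_{\{ws,w\}}(\h,W)$, and your proof of invertibility collapses at its first sentence. Your independence argument is likewise only a sketch with an acknowledged gap.

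The paper takes a completely different route that avoids rex moves altogether. It observes that in $\BE_{\{ws,w\}}(\h,W)$ each $B_\uw$ sits in a distinguished triangle
\[
b_{ws}\langle -1\rangle \to \Delta^{\{ws,w\}}_w \to B_\uw \xrightarrow{[1]},
\]
where the first two terms are canonical (independent of $\uw$). A short computation shows that $\Hom(b_{ws}\langle-1\rangle, B_{\uw'}[n])=0$ for $n\in\{0,-1\}$, so by \cite[Proposition~1.1.9]{bbd} there is a \emph{unique} morphism of triangles extending $\id_{\Delta_w}$, giving a canonical map $B_\uw\to B_{\uw'}$. Reversing the roles of $\uw$ and $\uw'$ and invoking uniqueness again shows this map is an isomorphism. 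This argument is short, delivers canonicity for free, and sidesteps the delicate analysis of rex moves entirely.
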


\begin{proof}
We will use the calculations from Example~\ref{ex:w-ws}.  Let us rewrite the triangle~\eqref{eq:distinguished triangle in BE w ws} as
\[
b_{ws}\langle -1\rangle \to \Delta^{\{ws,w\}}_w \to B_{\uw} \xrightarrow{[1]}.
\]
There is another version of this triangle in which the third term is replaced by $B_{\uw'}$.  We claim that there exist unique vertical maps $p$ and $q$ making the following diagram commute:
\[
\begin{tikzcd}
b_{ws}\langle -1\rangle \ar[r] \ar[d, "p"'] &
  \Delta^{\{ws,w\}}_w \ar[d, equal] \ar[r] &
  B_{\uw} \ar[d, "q"] \ar[r, "{[1]}"] & {} \\
b_{ws}\langle -1\rangle \ar[r] &
  \Delta^{\{ws,w\}}_w \ar[r] &
  B_{\uw'} \ar[r, "{[1]}"] & {}.
\end{tikzcd}
\]
According to~\cite[Proposition~1.1.9]{bbd}, the existence and uniqueness of $p$ and $q$ would follow if we knew the following two claims:
\begin{align*}
\Hom_{\BE_{\{ws,w\}}(\h,W)}(b_{ws}\langle -1\rangle, B_{\uw'}) &= 0, \\
\Hom_{\BE_{\{ws,w\}}(\h,W)}(b_{ws}\langle -1\rangle, B_{\uw'}[-1]) &= 0.
\end{align*}
The first one is obvious for degree reasons.  The second one is equivalent to the vanishing of $\Hom(b_{ws}, B_{\uw'}(-1))$.  As we observed in Example~\ref{ex:w-ws}, the $R$-module $\Hom^\bullet_{\Diag^\oplus_{\BS,\{ws,w\}}(\h,W)}(b_{ws},B_{\uw'})$ is generated in degree $1$; in particular, it contains no nonzero element of degree $-1$, as desired.

The same reasoning with the roles of $\uw$ and $\uw'$ reversed leads to a similar diagram with vertical maps in the opposite directions.  Using the uniqueness of the various vertical maps, one concludes that $p$ and $q$ are isomorphisms, as desired.
\end{proof}

From now on, for $w \in W^s$, we set
\[
b^s_w = (i^{\{\le w\}}_{\{ws,w\}})^* B_{\uw} \qquad\text{for any reduced expression $\uw$ for $w$.}
\]
(By Lemma~\ref{lem:bws}, this definition is independent of the choice of $\uw$.) Choosing for $\uw$ a reduced expression of the form $\uy s$ (with $\uy$ a reduced expression for $ws$),
light leaves considerations show that the $R$-module $\Hom_{\Diag^\oplus_{\BS,\{ws,w\}}(\h,W)}^\bullet(b_{w}^s,b^s_w)$ is free of rank $2$, and generated by the identity (of degree $0$) and the degree-$2$ morphism
\[
\id_{B_{\uy}} \star
\begin{tikzpicture}[thick,baseline,xscale=0.1,yscale=0.1]
      \draw (0,2) to (0,6);
      \node at (0,2) {$\bullet$};
      \node at (0,6.7) {\tiny $s$};
      \draw (0,-6) to (0,-2);
      \node at (0,-2) {$\bullet$};
      \node at (0,-6.9) {\tiny $s$};
      \end{tikzpicture}.
\]
In the course of the proof of Lemma~\ref{lem:bws}, we saw that there are distinguished triangles
\begin{equation}
\label{eqn:bsw-perverse}
b_{ws}\langle -1\rangle \to \Delta^{\{ws,w\}}_w \to b^s_w \xrightarrow{[1]}, \qquad
b^s_w \to \nabla^{\{ws,w\}}_w \to b_{ws}\langle 1\rangle \xrightarrow{[1]}.
\end{equation}

\begin{lema}
\label{lem:bsw-generate}
For any $w \in W^s$, the triangulated category $\BE_{\{ws,w\}}(\h,W|s)$ is generated by the objects of the form $b^s_w(m)$ with $m \in \Z$.
\end{lema}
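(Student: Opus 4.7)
The plan is to prove the two inclusions between $\BE_{\{ws,w\}}(\h,W|s)$ and the triangulated subcategory generated by the $b^s_w(m)$, both of which turn out to be formal consequences of the fact that a reduced expression for $w$ can be chosen to end in $s$. I fix a reduced expression $\uy$ for $ws$, so that $\uy s$ is a reduced expression for $w$; by Lemma~\ref{lem:bws} (and its analogue for the closed embedding $\{ws\}\subset\{ws,w\}$), $b^s_w$ is represented by $B_{\uy s}$ and $b_{ws}$ by $B_\uy$ in $\BE_{\{ws,w\}}(\h,W)$.

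For the ``$\supseteq$'' inclusion, I observe that $b_{ws} \ustar B_s \cong b^s_w$, so each $b^s_w(m)$ lies in the image of the functor $(-) \ustar B_s$, hence in $\BE_{\{ws,w\}}(\h,W|s)$.

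For the ``$\subseteq$'' inclusion, I would use that $\BE_{\{ws,w\}}(\h,W) = \Kb\Diag_{\BS,\{ws,w\}}^{\oplus}(\h,W)$ is generated, as a triangulated category, by the objects of $\Diag_{\BS,\{ws,w\}}^{\oplus}(\h,W)$ placed in cohomological degree $0$, that is, by the $b_{ws}(n)$ and $b^s_w(n)$ for $n \in \Z$. Since $(-) \ustar B_s$ is triangulated, it suffices to verify that $b_{ws}(n) \ustar B_s$ and $b^s_w(n) \ustar B_s$ both lie in the triangulated subcategory generated by the $b^s_w(m)$. The first equals $b^s_w(n)$ by the previous observation. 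For the second, using $b^s_w \cong B_{\uy s}$ together with~\eqref{eqn:BsBs}, one computes
\[
 b^s_w \ustar B_s \;\cong\; B_\uy \ustar B_s \ustar B_s \;\cong\; B_{\uy s}(1) \oplus B_{\uy s}(-1) \;\cong\; b^s_w(1) \oplus b^s_w(-1),
\]
so $b^s_w(n) \ustar B_s \cong b^s_w(n+1) \oplus b^s_w(n-1)$, which is manifestly in the desired subcategory. There is no serious obstacle; the argument collapses to the two computations above, which themselves depend only on the fundamental identity $B_s \star B_s \cong B_s(1) \oplus B_s(-1)$ and on the identification of reduced expressions provided by Lemma~\ref{lem:bws}.
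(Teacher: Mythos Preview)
Your proposal is correct and follows essentially the same approach as the paper: both proofs note that $\BE_{\{ws,w\}}(\h,W)$ is generated by the objects $b_{ws}(m)$ and $b^s_w(m)$, and then compute $b_{ws}(m)\ustar B_s\cong b^s_w(m)$ and $b^s_w(m)\ustar B_s\cong b^s_w(m+1)\oplus b^s_w(m-1)$ via~\eqref{eqn:BsBs}. The only difference is that you spell out the ``$\supseteq$'' inclusion separately, whereas the paper leaves it implicit.
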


\begin{proof}
The category $\BE_{\{ws,w\}}(\h,W)$ is clearly generated by the objects of the form $b_{ws}(m)$ and $b^s_w(m)$.  It follows that $\BE_{\{ws,w\}}(\h,W|s)$ is generated by the objects
\[
b_{ws}(m) \ustar B_s \cong b^s_w(m)
\qquad\text{and}\qquad
b^s_w(m) \ustar B_s \cong b^s_w(m+1) \oplus b^s_w(m-1),
\]
where the latter isomorphism follows from~\eqref{eqn:BsBs}.
\end{proof}

\subsection{Recollement}

We now show that the categories $\BE_I(\h,W|s)$ (with $I$ right $s$-stable) satisfy the same recollement formalism as the categories $\BE_I(\h,W)$.

\begin{prop}\label{prop:recollement for Ws}
Let $I \subset W$ be a finite locally closed right $s$-stable subset, and let $J \subset I$ be a closed right $s$-stable subset.  Then the restriction of the functors from Proposition~{\rm \ref{prop:recollement}} give a recollement diagram
\[
 \xymatrix@C=2cm{
 \BE_J(\h,W|s) \ar[r]|-{(i_J^I)_*} & \BE_I(\h,W|s) \ar[r]|-{(i_{I \smallsetminus J}^I)^*} \ar@/^0.5cm/[l]^-{(i_J^I)^!} \ar@/_0.5cm/[l]_-{(i_J^I)^*} & \BE_{I\smallsetminus J}(\h,W|s). \ar@/^0.5cm/[l]^-{(i_{I \smallsetminus J}^I)_*} \ar@/_0.5cm/[l]_-{(i_{I \smallsetminus J}^I)_!}
 }
\]
\end{prop}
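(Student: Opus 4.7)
The plan is to show that each of the six functors in the recollement of Proposition~\ref{prop:recollement} restricts to a functor between the subcategories $\BE_?(\h,W|s)$, and that the recollement axioms are then automatically inherited from the ambient setting. The key technical point is a natural commutation between each recollement functor and the self-adjoint endofunctor $(-)\ustar B_s$.

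First I would record the following general principle: if a triangulated functor $F$ admits an adjoint $G$ (on either side), and if $F$ commutes up to natural isomorphism with an endofunctor $T$ that is biadjoint to itself, then $G$ also commutes with $T$. Indeed, for $G$ a left adjoint of $F$ and any objects $X,Y$,
\begin{align*}
\Hom(G(T(Y)),X) &\cong \Hom(T(Y),F(X)) \cong \Hom(Y, T(F(X))) \\
&\cong \Hom(Y,F(T(X))) \cong \Hom(G(Y),T(X)) \cong \Hom(T(G(Y)),X),
\end{align*}
and Yoneda supplies the desired isomorphism $G \circ T \cong T \circ G$. Applied to $T=(-)\ustar B_s$, which is biadjoint to itself by the discussion of adjoint pairs in~\S\ref{ss:BE-RE}, this reduces the commutation problem to the two ``basic'' recollement functors $(i_J^I)_*$ (closed pushforward) and $(i_{I\smallsetminus J}^I)^*$ (open pullback); all four remaining adjoints will commute automatically.

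Next I would verify the commutation for these two basic functors. For $(i_J^I)_*$ with $J \subset I$ a closed right $s$-stable subset, Corollary~\ref{cor:multiplication-Bs} implies that both $\BE_J(\h,W)$ and $\BE_I(\h,W)$ are stable under $(-)\ustar B_s$, and the inclusion tautologically commutes with this operation. For $(i_{I\smallsetminus J}^I)^*$, the operation $(-)\ustar B_s$ on the Verdier quotient $\BE_{I\smallsetminus J}(\h,W)$ is by construction induced from $(-)\ustar B_s$ on $\BE_I(\h,W)$ (see the discussion in~\S\ref{ss:Bs-preliminaries} together with Remark~\ref{rmk:recollement-quotient}), so the quotient functor commutes with $(-)\ustar B_s$ by definition. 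Combining these two cases with the principle from the first step, each of the six recollement functors commutes with $(-)\ustar B_s$ up to natural isomorphism.

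Finally, since $\BE_I(\h,W|s)$ is by definition the full triangulated subcategory generated by the objects $\mathscr{F}\ustar B_s$, the commutation established above implies that each recollement functor sends generators of the source $|s$ subcategory to objects of the target $|s$ subcategory, hence restricts to a triangulated functor between the $|s$ subcategories. The remaining recollement axioms — the adjunctions, the vanishing $(i_{I\smallsetminus J}^I)^* \circ (i_J^I)_*=0$, the full faithfulness of $(i_J^I)_*$, $(i_{I\smallsetminus J}^I)_!$ and $(i_{I\smallsetminus J}^I)_*$, and the gluing distinguished triangles — are all inherited from the ambient recollement of Proposition~\ref{prop:recollement}, since the $|s$ subcategories are full triangulated subcategories. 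There is no substantive obstacle; the only real content is the commutation step, which is essentially immediate from the definitions.
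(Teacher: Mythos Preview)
Your proof is correct and takes a cleaner, more abstract route than the paper's. The paper establishes the commutation of the four derived functors with $(-)\ustar B_s$ by directly comparing, for $\mathscr{F}=\mathscr{G}\ustar B_s$, the recollement triangle for $\mathscr{F}$ against the triangle obtained by applying $(-)\ustar B_s$ to the recollement triangle for $\mathscr{G}$; the comparison is made via~\cite[Proposition~1.1.9]{bbd} after checking two $\Hom$-vanishing conditions (which in turn use the self-adjointness of $(-)\ustar B_s$ and Corollary~\ref{cor:multiplication-Bs}). Your Yoneda argument bypasses this triangle comparison entirely: once the two ``obvious'' functors commute with the biadjoint endofunctor $(-)\ustar B_s$, all four adjoints commute automatically. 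This is a genuine simplification, and the principle you isolate is reusable elsewhere.

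One small imprecision: your citation of Corollary~\ref{cor:multiplication-Bs} to justify that $(i_J^I)_*$ commutes with $(-)\ustar B_s$ is not quite on target, since that corollary treats subsets closed in $W$, whereas here $I$ and $J$ are only locally closed. The fix is immediate: write $I=I_0\smallsetminus I_1$ and $J=J_0\smallsetminus I_1$ with $I_1\subset J_0\subset I_0$ all closed and right $s$-stable in $W$ (as in~\S\ref{ss:Bs-preliminaries}); then the inclusion $\BE_{J_0}\hookrightarrow\BE_{I_0}$ tautologically commutes with $(-)\ustar B_s$ by Corollary~\ref{cor:multiplication-Bs}, and passing to the Verdier quotients by $\BE_{I_1}$ gives the claim for $(i_J^I)_*$. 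With this adjustment your argument is complete.
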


\begin{proof}
We have to show that the six functors from Proposition~\ref{prop:recollement} take the subcategory generated by $(-) \ustar B_s$ to the subcategory generated by $(-) \ustar B_s$.  For $(i^I_J)_*$ and $(i^I_{I \smallsetminus J})^*$, this is obvious.  

Now we consider the functors $(i^I_J)^*$ and $(i^I_{I \smallsetminus J})_!$. Let $\mathscr{G} \in \BE_I(\h,W)$, and set $\mathscr{F} := \mathscr{G} \ustar B_s$. Then we have a
distinguished triangle
\begin{equation}\label{eqn:s-recolle-1}
(i^I_{I \smallsetminus J})_!(i^I_{I \smallsetminus J})^*\mathscr{F} \to \mathscr{F} \to (i^I_J)_*(i^I_J)^*\mathscr{F} \xrightarrow{[1]}.
\end{equation}
On the other hand, we can also
form the distinguished triangle
\begin{equation}\label{eqn:s-recolle-2}
((i^I_{I \smallsetminus J})_!(i^I_{I \smallsetminus J})^*\mathscr{G}) \ustar B_s \to \mathscr{G} \ustar B_s \to ((i^I_J)_*(i^I_J)^*\mathscr{G}) \ustar B_s \xrightarrow{[1]}.
\end{equation}
We claim that the triangles~\eqref{eqn:s-recolle-1} and~\eqref{eqn:s-recolle-2} are canonically isomorphic.  This would follow from~\cite[Proposition~1.1.9]{bbd} if we knew that
\begin{align}
\Hom_{\BE_I(\h,W)}((i^I_{I \smallsetminus J})_!(i^I_{I \smallsetminus J})^*\mathscr{F}, ((i^I_J)_*(i^I_J)^*\mathscr{G}) \ustar B_s[n]) &= 0, \label{eqn:s-rec-van1}\\
\Hom_{\BE_I(\h,W)}(((i^I_{I \smallsetminus J})_!(i^I_{I \smallsetminus J})^*\mathscr{G}) \ustar B_s, (i^I_J)_*(i^I_J)^*\mathscr{F}[n]) &= 0 \label{eqn:s-rec-van2}
\end{align}
for all $n \in \Z$.  (Actually, we only need this for $n \in \{ 0, -1\}$.) 
Now $((i^I_J)_*(i^I_J)^*\mathscr{G}) \ustar B_s$ belongs to $\BE_J(\h,W)$, so~\eqref{eqn:s-rec-van1} holds by adjunction and basic properties of recollement.  For~\eqref{eqn:s-rec-van2}, because $(-) \ustar B_s$ is self-adjoint, we have
\begin{multline*}
\Hom_{\BE_I(\h,W)}(((i^I_{I \smallsetminus J})_!(i^I_{I \smallsetminus J})^*\mathscr{G}) \ustar B_s, (i^I_J)_*(i^I_J)^*\mathscr{F}[n]) \\
\cong
\Hom_{\BE_I(\h,W)}((i^I_{I \smallsetminus J})_!(i^I_{I \smallsetminus J})^*\mathscr{G}, ((i^I_J)_*(i^I_J)^*\mathscr{F}) \ustar B_s[n]).
\end{multline*}
This vanishes by the same reasoning as above.  

This result implies that for any $\mathscr{H}$ in $\BE_{I \smallsetminus J}(\h,W)$ we have
\[
 (i_{I \smallsetminus J}^I)_! (\mathscr{H} \ustar B_s) \cong (i_{I \smallsetminus J}^I)_! (\mathscr{H}) \ustar B_s.
\]
We deduce that $(i_{I \smallsetminus J}^I)_!$ sends $\BE_{I \smallsetminus J}(\h,W|s)$ to $\BE_{I}(\h,W|s)$. Similarly, since the functor $(i_J^I)_*$ is fully faithful and commutes with the functors $(-) \ustar B_s$, we obtain that
\[
 (i_J^I)^*(\mathscr{G} \ustar B_s) \cong (i_J^I)^*(\mathscr{G}) \ustar B_s.
\]
Again, this implies that $(i_J^I)^*$ sends $\BE_I(\h,W|s)$ to $\BE_J(\h,W|s)$.

The analogous claims for $(i^I_J)^!$ and $(i^I_{I \smallsetminus J})_*$ can be proved similarly, or deduced by duality, which finishes the proof.
\end{proof}

Now let $I \subset W$ and $J \subset I$ be finite locally closed right $s$-stable subsets.
In view of Proposition~\ref{prop:recollement for Ws}, we can also define the pushforward and pullback functors $(i_J^I)^*$, $(i_J^I)^!$, $(i_J^I)_*$, $(i_J^I)_!$ for the categories $\BE_J(\h,W|s)$ and $\BE_I(\h,W|s)$ as the restriction of those in~\S\ref{ss:pf-pb-locally-closed}. Then these functors also satisfy the properties of Lemma~\ref{le:recollement compositions for locally closed inclusions}.

\subsection{The perverse t-structure}
\label{ss:perverse-Ws}

We will denote by $\mathsf{C}$ the full subcategory of $\BE_{\{ws,w\}}(\h,W|s)$ whose object are direct sums of objects of the form $b^s_w\langle n\rangle$, $n\in\Z$.

\begin{lema}
\label{lem:bsw-tstruc}
Let $w \in W^s$.  Then if we set
\begin{align*}
{}^p \hspace{-1pt} \BE_{\{ws,w\}}(\h,W|s)^{\leq 0}&:={}^p \hspace{-1pt} \BE_{\{ws,w\}}(\h,W)^{\leq 0}\cap\BE_{\{ws,w\}}(\h,W|s),\\
{}^p \hspace{-1pt} \BE_{\{ws,w\}}(\h,W|s)^{\geq 0}&:={}^p \hspace{-1pt} \BE_{\{ws,w\}}(\h,W)^{\geq 0}\cap\BE_{\{ws,w\}}(\h,W|s),
\end{align*}
the pair $({}^p \hspace{-1pt} \BE_{\{ws,w\}}(\h,W|s)^{\leq 0},{}^p \hspace{-1pt} \BE_{\{ws,w\}}(\h,W|s)^{\geq 0})$
is a t-structure on the category $\BE_{\{ws,w\}}(\h,W|s)$, whose heart is $\mathsf{C}$.
\end{lema}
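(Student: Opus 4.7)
The strategy is to show that $\BE_{\{ws,w\}}(\h,W|s)$ is stable under the perverse truncation functors of the ambient category $\BE_{\{ws,w\}}(\h,W)$; once that is established, the pair of subcategories in the statement automatically defines a bounded t-structure on $\BE_{\{ws,w\}}(\h,W|s)$ whose heart is $\fP^{\BE}_{\{ws,w\}}(\h,W)\cap\BE_{\{ws,w\}}(\h,W|s)$, and it will remain only to identify this heart with $\mathsf{C}$.

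First I would verify that $b^s_w$ is perverse; by t-exactness of $\langle 1\rangle$ (Lemma~\ref{lem:basic-functors-exact}), the same then holds for every $b^s_w\langle n\rangle$. The outer terms of both triangles in~\eqref{eqn:bsw-perverse}, namely $b_{ws}\langle\pm 1\rangle$, $\Delta_w^{\{ws,w\}}$ and $\nabla_w^{\{ws,w\}}$, are perverse by Corollary~\ref{cor:D-N-perverse-I}. Running the long exact sequence of perverse cohomology on the first triangle gives ${}^pH^n(b^s_w)=0$ for $n\notin\{-1,0\}$, and on the second gives ${}^pH^n(b^s_w)=0$ for $n\notin\{0,1\}$; intersecting, $b^s_w$ is perverse. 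Hence $\mathsf{C}\subset\fP^{\BE}_{\{ws,w\}}(\h,W)\cap\BE_{\{ws,w\}}(\h,W|s)$.

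Next I would prove that $\mathsf{C}$ is a Serre subcategory of $\fP^{\BE}_{\{ws,w\}}(\h,W)$. Semisimplicity of $\mathsf{C}$ makes closure under subobjects and quotients automatic, so only the extension-closure requires work; by t-exactness of $\langle 1\rangle$, this reduces to checking
\[
\Ext^1_{\fP^{\BE}_{\{ws,w\}}(\h,W)}\bigl(b^s_w,b^s_w\langle k\rangle\bigr) = \Hom_{\BE_{\{ws,w\}}(\h,W)}\bigl(b^s_w, b^s_w\langle k\rangle[1]\bigr) = 0
\]
for all $k\in\Z$. Writing $\langle k\rangle[1]=(-k)[k+1]$, the right-hand side vanishes for single-term complexes unless $k=-1$, in which case it becomes $\Hom^1_{\Diag_{\BS,\{ws,w\}}^{\oplus}(\h,W)}(b^s_w,b^s_w)$. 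From the double-leaves analysis of~\S\ref{ss:Bs-preliminaries}, the graded $R$-module $\End^\bullet(b^s_w)$ is free of rank $2$ with generators in degrees $0$ and $2$, so its degree-$1$ piece is zero.

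The main step is then to show that every $X\in\BE_{\{ws,w\}}(\h,W|s)$ satisfies ${}^pH^n(X)\in\mathsf{C}$ for all $n\in\Z$. Let $\mathcal{S}$ be the full subcategory of $\BE_{\{ws,w\}}(\h,W|s)$ of objects with this property: each generator $b^s_w\langle m\rangle$ lies in $\mathcal{S}$, and $\mathcal{S}$ is visibly stable under $[1]$ and $\langle 1\rangle$. For a distinguished triangle $A\to B\to C\xrightarrow{[1]}$ with two of $A,B,C$ already in $\mathcal{S}$, the long exact sequence of perverse cohomology presents each ${}^pH^n$ of the third term as an extension of a cokernel and a kernel formed from objects of $\mathsf{C}$; the Serre property of $\mathsf{C}$ then forces the third term into $\mathcal{S}$. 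By Lemma~\ref{lem:bsw-generate} this gives $\mathcal{S}=\BE_{\{ws,w\}}(\h,W|s)$. Consequently the ambient truncation functors preserve $\BE_{\{ws,w\}}(\h,W|s)$, yielding the t-structure; and any perverse object in $\BE_{\{ws,w\}}(\h,W|s)$ equals its own ${}^pH^0$, hence lies in $\mathsf{C}$. The main technical hurdle is the $\Ext^1$-vanishing computation, which is reduced to an explicit light-leaves degree bound.
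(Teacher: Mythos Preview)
Your approach is genuinely different from the paper's. The paper invokes the machinery of \cite[\S1.3]{bbd}: it verifies that $\mathsf{C}$ is an \emph{admissible abelian} subcategory (by checking $[\mathsf{C}]*[\mathsf{C}[1]]\subset[\mathsf{C}[1]]*[\mathsf{C}]$ via an explicit diagonalisation of morphisms in $\mathsf{C}$), then applies \cite[Proposition~1.3.13]{bbd} to produce a t-structure on the triangulated subcategory generated by $\mathsf{C}$, and only at the end identifies this t-structure with the intersection by a truncation-triangle comparison. Your route---showing that the ambient perverse truncation already preserves $\BE_{\{ws,w\}}(\h,W|s)$ by proving every perverse cohomology object lands in $\mathsf{C}$---is more direct and avoids the BBD machinery; it also makes the identification of the heart immediate.

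There is, however, one step that is not justified as written. You assert that ``semisimplicity of $\mathsf{C}$ makes closure under subobjects and quotients automatic''. Semisimplicity of $\mathsf{C}$ as an abstract additive category (which is what the $\Hom$ computation gives) does \emph{not} by itself imply that $b^s_w$ has no proper nonzero subobject in the ambient abelian category $\fP^{\BE}_{\{ws,w\}}(\h,W)$, and hence does not give the Serre property. Fortunately, what your long-exact-sequence argument actually uses is weaker: you only need that kernels and cokernels, computed in $\fP^{\BE}_{\{ws,w\}}(\h,W)$, of morphisms \emph{between objects of $\mathsf{C}$} lie again in $\mathsf{C}$. This does follow from the $\Hom$ structure: since $\Hom(b^s_w\langle n\rangle,b^s_w\langle m\rangle)$ is $\bk$ for $n=m$ and $0$ otherwise, any morphism in $\mathsf{C}$ is, after a change of basis, a direct sum of identities and zero maps, so its kernel and cokernel in $\fP^{\BE}$ are direct summands of the source and target and hence lie in $\mathsf{C}$. (This is exactly the computation the paper performs to verify admissibility.) Alternatively, you can prove directly that $b^s_w$ is simple in $\fP^{\BE}_{\{ws,w\}}(\h,W)$: the triangles~\eqref{eqn:bsw-perverse} yield a surjection $\Delta^{\{ws,w\}}_w\twoheadrightarrow b^s_w$ and an injection $b^s_w\hookrightarrow\nabla^{\{ws,w\}}_w$, and their composite is a nonzero map $\Delta^{\{ws,w\}}_w\to\nabla^{\{ws,w\}}_w$ whose image is simultaneously $\rL^{\{ws,w\}}_w$ and $b^s_w$. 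Either fix completes your argument.
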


\begin{proof}
We claim that $\mathsf{C}$ is an {\it admissible abelian} subcategory of $\BE_{\{ws,w\}}(\h,W|s)$ in the sense of~\cite[Definition 1.2.5]{bbd}. It can be checked from the triangles in~\eqref{eqn:bsw-perverse} that $b_w^s$ lies in $\fP^{\BE}_{\{w,ws\}}(\h,W)$ (and thus that $\mathsf{C}$ is a subcategory of $\fP^{\BE}_{\{w,ws\}}(\h,W)$). It follows immediately that
\[
\Hom_{\BE_{\{w,ws\}}(\h,W|s)}(b_w^s, b_w^s \langle n \rangle [m])=0
\]
if $m < 0$.  Hence $\mathsf{C}$ satisfies~\cite[\S 1.2.0]{bbd}. On the other hand, to check that any morphism in $\mathsf{C}$ is admissible we have to check that
\[
[\mathsf{C}] * [\mathsf{C}[1]] \subset [\mathsf{C}[1]] * [\mathsf{C}],
\]
as explained in~\cite[Exemple 1.3.11(ii)]{bbd}. However, the objects whose class belongs to $[\mathsf{C}] * [\mathsf{C}[1]]$ are exactly the cones of morphisms in $\mathsf{C}$. From the remarks in~\S\ref{ss:Bs-preliminaries} we see that such a morphism is a direct sum of morphisms of the form
\[
b_w^s \to 0, \quad 0 \to b_w^s \quad \text{or} \quad b_w^s \xrightarrow{\id} b_w^s.
\]
It is easily checked that the class of the cone of such morphisms belongs to $[\mathsf{C}[1]] * [\mathsf{C}]$, and the claim follows.

Since $\Hom_{\BE_{\{w,ws\}}(\h,W|s)}(\mathscr{F},\mathscr{G}[1])=0$ for $\mathscr{F},\mathscr{G}$ in $\mathsf{C}$, this subcategory is also stable under extensions. Since $\mathsf{C}$ generates $\BE_{\{w,ws\}}(\h,W|s)$ as a triangulated category (see Lemma~\ref{lem:bsw-generate}), applying~\cite[Proposition~1.3.13]{bbd} we obtain a t-structure
\[
\left( {}^p \hspace{-1pt} \BE_{\{ws,w\}}(\h,W|s)^{\leq 0}, {}^p \hspace{-1pt} \BE_{\{ws,w\}}(\h,W|s)^{\geq 0} \right)
\]
on $\BE_{\{w,ws\}}(\h,W|s)$ whose nonnegative, resp.~nonpositive, part is generated under extensions by the objects of the form $\mathscr{F}[n]$ with $\mathscr{F}$ in $\mathcal{C}$ and $n\leq 0$, resp.~$n \geq 0$.

To conclude, it remains to prove that
\begin{align*}
{}^p \hspace{-1pt} \BE_{\{ws,w\}}(\h,W|s)^{\leq 0}&={}^p \hspace{-1pt} \BE_{\{ws,w\}}(\h,W)^{\leq 0}\cap\BE_{\{ws,w\}}(\h,W|s),\\
{}^p \hspace{-1pt} \BE_{\{ws,w\}}(\h,W|s)^{\geq 0}&={}^p \hspace{-1pt} \BE_{\{ws,w\}}(\h,W)^{\geq 0}\cap\BE_{\{ws,w\}}(\h,W|s).
\end{align*}
First, we noted above that $\mathsf{C} \subset \fP^{\BE}_{\{w,ws\}}(\h,W)$, so each left-hand side above is contained in the corresponding right-hand side. Now, let $\mathscr{F} \in {}^p \hspace{-1pt} \BE_{\{ws,w\}}(\h,W)^{\leq 0}\cap\BE_{\{ws,w\}}(\h,W|s)$. Consider the truncation triangle
\[
\tau_{\leq 0}(\mathscr{F}) \to \mathscr{F} \to \tau_{>0}(\mathscr{F}) \xrightarrow{[1]}
\]
for the t-structure we have just constructed on $\BE_{\{ws,w\}}(\h,W|s)$. From the containments we have already proved, we see that this triangle identifies with the truncation triangle for the perverse t-structure on $\BE_{\{ws,w\}}(\h,W)$. From our assumption we deduce that $\tau_{>0}(\mathscr{F})=0$, or in other words that $\mathscr{F}$ belongs to ${}^p \hspace{-1pt} \BE_{\{ws,w\}}(\h,W|s)^{\leq 0}$. The remaining equality can be proved similarly.
\end{proof}

For any finite locally closed right $s$-stable subset $I\subset W$, 
we now set
\begin{align*}
{}^p \hspace{-1pt} \BE_{I}(\h,W|s)^{\leq0}=&\\
=\bigl\{\rF\in\BE_{I}&(\h,W|s)\mid \forall w\in I\cap W^s \ (i_{\{ws,w\}}^I)^*(\rF)\in{}^p \hspace{-1pt} \BE_{\{ws,w\}}(\h,W|s)^{\leq0} \bigr\},\\ 
{}^p \hspace{-1pt} \BE_{I}(\h,W|s)^{\geq0}=&\\
=\bigl\{\rF\in\BE_{I}&(\h,W|s)\mid \forall w\in I\cap W^s, \ (i_{\{ws,w\}}^I)^!(\rF)\in{}^p \hspace{-1pt} \BE_{\{ws,w\}}(\h,W|s)^{\geq0}\}.
\end{align*}
The recollement formalism ensures that this defines a t-structure on $\BE_{I}(\h,W|s)$, which we will call the \emph{perverse t-structure}. The same arguments as in the proof of Lemma~\ref{lem:bsw-tstruc} show that we have
\begin{align*}
{}^p \hspace{-1pt} \BE_{I}(\h,W|s)^{\leq 0}&={}^p \hspace{-1pt} \BE_{I}(\h,W)^{\leq 0}\cap\BE_{I}(\h,W|s),\\
{}^p \hspace{-1pt} \BE_{I}(\h,W|s)^{\geq 0}&={}^p \hspace{-1pt} \BE_{I}(\h,W)^{\geq 0}\cap\BE_{I}(\h,W|s).
\end{align*}
In particular, the heart of this t-structure is $\fP_I^{\BE}(\h,W)\cap\BE_I(\h,W|s)$. As for $\fP_I^{\BE}(\h,W)$, any object in this abelian category has finite length.

We now investigate the simple objects in this heart. By the recollement formalism once again (see~\cite[Proposition~1.4.26]{bbd}), these objects can be classified as follows. For any $w \in W^s \cap I$, there exists a unique simple object $\rL_w^{I,s}$ in $\fP_I^{\BE}(\h,W)\cap\BE_I(\h,W|s)$ which belongs to $\BE_{\{\leq w\} \cap I}(\h,W|s)$ and satisfies $(i_{\{w,ws\}}^I)^* \rL_w^{I,s} \cong b_w^s$. Moreover, any simple object in $\fP_I^{\BE}(\h,W)\cap\BE_I(\h,W|s)$ is (up to isomorphism) of the form $\rL_w^{I,s} \langle n \rangle$ for $w \in W^s \cap I$ and $n \in \Z$.

\begin{lema}
\label{lem:simples-Ws}
 For any $w \in W^s \cap I$ we have $\rL_w^{I,s} \cong \rL_w^I$.
\end{lema}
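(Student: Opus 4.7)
The plan is as follows. The key observation is that since $w \in W^s$ means $\ell(w) = \ell(ws) + 1$, the element $w$ covers $ws$ in the Bruhat order; consequently $\{w, ws\}$ is open in $\{\leq w\}$, with closed complement $\{<ws\}$, and hence is open in the closed subset $J := \{\leq w\} \cap I$ of $I$. I will identify $\rL_w^{\{w, ws\}}$ explicitly with $b_w^s$, transfer this to $I$ via intermediate extension, and conclude by invoking the characterization of $\rL_w^{I, s}$ recalled just before the lemma.

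The first step is to prove $\rL_w^{\{w, ws\}} \cong b_w^s$. The two distinguished triangles in~\eqref{eqn:bsw-perverse} can be read as short exact sequences in $\fP^{\BE}_{\{w, ws\}}(\h, W)$, since each of $\Delta_w^{\{w, ws\}}$, $\nabla_w^{\{w, ws\}}$, $b_w^s$, and $b_{ws}\langle \pm 1 \rangle$ is perverse (the last two by direct inspection, and $b_w^s$ by the proof of Lemma~\ref{lem:bsw-tstruc}). The resulting composition $\Delta_w^{\{w, ws\}} \twoheadrightarrow b_w^s \hookrightarrow \nabla_w^{\{w, ws\}}$ restricts to an isomorphism $b_w \to b_w$ at the open point $w$ (each restriction is canonically $b_w$), and therefore agrees up to a unit with the canonical map, since $\Hom(\Delta_w^{\{w, ws\}}, \nabla_w^{\{w, ws\}}) \cong \bk$ by Lemma~\ref{lem:Hom-BE-D-N}. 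Hence its image $b_w^s$ is $\rL_w^{\{w, ws\}}$.

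Applying~\eqref{eqn:restriction-open-simple} to the open inclusion $\{w, ws\} \hookrightarrow J$ yields $(i_{\{w, ws\}}^J)^* \rL_w^J \cong b_w^s$, and composing with the closed inclusion $J \hookrightarrow I$ (using Lemma~\ref{le:recollement compositions for locally closed inclusions}) gives $(i_{\{w, ws\}}^I)^* \rL_w^I \cong b_w^s$. The heart of the matter is to show that $\rL_w^I$ already lies in $\BE_I(\h, W|s)$. For this I would realize $\rL_w^I$ as the intermediate extension $(i_{\{w, ws\}}^I)_{!*} b_w^s$, that is, as the image in $\fP^\BE_I(\h, W)$ of any nonzero morphism $(i_{\{w, ws\}}^I)_! b_w^s \to (i_{\{w, ws\}}^I)_* b_w^s$ (which is justified by the transitivity $(i_{\{w, ws\}}^I)_{!*} (i_w^{\{w, ws\}})_{!*} = (i_w^I)_{!*}$ together with $(i_w^{\{w, ws\}})_{!*} b_w \cong \rL_w^{\{w, ws\}} \cong b_w^s$). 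Since $b_w^s \in \BE_{\{w, ws\}}(\h, W|s)$, Proposition~\ref{prop:recollement for Ws} ensures that both source and target lie in $\BE_I(\h, W|s)$. Moreover, because the perverse t-structure on $\BE_I(\h, W|s)$ is by construction the restriction of the one on $\BE_I(\h, W)$, the truncation functors commute with the inclusion, so the operation of taking the image in the heart is compatible with the inclusion; therefore the image lies in $\fP^\BE_I(\h, W) \cap \BE_I(\h, W|s)$.

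Finally, $\rL_w^I$ is simple in $\fP^\BE_I(\h, W)$, hence remains simple in the abelian subcategory $\fP^\BE_I(\h, W) \cap \BE_I(\h, W|s)$; it lies in $\BE_{\{\leq w\} \cap I}(\h, W|s)$; and $(i_{\{w, ws\}}^I)^* \rL_w^I \cong b_w^s$. The uniqueness in the characterization of $\rL_w^{I, s}$ then yields $\rL_w^I \cong \rL_w^{I, s}$. The main obstacle is the explicit identification $\rL_w^{\{w, ws\}} \cong b_w^s$: one has to verify both that all the objects in~\eqref{eqn:bsw-perverse} are perverse (so that the triangles become short exact sequences) and that the composition through $b_w^s$ is nonzero at the stalk over $w$, so that it coincides up to a unit with the canonical map.
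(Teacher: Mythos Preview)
Your proof is correct but follows a different strategy from the paper's. One small inaccuracy: the closed complement of $\{w,ws\}$ in $\{\leq w\}$ is \emph{not} $\{<ws\}$ in general (take $w = s_1s_2s_1$ in $S_3$ with $s=s_1$: then $s_2s_1$ lies in the complement but not in $\{<s_1s_2\}$). What is true, and all you actually need, is that this complement is downward closed; this holds because $w$ covers $ws$, so no element can sit strictly between them in the Bruhat order.

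The paper argues in the opposite direction: instead of showing that $\rL_w^I$ lies in $\BE_I(\h,W|s)$ and then matching it against the characterization of $\rL_w^{I,s}$, it verifies directly that $\rL_w^{I,s}$ satisfies the stalk/costalk conditions~\eqref{eqn:conditions-Lw0}--\eqref{eqn:conditions-Lw} that pin down $\rL_w^I$. At the points $w$ and $ws$ this follows from the defining isomorphism $(i_{\{ws,w\}}^I)^*\rL_w^{I,s}\cong b_w^s$ together with the triangles~\eqref{eqn:bsw-perverse}; at other $y<w$ it uses the analogous support conditions for $\rL_w^{I,s}$ coming from the $s$-recollement, combined with the fact that the perverse t-structure on $\BE_{\{ys,y\}}(\h,W|s)$ is the restriction of that on $\BE_{\{ys,y\}}(\h,W)$. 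The paper's route is shorter and stays at the level of pointwise checks; yours is more structural, realizing $\rL_w^I$ as an intermediate extension computed entirely inside $\BE_I(\h,W|s)$, which makes the membership $\rL_w^I\in\BE_I(\h,W|s)$ conceptually transparent but costs you the extra verification that perverse truncations and images are compatible with the inclusion of t-categories.
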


\begin{proof}
We will show that $\rL_w^{I,s}$ satisfies the properties~\eqref{eqn:conditions-Lw0}--\eqref{eqn:conditions-Lw} which characterize $\rL_w^I$.

First, by definition we have $(i_{\{ws,w\}}^{I})^* \rL^{I,s}_w \cong b_w^s$. Using the triangles in~\eqref{eqn:bsw-perverse} we deduce that
\[
 (i_w^{I})^* \rL^{I,s}_w \cong b_w
\]
(so that $\rL_w^{I,s}$ satisfies~\eqref{eqn:conditions-Lw0}) and that
\[
 (i_{ws}^{I})^* \rL^{I,s}_w \cong b_{ws}\langle -1\rangle[1], \quad (i_{ws}^{I})^! \rL^{I,s}_w \cong b_{ws}\langle 1\rangle[-1].
\]
Hence $\rL^{I,s}_w$ satisfies~\eqref{eqn:conditions-Lw} for $y=ws$. 
Now if $y \in I\cap\{<w\}$ and $y \neq ws$, by the analogue~of \eqref{eqn:conditions-Lw} for $\rL^{I,s}_w$ we have
\[
(i_{\{ys,y\}}^{\{\leq w\} \cap I})^* \rL^{I,s}_w\in {}^p \hspace{-1pt} \BE_{\{ys,y\}}(\h,W|s)^{\leq -1}\subset
{}^p \hspace{-1pt} \BE_{\{ys,y\}}(\h,W)^{\leq -1}.
\]
Therefore, $(i_{y}^{\{\leq w\} \cap I})^* \rL^{I,s}_w\in{}^p \hspace{-1pt} \BE_{\{y\}}(\h,W)^{\leq -1}$. One proves similarly that
\[
(i_{y}^{\{\leq w\} \cap I})^! \rL^{I,s}_w\in{}^p \hspace{-1pt} \BE_{\{y\}}(\h,W)^{\geq 1},
\]
which concludes the proof.
\end{proof}

We can finally prove Lemma~\ref{lem:DwBs-composition-factors}.

\begin{proof}[Proof of Lemma~{\rm \ref{lem:DwBs-composition-factors}}]
 By Corollary~\ref{cor:multiplication-Bs}, $\Delta_w \ustar B_s$ belongs to $\BE_{\{\leq ws\}}(\h,W|s)$, and Lem\-ma~\ref{lem:ses-D-w-ws} ensures that it also belongs to $\fP^{\BE}(\h,W)$. Therefore, it belongs to the heart of the perverse t-structure on $\BE_{\{\leq ws\}}(\h,W|s)$. By Lemma~\ref{lem:simples-Ws}, any finite filtration of this object (in the abelian category given by the heart of this t-structure) with simple subquotients can also be viewed as a finite filtration with simple subquotients in $\fP^{\BE}(\h,W)$, and as such these subquotients are of the form $\rL_y \langle n \rangle$ with $ys<y$.
\end{proof}

\subsection{Description of some simple objects}

Under the present assumption that $\bk$ is a field,~\cite[Theorem~6.26]{ew} provides a description of the isomorphism classes of indecomposable objects in the Karoubian envelope $\Diag(\h,W)$ of $\DiagBS^{\oplus}(\h,W)$: for any $w \in W$ there exists a unique indecomposable object $B_w$ (up to isomorphism) which is a direct summand of $B_\uw$ for any reduced expression $\uw$ for $w$, but which is not isomorphic to a direct summand of an object of the form $B_\uv(n)$ with $n \in \Z$ and $\uv$ an expression such that $\ell(\uv)<\ell(w)$. Moreover, the assignment $(w,n) \mapsto B_w(n)$ induces a bijection between $W \times \Z$ and the set of isomorphism classes of indecomposable objects in $\Diag(\h,W)$. As explained in~\cite[Lemma~4.9.1]{amrw1}, the natural functor
\[
 \BE(\h,W) \to \Kb \Diag(\h,W)
\]
is an equivalence of triangulated categories; in particular, using this identification we can see the objects $B_w$ as living in $\BE(\h,W)$.

Recall the ring isomorphism
\begin{equation}
\label{eqn:isom-H-Groth}
\mathcal{H}_{(W,S)} \xrightarrow{\sim} [\BE(\h,W)]_\Delta
\end{equation}
constructed in~\S\ref{ss:Groth-gp}. Recall also the Kazhdan--Lusztig basis $(\underline{H}_w : w \in W)$ consi\-dered e.g.~in~\cite{soergel}. We conclude this section with the following claim, which provides a description of $\rL_w$ in a favorable situation.

\begin{prop}
\label{prop:uH-simple}
Let $w \in W$, and assume that the image of $\underline{H}_w$ under~\eqref{eqn:isom-H-Groth} is the class of $B_w$. Then $B_w \cong \rL_w$.
\end{prop}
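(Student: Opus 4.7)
My plan is to verify that $B_w$ satisfies the recollement characterization of $\rL_w$ from~\S\ref{ss:simple-perverse}: that $B_w \in \BE_{\{\leq w\}}(\h,W)$ is perverse, satisfies $(i_w^{\{\leq w\}})^* B_w \cong b_w$, and for every $y<w$ satisfies the strict-degree conditions $(i_y^{\{\leq w\}})^* B_w \in {}^p \hspace{-1pt} \BE_{\{y\}}(\h,W)^{\leq -1}$ and $(i_y^{\{\leq w\}})^! B_w \in {}^p \hspace{-1pt} \BE_{\{y\}}(\h,W)^{\geq 1}$.

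The support containment is immediate: since $B_w$ is a direct summand of $B_\uw$ for any reduced expression $\uw$ of $w$ and $*\uw=w$, Lemma~\ref{lem:Buw-leq*uw} gives $B_w \in \BE_{\{\leq w\}}(\h,W)$. Translating the hypothesis $[B_w]=\underline{H}_w$ via Theorem~\ref{thm:Groth-gp} and using the Kazhdan--Lusztig expansion $\underline{H}_w = H_w + \sum_{y<w} h_{y,w}(v) H_y$ with $h_{y,w}(v) \in v\mathbb{Z}[v]$ yields
\[
[B_w] = [\Delta_w] + \sum_{y<w,\, n\geq 1} c_{y,n}\,[\Delta_y(n)]
\]
for integers $c_{y,n}$. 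Lemma~\ref{lem:pf-pb-D-N} then computes the restrictions: $[(i_w^{\{\leq w\}})^* B_w] = [b_w]$ at the open stratum, and $[(i_y^{\{\leq w\}})^* B_w] = \sum_{n\geq 1} c_{y,n}[b_y(n)]$ for $y<w$, which involves only shifts $\langle m\rangle$ with $m \leq -1$ (after converting $(n)$ into the $\langle \cdot \rangle$-convention). Once perversity of $B_w$ is known, the t-exactness of $(i_w^{\{\leq w\}})^*$ (Lemma~\ref{lem:basic-functors-exact}, since $\{w\}$ is open in $\{\leq w\}$) combined with the class calculation and finite length of the heart $\fP^{\BE}_{\{w\}}(\h,W)$ (whose simples are the $b_w\langle m\rangle$) pins $(i_w^{\{\leq w\}})^* B_w \cong b_w$.

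The main obstacle is establishing the perversity of $B_w$ together with the strict-degree vanishings for $y<w$. The crucial auxiliary input is the $\D$-self-duality $\D B_w \cong B_w$, inherited from $\D B_\uw = B_\uw$ via Krull--Schmidt and the canonical normalization of indecomposable Bott--Samelson summands from \cite[Theorem~6.26]{ew}. Via~\eqref{eq:duality and recollement for locally closed inclusions} this identifies $(i_y^{\{\leq w\}})^! B_w \cong \D_{\{y\}}(i_y^{\{\leq w\}})^* B_w$ and yields symmetries $\pH^i B_w \cong \D\, \pH^{-i} B_w$ (with the corresponding stratum-by-stratum symmetry). Combining these symmetries with the strict $v$-positivity of $\underline{H}_w - H_w$ --- which forbids any $v^0$-contribution to the Euler characteristic of $(i_y^{\{\leq w\}})^* B_w$ for $y<w$ --- together with right t-exactness of $(i_y^{\{\leq w\}})^*$ and left t-exactness of $(i_y^{\{\leq w\}})^!$, should force all perverse cohomologies of $B_w$ away from degree $0$ to vanish and pin down the strict vanishings~\eqref{eqn:conditions-Lw}. (Note that the naive attempt to prove perversity by showing $(-)\ustar B_s$ preserves the heart fails: the Grothendieck class of $\Delta_s \ustar B_s$ already has negative coefficients in the simple basis, so the perversity must be a property \emph{specific to $B_w$} and not a general feature of convolution.) Once all of (i)--(iii) are verified, the characterization of simples in~\S\ref{ss:simple-perverse} yields $B_w \cong \rL_w$.
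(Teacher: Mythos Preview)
Your overall strategy---verify the recollement characterization \eqref{eqn:conditions-Lw0}--\eqref{eqn:conditions-Lw} of $\rL_w$---is exactly the paper's. The gap is in the execution: you compute only the \emph{class} of $(i_y^{\{\leq w\}})^* B_w$ in the Grothendieck group, then acknowledge that ``the main obstacle is establishing the perversity of $B_w$ together with the strict-degree vanishings,'' and propose to close it via self-duality and an Euler-characteristic argument left at the level of ``should force.'' That sketch is not a proof: knowing the class and knowing $\D B_w \cong B_w$ do not by themselves pin down the perverse-cohomological structure of an object, and you have not supplied the missing argument.

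The paper bypasses this difficulty by invoking a structural fact you omit. Since $B_w$ is a direct summand of the single Bott--Samelson object $B_\uw \in \Diag_{\BS,\{\leq w\}}^\oplus(\h,W)$ (viewed as a complex in cohomological degree~$0$), Remark~\ref{rmk:!*}\eqref{it:iw-1degree} (combined with duality and Karoubianness, Corollary~\ref{cor:Karoubian}) shows that for each $y<w$ the object $(i_y^{\{\leq w\}})^* B_w$ is isomorphic to a finite direct sum $\bigoplus_{m\in\Z} (b_y(m))^{\oplus \lambda_m}$. Once this ``purity'' is known, the class \emph{does} determine the object, and the Kazhdan--Lusztig positivity $h_{y,w}(v)\in v\Z[v]$ forces $\lambda_m=0$ for $m\leq 0$, whence
\[
(i_y^{\{\leq w\}})^* B_w \cong \bigoplus_{m>0} \bigl(b_y\langle -m\rangle[m]\bigr)^{\oplus \lambda_m} \in {}^p\hspace{-1pt}\BE_{\{y\}}(\h,W)^{\leq -1}
\]
directly. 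The dual condition in~\eqref{eqn:conditions-Lw} follows from $\D B_w\cong B_w$, condition~\eqref{eqn:conditions-Lw0} is immediate, and perversity of $B_w$ falls out of the definition of the t-structure---no circularity. The ingredient you are missing is precisely this purity of the stalks of Bott--Samelson summands; with it, the self-duality/Euler-characteristic detour becomes unnecessary.
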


\begin{rmk}
The assumption in Proposition~\ref{prop:uH-simple} is always satisfied if $\ell(w) \leq 2$, or if $W$ is finite and $w$ is the longest element in $W$. (In the latter case, this property follows from the fact that we have $B_w \star B_s \cong B_w(1) \oplus B_w(-1)$ for any $s \in S$, using~\cite[Proposition~2.9]{soergel}.) See~\cite{jw} for more examples of situations when this condition is satisfied or not satisfied (in the case when $(W,S)$ is crystallographic). 

Another setting where this assumption is known (for any $w \in W$) is the one considered
in Remark~\ref{rmk:rouquier-complexes}\eqref{it:rouquier-complexes}. Namely, the equivalence between $\Diag(\h,W)$ and the category of Soergel bimodules considered in this remark sends $B_w$ to the indecomposable Soergel bimodule $\mathsf{B}_w$ attached to $w$. In view of this identification, the condition in Proposition~\ref{prop:uH-simple} becomes Soergel's conjecture for $V$, which was proved in~\cite{ew-hodge}.
\end{rmk}

\begin{proof}[Proof of Proposition~{\rm \ref{prop:uH-simple}}]
Recall the characterization of $\rL_w$ given by~\eqref{eqn:conditions-Lw0}--\eqref{eqn:conditions-Lw}.
We will show that the object $B_w$ satisfies the first condition in~\eqref{eqn:conditions-Lw};
the second one can be either proved similarly or deduced by duality, and~\eqref{eqn:conditions-Lw0} is easy (and left to the reader).

Let $y \in W$ be such that $y<w$. Writing $\{y\}$ as an intersection $\{\leq y\} \cap I$ where $I \subset \{\leq w\}$ is open and $y$ is minimal in $I$ and using Remark~\ref{rmk:!*}\eqref{it:iw-1degree}, we see that $(i_y^{\{\leq w\}})^* B_w$ is isomorphic to an object of the form
\[
\bigoplus_{m \in \Z} \bigl( b_y (m) \bigr)^{\oplus \lambda_m}
\]
for some coefficients $\lambda_m \in \Z_{\geq 0}$ (with $\lambda_m=0$ for all but finitely many $m$'s). In terms of this decomposition, the class of this object in $[\BE_{\{y\}}(\h,W)]$ is then
\[
\sum_{m \in \Z} \lambda_{m} \cdot [b_y ( m )]
=
\sum_{m \in \Z} v^m \lambda_{m} \cdot [b_y]
\]
On the other hand, the coefficient of $\uH_w$ on $H_y$ (in the standard basis) belongs to $v \Z[v]$. Hence our assumption implies that $\lambda_m=0$ for $m \leq 0$, so that
\[
(i_y^{\{\leq w\}})^* B_w \cong \bigoplus_{m \in \Z_{>0}} \bigl( b_y \langle -m \rangle [m] \bigr)^{\oplus \lambda_m}.
\]
Here the right-hand side belongs to ${}^p \hspace{-1pt} \BE_{\{y\}}(\h,W)^{\leq -1}$, and the desired claim is proved.
\end{proof}

\section{The right-equivariant category}
\label{sec:RE-category}

Recall the categories $\oDiagBS^\oplus(\h,W)$ and $\RE(\h,W)$ introduced in~\S\ref{ss:BE-RE}. The goal of the present section is to briefly indicate how most of the results considered so far adapt to these categories, allowing us to define the category $\fP^{\RE}(\h,W)$ of right- equivariant perverse objects.

\subsection{Diagrammatic categories attached to locally closed subsets}
\label{ss:oDiag-loc-closed}

In~\S\S\ref{ss:oDiag-loc-closed}--\ref{ss:RE-loc-closed}, $\bk$ is an arbitrary integral domain.

Let $I \subset W$ be a closed subset. We define $\oDiag_{\BS,I}^{\oplus}(\h,W)$ to be the full subcategory of $\oDiagBS^\oplus(\h,W)$ whose objects are direct sums of objects of the form $\oB_\uw(n)$ with $\uw$ a reduced expression for an element in $I$. 
The autoequivalence $(1)$ of $\DiagBS^\oplus(\h,W)$ induces an autoequivalence of $\oDiagBS^\oplus(\h,W)$, which in turn restricts to an autoequivalence of $\oDiag_{\BS,I}^{\oplus}(\h,W)$. All of these autoequivalences will be denoted similarly, and we will use the notation
$\Hom^{\bullet}$ in these categories with the same conventions as in~\eqref{eqn:def-Hombullet}.

If $J \subset I \subset W$ are closed subsets, then there exists a natural embedding
\[
 (i_J^I)_* : \oDiag_{\BS,J}^{\oplus}(\h,W) \to \oDiag_{\BS,I}^{\oplus}(\h,W).
\]

Next, if $I \subset W$ is a locally closed subset, and if we write $I=I_0 \smallsetminus I_1$ with $I_1 \subset I_0 \subset W$ closed, then we set
\[
 \oDiag_{\BS,I}^\oplus(\h,W) := \oDiag_{\BS,I_0}^{\oplus}(\h,W)\qq\oDiag_{\BS,I_1}^{\oplus}(\h,W),
\]
where the symbol ``$\qq \,$'' has the same meaning as in~\S\ref{ss:Diag-loc-closed}.
The natural functor $\DiagBS^\oplus(\h,W) \to \oDiagBS^\oplus(\h,W)$ restricts to a functor $\Diag_{\BS,I_0}^\oplus(\h,W) \to \oDiag_{\BS,I_0}^\oplus(\h,W)$, which in turn induces a functor
\[
 \Diag_{\BS,I}^\oplus(\h,W) \to \oDiag_{\BS,I}^\oplus(\h,W).
\]
From the definitions we see that this functor (which we will denote $M \mapsto \overline{M}$) induces an isomorphism
\[
 \bk \otimes_R \Hom^{\bullet}_{\Diag_{\BS,I}^\oplus(\h,W)}(M,N) \xrightarrow{\sim} \Hom^{\bullet}_{\oDiag_{\BS,I}^\oplus(\h,W)}(\overline{M},\overline{N}).
\]
Using this, the considerations of~\S\ref{ss:Diag-loc-closed} show that the category $\oDiag_{\BS,I}^\oplus(\h,W)$ does not depend on the choice of $I_0$ and $I_1$ (up to canonical equivalence), and that the morphism spaces in this category are free of finite rank over $\bk$.

If $w \in W$, we denote by $\overline{b}_w$ the image of $b_w$ in $\oDiag_{\BS,\{w\}}^{\oplus}(\h,W)$.
Lemma~\ref{le:Dw is equivalent to free R} also implies the following claim.

\begin{lema}
\label{le:oDw}
There exists a canonical equivalence of categories 
\[
\overline{\gamma}:\oDiag_{\BS,\{w\}}^{\oplus}(\h,W)\xrightarrow{\sim}\Free^{\fgen,\Z}(\bk)
\]
such that $\overline{\gamma}(\overline{b}_w) = \bk$.
Under this equivalence, the autoequivalence $(1)$ identifies with the ``shift of grading'' autoequivalence of $\Free^{\fgen,\Z}(\bk)$ defined by $\bigl( M(1) \bigr)^n = M^{n+1}$.
\end{lema}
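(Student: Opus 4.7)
The plan is to imitate the proof of Lemma~\ref{le:Dw is equivalent to free R} by transporting its conclusions across the ``bar'' functor $M \mapsto \overline{M}$. The key observation, recorded in~\S\ref{ss:oDiag-loc-closed}, is that this functor is essentially surjective by construction and induces, for any $M, N$ in $\Diag_{\BS,\{w\}}^\oplus(\h,W)$, an isomorphism
\[
\bk \otimes_R \Hom^\bullet_{\Diag_{\BS,\{w\}}^\oplus(\h,W)}(M,N) \xrightarrow{\sim} \Hom^\bullet_{\oDiag_{\BS,\{w\}}^\oplus(\h,W)}(\overline{M},\overline{N}),
\]
where $\bk = R/V^* \cdot R$ sits in degree $0$.

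First I would use Lemma~\ref{le:Dw is equivalent to free R} and essential surjectivity of the bar functor to deduce that every object of $\oDiag_{\BS,\{w\}}^\oplus(\h,W)$ is isomorphic to a finite direct sum of shifts of $\overline{b}_w$. Next, applying the displayed base-change isomorphism to $M = N = b_w$ and using that $\End^\bullet_{\Diag_{\BS,\{w\}}^\oplus(\h,W)}(b_w) \cong R$ (as a graded $R$-module), I obtain
\[
\End^\bullet_{\oDiag_{\BS,\{w\}}^\oplus(\h,W)}(\overline{b}_w) \cong \bk \otimes_R R \cong \bk,
\]
concentrated in degree $0$. More generally, the same base change identifies $\Hom^\bullet_{\oDiag_{\BS,\{w\}}^\oplus(\h,W)}(\overline{b}_w, \overline{b}_w(n))$ with a free $\bk$-module of rank $1$ concentrated in a single degree.

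Finally I would set $\overline{\gamma} := \Hom^\bullet_{\oDiag_{\BS,\{w\}}^\oplus(\h,W)}(\overline{b}_w, -)$, viewed as a functor to $\Free^{\fgen,\Z}(\bk)$. The two facts above imply that $\overline{\gamma}$ is essentially surjective (it sends $\overline{b}_w(n)$ to $\bk(n)$, and direct sums to direct sums) and fully faithful (since it induces, on $\Hom$-spaces between shifts of $\overline{b}_w$, the canonical $\bk$-linear identification, and both source and target additive categories are built as finite direct sums of such shifted objects with free $\bk$-modules of morphisms). The compatibility $\overline{\gamma}(\overline{b}_w) = \bk$ is built in, and the autoequivalence $(1)$ corresponds to grading shift by inspection. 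There is no real obstacle here; the content is entirely absorbed into Lemma~\ref{le:Dw is equivalent to free R} and the base-change isomorphism already established in~\S\ref{ss:oDiag-loc-closed}.
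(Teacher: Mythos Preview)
Your proposal is correct and follows essentially the same approach as the paper. In fact, the paper gives no proof at all for this lemma beyond the sentence ``Lemma~\ref{le:Dw is equivalent to free R} also implies the following claim''; you have simply spelled out the details of that implication, defining $\overline{\gamma} := \Hom^\bullet_{\oDiag_{\BS,\{w\}}^\oplus(\h,W)}(\overline{b}_w, -)$ exactly in parallel with the definition of $\gamma$ in the proof of Lemma~\ref{le:Dw is equivalent to free R}, and using the base-change isomorphism from~\S\ref{ss:oDiag-loc-closed} to transport the relevant computations.
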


Finally, if $J \subset I$ is a closed (resp.~open) subset, then there exists a natural functor
\[
 (i_J^I)_* : \oDiag_{\BS,J}^\oplus(\h,W) \to \oDiag_{\BS,I}^\oplus(\h,W), \quad \text{resp.} \quad (i_J^I)^* : \oDiag_{\BS,I}^\oplus(\h,W) \to \oDiag_{\BS,J}^\oplus(\h,W).
\]
We also have a ``duality'' functor $\mathbb{D}_I$ on $\oDiag_{\BS,I}^\oplus(\h,W)$, and compatibility properties similar to those stated in~\S\ref{ss:inclusions}.

\subsection{Right-equivariant categories attached to locally closed subsets and recollement}
\label{ss:RE-loc-closed}

If $I \subset W$ is a locally closed subset, we set
\[
 \RE_I(\h,W) := \Kb \oDiag_{\BS,I}^\oplus(\h,W).
\]
All the constructions of~\S\ref{ss:BE-category} adapt to this setting, and we obtain functors that will be denoted by the same symbol as in the case of $\BE_I(\h,W)$. We also have a natural ``forgetful'' functor
\[
 \For^{\BE}_{\RE} : \BE_I(\h,W) \to \RE_I(\h,W).
\]

\begin{prop}
\label{prop:recollement-RE}
Let $I \subset W$ be a locally closed subset, and let $J\subset I$ be a finite closed subset. Then the functor $(i_{I \smallsetminus J}^I)^*:\RE_I(\h,W)\to\RE_{I\smallsetminus J}(\h,W)$ admits a left adjoint 
$(i_{I \smallsetminus J}^I)_!$ and a right adjoint 
$(i_{I \smallsetminus J}^I)_*$. 
Similarly, the functor $(i_J^I)_*:\RE_J(\h,W)\to\RE_{I}(\h,W)$ admits a left adjoint $(i_J^I)^*$ and a right adjoint $(i_J^I)^!$. Together, these functors give a recollement diagram
\[
 \xymatrix@C=2cm{
 \RE_J(\h,W) \ar[r]|-{(i_J^I)_*} & \RE_I(\h,W) \ar[r]|-{(i_{I \smallsetminus J}^I)^*} \ar@/^0.5cm/[l]^-{(i_J^I)^!} \ar@/_0.5cm/[l]_-{(i_J^I)^*} & \RE_{I\smallsetminus J}(\h,W). \ar@/^0.5cm/[l]^-{(i_{I \smallsetminus J}^I)_*} \ar@/_0.5cm/[l]_-{(i_{I \smallsetminus J}^I)_!}
 }
\]
\end{prop}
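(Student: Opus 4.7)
The plan is to imitate the proof of Proposition~\ref{prop:recollement} step by step, replacing $R$-linear constructions with their $\bk$-linear analogs throughout, and exploiting the fact (see~\S\ref{ss:oDiag-loc-closed}) that the light leaves basis considerations from~\S\ref{ss:Diag-loc-closed} descend to the right-equivariant setting. In particular, for any locally closed $I \subset W$ and reduced expressions $\uv$, $\uw$ for elements of $I$, the space $\Hom^\bullet_{\oDiag_{\BS,I}^\oplus(\h,W)}(\oB_\uv,\oB_\uw)$ is a graded free $\bk$-module of finite rank, with a basis given by the images of the double leaves morphisms $\LL^{\uv,\uw}_{x,\mathbf{f},\mathbf{e}}$ with $x \in I$; this follows from the corresponding statement over $R$ (see~\S\ref{ss:Diag-loc-closed}) by applying $\bk \otimes_R(-)$ and using the graded-freeness.

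As in the $\BE$ case, the crucial technical step is the analog of Lemma~\ref{le:recollement step 1a}, handling a minimal element $w \in I$. Fix a reduced expression $\uw$ for $w$; for each reduced expression $\ux$ for an element of $I \smallsetminus \{w\}$, define $\oB_\ux^+$ to be the two-term complex with $\oB_\ux$ in cohomological degree $0$ and $\oB_\uw \Tenint_\bk \Hom^\bullet_{\oDiag_{\BS,I}^\oplus(\h,W)}(\oB_\uw, \oB_\ux)$ in degree $-1$, with nontrivial differential given by the canonical morphism from~\S\ref{ss:tensor-product} (applied with the graded $\bk$-algebra $\bk$). The analog of Lemma~\ref{le:iso para the recollement} then follows from the long exact Hom sequence associated to the natural distinguished triangle containing $\oB_\ux^+$, together with the injectivity of the composition map
\[
\Hom^\bullet_{\oDiag_{\BS,I}^\oplus(\h,W)}(\oB_\uy,\oB_\uw) \otimes_\bk \Hom^\bullet_{\oDiag_{\BS,I}^\oplus(\h,W)}(\oB_\uw,\oB_\ux) \to \Hom^\bullet_{\oDiag_{\BS,I}^\oplus(\h,W)}(\oB_\uy,\oB_\ux)
\]
analogous to that in Example~\ref{ex:bases of x w and y w}. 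Since $w$ is minimal in $I$, the composition of the two relevant light leaves bases produces a subset of a $\bk$-basis of the target, hence is $\bk$-linearly independent.

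With the analog of Lemma~\ref{le:recollement step 1a} in hand, the analog of Lemma~\ref{le:recollement step 1b} follows by the same abstract-nonsense argument used there (construction of a ``$\fD^+$'' subcategory and factorization of functors through it), and the induction on $|J|$ carried out in the proof of Proposition~\ref{prop:recollement} goes through without further modification, as does the deduction of the other four functors by duality and adjunction. The main subtlety will be to verify rigorously the descent of the light leaves basis properties to $\oDiag_{\BS,I}^\oplus(\h,W)$, in particular the bases and injectivity statements of Example~\ref{ex:bases of x w and y w}; beyond that, no genuinely new ideas are required compared to the proof of Proposition~\ref{prop:recollement}.
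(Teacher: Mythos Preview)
Your proposal is correct and follows exactly the same approach as the paper's own proof, which simply states that the argument is identical to that of Proposition~\ref{prop:recollement} with $B_\ux^+$ replaced by its image $\oB_\ux^+$ in the $\RE$ setting. You have filled in considerably more detail than the paper (which leaves the details to the reader), but the strategy is the same: repeat the proof of Proposition~\ref{prop:recollement} with $\bk$ in place of $R$, using the $\bk$-linear light leaves basis inherited from~\S\ref{ss:oDiag-loc-closed}.
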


\begin{proof}
  The proof is identical to that of Proposition~\ref{prop:recollement}, starting with the case $|J|=1$ and then using induction on $|J|$. Details are left to the reader. (In the case where $J=\{w\}$, we replace the complex $B_\ux^+$ by its image $\oB_\ux^+$ in $\RE_I(\h,W)$, which fits into a distinguished triangle
\[
\oB_\ux \to \oB_\ux^+ \to \oB_\uw \Tenint_\bk \Hom^\bullet_{\oDiag_{\BS,I}^\oplus(\h,W)}(\oB_\uw,\oB_\ux)[1] \xrightarrow{[1]},
\]
where the third term is defined in the natural way.)
\end{proof}

Starting with Proposition~\ref{prop:recollement-RE}, we can define as in~\S\ref{ss:pf-pb-locally-closed} the functors $(i_J^I)^*$, $(i_J^I)^!$, $(i_J^I)_*$, $(i_J^I)_!$ for any locally closed embedding $J \subset I$ of finite subsets of $W$. These functors satisfy the appropriate analogue of Lemma~\ref{le:recollement compositions for locally closed inclusions}. Moreover, there exist canonical isomorphisms
\begin{multline*}
 (i_J^I)^* \circ \For^{\BE}_{\RE} \cong \For^{\BE}_{\RE} \circ (i_J^I)^*, \quad (i_J^I)^! \circ \For^{\BE}_{\RE} \cong \For^{\BE}_{\RE} \circ (i_J^I)^!, \\
 (i_J^I)_* \circ \For^{\BE}_{\RE} \cong \For^{\BE}_{\RE} \circ (i_J^I)_*, \quad (i_J^I)_! \circ \For^{\BE}_{\RE} \cong \For^{\BE}_{\RE} \circ (i_J^I)_!,
\end{multline*}
where in each case the functor on the left-hand side is defined for $\RE$ categories, while the functor on the right-hand side is defined for $\BE$ categories. In fact it suffices to prove these isomorphisms in case $J$ is either open or closed in $I$. In this case, the claim is either obvious or follows from the construction. (For instance, we observe that in the construction for Lemma~\ref{le:recollement step 1a} and its counterpart for the $\RE$ categories, the subcategories $\fD^+ \subset \BE_I(\h,W)$ and $\overline{\fD}^+ \subset \RE_I(\h,W)$ satisfy $\For^{\BE}_{\RE}(\fD^+) \subset \overline{\fD}^+$.)

\subsection{Perverse t-structure}

From now on we assume that $\bk$ satisfies the conditions of~\S\ref{ss:rings}. Then, as in Lemma~\ref{lem:DbMod-KbFree} we have a canonical equivalence of triangulated categories
\begin{equation}
\label{eqn:Kbfree-DbMod-k}
 \Kb \Free^{\fgen,\Z}(\bk) \xrightarrow{\sim} \Db \Mod^{\fgen,\Z}(\bk).
\end{equation}
This gives rise to an equivalence $\RE_{\{w\}}(\h,W) \cong \Db \Mod^{\fgen,\Z}(\bk)$, analogous to~\eqref{eqn:equiv-BEw}.  However, there is also a \emph{different} equivalence, described in the following lemma, that has no direct analogue in the setting of $\BE_{\{w\}}(\h,W)$.

\begin{lema}
\label{lem:REw-DbMod}
Let $w \in W$.
 There exists an equivalence of triangulated categories
 \[
  \RE_{\{w\}}(\h,W) \xrightarrow{\sim} \Db \Mod^{\fgen,\Z}(\bk)
 \]
such that the autoequivalence $\langle 1 \rangle$ of $\RE_{\{w\}}(\h,W)$ corresponds to the autoequivalence $\Db((-1))$ of $\Db \Mod^{\fgen,\Z}(\bk)$, where $(-1)$ is the inverse of the ``shift of grading'' autoequivalence of $\Mod^{\fgen,\Z}(\bk)$ defined as in Lemma~{\rm \ref{le:Dw is equivalent to free R}}.
 \end{lema}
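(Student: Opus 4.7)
The strategy is to construct the required equivalence by composing an ``obvious'' equivalence with a re-indexing autoequivalence that corrects for the bigrading conventions. Applying the functor $\Kb$ to the equivalence $\overline{\gamma}$ of Lemma~\ref{le:oDw} and composing with the analogue of Lemma~\ref{lem:DbMod-KbFree} for $\bk$-modules (namely~\eqref{eqn:Kbfree-DbMod-k}) yields a first equivalence
\[
\eta_0 : \RE_{\{w\}}(\h,W) \xrightarrow{\sim} \Db\Mod^{\fgen,\Z}(\bk).
\]
Recalling that $(1)$ on a bounded homotopy category is defined in the last paragraph of~\S\ref{ss:BE-RE} as termwise grading shift combined with a sign change on differentials, and that $\langle 1\rangle = [1]\circ(-1)$, a direct calculation shows that under $\eta_0$ the autoequivalence $\langle 1\rangle$ corresponds to the ``diagonal shift'' sending $(M^n, d^n)$ to $(M^{n+1}(-1), d^{n+1})$, without any sign change on differentials. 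This is not the desired termwise shift $\Db((-1))$, so a correction is required.

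To correct this, I construct a triangulated self-equivalence $\rE$ of $\Db\Mod^{\fgen,\Z}(\bk)$ by re-indexing. Viewing an object of $\Kb\Free^{\fgen,\Z}(\bk)$ as a bigraded free $\bk$-module $(C^{n,j})_{n,j\in\Z}$, with $n$ the cohomological degree, $j$ the internal degree, and a differential of bidegree $(1,0)$, set
\[
 \rE(C)^{a,b} := C^{a-b,\, b}
\]
with the same underlying differential. A short check shows that $\rE$ commutes with $[1]$ up to natural isomorphism (hence is triangulated) and sends the diagonal shift described above to the pure internal grading shift $\Db((-1))$. The composition $\rE\circ\eta_0$ is therefore the sought-after equivalence.

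The main obstacle is verifying that $\rE$ actually preserves the bounded derived category, since the naive re-indexing of an arbitrary bigraded module could easily fail to be bounded in cohomological degree. This is where the finite-rank hypothesis in $\Free^{\fgen,\Z}(\bk)$ is essential: if $C^n$ is nonzero only for $|n|\le N$, and, because each $C^n$ has finite rank, has internal degrees confined to $|j|\le M$ (with $M$ a uniform bound over the finitely many relevant $n$), then $\rE(C)^{a,b}$ is nonzero only when $|a-b|\le N$ and $|b|\le M$, which forces $|a|\le N+M$. Once $\rE$ is known to be well-defined on $\Kb\Free^{\fgen,\Z}(\bk)$, the remaining checks---invertibility of $\rE$, compatibility with cohomological shift, and the intertwining relation with $\langle 1\rangle$---are routine bookkeeping on bigraded complexes.
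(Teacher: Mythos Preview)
Your proof is correct and follows essentially the same route as the paper: your re-indexing autoequivalence $\rE$, defined by $\rE(C)^{a,b}=C^{a-b,b}$, is exactly the paper's $\zeta$ (defined by $\zeta(M)^{n,m}=M^{n-m,m}$), and your $\eta_0$ is the paper's composition $\Kb(\overline{\gamma})$ followed by~\eqref{eqn:Kbfree-DbMod-k}. The paper simply asserts that the intertwining with $\langle 1\rangle$ is ``straightforward to check,'' whereas you spell out both that step and the boundedness of $\rE$ (which does require the finite-rank condition, as you note); these are welcome clarifications but not a different argument.
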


 \begin{proof}
  We consider the composition
  \[
   \RE_{\{w\}}(\h,W) \xrightarrow[\sim]{\Kb(\overline{\gamma})} \Kb \Free^{\fgen,\Z}(\bk) \xrightarrow[\sim]{\eqref{eqn:Kbfree-DbMod-k}} \Db \Mod^{\fgen,\Z}(\bk) \xrightarrow[\sim]{\zeta} \Db \Mod^{\fgen,\Z}(\bk)
  \]
where $\zeta$ is the equivalence sending a complex $(M^{n,m})_{n \in \mathbb{Z}}$ of graded $\bk$-modules (where $M^{n,m}$ means the part in cohomological degree $n$ and ``internal'' degree $m$) to the complex with $\zeta(M)^{n,m}:=M^{n-m,m}$ (and the same differential). It is straightforward to check that this equivalence has the required property with respect to the functor $\langle 1 \rangle$.
 \end{proof}

 We now define the \emph{perverse t-structure} on $\RE_{\{w\}}(\h,W)$ as the transport of the tautological t-structure on $\Db \Mod^{\fgen,\Z}(\bk)$ under the equivalence of Lemma~\ref{lem:REw-DbMod}. Under the equivalence~\eqref{eqn:equiv-BEw} and that induced by the equivalence of Lemma~\ref{le:oDw}, the functor $\For^{\BE}_{\RE}$ corresponds to the functor $\bk \lotimes_R (-)$. Hence, in view of~\cite[Proposition~4.4]{prinblock}, we deduce that $\For^{\BE}_{\RE} : \BE_{\{w\}}(\h,W) \to \RE_{\{w\}}(\h,W)$ is t-exact for the perverse t-structures; more precisely, an object $\mathscr{F}$ of $\BE_{\{w\}}(\h,W)$ is perverse if and only if $\For^{\BE}_{\RE}(\mathscr{F})$ is perverse in $\RE_{\{w\}}(\h,W)$.
 
 Once the perverse t-structure is defined on $\RE_{\{w\}}(\h,W)$ for any $w \in W$, as in~\S\ref{ss:def-t-structure}, using recollement we can define a perverse t-structure on $\RE_I(\h,W)$ for any locally closed subset $I \subset W$. The heart of this t-structure will be denoted by $\fP^{\RE}_I(\h,W)$. The remarks above show that $\For^{\BE}_{\RE} : \BE_{I}(\h,W) \to \RE_{I}(\h,W)$ is t-exact for the perverse t-structures; in fact an object $\mathscr{F}$ of $\BE_{I}(\h,W)$ is perverse if and only if $\For^{\BE}_{\RE}(\mathscr{F})$ is perverse in $\RE_{I}(\h,W)$.
 
 We define the \emph{standard} and \emph{costandard} objects in $\RE_I(\h,W)$ by
 \[
  \oD_w^I := \For^{\BE}_{\RE}(\Delta^I_w), \qquad \oN^I_w := \For^{\BE}_{\RE}(\nabla^I_w).
 \]
Corollary~\ref{cor:D-N-perverse-I} and the t-exactness of $\For^{\BE}_{\RE}$ imply that these objects belong to $\fP^{\RE}_I(\h,W)$. Moreover, the same proof as for Lemma~\ref{lem:Hom-BE-D-N} shows that for $x,y \in I$ we have
\begin{equation}
\label{eqn:Hom-RE-D-N}
 \Hom_{\RE_I(\h,W)}(\oD^I_x, \oN^I_y \langle n \rangle [m]) \cong
 \begin{cases}
   \bk & \text{if $x=y$ and $m=n=0$;}\\
   0 & \text{otherwise.}
 \end{cases}
\end{equation}

\subsection{Right-equivariant and bi-equivariant perverse sheaves}

The goal of this subsection is to prove the following claim.

\begin{prop}
\label{prop:perv-equiv-const}
Let $I \subset W$ be a locally closed subset. Then the functor
\[
 \fP^{\BE}_I(\h,W) \to \fP^{\RE}_I(\h,W)
\]
obtained by restricting
$\For^{\BE}_{\RE}$ to perverse objects is fully faithful.
\end{prop}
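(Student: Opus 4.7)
The plan is to proceed by induction on $|I|$, reducing immediately to the case where $I$ is finite by writing $\BE_I(\h,W)$ and $\RE_I(\h,W)$ as direct limits over their finite closed subsets (cf.~\S\ref{ss:BE-category} and~\S\ref{ss:RE-loc-closed}). Throughout I exploit two features noted earlier in Section~\ref{sec:RE-category}: $\For^{\BE}_{\RE}$ is t-exact for the perverse t-structures, and it commutes with the recollement functors $i^*, i^!, i_*, i_!$ for any locally closed inclusion of subsets of $W$.

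\textbf{Base case $I=\{w\}$.} The equivalences $\BE_{\{w\}}(\h,W) \cong \Db \Mod^{\fgen,\Z}(R)$ from~\S\ref{ss:t-structure-singleton} and $\RE_{\{w\}}(\h,W) \cong \Db \Mod^{\fgen,\Z}(\bk)$ from Lemma~\ref{lem:REw-DbMod} identify $\For^{\BE}_{\RE}$ with $\bk \lotimes_R (-)$. Passing through the linear Koszul duality $\Db \Mod^{\fgen,\Z}(R) \cong D^{\fgen}_{\Z}(\mathsf{\Lambda})$ used in~\S\ref{ss:t-structure-singleton}, the perverse heart $\fP^{\BE}_{\{w\}}(\h,W)$ identifies canonically with $\Mod^{\fgen,\Z}(\bk)$ (the copy of $V$ in $\mathsf{\Lambda}$ lies in strictly negative cohomological degree, and must act trivially on anything concentrated in cohomological degree $0$). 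The heart $\fP^{\RE}_{\{w\}}(\h,W)$ is $\Mod^{\fgen,\Z}(\bk)$ by construction, and a direct check shows that $\For^{\BE}_{\RE}$ restricts on hearts to an equivalence of abelian categories, hence is fully faithful.

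\textbf{Inductive step.} Pick $w \in I$ maximal, so that $j : \{w\} \hookrightarrow I$ is an open embedding and $i : J := I \smallsetminus \{w\} \hookrightarrow I$ is a closed embedding. For $\mathscr{F}, \mathscr{G} \in \fP^{\BE}_I(\h,W)$, applying $\Hom(\mathscr{F}, -)$ to the recollement triangle $i_* i^! \mathscr{G} \to \mathscr{G} \to j_* j^* \mathscr{G} \xrightarrow{[1]}$ and using adjunctions gives a four-term exact sequence
\[
\Hom(i^*\mathscr{F}, i^!\mathscr{G}) \to \Hom(\mathscr{F}, \mathscr{G}) \to \Hom(j^*\mathscr{F}, j^*\mathscr{G}) \to \Hom(i^*\mathscr{F}, i^!\mathscr{G}[1]).
\]
Since $i^*$ is right t-exact, $i^!$ left t-exact, and $j^*$ is t-exact, one has $i^*\mathscr{F} \in {}^p \hspace{-1pt} \BE_J(\h,W)^{\leq 0}$, $i^!\mathscr{G} \in {}^p \hspace{-1pt} \BE_J(\h,W)^{\geq 0}$, and both $j^*\mathscr{F}, j^*\mathscr{G}$ are perverse. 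Standard t-structure truncation arguments collapse $\Hom(i^*\mathscr{F}, i^!\mathscr{G})$ to $\Hom_{\fP^{\BE}_J(\h,W)}({}^pH^0(i^*\mathscr{F}), {}^pH^0(i^!\mathscr{G}))$. The same formalism produces an analogous four-term sequence in $\RE_I(\h,W)$, and the commutation of $\For^{\BE}_{\RE}$ with the recollement functors yields a comparison between the two sequences. The inductive hypothesis (applied to $J$) makes the first vertical map an isomorphism, the base case makes the third one an isomorphism, and the five lemma then gives the required isomorphism on the middle map, provided the fourth vertical map is injective.

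\textbf{Main obstacle.} The delicate step is showing that $\For^{\BE}_{\RE}$ is injective on the Ext-type term $\Hom(i^*\mathscr{F}, i^!\mathscr{G}[1])$. I expect this to require a further dévissage of $i^*\mathscr{F}$ and $i^!\mathscr{G}$ via their perverse truncations, so that this group is itself sandwiched between Hom groups between perverse objects on $J$, to which the inductive hypothesis applies. A secondary delicate point is the verification of the base case over a general ring $\bk$ satisfying the assumptions of~\S\ref{ss:rings}: the description of $\fP^{\BE}_{\{w\}}(\h,W)$ as $\Mod^{\fgen,\Z}(\bk)$ via the linear Koszul duality of~\cite{prinblock} is most naturally stated over a field, and one must check that it persists in the generality needed here and that $\For^{\BE}_{\RE}$ still restricts to the identity on hearts.
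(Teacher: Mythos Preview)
Your recollement-and-induction strategy is quite different from the paper's argument, and it has a genuine gap precisely where you flag the ``main obstacle''. To apply the five lemma you need
\[
\For^{\BE}_{\RE} : \Hom_{\BE_J}(i^*\mathscr{F}, i^!\mathscr{G}[1]) \to \Hom_{\RE_J}(i^*\For\mathscr{F}, i^!\For\mathscr{G}[1])
\]
to be injective. Your proposed fix is to d\'evissage this into $\Hom$-groups between perverse objects on $J$. But this cannot succeed: take the case where $i^*\mathscr{F}$ and $i^!\mathscr{G}$ are already perverse (which certainly occurs). Then the group is literally $\Hom_{\BE_J}({}^pH^0 i^*\mathscr{F}, {}^pH^0 i^!\mathscr{G}[1])$, an $\Ext^1$ in the ambient triangulated category between perverse objects, and no amount of truncation reduces this further. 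More generally, writing $A = i^*\mathscr{F}$ and $B = i^!\mathscr{G}$ and truncating both, the piece $\Hom(H^0A, H^0B[1])$ appears unavoidably. Your inductive hypothesis controls only degree-$0$ $\Hom$'s between perverse objects, not degree-$1$ ones, so the argument does not close. One could try to strengthen the hypothesis to include injectivity on $\Ext^1$, but carrying that through the recollement sequence then forces you to control a degree-$2$ term, and the problem cascades.

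The paper avoids this entirely by a direct dg argument with no induction. It factors $\For^{\BE}_{\RE}$ as $\For^{\mathsf{LM}}_{\RE} \circ \For^{\BE}_{\mathsf{LM}}$ through the left-monodromic category $\mathsf{LM}_I(\h,W)$ of~\cite{amrw1}, where $\For^{\mathsf{LM}}_{\RE}$ is an equivalence. For $\For^{\BE}_{\mathsf{LM}}$, morphisms $\mathscr{F} \to \mathscr{G}[\bullet]$ are computed by an explicit complex $M = \underline{\Hom}_{\BE,I}(\mathscr{F},\mathscr{G})$ on the $\BE$ side and by $\Lambda \otimes_{\bk} M$ (with a Koszul-type differential, $\Lambda$ the exterior algebra on $V^*$) on the $\mathsf{LM}$ side. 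A short homological-algebra lemma shows that the natural map $M \to \Lambda \otimes_{\bk} M$ induces an isomorphism on $\coH^0$ whenever $\coH^{<0}(M) = 0$; this last condition holds exactly because $\mathscr{F}$ and $\mathscr{G}$ lie in the heart of a t-structure. That is the whole proof: the key input is the vanishing of negative-degree $\Hom$'s, used once globally rather than being fed through recollement.
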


The proof of this proposition will rely on the following lemma from homological algebra. We regard (as, for instance, in~\cite[\S 3.2]{amrw1}) $R$ as a $\Z$-graded dg-algebra\footnote{In what follows, the ``internal'' grading will play no role, and hence can be forgotten.} with generators in bidegree $(2,2)$. Consider also $\Lambda:=\mathrm{Sym}(V^*(-2)[1])$ (so that $\Lambda$ is the exterior algebra of $V^*$, regarded as a bigraded ring with generators in bidegree $(1,2)$). For any $\Z$-graded $R$-dg-module $M$, the bigraded $\bk$-module $\Lambda \otimes_\bk M$ admits a natural structure of $\Z$-graded $\bk$-dg-module, with (Koszul-type) differential given by
\begin{multline*}
 d \bigl( (r_1 \wedge \cdots \wedge r_k) \otimes m) = \\
 \sum_{j=1}^k (-1)^{j-1} \cdot (r_1 \wedge \cdots \wedge \widehat{r_j} \wedge \cdots \wedge r_k) \otimes (r_j \cdot m) + (-1)^k \cdot (r_1 \wedge \cdots \wedge r_k) \otimes d(m).
\end{multline*}
It is clear that the assignment $m \mapsto 1 \otimes m$ defines a morphism of $\Z$-graded dg-modules $M \to \Lambda \otimes_\bk M$.

\begin{lema}
\label{lem:morph-equiv-const}
 Assume that $\coH^{<0}(M)=0$. Then the morphism $M \to \Lambda \otimes_\bk M$ induces an isomorphism
 \[
  \coH^0(M) \xrightarrow{\sim} \coH^0(\Lambda \otimes_\bk M).
 \]
\end{lema}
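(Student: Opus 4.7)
The plan is to use the wedge-degree filtration on $\Lambda \otimes_\bk M$ and show that under the hypothesis, the resulting spectral sequence collapses in total cohomological degree $0$ to $\coH^0(M)$.

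First I would introduce the increasing filtration $F^p := \Lambda^{\leq p} \otimes_\bk M$ of $\Lambda \otimes_\bk M$. The Koszul part of the differential strictly lowers wedge degree (yet stays within the filtration), while the remaining part acts by $\pm \mathrm{id} \otimes d_M$ on $\Lambda^p \otimes M$; thus each $F^p$ is a subcomplex, and on the associated graded $\mathrm{gr}^p F = \Lambda^p \otimes_\bk M$ only the $d_M$-piece survives. Since $\Lambda^p$ is free over $\bk$, this gives $\coH^\bullet(\mathrm{gr}^p F) \cong \Lambda^p \otimes_\bk \coH^\bullet(M)$, and the associated spectral sequence has
\[
E_1^{p,q} \;=\; \Lambda^p \otimes_\bk \coH^q(M),
\]
with $p+q$ being the total cohomological degree, converging to $\coH^{p+q}(\Lambda \otimes_\bk M)$. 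Convergence is automatic once $M$ is bounded below, as will be the case in our applications.

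Next I would examine the total cohomological degree $0$ antidiagonal: the contributions are $E_1^{p,-p} = \Lambda^p \otimes_\bk \coH^{-p}(M)$ for $p \geq 0$, and the hypothesis $\coH^{<0}(M) = 0$ immediately kills all terms with $p \geq 1$, leaving only $E_1^{0,0} = \coH^0(M)$. I would then verify that every differential $d_r$ into or out of $E_r^{0,0}$ vanishes: outgoing differentials land in filtration step $-r < 0$, where $\Lambda^{-r} = 0$, while incoming differentials would originate from slots of the shape $\Lambda^r \otimes_\bk \coH^{-1-r}(M)$, which vanish by the hypothesis. Hence $E_\infty^{0,0} = \coH^0(M)$ is the only surviving graded piece of $\coH^0(\Lambda \otimes_\bk M)$.

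Finally, the natural morphism $m \mapsto 1 \otimes m$ identifies $M$ with the bottom step $F^0 = \Lambda^0 \otimes_\bk M$ of the filtration, so on $\coH^0$ it induces exactly the isomorphism $\coH^0(M) \xrightarrow{\sim} E_\infty^{0,0} = \coH^0(\Lambda \otimes_\bk M)$ detected above. The only points of care are tracking the bigradings on $R$ and $\Lambda$ fixed in the paragraph preceding the lemma (so that the Koszul part indeed raises total cohomological degree by $1$ and $\Lambda^p$ sits in cohomological degree $p$) and ensuring spectral-sequence convergence; once these are granted, the cohomological vanishing $\coH^{<0}(M) = 0$ does all the work.
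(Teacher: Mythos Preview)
Your argument is correct and takes a genuinely different route from the paper. The paper proves the lemma by a direct, elementary manipulation: fixing a basis $(e_1,\ldots,e_m)$ of $V^*$, writing a degree-$0$ cocycle $x \in \Lambda \otimes_\bk M$ in the form $\sum (e_{i_1}\wedge\cdots\wedge e_{i_k})\otimes m_{i_1,\ldots,i_k}$, picking a term of maximal wedge length, and using the vanishing of $\coH^{<0}(M)$ to modify $x$ by an explicit coboundary so as to strictly reduce the number (and eventually the length) of top-degree terms. Injectivity is handled by the same trick applied to a bounding element $y$. Your spectral-sequence argument packages exactly this induction: the wedge-degree filtration is finite, $E_1^{p,q}=\Lambda^p\otimes_\bk \coH^q(M)$, and the hypothesis kills every contribution on the degree-$0$ antidiagonal except $(0,0)$, while the sources $\Lambda^r\otimes_\bk\coH^{-1-r}(M)$ of incoming differentials vanish for all $r\ge 1$ and outgoing ones land in negative wedge degree.

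Two small remarks. First, your caveat about convergence (``automatic once $M$ is bounded below'') is unnecessary and slightly misleading: since $V^*$ has finite rank, the filtration $0=F^{-1}\subset F^0\subset\cdots\subset F^m=\Lambda\otimes_\bk M$ is bounded, so the spectral sequence converges strongly without any hypothesis on $M$. Second, because your indexing is set up so that $E_1^{p,q}$ sits in total degree $p+q$ while the $d_1$ (coming from the Koszul part) shifts $(p,q)\mapsto(p-1,q+2)$, the differentials $d_r$ do not have the textbook bidegree; it would help the reader to say this explicitly, as you implicitly do when identifying the incoming source as $\Lambda^r\otimes_\bk\coH^{-1-r}(M)$. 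With these clarifications your proof is complete; compared with the paper's, it is shorter and more structural, at the cost of invoking the spectral-sequence formalism rather than the bare-hands reduction.
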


\begin{proof}
 Let us fix a basis $(e_1, \ldots, e_m)$ of $V^*$. Then any homogenous element $x$ of $\Lambda \otimes_\bk M$ can be written uniquely as a sum
 \begin{equation}
  \label{eqn:decomp-x}
  x=\sum_{1 \leq i_1 < \cdots < i_k \leq m} (e_{i_1} \wedge \cdots \wedge e_{i_k}) \otimes m_{i_1, \ldots, i_k}
 \end{equation}
 with $m_{i_1, \ldots, i_k}$ homogeneous, of cohomological degree $\deg(x)-k$.
 
First we prove that our morphism is surjective. For this, assume that $x$ has cohomological degree $0$ and that $d(x)=0$, and choose a sequence $\underline{i}:=(i_1, \ldots, i_k)$ with $k$ maximal such that $m_{i_1, \ldots, i_k} \neq 0$. If $k=0$ then the class of $x$ belongs to the image of our morphism. Otherwise, let
\[
 y:= x - (e_{i_1} \wedge \cdots \wedge e_{i_k}) \otimes m_{\underline{i}}.
\]
Then $d(x)$ equals
\[
 d(y) + \sum_{j=1}^k (-1)^{j-1} \cdot (e_{i_1} \wedge \cdots \wedge \widehat{e_{i_j}} \wedge \cdots \wedge e_{i_k}) \otimes (e_{i_j} \cdot m_{\underline{i}}) + (-1)^k \cdot (e_{i_1} \wedge \cdots \wedge e_{i_k}) \otimes d(m_{\underline{i}}).
\]
Since $d(x)=0$, the maximality of $k$ implies that $d(m_{\underline{i}})=0$.
Now $m_{\underline{i}}$ has strictly negative cohomological degree, so by our assumption there exists $n_{\underline{i}}$ in $M$ such that $m_{\underline{i}} = d(n_{\underline{i}})$. We then set
\[
 x':= y - \sum_{j=1}^k (-1)^{k+j-1} \cdot (e_{i_1} \wedge \cdots \wedge \widehat{e_{i_j}} \wedge \cdots \wedge e_{i_k}) \otimes (e_{i_j} \cdot n_{\underline{i}}).
\]
The element $x-x'$ equals
\begin{multline*}
  (e_{i_1} \wedge \cdots \wedge e_{i_k}) \otimes m_{\underline{i}} + \sum_{j=1}^k (-1)^{k+j-1} \cdot (e_{i_1} \wedge \cdots \wedge \widehat{e_{i_j}} \wedge \cdots \wedge e_{i_k}) \otimes (e_{i_j} \cdot n_{\underline{i}}) \\
  = d \bigl( (-1)^k \cdot (e_{i_1} \wedge \cdots \wedge e_{i_k}) \otimes n_{\underline{i}} \bigr).
\end{multline*}
Hence $d(x')=0$, and $x$ and $x'$ have the same image in $\coH^0(\Lambda \otimes_\bk M)$. Repeating this procedure if necessary, we can decrease the number of terms in the decomposition~\eqref{eqn:decomp-x} attached to sequences of length $k$, and then decrease the maximal length of a sequence $\underline{j}$ such that $m_{\underline{j}} \neq 0$, and obtain finally that the image of $x$ in $\coH^0(\Lambda \otimes_\bk M)$ belongs to the image of $\coH^0(M)$.

Now we prove injectivity of our morphism. Let $x \in M^0$ be such that $d_M(x)=0$, and assume that the image of $x$ in $\coH^0(\Lambda \otimes_\bk M)$ vanishes, or in other words that $x=d(y)$ for some $y$ in $(\Lambda \otimes_\bk M)^{-1}$. Write $y$ as in~\eqref{eqn:decomp-x}. If we assume that there exists a sequence $\underline{i}:=(i_1, \ldots, i_k)$ with $k>0$ such that $m_{i_1, \ldots, i_k} \neq 0$, then we choose such a sequence with $k$ maximal. The fact that $d(y)=x$ implies that $d(m_{\underline{i}})=0$, and as above $y=y'+d(z)$ for some $z$ in $(\Lambda \otimes_\bk M)^{-2}$. Then $x=d(y')$, and repeating this procedure if necessary we obtain that the class of $x$ vanishes in $\coH^0(M)$.
\end{proof}

\begin{proof}[Proof of Proposition~{\rm \ref{prop:perv-equiv-const}}]
In this proof we assume the reader has some familiarity with the constructions of~\cite[Chap.~4]{amrw1}.

As in~\cite{amrw1}, one can define the notion of $\Diag_{\BS,I}^\oplus(\h,W)$-sequence and, for any such sequences $\mathscr{F}$ and $\mathscr{G}$, consider the bigraded $\bk$-module $\underline{\Hom}_{\BE,I}(\mathscr{F},\mathscr{G})$. Then as in~\cite[\S 4.2]{amrw1} one can describe the category $\BE_I(\h,W)$ as the category of pairs $(\mathscr{F},\delta)$ with $\delta$ in $\underline{\Hom}_{\BE,I}(\mathscr{F},\mathscr{F})^{(1,0)}$ which satisfies $\delta \circ \delta=0$, with appropriately defined morphisms. As in~\cite[\S 4.3]{amrw1} one also has a similar description for $\RE_I(\h,W)$, replacing $\underline{\Hom}_{\BE,I}(\mathscr{F},\mathscr{G})$ with $\bk \otimes_R \underline{\Hom}_{\BE,I}(\mathscr{F},\mathscr{G})$. Finally, replacing $\underline{\Hom}_{\BE,I}(\mathscr{F},\mathscr{G})$ with $\Lambda \otimes_\bk \underline{\Hom}_{\BE,I}(\mathscr{F},\mathscr{G})$, one obtains the category $\mathsf{LM}_I(\h,W)$ of ``left-monodromic complexes''; see~\cite[\S 4.4]{amrw1}. With this notation, the functor $\For^{\BE}_{\RE}$ factors as a composition
\[
 \BE_I(\h,W) \xrightarrow{\For^{\BE}_{\mathsf{LM}}} \mathsf{LM}_I(\h,W) \xrightarrow{\For^{\mathsf{LM}}_{\RE}} \RE_I(\h,W).
\]
Moreover, as in~\cite[Theorem~4.6.2]{amrw1}, the functor $\For^{\mathsf{LM}}_{\RE}$ is an equivalence of categories.

Now, let $\mathscr{F},\mathscr{G}$ be in $\fP^{\BE}_I(\h,W)$. Then the morphisms from $\mathscr{F}$ to shifts of $\mathscr{G}$ can be computed as the cohomology of the complex of $\Z$-graded $R$-dg-modules $\underline{\Hom}_{\BE,I}(\mathscr{F},\mathscr{G})$. Since $\mathscr{F}$ and $\mathscr{G}$ belong to the heart of a t-structure, this complex has no negative cohomology. On the other hand, with the same conventions as above, the complex $\Lambda \otimes_{\bk} \underline{\Hom}_{\BE,I}(\mathscr{F},\mathscr{G})$ computes the morphisms from $\For^{\BE}_{\mathsf{LM}}(\mathscr{F})$ to shifts of $\For^{\BE}_{\mathsf{LM}}(\mathscr{G})$. Hence Lemma~\ref{lem:morph-equiv-const} shows that $\For^{\BE}_{\mathsf{LM}}$ induces an isomorphism
\[
 \Hom_{\BE_I(\h,W)}(\mathscr{F}, \mathscr{G}) \xrightarrow{\sim} \Hom_{\mathsf{LM}_I(\h,W)}(\For^{\BE}_{\mathsf{LM}}(\mathscr{F}), \For^{\BE}_{\mathsf{LM}}(\mathscr{G})).
\]
Since $\For_{\RE}^{\mathsf{LM}}$ is an equivalence of categories, the claim of the proposition follows.
\end{proof}

\subsection{The case of field coefficients}
\label{ss:RE-field}

In this subsection we assume that $\bk$ is a field. In this case, as in~\S\ref{ss:simple-perverse}, the recollement formalism provides a description of the simple objects in the abelian category $\fP^{\RE}_I(\h,W)$. In fact, t-exactness of $\For^{\BE}_{\RE}$ implies that these simple objects are (up to isomorphism) exactly the objects $\overline{\rL}^I_w \langle n \rangle$ with $(w,n) \in W \times \mathbb{Z}$, where $\overline{\rL}^I_w:=\For^{\BE}_{\RE}(\rL^I_w)$. 

The following result is the main reason that motivates the generalization of our constructions to the $\RE$ categories. It uses the concept of (graded) \emph{highest weight category} due Cline--Parshall--Scott; see~\cite[Definition~A.1]{modrap2} for the definition we want to use (except that we replace Axiom~(1) by the weaker condition that for any $s \in \mathscr{S}$ the set $\{t \in \mathscr{S} \mid t \leq s\}$ is finite).

\begin{theorem}
\label{thm:hw}
 Let $I \subset W$ be a locally closed subset. The category $\fP^{\RE}_I(\h,W)$ is a graded highest weight category with weight poset $(I,\leq)$, normalized standard objects $(\oD^I_w : w \in I)$ and normalized costandard objects $(\oN^I_w : w \in I)$.
\end{theorem}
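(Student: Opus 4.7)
The plan is to verify the defining axioms of a graded highest weight category (in the sense of~\cite[Definition~A.1]{modrap2}) for $\fP^{\RE}_I(\h,W)$, with weight poset $(I,\leq)$ and normalized standard, resp.~costandard, objects $(\oD^I_w)_{w\in I}$, resp.~$(\oN^I_w)_{w\in I}$. Using the compatibility of all relevant structures with pushforward along closed inclusions, and the fact that $\RE_I(\h,W)$ is the direct $2$-limit of the $\RE_J(\h,W)$ for finite closed $J \subset I$, I would first reduce to the case where $I$ is finite.

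Next I would verify the elementary axioms. From~\S\ref{ss:RE-field}, the isomorphism classes of simples in $\fP^\RE_I(\h,W)$ are the $\overline{\rL}^I_w\langle n\rangle$ with $w \in I$, $n \in \Z$, and $\End(\overline{\rL}^I_w) = \bk$ (deduced from Lemma~\ref{lem:REw-DbMod} in the singleton case via the general recollement characterization). The arguments for Lemmas~\ref{lem:pushforward-pullback-singleton} and~\ref{lem:ker-standard-object-simple} go through verbatim in the $\RE$ setting, giving that $\overline{\rL}^I_w$ is the head of $\oD^I_w$ and socle of $\oN^I_w$, and that the composition factors of $\ker(\oD^I_w \twoheadrightarrow \overline{\rL}^I_w)$ and $\coker(\overline{\rL}^I_w \hookrightarrow \oN^I_w)$ are all of the form $\overline{\rL}^I_y\langle n\rangle$ with $y < w$. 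The crucial Hom and higher Ext-vanishing axioms --- $\dim_{\bk} \Hom(\oD^I_x, \oN^I_y\langle n\rangle) = \delta_{x,y}\delta_{n,0}$ and $\Ext^i(\oD^I_x, \oN^I_y\langle n\rangle) = 0$ for all $i>0$ --- are immediate from~\eqref{eqn:Hom-RE-D-N}.

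The main obstacle is the construction of an indecomposable projective cover $P^I_w$ of each simple $\overline{\rL}^I_w$, each carrying a $\oD^I_\bullet$-filtration satisfying the BGG reciprocity $(P^I_w : \oD^I_y\langle n\rangle) = [\oN^I_y\langle n\rangle : \overline{\rL}^I_w]$. I would proceed by induction on $|I|$. The base case $|I| = 1$ is clear from Lemma~\ref{lem:REw-DbMod}: the heart is equivalent to $\Mod^{\fgen,\Z}(\bk)$, whose indecomposable projectives are the shifts of $\overline{b}_w = \oD^I_w$. For the inductive step, pick $w \in I$ maximal, so that $\{w\}$ is open and $J := I \smallsetminus \{w\}$ is closed. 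The object $\oD^I_w$ is projective in $\fP^{\RE}_I(\h,W)$: indeed, by~\eqref{eqn:Hom-RE-D-N} we have $\Ext^1(\oD^I_w, \oN^I_y\langle n\rangle) = 0$ for all $y,n$, and a standard d\'evissage using the composition series of an arbitrary object together with the simple-socle description of costandards yields $\Ext^1(\oD^I_w, -) = 0$. One sets $P^I_w := \oD^I_w$, and for $y \in J$ defines $P^I_y$ as a universal extension of $(i^I_J)_* P^J_y$ by a suitable finite direct sum of shifts of $\oD^I_w$, chosen to kill $\Ext^1(P^I_y, \overline{\rL}^I_w\langle n\rangle)$ for every $n$. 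This is the standard recollement construction of projectives for highest weight categories (cf.~\cite[Proposition~A.7]{modrap2} or~\cite[\S 5]{makisumi}); the necessary Ext-vanishing to carry it out and to verify projectivity of $P^I_y$ is again supplied by~\eqref{eqn:Hom-RE-D-N}, while the compatibility of $(i^I_J)_*$ with standard filtrations follows from the $\RE$-analogue of Lemma~\ref{lem:pf-pb-D-N} together with the exactness of $(i^I_J)_*$. BGG reciprocity is then a formal consequence of the resulting standard filtration and~\eqref{eqn:Hom-RE-D-N}.
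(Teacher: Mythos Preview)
Your proposal is correct, but it takes a longer and partially redundant route compared with the paper. The paper verifies the five axioms of~\cite[Definition~A.1]{modrap2} directly and stops: Axiom~1 is the interval-finiteness of $(I,\leq)$; Axiom~2 is the computation of $\Hom(\overline{\rL}^I_w,\overline{\rL}^I_w\langle n\rangle)$, obtained by embedding it in $\Hom(\oD^I_w,\oN^I_w\langle n\rangle)$ via the surjection $\oD^I_w\twoheadrightarrow\overline{\rL}^I_w$ and the injection $\overline{\rL}^I_w\hookrightarrow\oN^I_w$; Axiom~3 (projectivity of $\oD^I_w$ and injectivity of $\oN^I_w$ in $\fP^{\RE}_J$ when $w$ is maximal in the closed subset $J$) is checked by adjunction, namely $\Hom_{\RE_J}(\oD^J_w,M[1])\cong\Hom_{\RE_{\{w\}}}(\overline{b}_w,(i_w^J)^*M[1])=0$ since $(i_w^J)^*M$ is perverse and $\overline{b}_w$ is projective in the heart over a field; Axiom~4 is Lemma~\ref{lem:ker-standard-object-simple}; and Axiom~5 (the $\Ext^2$-vanishing) follows from~\eqref{eqn:Hom-RE-D-N} together with~\cite[Lemma~3.2.4]{bgs}. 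The existence of projective covers with standard filtrations and BGG reciprocity are then \emph{consequences} of the axioms (cf.~\cite[Theorem~A.3]{modrap2}), not part of the definition.

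By contrast, you verify essentially the same elementary axioms but then treat the construction of projective covers as the ``main obstacle'' and carry it out explicitly via universal extensions. This construction is correct (it is essentially how the general theory produces projectives once the axioms hold), but it is unnecessary for the theorem as stated. A secondary difference is in the proof of Axiom~3: you argue by d\'evissage through costandards (using $\Ext^1(\oD^I_w,\oN^I_y)=0$ and the socle description of $\oN^I_y$), whereas the paper bypasses this with the one-line adjunction computation above. Both arguments are valid; the paper's is shorter and avoids any appearance of circularity.
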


\begin{proof}
 The first axiom is obviously satisfied. For the second one, we observe that the surjection $\oD_w^I \twoheadrightarrow \overline{\rL}^I_w$ and the injection $\overline{\rL}^I_w \hookrightarrow \oN^I_w$ induce an embedding
 \[
  \Hom_{\fP_I^{\RE}(\h,W)}(\overline{\rL}^I_w, \overline{\rL}^I_w \langle n \rangle) \hookrightarrow \Hom_{\fP_I^{\RE}(\h,W)}(\oD^I_w, \oN^I_w \langle n \rangle);
 \]
then the desired claim follows from~\eqref{eqn:Hom-RE-D-N}. To check the third axiom, we consider $J \subset I$ closed and $w \in J$ maximal. Then $\oD^I_w$ belongs to (the essential image of) $\fP^{\RE}_J(\h,W)$, and if $M$ belongs to $\fP^{\RE}_J(\h,W)$ we have
\[
 \Ext^1_{\fP^{\RE}_J(\h,W)}(\oD^I_w, M) \cong \Hom_{\RE_J(\h,W)}(\oD^J_w, M[1]) \cong \Hom_{\RE_{\{w\}}(\h,W)}(\overline{b}_w, (i_w^J)^*M [1]),
\]
which vanishes since $(i_w^J)^*M$ is a perverse object. One checks similarly that
\[
 \Ext^1_{\fP^{\RE}_J(\h,W)}(M,\oN^I_w)=0.
\]
The fourth axiom follows from Lemma~\ref{lem:ker-standard-object-simple}. Finally, the fifth axiom in the definition of highest weight categories follows from~\eqref{eqn:Hom-RE-D-N} and~\cite[Lemma~3.2.4]{bgs}.
\end{proof}

Once Theorem~\ref{thm:hw} is established, one can consider the \emph{tilting} objects in the highest weight category $\fP^{\RE}_I(\h,W)$, i.e.~the objects which admit both a standard filtration and a costandard filtration; see~\cite[Definition~A.2]{modrap2}. As in~\cite{modrap2} we will use the notation $(\mathscr{T} : \oD_w^I\langle n \rangle)$ (or $(\mathscr{T} : \oN_w^I\langle n \rangle)$) for multiplicities of standard (or costandard) objects in a standard (or costandard) filtration. The indecomposable tilting objects are classified in the following way. For any $w \in W$, there exists a unique (up to isomorphism) indecomposable tilting object $\mathscr{T}^I_w$ in $\fP^{\RE}_I(\h,W)$ which satisfies
\[
 (\mathscr{T}^I_w : \oD_w^I)=1 \quad \text{and} \quad \bigl((\mathscr{T}^I_w : \oD_x^I \langle n \rangle) \neq 0 \Rightarrow x\leq w \bigr);
\]
moreover the assignment $(w,n) \mapsto \mathscr{T}^I_w \langle n \rangle$ induces a bijection between $I \times \mathbb{Z}$ and the set of isomorphism classes of indecomposable tilting objects. (See~\cite[Proposition~A.4]{modrap2}, and~\cite[Theorem~7.14]{riche-hab} for more details in the ungraded setting.) By uniqueness we have $\D_I(\mathscr{T}^I_w) \cong \mathscr{T}
^I_w$. Moreover there exists a surjection
\begin{equation}
\label{eqn:surj-T-nabla}
 \mathscr{T}_w^I \twoheadrightarrow \oN_w^I,
\end{equation}
resp.~an embedding
\begin{equation}
 \oD_w^I \hookrightarrow \mathscr{T}_w^I,
\end{equation}
whose kernel, resp.~cokernel, admits a costandard, resp.~standard, filtration.

The study of such objects is particularly important in view of the following result, which follows from~\cite[Lemma~A.5 and Lemma~A.6]{modrap2}. Here we denote by $\Tilt^{\RE}_I(\h,W)$ the full subcategory of $\fP^{\RE}_I(\h,W)$ consisting of tilting objects.

\begin{theorem}
The natural functors
\[
\Kb \Tilt_I^{\RE}(\h,W) \to \Db \fP_I^{\RE}(\h,W) \to \RE_I(\h,W)
\]
are equivalences of triangulated categories.
\end{theorem}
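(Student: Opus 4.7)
The strategy is to invoke the abstract machinery of~\cite[Appendix A]{modrap2}: Lemma A.5 yields the realization equivalence $\Db \fP^{\RE}_I(\h,W) \xrightarrow{\sim} \RE_I(\h,W)$, and Lemma A.6 yields the tilting equivalence $\Kb \Tilt_I^{\RE}(\h,W) \xrightarrow{\sim} \Db \fP^{\RE}_I(\h,W)$. Most of the required hypotheses are already established: Theorem~\ref{thm:hw} says that $\fP^{\RE}_I(\h,W)$ is a graded highest weight category with weight poset $(I,\leq)$, standard objects $\oD^I_w$, costandard objects $\oN^I_w$, and the Hom-vanishing~\eqref{eqn:Hom-RE-D-N} between them; the classification of indecomposable tilting objects $\mathscr{T}_w^I$ and their standard/costandard filtrations has been recalled just above.

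For Lemma A.5 the remaining points to verify are: (a) $\RE_I(\h,W)$ is a Karoubian triangulated category, which follows from Corollary~\ref{cor:Karoubian} by the same argument applied in the right-equivariant setting (boundedness of the perverse t-structure is built into Definition~\ref{def:t structure} via recollement from singletons, where~\eqref{eqn:Kbfree-DbMod-k} is evidently bounded); (b) $\RE_I(\h,W)$ is generated as a triangulated category by the objects $\oD^I_w\langle n\rangle$ (equivalently by the $\oN^I_w\langle n\rangle$), which is the $\RE$-analogue of Lemma~\ref{lema:BE-generated-D-N} and is proved verbatim using the recollement formalism of Proposition~\ref{prop:recollement-RE} and Lemma~\ref{lem:pf-pb-D-N} adapted to the $\RE$ setting. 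Combined with~\eqref{eqn:Hom-RE-D-N}, these feed directly into~\cite[Lemma A.5]{modrap2} to produce the second equivalence.

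For Lemma A.6 the key input is that every standard object admits a finite right resolution by tilting objects, and dually every costandard object admits a finite left tilting resolution. By the surjection~\eqref{eqn:surj-T-nabla} and the fact that its kernel has a costandard filtration whose subquotients are $\oN^I_x\langle m\rangle$ with $x<w$, one obtains such a resolution by induction on the poset $(I,\leq)$, using that $\{\preceq w\}$ is finite for each $w$ thanks to the weakened version of Axiom~(1) in the definition of graded highest weight category. Applying~\cite[Lemma A.6]{modrap2} then yields the first equivalence.

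The main obstacle is really just bookkeeping: one must confirm that all the formal inputs to~\cite[Appendix A]{modrap2} (Karoubianness, boundedness, generation by standards) go through in the $\RE$ setting, where our constructions have been made explicit only partially. All three are, however, direct translations of arguments already carried out in the $\BE$ setting in Sections~\ref{sec:BE}--\ref{sec:BE t structure}, with~\S\ref{ss:RE-loc-closed} providing the corresponding recollement apparatus; once they are in hand, no further delicate computation is required.
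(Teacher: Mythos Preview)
Your proposal is correct and follows exactly the paper's approach: the paper simply states that the theorem ``follows from~\cite[Lemma~A.5 and Lemma~A.6]{modrap2},'' and you invoke precisely those two lemmas, supplying in addition a careful check of their hypotheses (Karoubianness via the bounded t-structure, generation by standards, the Hom-vanishing~\eqref{eqn:Hom-RE-D-N}, and the highest weight structure from Theorem~\ref{thm:hw}). Your elaboration is accurate and more detailed than the paper's one-line justification, but the underlying argument is identical.
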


We conclude this section by noting that, with the theory we developed here in hand, the results obtained in~\cite[\S\S 10.5--10.7]{amrw1} generalize to the present setting. In particular, the tilting objects in $\fP^{\RE}(\h,W)$ can be produced via a ``Bott--Samelson type'' construction.

\section{Ringel duality and the big tilting object}
\label{sec:Ringel}

In this section we assume that $W$ is finite, and denote by $w_0$ the longest element of $W$. 

\subsection{Ringel duality}

By Proposition~\ref{prop:D-N-convolution}\eqref{it:Delta-nabla-convolution-2}, the functor
\[
\fR:=(-)\ustar\Delta_{w_0}:\RE(\h,W)\to\RE(\h,W)
\]
is an equivalence of triangulated categories, with quasi-inverse
\[
\fR^{-1}:=(-)\ustar\nabla_{w_0}:\RE(\h,W)\to\RE(\h,W).
\]

\begin{lema}
\label{lem:R-N-D}
For any $w \in W$ we have
\[
\fR(\onabla_x)\simeq\oDelta_{xw_0}.
\]
\end{lema}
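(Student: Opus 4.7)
The plan is to lift the statement from $\RE(\h,W)$ to $\BE(\h,W)$ via the forgetful functor, and then to reduce it to a purely convolution-theoretic identity that follows from Proposition~\ref{prop:D-N-convolution}.

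First, I would use the compatibility~\eqref{eqn:For-convolution} to write
\[
\onabla_x \ustar \Delta_{w_0} = \For^{\BE}_{\RE}(\nabla_x) \ustar \Delta_{w_0} \cong \For^{\BE}_{\RE}(\nabla_x \ustar \Delta_{w_0}).
\]
Since $\oDelta_{xw_0} = \For^{\BE}_{\RE}(\Delta_{xw_0})$ by definition, it is enough to establish the isomorphism
\[
\nabla_x \ustar \Delta_{w_0} \cong \Delta_{xw_0}
\]
inside $\BE(\h,W)$.

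Next, because $w_0$ is the longest element, we have $\ell(x^{-1}) + \ell(xw_0) = \ell(x) + (\ell(w_0) - \ell(x)) = \ell(w_0)$, so concatenating a reduced expression for $x^{-1}$ with one for $xw_0$ produces a reduced expression for $w_0$. Applying Proposition~\ref{prop:D-N-convolution}\eqref{it:Delta-nabla-convolution-1} to this reduced expression and using the associativity of $\ustar$ to group the factors, I obtain a canonical isomorphism $\Delta_{w_0} \cong \Delta_{x^{-1}} \ustar \Delta_{xw_0}$.

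Finally, substituting this decomposition and invoking Proposition~\ref{prop:D-N-convolution}\eqref{it:Delta-nabla-convolution-2} applied to the element $x^{-1}$, which yields $\nabla_x \ustar \Delta_{x^{-1}} \cong B_\varnothing$, I compute
\[
\nabla_x \ustar \Delta_{w_0} \cong \nabla_x \ustar \Delta_{x^{-1}} \ustar \Delta_{xw_0} \cong B_\varnothing \ustar \Delta_{xw_0} \cong \Delta_{xw_0},
\]
as desired. There is no real obstacle: once Proposition~\ref{prop:D-N-convolution} is in hand, the argument is a short manipulation, and the only care needed is in verifying that $w_0 = x^{-1} \cdot (xw_0)$ is a length-additive factorization so that the first part of that proposition can be applied to the chosen reduced expression.
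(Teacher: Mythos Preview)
Your proof is correct and follows exactly the same route as the paper's: lift through $\For^{\BE}_{\RE}$ via~\eqref{eqn:For-convolution}, factor $\Delta_{w_0}\cong\Delta_{x^{-1}}\ustar\Delta_{xw_0}$ using length-additivity and Proposition~\ref{prop:D-N-convolution}\eqref{it:Delta-nabla-convolution-1}, and cancel $\nabla_x\ustar\Delta_{x^{-1}}\cong B_\varnothing$ by Proposition~\ref{prop:D-N-convolution}\eqref{it:Delta-nabla-convolution-2}. Your version simply spells out the length computation that the paper leaves implicit.
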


\begin{proof}
The desired isomorphism follows from the sequence of isomorphisms
\begin{multline*}
\fR(\onabla_x) = \onabla_x \ustar \Delta_{w_0} \cong \For^\BE_\RE(\nabla_x \ustar \Delta_{w_0}) \cong \\
\For^\BE_\RE(\nabla_x \ustar \Delta_{x^{-1}} \ustar \Delta_{xw_0}) \cong \For^\BE_\RE(\Delta_{xw_0}) \cong \oDelta_{xw_0},
\end{multline*}
where the first isomorphism is a special case of~\eqref{eqn:For-convolution}, and the second and third ones follow from Proposition~\ref{prop:D-N-convolution}.
\end{proof}

\subsection{Projective and tilting perverse objects}

From now on we assume that $\bk$ is a field.
Then the category $\fP^{\RE}(\h,W)$ has enough projective and injective objects,
and any projective (resp.~injective) object admits a standard (resp.~costandard) filtration, cf. \cite[Theorem A.3]{modrap2}. For $x \in W$, we will denote by $\rP_x$, resp.~$\rI_x$, the projective cover, resp.~injective hull, of $\overline{\rL}_x$.
Recall the reciprocity formula
\begin{align}\label{eq:reciprocity formula}
(\rP_x:\oDelta_y\langle n\rangle)=[\onabla_y\langle n\rangle:\overline{\rL}_x],
\end{align}
where $x,y\in W$ and $n\in\Z$ (see~\cite[remarks after Theorem~3.2.1]{bgs}).

It is a direct consequence of Lemma~\ref{lem:R-N-D} that if $\mathscr{M}$ is a perverse object which admits a costandard filtration, then $\fR(\mathscr{M})$ is also perverse, and it admits a standard filtration; moreover we have
\begin{equation}
\label{eqn:multiplicities-R}
 \left(\fR(\mathscr{M}):\oDelta_{xw_0}\langle n\rangle\right)=\left(\mathscr{M}:\onabla_x\langle n\rangle\right) 
\end{equation}
for all $x\in W$ and $n\in\Z$.

\begin{prop}
\label{prop:ringel duality}
For any $x \in W$ we have
\[
 \fR(\rT_x)\cong\rP_{xw_0}, \qquad \fR(\rI_x) \cong \rT_{xw_0}.
\]
\end{prop}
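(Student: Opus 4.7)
The plan is to prove both isomorphisms using parallel arguments, exploiting that $\rT_x$ and $\rI_x$ both admit costandard filtrations. By Lemma~\ref{lem:R-N-D} and~\eqref{eqn:multiplicities-R}, applying $\fR$ termwise shows that both $\fR(\rT_x)$ and $\fR(\rI_x)$ lie in $\fP^{\RE}(\h,W)$ and admit standard filtrations with multiplicities $(\fR(\rT_x):\oD_{yw_0}\langle n\rangle)=(\rT_x:\oN_y\langle n\rangle)$ and $(\fR(\rI_x):\oD_{yw_0}\langle n\rangle)=(\rI_x:\oN_y\langle n\rangle)$; indecomposability transfers from $\rT_x$ and $\rI_x$ to their $\fR$-images since $\fR$ is a triangulated equivalence preserving (local) endomorphism rings.

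For $\fR(\rT_x)\cong\rP_{xw_0}$, applying $\fR$ to the surjection $\rT_x\twoheadrightarrow\oN_x$ from~\eqref{eqn:surj-T-nabla} produces a surjection $\fR(\rT_x)\twoheadrightarrow\oD_{xw_0}$, so the head of $\fR(\rT_x)$ is $\overline{\rL}_{xw_0}$. To verify projectivity, I would compute, for any $M\in\fP^{\RE}(\h,W)$ and $i\ge 1$,
\[
\Ext^i_{\fP^{\RE}}(\fR(\rT_x),M)\cong\Hom_\RE(\rT_x,\fR^{-1}(M)[i]),
\]
and evaluate this via a finite projective resolution $P_\bullet\to M$ in $\fP^{\RE}(\h,W)$ (which exists since $W$ is finite, so the category has finite global dimension with enough projectives). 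Each $P_j$ has a standard filtration, and since $\fR^{-1}(\oD_z)\cong\oN_{zw_0}$ by Lemma~\ref{lem:R-N-D}, each $\fR^{-1}(P_j)$ has a costandard filtration. By~\eqref{eqn:Hom-RE-D-N} and d\'evissage, $\Ext^k_\RE(\rT_x,\fR^{-1}(P_j))=0$ for $k\ge 1$, so $\RHom_\RE(\rT_x,\fR^{-1}(M))$ is computed by the honest complex $\Hom_\RE(\rT_x,\fR^{-1}(P_\bullet))$, whose cohomology in strictly positive degrees vanishes since $P_\bullet$ is concentrated in non-positive cohomological degrees. Combined with indecomposability and the head $\overline{\rL}_{xw_0}$, this forces $\fR(\rT_x)\cong\rP_{xw_0}$.

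For $\fR(\rI_x)\cong\rT_{xw_0}$, using the equivalence $\fR$ and $\fR^{-1}(\oD_y)\cong\oN_{yw_0}$, for any $y,n$ and $k\ge 1$ we have
\[
\Ext^k_{\fP^{\RE}}(\oD_y\langle n\rangle,\fR(\rI_x))\cong\Hom_\RE(\oN_{yw_0}\langle n\rangle,\rI_x[k])=0
\]
by injectivity of $\rI_x$. Combined with the standard filtration established above, this Ext-orthogonality to standards characterizes $\fR(\rI_x)$ as a tilting module. Since the simple socle of the indecomposable injective $\rI_x$ is $\overline{\rL}_x$, which is already the socle of $\oN_x$, the costandard filtration of $\rI_x$ has $(\rI_x:\oN_x)=1$ with all other layers $\oN_y\langle n\rangle$ having $y>x$; translating through~\eqref{eqn:multiplicities-R} gives $(\fR(\rI_x):\oD_{xw_0})=1$ with other standards indexed by $zw_0<xw_0$. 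Combined with indecomposability, this forces $\fR(\rI_x)\cong\rT_{xw_0}$.

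The main obstacle is verifying projectivity of $\fR(\rT_x)$: one must combine the finite projective resolution of $M$, the transport of standard filtrations into costandard filtrations via $\fR^{-1}$, and the standard Ext-vanishing~\eqref{eqn:Hom-RE-D-N} between $\mathcal{F}(\oD)$ and $\mathcal{F}(\oN)$ to reduce the vanishing of $\Ext^i_{\fP^\RE}(\fR(\rT_x),M)$ for arbitrary perverse $M$ to a bounded cohomological computation.
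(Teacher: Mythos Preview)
Your proof is correct, but it takes a somewhat different route from the paper, particularly in two places.

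For the projectivity of $\fR(\rT_x)$, the paper argues more directly: it computes $\Hom_{\RE}(\fR(\rT_x),\oD_y\langle n\rangle[m]) \cong \Hom_{\RE}(\rT_x,\oN_{yw_0}\langle n\rangle[m])$, which vanishes for $m\neq 0$ by the standard--costandard orthogonality~\eqref{eqn:Hom-RE-D-N}, and then invokes the analogue of Lemma~\ref{lem:t-structure-D-N} (the nonpositive part of the t-structure is generated under extensions by shifted standards) to conclude that $\Hom_{\RE}(\fR(\rT_x),\mathscr{M})=0$ for all $\mathscr{M}\in{}^p\RE^{<0}$. This avoids your detour through finite projective resolutions of an arbitrary perverse $M$; your argument is valid (the stupid filtration of $P_\bullet$ reduces the vanishing to the termwise statement $\Hom_{\RE}(\rT_x,\fR^{-1}(P_j)[i+j])=0$ for $i\ge 1$, $j\ge 0$), but the paper's method is shorter and uses only the structural description of the t-structure rather than finiteness of global dimension.

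For the second isomorphism, the paper proves the \emph{inverse} statement $\fR^{-1}(\rT_{xw_0})\cong\rI_x$ by the argument dual to the first part (showing $\fR^{-1}(\rT_{xw_0})$ is perverse, indecomposable, injective, with socle $\overline{\rL}_x$). You instead show directly that $\fR(\rI_x)$ is tilting, using that $\Ext^{\ge 1}(\oD_y\langle n\rangle,-)$ vanishes on it (equivalent to a costandard filtration in a highest weight category) together with the standard filtration you already have. Both approaches are fine; yours has the advantage of making the tilting criterion explicit, while the paper's is a straightforward dualization of the first half. One small remark: your claim about the costandard multiplicities of $\rI_x$ is correct, but the cleanest justification is the dual reciprocity formula $(\rI_x:\oN_y\langle n\rangle)=[\oD_y\langle n\rangle:\overline{\rL}_x]$ rather than the socle argument you sketch.
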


\begin{proof}
Let $\rT$ be a tilting object in $\fP^{\RE}(\h,W)$. Then $\fR(\rT)$ is perverse by the comments before the proposition. We claim that $\fR(\rT)$ is projective. 
In fact, by Lemma~\ref{lem:R-N-D}, for $y \in W$ and $n,m \in \Z$ we have
\[
\Hom_{\RE(\h,W)}(\fR(\rT),\oDelta_y\langle n\rangle[m])
\cong \Hom_{\RE(\h,W)}(\rT,\onabla_{yw_0}\langle n\rangle[m]).
\]
Since $\rT$ admits a standard filtration,~\eqref{eqn:Hom-RE-D-N} implies that this vector space vanishes unless $m=0$. Using the analogue of Lemma~\ref{lem:t-structure-D-N} for the right-equivariant category, we deduce that
\[
 \Hom_{\RE(\h,W)}(\fR(\rT),\mathscr{M})=0
\]
for any $\mathscr{M}$ in ${}^p \hspace{-1pt}\RE(\h,W)^{<0}$. In particular this shows that
\[
 \Ext^1_{\fP^{\RE}(\h,W)}(\fR(\rT), \mathscr{N})=0
\]
for any $\mathscr{N}$ in $\fP^{\RE}(\h,W)$, and hence that $\fR(\rT)$ is projective, as claimed.

If now $\rT=\rT_x$, then since $\fR$ is an equivalence of categories, $\fR(\rT_x)$ is indecomposable. Moreover, the kernel of the natural surjection $\rT_x \twoheadrightarrow \onabla_x$ admits a costandard filtration; hence its image $\fR(\rT_x) \to \oDelta_{xw_0}$ under $\fR$ is surjective. This shows that $\fR(\rT_x)$ surjects to $\rL_{xw_0}$, and hence that $\fR(\rT_x) \cong \rP_{xw_0}$.

Very similar arguments show that $\fR^{-1}(\rT_{xw_0})$ belongs to $\fP^{\RE}(\h,W)$, is indecomposable and injective therein, and contains $\rL_x$ as a simple subobject. Therefore, as above we have $\fR^{-1}(\rT_{xw_0}) \cong \rI_x$, which concludes the proof.
\end{proof}

\subsection{The big tilting object}

The following theorem is an analogue of a well-known result in category $\mathscr{O}$ which is the starting point of the ``Soergel-theoretic'' analysis of this category.

\begin{theorem}\label{thm:on tilting w0}
There exist isomorphisms
\begin{equation}
\label{eqn:isom-Pe-Tw0-Ie}
 \rT_{w_0}\cong\rP_e\langle\ell(w_0)\rangle\cong\rI_e\langle-\ell(w_0)\rangle.
\end{equation}
Moreover, for any $x \in W$ we have
\begin{equation}
\label{eq:1 on tilting w0}
\left(\rT_{w_0}:\onabla_{x}\langle-n\rangle\right)=\left(\rT_{w_0}:\oDelta_{x}\langle n\rangle\right)=\begin{cases}
                                                                                1 & \text{if $n=\ell(xw_0)$;}\\
                                                                                0 & \mbox{otherwise.}
                                                                               \end{cases}
\end{equation}
\end{theorem}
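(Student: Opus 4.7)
The plan is: first compute the multiplicities~\eqref{eq:1 on tilting w0} via Ringel duality together with the self-duality of $\rT_{w_0}$, and then independently identify $\rT_{w_0}$ as both the indecomposable injective hull $\rI_e\langle -\ell(w_0)\rangle$ and the indecomposable projective cover $\rP_e\langle \ell(w_0)\rangle$ via an adjunction argument combined with duality.

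For the multiplicities, Proposition~\ref{prop:ringel duality} gives $\rT_{w_0} \cong \fR(\rI_e)$, so~\eqref{eqn:multiplicities-R} yields $(\rT_{w_0} : \oDelta_{xw_0}\langle n\rangle) = (\rI_e : \onabla_x\langle n\rangle)$. The injective analogue of reciprocity~\eqref{eq:reciprocity formula}, obtained by applying $\D_W$ to~\eqref{eq:reciprocity formula}, reads $(\rI_e : \onabla_x\langle n\rangle) = [\oDelta_x\langle n\rangle : \overline{\rL}_e]$, which by Proposition~\ref{prop:the socle of the standard objects} equals $\delta_{n, \ell(x)}$. Substituting $y=xw_0$ yields the standard part of~\eqref{eq:1 on tilting w0}. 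The costandard part follows from $\D_W(\rT_{w_0}) \cong \rT_{w_0}$ (recorded in Section~\ref{sec:RE-category}): since $\D_W$ sends a standard filtration with subquotient $\oDelta_y\langle n\rangle$ to a costandard filtration with subquotient $\onabla_y\langle -n\rangle$, we obtain
\[
(\rT_{w_0} : \onabla_y\langle m\rangle) = (\rT_{w_0} : \oDelta_y\langle -m\rangle) = \delta_{m, -\ell(yw_0)}.
\]

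For the isomorphisms~\eqref{eqn:isom-Pe-Tw0-Ie}, I will first show that $\rT_{w_0}$ is injective in $\fP^{\RE}(\h,W)$. Since $(-) \ustar \Delta_{w_0}$ is an equivalence with quasi-inverse $(-) \ustar \nabla_{w_0}$ (Proposition~\ref{prop:D-N-convolution}\eqref{it:Delta-nabla-convolution-2}), the corresponding adjunction applied to $\rT_{w_0} \cong \rI_e \ustar \Delta_{w_0}$ gives, for any $M \in \fP^{\RE}(\h,W)$,
\[
\Ext^1_{\fP^{\RE}(\h,W)}(M, \rT_{w_0}) \cong \Hom_{\RE(\h,W)}(M \ustar \onabla_{w_0}, \rI_e[1]).
\]
The $\RE$-analogue of Lemma~\ref{lem:D-N-exactness}\eqref{it:N-exact} places $M \ustar \onabla_{w_0}$ in ${}^p\hspace{-1pt}\RE(\h,W)^{\leq 0}$, whereas $\rI_e[1]$ lies in ${}^p\hspace{-1pt}\RE(\h,W)^{\geq 1}$, so this Hom vanishes and $\rT_{w_0}$ is injective. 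Since $\rT_{w_0}$ is indecomposable in a finite-length abelian category, its socle is simple; the embedding $\oDelta_{w_0} \hookrightarrow \rT_{w_0}$ (see Section~\ref{sec:RE-category}, right after~\eqref{eqn:surj-T-nabla}) combined with $\soco(\oDelta_{w_0}) \cong \overline{\rL}_e\langle -\ell(w_0)\rangle$ from Proposition~\ref{prop:the socle of the standard objects} identifies this socle, hence $\rT_{w_0} \cong \rI_e\langle -\ell(w_0)\rangle$. The identification $\rT_{w_0} \cong \rP_e\langle \ell(w_0)\rangle$ then follows by applying the duality $\D_W$, which exchanges indecomposable injectives and projectives, preserves $\rT_{w_0}$, and sends $\overline{\rL}_e\langle -\ell(w_0)\rangle$ to $\overline{\rL}_e\langle \ell(w_0)\rangle$.

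The main obstacle is supplying the $\RE$-side prerequisites used throughout: the right $t$-exactness of $(-) \ustar \onabla_{w_0}$ on $\RE(\h,W)$, the injective version of reciprocity~\eqref{eq:reciprocity formula}, and the precise compatibilities of $\D_W$ with the shift $\langle n\rangle$ and with the exchange of $\oDelta_x$ and $\onabla_x$. Each of these should follow by routine adaptation of arguments already present on the $\BE$ side or from standard graded highest weight category theory, but the verifications must be made explicit in the framework of this paper.
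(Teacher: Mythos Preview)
Your computation of the multiplicities~\eqref{eq:1 on tilting w0} is correct and is essentially dual to the paper's (you go through $\rI_e$ and injective reciprocity, the paper through $\rP_e$ and~\eqref{eq:reciprocity formula}; both are fine).

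The argument for~\eqref{eqn:isom-Pe-Tw0-Ie}, however, contains a genuine error. You assert that $\rI_e[1]$ lies in ${}^p\hspace{-1pt}\RE(\h,W)^{\geq 1}$, but this is false: since $\rI_e$ is perverse, the shift $\rI_e[1]$ lies in ${}^p\hspace{-1pt}\RE(\h,W)^{\leq -1}$. The t-structure axiom gives $\Hom(D^{\leq 0}, D^{\geq 1})=0$, not $\Hom(D^{\leq 0}, D^{\leq -1})=0$, so the vanishing you want does not follow. In fact, using the equivalence $\RE(\h,W)\cong \Db\fP^{\RE}(\h,W)$ and the injectivity of $\rI_e$, one computes
\[
\Hom_{\RE(\h,W)}\bigl(M\ustar\nabla_{w_0},\rI_e[1]\bigr)\cong \Hom_{\fP^{\RE}(\h,W)}\bigl({}^pH^{-1}(M\ustar\nabla_{w_0}),\rI_e\bigr),
\]
and there is no a priori reason for ${}^pH^{-1}(M\ustar\nabla_{w_0})$ to have no composition factor $\overline{\rL}_e$ for arbitrary perverse $M$. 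Right t-exactness alone is not enough.

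The paper avoids this difficulty by not proving injectivity directly. Instead, after establishing~\eqref{eq:1 on tilting w0} it computes $\dim\Hom(\rT_{w_0},\onabla_x\langle n\rangle)$ from the standard multiplicities, shows (by exhibiting an explicit surjection via $\rT_{w_0}\twoheadrightarrow\onabla_{w_0}\twoheadrightarrow\onabla_x\langle\ell(xw_0)\rangle$) that every nonzero morphism $\rT_{w_0}\to\onabla_x\langle\ell(xw_0)\rangle$ is surjective, and deduces that the head of $\rT_{w_0}$ is $\overline{\rL}_e\langle\ell(w_0)\rangle$. This yields a surjection $\rP_e\langle\ell(w_0)\rangle\twoheadrightarrow\rT_{w_0}$, which is an isomorphism by a length count using~\eqref{eq:1 on tilting w0}; the injective identification then follows by duality. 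You could repair your argument along these lines, or alternatively prove directly that ${}^pH^{-1}(M\ustar\nabla_{w_0})$ has no composition factor $\overline{\rL}_e$ for perverse $M$, but that is not obviously easier.
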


\begin{proof}
First, we note that $\left(\rP_e:\oDelta_x\langle -n\rangle\right)$ is $1$ if $n=\ell(x)$ and $0$ otherwise by the reciprocity formula~\eqref{eq:reciprocity formula} and Proposition~\ref{prop:the socle of the standard objects}. Then, using~\eqref{eqn:multiplicities-R}, Proposition~\ref{prop:ringel duality} and the fact that
$\D(\rT_{w_0}) \cong \rT_{w_0}$, we obtain \eqref{eq:1 on tilting w0}. In particular, we deduce that for all $x \in W$ we have
\begin{equation}
\label{eq:2 on tilting w0}
\dim_\bk \Hom_{\fP^{\RE}(\h,W)}\left(\rT_{w_0},\onabla_{x}\langle n\rangle\right)=\begin{cases}
                                                                                1 & \text{if $n=\ell(xw_0)$;}\\
                                                                                0 & \mbox{otherwise.}
                                                                               \end{cases}
\end{equation}

We now claim that any nonzero morphism $f:\rT_{w_0}\to\onabla_{x}\langle\ell(xw_0)\rangle$ is surjective. In fact, since the corresponding $\Hom$-space is $1$-dimensional, it suffices to prove that there exists a surjective morphism from $\rT_{w_0}$ to $\onabla_{x}\langle\ell(xw_0)\rangle$. Such a morphism is provided by the composition
\[
 \rT_{w_0} \twoheadrightarrow \onabla_{w_0} \twoheadrightarrow \onabla_{x}\langle\ell(xw_0)\rangle
\]
where the first morphism is given by~\eqref{eqn:surj-T-nabla} and the second one is provided by Proposition~\ref{prop:morphisms between costandard objects}.

This claim implies that $\rT_{w_0}$ has no quotient of the form $\rL_{y} \langle n \rangle$ with $y \neq e$, since otherwise we would obtain a nonzero and nonsurjective morphism $\rT_{w_0} \to \onabla_y \langle n \rangle$ as the composition
\[
 \rT_{w_0} \twoheadrightarrow \rL_{y} \langle n \rangle \hookrightarrow \onabla_{y} \langle n \rangle.
\]
In view of~\eqref{eq:2 on tilting w0}, we deduce that the head of $\rT_{w_0}$ is $\rL_e \langle \ell(w_0) \rangle$, and hence that there exists a surjective morphism
\[
 \rP_e \langle \ell(w_0) \rangle \twoheadrightarrow \rT_{w_0}.
\]
Since these objects have the same length (namely, the sum of the lengths of all objects $\oD_x$ with $x \in W$), this surjection must be an isomorphism, which proves the first isomorphism in~\eqref{eqn:isom-Pe-Tw0-Ie}. The second isomorphism follows by duality.
\end{proof}


\end{document}